\def\bbone{{\mathbbm 1}}
\theoremstyle{plain}
\newtheorem{thm}{Theorem}[section]
\newtheorem{cor}{Corollary}[section]
\newtheorem{prop}{Proposition}[section]
\newtheorem{lem}{Lemma}[section]
\theoremstyle{definition}
\newtheorem{rem}{Remark}[section]
\newtheorem{defi}{Definition}[section]
\begin{document}
\title{Some Notes on Quantitative Generalized CLTs with Self-Decomposable Limiting Laws by Spectral Methods}

\author{Benjamin Arras\thanks{Univ. Lille, CNRS, UMR 8524 - Laboratoire Paul Painlev\'e, F-59000 Lille, France; benjamin.arras@univ-lille.fr
\newline\indent Keywords: self-decomposability, Stein's method, stable laws, Dirichlet forms, Wasserstein distances, non-local partial differential equations, Poincar\'e-type inequality.
\newline\indent MSC 2010: 60E07; 60F05; 31C25; 26D10.}}

\maketitle

\vspace{\fill}
\begin{abstract}
 In these notes, we obtain new stability estimates for centered non-degenerate selfdecomposable probability measures on $\mathbb{R}^d$ with finite second moment and for non-degenerate symmetric $\alpha$-stable probability measures on $\mathbb{R}^d$ with $\alpha \in [1,2)$. These new results are refinements of the corresponding ones available in the literature. The proofs are based on Stein's method for self-decomposable laws, recently developed in a series of papers, and on closed forms techniques together with a new ingredient:~weighted Poincar\'e-type inequalities. As applications, rates of convergence in Wasserstein-type distances are computed for several instances of the generalized central limit theorems (CLTs).~In particular, a $n^{1-2/\alpha}$-rate is obtained in $1$-Wasserstein distance when the target law is a non-degenerate symmetric $\alpha$-stable one with $\alpha \in (1,2)$.~Finally, the non-degenerate symmetric Cauchy case is studied at length from a spectral point of view. At last, in this Cauchy situation, a $n^{-1}$-rate of convergence is obtained when the initial law is a certain instance of layered stable distributions.
\end{abstract}
\vspace{\fill}

\section{Introduction}
\subsection{Overview and main results}
\noindent
These notes are concerned with application of Stein's method in order to prove quantitative versions of generalized central limit theorems when the target law is not a Gaussian probability measure.~Originating in the work of C.~Stein (see \cite{Stein_72,Stein_86}), Stein's method is a collection of techniques which allows to assess the distance between two probability measures on $\mathbb{R}^d$, $d \geq 1$, and, in particular, to compute explicit rates of convergence for weak limit theorems of probability theory.~Extensions of the method outside the Gaussian world have been introduced so far and several non-equivalent general approaches are now available.~In this regard, let us cite \cite{MRRS_23} for a multivariate generalization of the density approach, \cite{FSX_19,GDVM_19} for recent developments of the method when the target law is the invariant measure of certain diffusions and \cite{AH18_1,AH19_2,AH20_3} where the method has been set up for some infinitely divisible (ID) probability measures on $\mathbb{R}^d$. For good introductions on the method and its generalizations, let us refer the reader to \cite{Chen_Goldstein_Shao_11,Ross_11,Cha_14}. Moreover, over the recent years, the method has proven to be extremely fruitful when it is combined with other fields of mathematics (see, e.g., \cite{Cha_09,NP_12,Ledoux_Nourdin_Peccati_GAFA15,CFP_19}). 

In particular, the methodology developed in this manuscript is an extension of the Stein kernel approach to stability estimates for functional inequalities and to quantitative approximation results for target probability measures which are not Gaussian measures. Implicitly initiated in \cite{Stein_86,CaPaUtev_AOP94}, further developed in \cite{Cha_09} and in \cite{NP_12}, the Stein kernel approach has been an active line of research in the last years starting with \cite{Ledoux_Nourdin_Peccati_GAFA15,CFP_19,F_AOP19} and very recently put forward in \cite{FK_AAP21,FK_AIHP24,MS_PTRF24}. Let us recall its basic principles when the reference measure is the standard Gaussian measure (denoted by $\gamma$ in the sequel) on $\mathbb{R}^d$. Let $\mu$ be a centered probability measure on $\mathbb{R}^d$ with finite second moment such that its covariance matrix is the identity and let $\tau_\mu$ be a mapping from $\mathbb{R}^d$ to the set of $d \times d$ matrices with real entries (denoted by $M_{d \times d}(\mathbb{R})$ in the sequel). $\tau_\mu$ is a (Gaussian-)Stein kernel for $\mu$ if, for all sufficiently smooth $\mathbb{R}^d$-valued function $f$ defined on $\mathbb{R}^d$, 
\begin{align}\label{eq:Stein_kernel_Gaussian}
\int_{\mathbb{R}^d} \langle x ; f(x) \rangle \mu(dx) = \int_{\mathbb{R}^d} \langle \tau_\mu(x) ; \nabla(f)(x) \rangle_{HS} \mu(dx), 
\end{align}
where $\langle \cdot ; \cdot \rangle$ stands for the Euclidean inner product on $\mathbb{R}^d$ and where $\langle \cdot;\cdot\rangle_{HS}$ stands for the Hilbert-Schmidt inner product between elements of $M_{d\times d}(\mathbb{R})$. Thanks to the characterization of $\gamma$ by Stein's lemma, $\tau_{\mu}$ is equal to the identity matrix if and only if $\mu$ is equal to $\gamma$. Then, the Stein kernel approach is based on the following heuristic: the proximity of $\tau_\mu$ to the identity matrix must imply, in a precise sense, the proximity of $\mu$ to $\gamma$. Indeed, this idea has been made quantitative and has taken, for example, the following form (see \cite[Proposition $3.1$]{Ledoux_Nourdin_Peccati_GAFA15}): 
\begin{align}\label{ineq:W2_SteinDiscrepancy_ineq}
W_2\left(\mu, \gamma\right) \leq  \left(\int_{\mathbb{R}^d} \|\tau_\mu(x) - I_d\|^2_{HS}\mu(dx)\right)^{\frac{1}{2}},
\end{align}
where $W_2(\mu, \gamma)$ denotes the classical $2$-Wasserstein distance between $\mu$ and $\gamma$ (see \cite[Definition $6.1$]{Villani_OT09}), where $\|\cdot\|_{HS}$ is the Hilbert-Schmidt norm and where $I_d$ is the identity matrix of $M_{d \times d}(\mathbb{R})$.~Using identity \eqref{eq:Stein_kernel_Gaussian} combined with the linear structure of the central limit theorem and inequality \eqref{ineq:W2_SteinDiscrepancy_ineq}, it is then possible to obtain sharp quantitative approximation results with explicit dimension dependence.~This strategy has been implemented, for example, in \cite{CFP_19,F_AOP19,F_SM21,FK_AAP21,AH22_5,FK_AIHP24,MS_PTRF24} and has provided rates of convergence in the multivariate central limit theorem under various assumptions on the initial law $\mu$. Indeed, a non-trivial task in dimension $d \geq 2$ is to find relevant sufficient conditions on the measure $\mu$ which ensure the existence of a Stein kernel $\tau_\mu$ together with a control of its moments: under the assumption of a finite Poincar\'e constant for $\mu$ and a closability condition, \cite{CFP_19,AH22_5} prove the existence of $\tau_\mu$ satisfying \eqref{eq:Stein_kernel_Gaussian} with a control on its second moment using techniques from the calculus of variations and from Dirichlet form theory (see, e.g., \cite[Proposition $4.1$]{AH22_5}); using tools coming from optimal transport and partial differential equations, the author of \cite{F_AOP19} has built a Stein kernel verifying \eqref{eq:Stein_kernel_Gaussian}, taking values in the set of symmetric positive definite real matrices and with an explicit dimension-dependent upper bound on its $p$-th moment, with $p \geq 2$, under log-concavity and non-degeneracy conditions on $\mu$ (see, \cite[Theorem $2.3$ and Proposition $3.2$]{F_AOP19}); finally, in \cite{MS_PTRF24}, the authors have built Stein kernel, in the sense of equation \eqref{eq:Stein_kernel_Gaussian}, for $\mu$ which is the image measure of an isotropic log-concave compactly supported probability measure by a continuously differentiable mapping with bounded partial derivatives using the F\"ollmer process as well as stochastic and Malliavin calculus on the Wiener space. 

When the target probability measure is not Gaussian, the Stein kernel approach has been developed in some specific situations and has lead to new functional inequalities and to new stability estimates (see, e.g., \cite{Ledoux_Nourdin_Peccati_GAFA15,AS_SPA17,AH19_2,AH20_3,AH22_5,Cheng_Thalmaier_Wang_jfa23}). In particular, \cite[Theorem $2.1$]{AH22_5} furnishes a general and abstract device in the setting of closed forms to build
Stein kernels without any constraints on the reference probability measure. This is \textit{the starting point} of these notes in order to develop the Stein kernel approach to stability estimates and to quantitative approximation results when the target probability measure is either self-decomposable (SD) with finite second moment or symmetric non-degenerate $\alpha$-stable with $\alpha \in [1,2)$.

Indeed, the set of SD probability measures on $\mathbb{R}^d$ is a subset of the set of infinitely divisible ones and comprises the Gaussian and the stable laws. Originally introduced by P. L\'evy in \cite{Levy_book54}, this set plays a prominent role in probability theory since these probability measures appear as weak limits in the generalized central limit theorems (see, e.g., \cite[Theorems $9.3$ and $15.3$]{Sato_15}).~As such, they have been studied by many authors (\cite{Loeve_book78,Bon_92,Steutel_Harn_04,Sato_15}). In particular, these probability measures are known to satisfy a Poincar\'e-type inequality with non-local quadratic forms (\cite{Chen_85,Chen_Lou_87,Houdre_al_98,Rockner_Wang_03}).

The stability estimates which are presented next rely on refinements of the Stein's method results obtained in \cite{AH19_2,AH20_3} as well as on closed forms techniques already used to some extent in \cite{AH20_3,AH22_5} together with a new spectral inequality: a weighted Poincar\'e-type inequality (see Definition \ref{def:wnl_Poincare_ineq} below).~This spectral inequality allows to introduce a weight function from which one can assess the distance to the target SD probability measure. Throughout the paper, an $\mathbb{R}^d$-valued random vector $X$, with characteristic function $\varphi_X$, is self-decomposable if, for all $\gamma \in (0,1)$, the function $\varphi_{X,\gamma}$ defined, for all $\xi \in \mathbb{R}^d$, by
\begin{align}\label{eq:def_SD_fc}
\varphi_{X,\gamma}(\xi) : = \dfrac{\varphi_X(\xi)}{\varphi_X(\gamma \xi)}, 
\end{align}
is a characteristic function.~Self-decomposable random vectors are infinitely divisible and their L\'evy measures admit the following polar decomposition:  
\begin{align}\label{eq:polar_decomposition_SD}
\nu(du) =\bbone_{\mathbb{S}^{d-1}}(x)\bbone_{(0,+\infty)}(r) \frac{k_x(r)dr}{r} \sigma(dx),
\end{align}
where $\mathbb{S}^{d-1}$ is the Euclidean unit sphere, where $\sigma$ is a finite Borel positive measure on $\mathbb{S}^{d-1}$ and where $k_x(r)$ is a function which is non-negative, non-increasing in $r$, ($k_x(r_1)\leq k_x(r_2)$, for $0<r_2\leq r_1$) and measurable in $x$. $\sigma$ is called the \textit{spherical part} of $\nu$.~Without loss of generality, $k_x(r)$ is assumed to be right-continuous in $r\in (0,+\infty)$, to admit a left-limit at each $r\in (0,+\infty)$ and to be such that $\int_0^{+\infty} (1\wedge r^2) k_x(r)dr/r$ is finite and independent of $x$ (see \cite[Theorem $15.10.$ and Remark $15.12.$]{Sato_15}). In the sequel, $\mu_1 << \mu_2$ means that the positive Borel measure $\mu_1$ is absolutely continuous with respect to the positive Borel measure $\mu_2$, $\ast$ denotes the convolution product between measures or functions, $\mathcal{C}^1_b(\mathbb{R}^d, \mathbb{R}^d)$ is the set of $\mathbb{R}^d$-valued functions which are bounded and continuously differentiable on $\mathbb{R}^d$ with uniformly bounded first order partial derivatives and $\mathcal{B}(\mathbb{R}^d)$ is the Borel sigma-field on $\mathbb{R}^d$.~A probability measure $\mu$ on $\mathbb{R}^d$ is degenerate if there exist $a \in \mathbb{R}^d$ and a proper linear subspace $E$ of $\mathbb{R}^d$ such that the support of $\mu$ is contained in the set $a+E$. Otherwise, it is called non-degenerate. Finally, the integral probability metric $d_{W_2}$ considered in this first stability estimate is the smooth 2-Wasserstein distance as defined in \eqref{eq:def_smooth_wasserstein_dist} with $r=2$.

\begin{thm}\label{thm:stability_estimate_second_moment}
Let $d \geq 1$ be an integer and let $\mu$ be a non-degenerate centered SD probability measure on $\mathbb{R}^d$ with finite second moment and with L\'evy measure $\nu$ given by 
\begin{align}\label{eq:polar_decomposition_stability}
\nu(du) =\bbone_{\mathbb{S}^{d-1}}(x)\bbone_{(0,+\infty)}(r) \frac{k(r)dr}{r} \sigma(dx),
\end{align}
where $k$ is positive on $(0,+\infty)$ and $\sigma$ is a finite positive Borel measure on $\mathbb{S}^{d-1}$. Let $X$ be a centered random vector of $\mathbb{R}^d$ with law $\mu_X$ and such that $\mathbb{E} \|X\|^2 <+\infty$. 
Let $\omega_X$ be a positive Borel function defined on $(0,+\infty)$ such that 
\begin{itemize}
\item $\int_{\mathbb{R}^d} \|u\|^2 \omega_X(\|u\|) \nu(du) <+ \infty$;
\item $\mu_X \ast \omega_X \nu << \mu_X$;
\item  For all $f \in \mathcal{C}_b^1(\mathbb{R}^d,\mathbb{R}^d)$ with $\int_{\mathbb{R}^d} f(x) \mu_X(dx) = 0$, 
\begin{align}\label{ineq:Poinc_type_weights}
\int_{\mathbb{R}^d} \| f(x)\|^2 \mu_X(dx) \leq \int_{\mathbb{R}^d} \int_{\mathbb{R}^d} \| f(x+u) - f(x) \|^2 \omega_X(\|u\|) \nu(du) \mu_X(dx).
\end{align}
\end{itemize}
Then, 
\begin{align}\label{eq:stability_estimate_second_moment}
d_{W_2}(\mu_X , \mu) & \leq \frac{1}{2}  \left(\int_{\mathbb{R}^d} \| u \|^2 \omega_X(\| u \|) \nu(du)\right)^{\frac{1}{2}}  \sqrt{ \int_{\mathbb{R}^d} \|u\|^2 \omega_X(\|u\|) \nu(du) - \int_{\mathbb{R}^d} \|x\|^2 \mu_X(dx) } \nonumber \\
& \quad\quad+ \frac{1}{2} \int_{\mathbb{R}^d} \|u\|^2 \left| \omega_X(\|u\|)-1\right| \nu(du). 
\end{align}
\end{thm}
\noindent
The proof of Theorem \ref{thm:stability_estimate_second_moment} is postponed to Section \ref{sec:stability_estimate_finite_second_moment}.~At this point, let us compare Inequality \eqref{eq:stability_estimate_second_moment} with previous stability estimates for Poincar\'e inequalities which have appeared in the literature.~In \cite[Theorem $1$]{Utev_SMJ89}, the author obtained the following stability estimate for the Gaussian Poincar\'e inequality in dimension $1$: for all centered probability measure $\mu$ on $\mathbb{R}$ with $\int_{\mathbb{R}} |x|^2 \mu(dx)= 1$ and with Poincar\'e constant $0<C_p(\mu)<+\infty$, 
\begin{align}\label{ineq:Utev_stability_estimate}
\underset{A \in \mathcal{B}(\mathbb{R})}{\sup} \left| \mu(A) - \gamma_1(A) \right| \leq 3 \sqrt{C_p(\mu)-1}, 
\end{align}
where $\gamma_1$ is the standard Gaussian measure on $\mathbb{R}$. Moreover, the authors of \cite[Theorem $4.1$]{CFP_19} obtained the following multidimensional extension of \eqref{ineq:Utev_stability_estimate}: for all centered probability measure $\mu$ on $\mathbb{R}^d$ with covariance matrix the identity matrix and with Poincar\'e constant $0<C_p(\mu)<+\infty$, 
\begin{align}\label{ineq:CFP_stability_estimate}
W_2\left(\mu,\gamma\right) \leq \sqrt{d \left(C_p(\mu) - 1\right)}. 
\end{align}  
Finally, \cite[Theorem $4.2$]{AH22_5} provides the following anisotropic extension of \eqref{ineq:CFP_stability_estimate}: for all centered probability measure $\mu$ on $\mathbb{R}^d$ with non-degenerate covariance matrix $\Sigma$ and with anisotropic Poincar\'e constant $0<U_{\Sigma, \mu}<+\infty$ (see \cite[Section $4$]{AH22_5}), 
\begin{align}\label{ineq:AH_stability_estimate}
W_1\left(\mu, \gamma_{\Sigma}\right) \leq \|\Sigma^{- \frac{1}{2}}\|_{op} \|\Sigma\|_{HS} \sqrt{U_{\Sigma, \mu} - 1},
\end{align}
where $W_1$ is the $1$-Wasserstein distance as defined in \eqref{eq:def_W1} below, where $\|\cdot\|_{op}$ is the operator norm for matrices and where $\gamma_{\Sigma}$ is the centered Gaussian probability measure on $\mathbb{R}^d$ with covariance matrix $\Sigma$.~Clearly, Inequality \eqref{eq:stability_estimate_second_moment} is an extension of the previous stability estimates taking into account the fact that the probability measure $\mu_X$ is not necessarily self-decomposable and does not necessarily have second moments which match those of $\mu$. 

The next set of probability measures which are of interest in these notes is the set of $\alpha$-stable probability measures on $\mathbb{R}^d$, with $\alpha \in (0,2)$. An ID $\mathbb{R}^d$-valued random vector $X$, with characteristic function $\varphi_X$, is called a stable random vector if, for all $a>0$,  there exist $b>0$ and $c \in \mathbb{R}^d$ such that, for all $\xi \in \mathbb{R}^d$, 
\begin{align}\label{eq:def_stable_rv}
\varphi_X(\xi)^a = \varphi_X(b \xi) e^{i \langle c ; \xi \rangle}. 
\end{align}
It is called strictly stable if $c = 0$. Stable random vectors of $\mathbb{R}^d$ are infinitely divisible and their L\'evy measures verify, for all $c>0$ and all $B \in \mathcal{B}(\mathbb{R}^d)$, 
\begin{align}\label{eq:def_scale_invariance}
c^{-\alpha}T_c(\nu)(B) = \nu(B),
\end{align}
for some $\alpha \in (0,2)$ which is named the stability index and where $T_c(\nu)(\cdot) = \nu(\cdot/c)$. Thanks to \cite[Theorem 14.3.]{Sato_15}, $\alpha$-stable random vectors of $\mathbb{R}^d$ are SD and their L\'evy measures admit the following polar decomposition: 
\begin{align}\label{eq:polar_decomposition_stable}
\nu(du) =\bbone_{\mathbb{S}^{d-1}}(x)\bbone_{(0,+\infty)}(r) \frac{dr}{r^{\alpha+1}} \sigma(dx).
\end{align}
Among the class of $\alpha$-stable probability measures, a particular emphasis will be put on the \textit{non-degenerate symmetric} ones. A probability measure $\mu$ on $\mathbb{R}^d$ is called symmetric if $\mu(B) = \mu(-B)$, for all $B \in \mathcal{B}(\mathbb{R}^d)$.~Thanks to \cite[Theorem 14.13]{Sato_15}, a non-trivial symmetric $\alpha$-stable probability measure $\mu_\alpha$ on $\mathbb{R}^d$ is characterized by its Fourier transform which is given, for all $\xi \in \mathbb{R}^d$, by 
\begin{align}\label{eq:fourier_symmetric_stable}
\widehat{\mu_\alpha}(\xi) : = \int_{\mathbb{R}^d} e^{i \langle  \xi ; x\rangle} \mu_\alpha(dx) =  \exp\left( - \int_{\mathbb{S}^{d-1}} |\langle \xi ; y\rangle|^\alpha \lambda(dy)\right),
\end{align}
where $\lambda$ is a symmetric finite non-zero Borel positive measure on $\mathbb{S}^{d-1}$ which is proportional to the spherical part of the L\'evy measure of $\mu_\alpha$.~$\lambda$ is called the \textit{spectral measure} of $\mu_\alpha$, is uniquely determined by $\mu_\alpha$ and encodes the anisotropy of $\mu_\alpha$ when it is not proportional to the surface measure of $\mathbb{S}^{d-1}$.~Also, a \textit{necessary and sufficient condition} which ensures the non-degeneracy of $\mu_\alpha$ is 
\begin{align}\label{eq:ND_CNS}
\underset{e \in \mathbb{S}^{d-1}}{\inf} \int_{\mathbb{S}^{d-1}} \left| \langle e ; y \rangle \right|^\alpha \lambda(dy) >0.
\end{align}
Regarding the non-degenerate symmetric $\alpha$-stable probability measures on $\mathbb{R}^d$ with $d \geq 1$ and $\alpha \in (1,2)$, the stability estimate reads as follows.  

\begin{thm}\label{thm:stability_estimate_NDS_stable}
Let $d \geq 1$ be an integer, let $\alpha \in (1,2)$, let $\mu_\alpha$ be the non-degenerate symmetric $\alpha$-stable probability measure on $\mathbb{R}^d$ which Fourier transform is given by \eqref{eq:fourier_symmetric_stable} and let $\nu_\alpha$ be the associated L\'evy measure. For all $R>0$, let $g_{R}$ be the function defined by $g_{R}(x) = x \exp\left( - \|x\|^2/R^2\right)$, for all $x \in \mathbb{R}^d$.~Let $\mu_X$ be a symmetric probability measure on $\mathbb{R}^d$ such that there exists $\beta \in (1,\alpha)$ with $\int_{\mathbb{R}^d} \|x\|^\beta \mu_X(dx) <+\infty$. Let $\omega_X$ be a positive Borel function defined on $(0,+\infty)$ such that
\begin{itemize}
\item $\mu_X \ast \omega_X \nu_\alpha << \mu_X$;
\item $\int_{\| u \|\leq 1} \|u\|^2 \omega_X(\|u\|) \nu_\alpha(du)< +\infty$ and $\int_{\| u \|\geq 1} \|u\| \omega_X(\|u\|) \nu_\alpha(du) <+\infty$;
\item For all $f \in \mathcal{C}_b^1(\mathbb{R}^d ,\mathbb{R}^d)$ such that $\int_{\mathbb{R}^d} f(x) \mu_X(dx) = 0$, 
\begin{align*}
\int_{\mathbb{R}^d} \|f(x)\|^2 \mu_X(dx) \leq \int_{\mathbb{R}^d} \int_{\mathbb{R}^d} \|f(x+u)-f(x)\|^2 \omega_X(\|u\|) \nu_\alpha(du) \mu_X(dx); 
\end{align*}
\item $\underset{R \longrightarrow +\infty}{\limsup} \left(\mathcal{E}_{\nu_\alpha , \omega_X}(g_R , g_R) - \langle g_R ; g_R \rangle_{L^2(\mu_X)} \right) \leq \delta$, where $\mathcal{E}_{\nu_\alpha , \omega_X}$ is given, for all $f_1, f_2 \in \mathcal{C}_b^1(\mathbb{R}^d , \mathbb{R}^d)$, by 
\begin{align}\label{eq:weighted_form_NDS_gen}
\mathcal{E}_{\nu_\alpha, \omega_X}(f_1 , f_2) = \int_{\mathbb{R}^d} \int_{\mathbb{R}^d} \langle f_1(x+u) - f_1(x)  ; f_2(x+u) - f_2(x) \rangle \omega_X(\|u\|) \nu_\alpha(du) \mu_X(dx),
\end{align}
and where $\delta \geq 0$ depends on $\alpha$, $d$, $\omega_X$ and $\mu_X$.
\end{itemize}
Then, 
\begin{align}\label{ineq:stability_estimate_W1_stable_NDS_gen}
W_1\left(\mu_X , \mu_\alpha \right) & \leq \left(C_{\alpha,d}^2 \int_{\|u\| \leq 1} \|u\|^2 \omega_X(\|u\|)\nu_\alpha(du) + 4 \int_{\|u\| \geq 1} \omega_X(\|u\|) \nu_\alpha(du)\right)^{\frac{1}{2}}\sqrt{\delta} \nonumber \\
&\quad\quad + 2 \int_{\|u\|\geq 1} \|u \| \left| \omega_X( \|u\| )-1 \right|\nu_\alpha(du) \nonumber \\
& \quad\quad + C_{\alpha,d} \int_{\|u\|\leq 1} \|u \|^2 \left| \omega_X( \|u\| )-1 \right| \nu_\alpha(du),
\end{align}
for some positive constant $C_{\alpha,d}$ which depends only on $\alpha$ and on $d$.
\end{thm}
\noindent
The proof of inequality \eqref{ineq:stability_estimate_W1_stable_NDS_gen} is postponed to Section \ref{sec:stability_alpha_stable_pm_12}.~It relies on the generator approach to Stein's method for non-degenerate symmetric $\alpha$-stable probability measure, with $\alpha \in (1,2)$, and on a smooth truncation argument in order to use \cite[Theorem $2.1$]{AH22_5}. At the core of the proof lies a ``regularization phenomenon" enjoyed by the solution to the Stein equation: the supremum norm of the Hessian matrix of the solution is bounded by a constant depending on the data while the second member of the Stein equation is controlled in Lipschitz seminorm (see Equation \eqref{ineq:regularity_estimate_NDS_stable} in Proposition \ref{prop:Stein_Method_Stable_NDS}). This global regularization phenomenon is linked to this finite dimensional setting and is reminiscent of the non-degenerate Gaussian case: in \cite[Lemma $4.3$]{AH22_5}, supremum norms of the Hessian matrix of the solution to the Gaussian Stein equation are controlled by the operator norm of $\Sigma^{-1/2}$, $\Sigma$ being the non-degenerate covariance matrix. The ellipticity condition \eqref{eq:ND_CNS} plays the role of $\operatorname{det}(\Sigma) \ne 0$ and is the only requirement to observe this phenomenon in the symmetric $\alpha$-stable case with $\alpha \in (1,2)$.~This fact has already been noticed in the PDE literature when studying, e.g., sharp regularity estimates for the solution to the Poisson-type equation with a symmetric $\alpha$-stable L\'evy operator (see, e.g., \cite[Assumptions $(1.2)$ and Theorem $1.1$]{ROS_16}).
 
 Moreover, no restriction on the ``anisotropy" of the target non-degenerate symmetric $\alpha$-stable probability measure is imposed which contrasts \textit{significantly} with the Stein's method results available in the literature up until now (see \cite[Theorem $3$, (i)]{Chen_Nourdin_Xu_Yang_23} and Remark \ref{rem:Stein_Equation_Solution_NDS} below for a detailed discussion).~In particular, this allows us to reach second order Stein's factors for the whole set of non-degenerate symmetric $\alpha$-stable probability measures, with $\alpha \in (1,2)$, without using pointwise estimates on the associated density and its derivatives (see \cite[Lemma $2$]{Chen_Nourdin_Xu_Yang_23}). Finally, in infinite dimension, one might hope to extend these stability results using the smooth $2$-Wasserstein distance as shown by the inequalities \eqref{eq:stability_estimate_second_moment} and \eqref{ineq:stability_result}.~This will be investigated elsewhere.
  
Next, let us present the stability estimate for the non-degenerate symmetric $1$-stable (Cauchy) probability measures on $\mathbb{R}^d$.~In this critical situation, we do not develop an extension of the Stein kernel approach and do not use \cite[Theorem $2.1$]{AH22_5} to derive the corresponding stability estimate.~Instead, assuming that the compared probability measure $\mu_X$ is also infinitely divisible with an appropriate L\'evy measure, the proof of Theorem \ref{thm:stability_result_sym_Cauchy_measure} relies on a version of Stein's lemma for infinitely divisible probability measures on $\mathbb{R}^d$ together with a comparison of the integro-differential parts of the non-local Stein operators along the solution to the Cauchy Stein equation.~Indeed, as discussed in Remark \ref{rem:comments_previous_theorem}, (iii), a natural quadratic form to consider in order to apply \cite[Theorem $2.1$]{AH22_5} is given by \eqref{eq:form_central_difference} below.~Nevertheless, when the reference measure is the rotationally invariant $1$-stable probability measure and the weight function is the constant function identically equal to $1$, we do not know if a Poincar\'e-type inequality holds true with this non-local form. Such Poincar\'e-type inequalities are important to infer quantitative ergodic properties of the associated semigroups (see \cite[Theorem $2.1$, Assumption $(2.29)$]{AH22_5} and \cite[Chapter $4$]{BGL_book14}). The proof of Theorem \ref{thm:stability_result_sym_Cauchy_measure} below is postponed to Subsection \ref{sec:SE_NDS_Cauchy}.  

\begin{thm}\label{thm:stability_result_sym_Cauchy_measure}
Let $d \geq 1$ be an integer, let $\mu_1$ be a non-degenerate symmetric $1$-stable probability measure on $\mathbb{R}^d$ with associated L\'evy measure $\nu_1$. Let $\omega_X$ be a positive Borel function defined on $(0,+\infty)$ such that
\begin{align}\label{eq:comp_cond_weight_cauchy}
\int_{\|u\|\leq 1} \| u \|^2 \omega_X(\|u\|) \nu_1(du) < +\infty , \quad \int_{\|u\|\geq 1} \omega_X(\|u\|) \nu_1(du) <+\infty,
\end{align}
such that, for all $r \in (0, +\infty)$ and all $y \in \mathbb{S}^{d-1}$,
\begin{align}\label{eq:cond_semi_cv}
\int_{\mathbb{R}^d} \sin\left( r \langle u ; y \rangle\right) \langle y ; u \rangle \omega_X(\|u\|) \nu_1(du) <+\infty, 
\end{align}
and such that, for all $r \in (0, +\infty)$ and all $y \in \mathbb{S}^{d-1}$, 
\begin{align}\label{eq:cond_A_X}
\underset{R \geq 1}{\sup} \left| \int_{\frac{1}{R} \leq \|u\| \leq R} \sin\left( r \langle u ; y \rangle\right) \langle y ; u \rangle \omega_X(\|u\|) \nu_1(du) \right| \leq \Phi_X(r), 
\end{align}
where $\Phi_X$ is a positive valued function defined on $(0,+\infty)$ with at most polynomial growth when $r$ tends to $+\infty$.~Let $X$ be a random vector of $\mathbb{R}^d$ with law $\mu_X$ such that $\mu_X$ is infinitely divisible with L\'evy measure $\nu_X = \omega_X \nu_1$ and parameter $b_X = 0$ and such that 
\begin{align}\label{eq:moment_condition}
\int_{\mathbb{R}^d} \|x\|^{\varepsilon} \mu_X(dx) <+\infty,
\end{align}
for some $\varepsilon \in (1/2,1)$. Then, 
\begin{align}\label{ineq:stability_result}
d_{W_2}(\mu_X , \mu_1) & \leq  \int_{\|u\| \leq 1} \|u\|^2 \left| \omega_X(\| u \|) - 1 \right| \nu_1(du) + M_1 \int_{\|u\| \geq 1} \left| \omega_X(\|u\|) - 1 \right| \nu_1(du) \nonumber \\
& \quad\quad + \int_{\|x\| \geq 1} \left(\int_{1 \leq \|u\| \leq \|x\|} \|u\| \left| \omega_X(\|u\|) - 1 \right| \nu_1(du)\right) \mu_X(dx)  \nonumber \\
& \quad\quad + M_2\int_{\|x\| \geq 1} \left(1+ \|x\|\right) \left(\int_{\|u\| \geq \|x\|} \left| \omega_X(\|u\|) - 1 \right| \nu_1(du)\right) \mu_X(dx),
\end{align} 
for some $M_1, M_2 >0$. 
\end{thm}
\noindent
As direct applications of the previous stability results, several quantitative instances of generalized central limit theorems with $\alpha$-stable target laws are put forward. In particular, the computed rates of convergence are sharp and match, in the non-degenerate symmetric $\alpha$-stable case with $\alpha \in (1,2)$, the best known rates. Moreover, Subsection \ref{subsec:literature_review} below discusses at length the related literature and provides a detailed comparison between Theorem \ref{thm:all_dimensions_NDS} and the corresponding results contained in \cite{Chen_Nourdin_Xu_Yang_23}. Finally, in the non-degenerate symmetric Cauchy case, a $n^{-1}$-rate of convergence is obtained when the initial law is a certain instance of layered stable distributions (see, e.g., \cite{HK_07,RS_10}).

\begin{thm}\label{thm:all_dimensions_NDS}
Let $\alpha \in (1,2)$, let $d \geq 1$ be an integer and let $\mu_\alpha$ be a non-degenerate symmetric $\alpha$-stable probability measure on $\mathbb{R}^d$ with associated L\'evy measure $\nu_\alpha$ which spherical component is denoted by $\sigma$.~Let $\mu$ be a symmetric SD probability measure on $\mathbb{R}^d$ with L\'evy measure $\nu$ such that  
\begin{align}
\nu(du) =\bbone_{(0,+\infty)}(r)\bbone_{\mathbb{S}^{d-1}}(\theta) \frac{k(r)}{r}dr \sigma(d\theta),
\end{align}
where the function $k$ is continuous on $(0,+\infty)$, is such that $\underset{z \in (0,+\infty)}{\sup} (z^\alpha k(z)) <+\infty$ and is such that 
\begin{align}\label{eq:second_order_expansion_NDS}
k(r) = \frac{1}{r^\alpha} +  \frac{c_{\alpha,\beta}}{r^\beta}, \quad r \longrightarrow +\infty,
\end{align}
for some $c_{\alpha,\beta} \in \mathbb{R}^*$ and $\beta >2$.~Let $(Z_k)_{k \geq 1}$ be a sequence of independent and identically distributed (i.i.d.) random vectors of $\mathbb{R}^d$ with law $\mu$ and let $(S^{\alpha}_n)_{n \geq 1}$ be the sequence of random vectors of $\mathbb{R}^d$ defined, for all $n \geq 1$, by 
\begin{align}\label{eq:Sn_alpha_meta}
S_n^{\alpha} = \dfrac{1}{n^{\frac{1}{\alpha}}} \sum_{k=1}^n Z_k.
\end{align}
For all $n \geq 1$, let $\mu^\alpha_n$ be the law of $S^\alpha_n$. Then, for all $n \geq 1$ big enough, 
\begin{align}\label{eq:explicit_rate_convergence_NDA_w1_alldim} 
W_1(\mu^\alpha_n , \mu_\alpha) & \leq \frac{C_{\alpha,\beta,d}(k)}{n^{\frac{2}{\alpha}-1}},
\end{align}
for some $C_{\alpha,\beta,d}(k)>0$ depending only on $\alpha$, $k$, $\beta$ and on $d$. 
\end{thm}
\noindent
The conditions in Theorem \ref{thm:all_dimensions_NDS} on the function $k$ are tested in Section \ref{sec:stability_alpha_stable_pm_12} in dimension $1$ on two Pareto-type distributions as initial law.~Indeed, the one-sided Pareto and the double Pareto distributions are self-decomposable on $\mathbb{R}_+$ and on $\mathbb{R}$ respectively. Moreover, they belong to the extended generalized gamma convolutions (EGGC) class (see \cite{Th_77,Bon_92,Steutel_Harn_04}). In particular, the second order asymptotic expansion \eqref{eq:second_order_expansion_NDS} is natural in this setting and follows from a refined analysis of the behavior at $0^+$ of the Lebesgue densities of the associated (extended) Th\"orin measures.~Moreover, the next two results go beyond Theorem \ref{thm:all_dimensions_NDS} since the $k$-functions of the initial laws do not satisfy the condition $\sup_{z \in (0,+\infty)} (z^\alpha k(z)) <+\infty$. Indeed, these initial laws are two simple examples of layered stable distributions whose $k$-functions exhibit two different behaviors at $0^+$ and at $+\infty$ depending on an inner exponent $\beta$ and on an outer exponent $\alpha$. The proofs of Theorem \ref{thm:quantitative_approximation_Ls} and Theorem \ref{thm:quantitative_approximation_CauchyLs} are postponed to Section \ref{sec:stability_alpha_stable_pm_12} and Subsection \ref{sec:SE_NDS_Cauchy} respectively.  

\begin{thm}\label{thm:quantitative_approximation_Ls}
Let $d \geq 1$ be an integer, let $\alpha \in (1,2)$ and let $\mu_\alpha$ be a non-degenerate symmetric $\alpha$-stable probability measure on $\mathbb{R}^d$ with associated L\'evy measure $\nu_\alpha$ which spherical component is denoted by $\sigma$. Let $\beta \in (\alpha,2)$ and let $\mu^L_{\alpha,\beta}$ be the probability measure on $\mathbb{R}^d$ which Fourier transform is given, for all $\xi \in \mathbb{R}^d$, by
\begin{align*}
\widehat{\mu^L_{\alpha,\beta}}(\xi) = \exp \left( \int_{\mathbb{R}^d} \left(e^{i \langle \xi ; u \rangle} - 1 - i \langle \xi ;u \rangle\right) \nu^L_{\alpha,\beta}(du)\right),
\end{align*}
with,
\begin{align}\label{eq:Levymeasure_L} 
\nu^L_{\alpha,\beta}(du) =\bbone_{(0,+\infty)}(r)\bbone_{\mathbb{S}^{d-1}}(y) \left(\frac{1}{r^{\beta}}\bbone_{(0,1]}(r) + \frac{1}{r^\alpha}\bbone_{(1,+\infty)}(r)\right) \frac{dr}{r} \sigma(dy).
\end{align}
Let $(Z_k)_{k \geq 1}$ be a sequence of i.i.d. random vectors of $\mathbb{R}^d$ such that $Z_1 \sim \mu^L_{\alpha,\beta}$ and let $(S_n)_{n \geq 1}$ be the sequence of random vectors defined, for all $n \geq 1$, by
\begin{align*}
S_n  = \frac{1}{n^{\frac{1}{\alpha}}} \sum_{k = 1}^n Z_k. 
\end{align*}
Let us denote by $\mu_n^{\alpha,\beta}$ the law of $S_n$, for all $n \geq 1$.~Then, for all $n \geq 1$ big enough, 
\begin{align}\label{ineq:rate_W1_LayeredSimple}
W_1(\mu_n^{\alpha, \beta} , \mu_\alpha) \leq \frac{C_{\alpha, \beta, d}}{n^{\frac{2}{\alpha}-1}},
\end{align}
for some $C_{\alpha,\beta,d}>0$ depending on $\alpha$, on $\beta$ and on $d$. 
\end{thm}

\begin{thm}\label{thm:quantitative_approximation_CauchyLs}
Let $d \geq 1$ be an integer and let $\mu_1$ be a non-degenerate symmetric $1$-stable probability measure on $\mathbb{R}^d$ with associated L\'evy measure $\nu_1$ which spherical component is denoted by $\sigma$.~Let $\beta \in (1,2)$ and let $\mu^L_{1,\beta}$ be the probability measure on $\mathbb{R}^d$ which Fourier transform is given, for all $\xi \in \mathbb{R}^d$, by
\begin{align}\label{eq:cf_layered_cauchy}
\widehat{\mu_{1,\beta}^L}(\xi) = \exp \left(\int_{\mathbb{R}^d} \left(e^{i \langle u ; \xi \rangle} - 1 - i \langle u ; \xi \rangle\bbone_{\|u\| \leq 1}\right) \nu_{1,\beta}^{L}(du)\right),
\end{align}
with, 
\begin{align*}
\nu_{1, \beta}^L(du) =\bbone_{(0,+\infty)}(r)\bbone_{\mathbb{S}^{d-1}}(y) \left( \frac{1}{r^{\beta}}\bbone_{(0,1]}(r) + \frac{1}{r}\bbone_{(1,+\infty)}(r)\right) \dfrac{dr}{r} \sigma(dy).
\end{align*}
Let $(Z_k)_{k \geq 1}$ be a sequence of i.i.d. random vectors of $\mathbb{R}^d$ such that $Z_1 \sim \mu_{1, \beta}^L$ and let $(S_n)_{n \geq 1}$ be the sequence of random vectors of $\mathbb{R}^d$ defined, for all $n \geq 1$, by 
\begin{align}\label{def:Sn_Cauchy_case}
S_n = \frac{1}{n} \sum_{k=1}^n Z_k. 
\end{align}
Let us denote by $\mu_{1,\beta}^{L,n}$ the law of $S_n$, for all $n \geq 1$.
Then, for all $n \geq 2$,
\begin{align}\label{ineq:rate_convergence_cauchy_case}
d_{W_2}(\mu_{1,\beta}^{L,n} , \mu_1) \leq \frac{C_{\beta,d}}{n},
\end{align}
for some positive constant $C_{\beta,d}$ depending on $\beta$ and on $d$ only.
\end{thm}

\subsection{Literature review on quantitative stable approximations}\label{subsec:literature_review}
\noindent
In dimension $1$, there have been, recently, several works dealing with Stein's method for stable approximation.~First, \cite[Theorems $2.1$ and $2.6$]{Lihu_Xu_19} provide quantitative $1$-Wasserstein bounds for the generalized central limit theorem when the limit distribution is a symmetric $\alpha$-stable one with $\alpha \in (1,2)$.~For several examples, accurate rates of convergence are computed explicitly; in particular, when the initial law is a ``symmetric Pareto" distribution with Lebesgue density given, for all $x \in \mathbb{R}$, by  
\begin{align*}
g_\alpha(x) = \left\{
    \begin{array}{ll}
       0  & \mbox{if}\, |x| \leq 1 ,\\
        \frac{\alpha}{2|x|^{\alpha+1}} & \mbox{if}\, |x|>1,
    \end{array}
\right.
\end{align*} 
a rate of order $n^{-(2/\alpha-1)}$ is obtained (see \cite[Example $1$]{Lihu_Xu_19}).~Note that this Pareto distribution is not infinitely divisible (see \cite[Remark $4.6$]{AH18_1} for the details). The proof relies on Stein's method for symmetric $\alpha$-stable law with $\alpha \in (1,2)$ combined with a generalization of the K-function approach (see, e.g., \cite[Chapters $2.3$ and $2.4$]{Chen_Goldstein_Shao_11}). This approach has been generalized to one-dimensional self-decomposable distributions with finite first moment in \cite[Chapters $5$ and $6$]{AH18_1}. Let us mention as well the works \cite{Chen_Nourdin_Xu_21,Chen_Nourdin_Xu_Yang_Zhang_22} where the cases of non-symmetric $\alpha$-stable distributions with $\alpha \in (1,2)$ (\cite[Theorems $1.4$ and $1.6$]{Chen_Nourdin_Xu_21}) and non-integrable $\alpha$-stable distributions with $\alpha \in (0,1]$ (\cite[Theorem 4]{Chen_Nourdin_Xu_Yang_Zhang_22}) are investigated by means of Stein's method and the leave-one-out approach. Finally, still in dimension one and for $\alpha \in (0,2)$, the authors of \cite{Chen_Xu_19} use an adaptation of the Lindeberg principle together with Taylor-like arguments to obtain rates of convergence in smooth Wasserstein distance (see \cite[Theorem $1.4$ and Theorem $1.7$]{Chen_Xu_19}).

In dimension $d \geq 2$, there are not that many results in the literature dealing with quantitative stable approximations.~\cite[Theorem $3.2$ and Theorem $3.3$]{Davydov_Nagaev_02} provide rates of convergence in total variation distance and in $\|\cdot\|_\infty$-distance (at the level of densities) for initial laws which are $\nu$-Paretian (see \cite[Definition $3.1$]{Davydov_Nagaev_02}). In particular, a rate of convergence of order $n^{-\beta_1}$ is obtained in \cite[Theorem $3.2$]{Davydov_Nagaev_02} with $\beta_1 = \min\left(\alpha, 2-\alpha\right)/(\alpha+d)$. It is conjectured in there that a rate of order $n^{-(2/\alpha-1)}$, when $\alpha \in (1,2)$, (independent of the dimension) should hold. In \cite{Chen_Nourdin_Xu_Yang_23}, extending to the multivariate setting the methodology developed in \cite{Lihu_Xu_19,Chen_Nourdin_Xu_21,Chen_Nourdin_Xu_Yang_Zhang_22}, the authors obtained $1$-Wasserstein bound when the target distribution is a strictly $\alpha$-stable probability measure on $\mathbb{R}^d$ with $\alpha \in (1,2)$ and with Fourier transform given, for all $\xi \in \mathbb{R}^d$, by
\begin{align}\label{eq:characteristic_function_CNXY23}
\widehat{\mu_\alpha}(\xi) = \exp\left( \int_{\mathbb{R}^d} \left(e^{i \langle u ; \xi \rangle}-1- i \langle u ; \xi \rangle\right) \nu_\alpha(du)\right),
\end{align}
where $\nu_\alpha$ is the associated L\'evy measure with polar decomposition,
\begin{align*}
\nu_\alpha(du) =\bbone_{(0,+\infty)}(r)\bbone_{\mathbb{S}^{d-1}}(y) \frac{dr}{r^{1+\alpha}} \sigma(dy).
\end{align*}
The L\'evy measure is assumed to be symmetric and to be a $\gamma$-measure with $\gamma \in [1,d]$ such that $\gamma>d-\alpha$ (see \cite[Lemma $2$]{Chen_Nourdin_Xu_Yang_23}).~In particular, note that, in \cite[Lemma $2$]{Chen_Nourdin_Xu_Yang_23}, the Lebesgue density of $\mu_\alpha$ is assumed to exist and so the associated spectral measure satisfies the ellipticity condition \eqref{eq:ND_CNS}.~Under the previous assumptions and for initial laws which are $\nu$-Paretian, a rate of order $n^{-(2/\alpha-1)}$ is obtained in \cite[Theorem $16$]{Chen_Nourdin_Xu_Yang_23}. At this point, let us stress the main differences between Theorem \ref{thm:all_dimensions_NDS} and \cite[Theorem $16$]{Chen_Nourdin_Xu_Yang_23}. As already mentioned in the introduction (see also Remark \ref{rem:Stein_Equation_Solution_NDS}), the whole set of non-degenerate symmetric $\alpha$-stable probability measures with $\alpha \in (1,2)$ and with $d \geq 1$ is attained by Theorem \ref{thm:all_dimensions_NDS} whereas \cite[Theorem $16$]{Chen_Nourdin_Xu_Yang_23} is restricted to non-degenerate symmetric $\alpha$-stable probability measures whose L\'evy measures are $\gamma$-measures with $\gamma \in (d-\alpha,d]$. The assumptions regarding the initial laws are of different nature: in Theorem \ref{thm:all_dimensions_NDS}, the Fourier transform of the initial law $\mu$ is linked to the Fourier transform of the target non-degenerate symmetric $\alpha$-stable probability measure, with $\alpha \in (1,2)$.~Conversely, \cite[Theorem $16$]{Chen_Nourdin_Xu_Yang_23} imposes a rigid spatial structure on the density of the initial law: it has to be $\nu$-Paretian in the sense of \cite[Definition 2]{Chen_Nourdin_Xu_Yang_23}.

Next, let us discuss briefly the non-degenerate symmetric Cauchy ($\alpha = 1$) case in dimension $d \geq 2$. As already mentioned previously, in \cite[Theorem $3.2$]{Davydov_Nagaev_02}, the rate $n^{-\beta_1}$, with  $\beta_1  = 1/(d+1)$, is obtained in total variation distance for initial laws which are $\nu$-Paretian. It is conjectured in there that a rate of order $n^{-1}$ (independent of the dimension) should hold. We answer positively this conjecture in the smooth $2$-Wasserstein distance and for an initial law which is a specific instance of layered stable distributions.~In \cite[Theorem $16$]{Chen_Nourdin_Xu_Yang_23}, a rate of order $(\log(n))^2/n$ is obtained in a different Wasserstein-type distance and for initial laws which are $\nu$-Paretian.~Still regarding the Cauchy case and after a first version of this manuscript was posted on the arXiv, \cite[Theorems 1.1 and 1.2]{LXY_23} provide a rate of order $n^{-1}$ in total variation distance under mild assumptions on the initial laws and on the spherical component of the target L\'evy measure (see \cite[Assumptions I and II]{LXY_23}). In particular, the spectral measure of the target L\'evy measure must be either (i) absolutely continuous with respect to the surface measure on $\mathbb{S}^{d-1}$ with a density uniformly lower and upper bounded or (ii) a symmetric $\gamma-1$-measure with $\gamma \in (d-1, d] \cap [1, +\infty[$. 

Finally, let us briefly discuss previous results regarding quantitative stable approximation.~In dimension one, the author of \cite{Hall_81} obtained Kolmogorov distance bounds when the initial law belongs to the domain of normal attraction of an $\alpha$-stable probability measure on $\mathbb{R}$ with $\alpha \in (1,2)$ (see \cite[Theorem $1$]{Hall_81}).~In particular, necessary and sufficient conditions on the remainders in the tail sum and in the tail difference ensure a rate of convergence of order $n^{-(2/\alpha-1)}$ (see, \cite[Corollary $1$]{Hall_81}). Still in dimension one and for the Kolmogorov distance, the authors of \cite{Juo_Paulauskas_98} considered for the initial law perturbations of the one-sided and of the two-sided Pareto distributions.~Depending on the order of the perturbation, different rates of convergence are observed.~Regarding the $1$-Wasserstein distance, \cite[Theorem $1.2$]{Johnson_Samworth_05} ensures a rate of order $n^{-(1/\alpha-1/\beta)}$ when $\alpha \in (1,2)$ in the $\beta$-Wasserstein distance, for some $\beta \in (\alpha, 2]$, when the initial law belongs to the strong domain of normal attraction of a non-symmetric $\alpha$-stable probability measure on $\mathbb{R}$ (see, \cite[Definition $5.2$]{Johnson_Samworth_05}).

\subsection{Side results}
\noindent
Before ending this introduction, let us focus on one of the conditions of the stability estimates of Theorem \ref{thm:stability_estimate_second_moment} and Theorem \ref{thm:stability_estimate_NDS_stable}; namely, $\mu_X \ast \omega_X\nu << \mu_X$, where $\mu_X$ is a probability measure, $\omega_X$ is the weight function and $\nu$ is the target L\'evy measure.~This condition implies that the bilinear form defined, for all $f_1,f_2 \in \mathcal{C}^1_b(\mathbb{R}^d, \mathbb{R}^d)$, by 
\begin{align*}
\mathcal{E}_{\nu,\omega_X}(f_1,f_2) = \int_{\mathbb{R}^d} \int_{\mathbb{R}^d} \langle f_1(x+u) - f_1(x) ; f_2(x+u) - f_2(x) \rangle \omega_X(\|u\|)\nu(du) \mu_X(dx),
\end{align*}
with domain $\mathcal{C}^1_b(\mathbb{R}^d, \mathbb{R}^d)$ is closable when it is considered as a bilinear non-negative definite symmetric form on $L^2(\mu_X, \mathbb{R}^d)$, the set of equivalence classes of $\mathbb{R}^d$-valued functions which are Borel measurable and square-integrable with respect to $\mu_X$.~As discussed in Remark \ref{rem:applications_forms}, (ii), under this assumption, there are several closed extensions to $(\mathcal{E}_{\nu,\omega_X},\mathcal{C}^1_b(\mathbb{R}^d, \mathbb{R}^d))$ which do not coincide \textit{a priori} in this general setting. Nevertheless, in Proposition \ref{prop:markov_uniqueness_rot_inv} of the Appendix section, we prove that the smallest closed extension and another classical Dirichlet form do coincide when $\mu_X = \mu_{\alpha}^{\operatorname{rot}}$, $\omega_X = 1$ and $\nu = \nu_\alpha^{\operatorname{rot}}$ as defined in \eqref{eq:characteristic_function_rot_inv}. In a diffusive setting, this property is known as Markov uniqueness (see, e.g., \cite[Chapter $3$, Section $3.3$]{FOT_10}).

Related to this question is the one discussed in Remark \ref{rem:discussion_condition_constant_weight_function}, (iv), regarding semigroups naturally associated with the probability measures $\mu_\alpha$, with $\alpha \in (1,2)$. In particular, Theorem \ref{thm:thesame1} of the Appendix section ensures that the semigroup generated by the smallest closed extension of $(\mathcal{E}_{\nu_\alpha, \mu_\alpha},\mathcal{C}^1_b(\mathbb{R}^d))$ (defined in \eqref{eq:form_nua_mua}) and the ``carr\'e de Mehler" semigroup recently put forward in \cite{AH20_4} and recalled in \eqref{eq:carre_mehler_sg} coincide when the logarithmic derivative of the positive Lebesgue density of $\mu_\alpha$ is uniformly bounded on $\mathbb{R}^d$. Interestingly, in the rotationally invariant situation, the generator of the ``carr\'e de Mehler" semigroup coincides with the one associated with the closure of $(\mathcal{E}_{\nu^{\operatorname{rot}}_\alpha, \mu^{\operatorname{rot}}_\alpha},\mathcal{C}^1_b(\mathbb{R}^d))$ and with the one associated with the form $\left(\mathcal{E}_{\nu^{\operatorname{rot}}_\alpha, \mu^{\operatorname{rot}}_\alpha} , D\left(\mathcal{E}_{\nu^{\operatorname{rot}}_\alpha, \mu^{\operatorname{rot}}_\alpha}\right)\right)$ defined by 
\begin{align*}
D\left(\mathcal{E}_{\nu^{\operatorname{rot}}_\alpha, \mu^{\operatorname{rot}}_\alpha}\right) = \{f \in L^2 \left(\mu_\alpha^{\operatorname{rot}}\right): \, \mathcal{E}_{\nu^{\operatorname{rot}}_\alpha, \mu^{\operatorname{rot}}_\alpha}(f,f)<+\infty \}. 
\end{align*} 
When the reference measure is the standard Gaussian measure $\gamma$, the Ornstein-Ulhenbeck operator $\mathcal{L}^\gamma:=- \langle x ; \nabla \rangle + \Delta$ is essentially self-adjoint. 

Finally, in subsection \ref{sec:L2_Spec_Prop}, a spectral analysis is performed regarding the non-local operators associated with the non-degenerate symmetric $1$-stable probability measures on $\mathbb{R}^d$, with a particular emphasis on the rotationally invariant case. This subsection starts with a semigroup proof of the Poincar\'e-type inequality for non-degenerate symmetric Cauchy measures (see Proposition \ref{prop:Poinc_type_Cauchy}). Then, the subsection focuses on the Ornstein-Uhlenbeck and on the ``carr\'e de Mehler" generators when the reference Cauchy measure is rotation invariant. In particular, Proposition \ref{prop:aps_ou_stable} and Proposition \ref{prop:approximate_point_spectrum} provide asymptotic analysis of spectral quantities related to these generators along the sequence of smooth truncations defined by $g_R(x) = x \exp\left(- \|x\|^2/R^2\right)$, for all $x \in \mathbb{R}^d$ and all $R>0$. Proposition \ref{lem:dual_Cauchy_SG} and Lemma \ref{lem:action_square_root_laplacian} furnish exact formulas which are central to the aforementioned asymptotic analysis and which can be of independent interest.  

\subsection{Organization}
\noindent
Let us further describe the content of these notes.~In the next section, we introduce the notations and the definitions used throughout the manuscript and we recall and prove some preliminary results regarding Stein's method for self-decomposable probability measures on $\mathbb{R}^d$ and weighted Poincar\'e-type inequalities.~In Section \ref{sec:stability_estimate_finite_second_moment}, the stability estimates for the non-degenerate centered self-decomposable probability measures with finite second moment are considered while Section \ref{sec:stability_alpha_stable_pm_12} deals with the corresponding results for the non-degenerate symmetric $\alpha$-stable probability measures with $\alpha \in (1,2)$. Section \ref{sec:spectral_cauchy} is divided into two sub-sections: in sub-section \ref{sec:L2_Spec_Prop}, we investigate the $L^2$-spectral properties of several non-local operators associated with the non-degenerate symmetric Cauchy probability measures, with a special focus on the rotationally invariant one, while in sub-section \ref{sec:SE_NDS_Cauchy}, we prove the corresponding stability estimate together with its direct applications. Finally, the manuscript ends with an appendix section gathering technical results used throughout these notes.

\section{Notations and Preliminaries}\label{sec:notations_preliminaries}
\noindent
Let us now introduce the notations and some preliminary results which will be used throughout the text. For all integer $d \geq 1$, let $\| \cdot \|$ be the Euclidean norm on $\mathbb{R}^d$. Let $\mathcal{S}(\mathbb{R}^d)$ be the Schwartz space of functions which are infinitely differentiable on $\mathbb{R}^d$ and such that, for all $\alpha \geq 0$ and all $\beta \in \mathbb{N}^d$, (the subset of vectors of $\mathbb{R}^d$ with non-negative integer coordinates), 
\begin{align}\label{eq:semi_norms_Schwartz}
\|f\|_{\alpha,\beta} := \underset{x \in \mathbb{R}^d}{\sup} \left| \left(1+\|x\|\right)^{\alpha} D^\beta(f)(x)\right| < + \infty,
\end{align}
where $D^\beta = \partial_{x_1}^{\beta_1} \dots  \partial_{x_d}^{\beta_d} $ is the derivative operator of order $|\beta| = \beta_1 + \dots + \beta_d$ and where $ \partial_{x_k}^{\beta_k}$, with $k \in \{1, \dots, d\}$, is the partial derivative operator of order $\beta_k$ in the coordinate $x_k$.~Let $\mathcal{C}_c^{\infty}(\mathbb{R}^d)$ be the set of functions which are infinitely differentiable on $\mathbb{R}^d$ with compact support. In the sequel, $\mathcal{F}$ denotes the Fourier transform operator which is an isomorphism on $\mathcal{S}(\mathbb{R}^d)$ and which is defined, for all $f \in \mathcal{S}(\mathbb{R}^d)$ and all $\xi \in \mathbb{R}^d$, by
\begin{align}\label{eq:def_Fourier_transform}
\mathcal{F}(f)(\xi) = \int_{\mathbb{R}^d} f(x) e^{- i \langle \xi ; x \rangle} dx. 
\end{align} 
The inverse Fourier transform is given, for all $f \in \mathcal{S}(\mathbb{R}^d)$ and all $x \in \mathbb{R}^d$, by
\begin{align}\label{eq:defi_inverse_Fourier_transform}
f(x)  = \frac{1}{(2\pi)^d} \int_{\mathbb{R}^d} \mathcal{F}(f)(\xi) e^{i \langle \xi ;  x \rangle} d\xi. 
\end{align}
For a Borel probability measure $\mu$ on $\mathbb{R}^d$ and for all $p \in [1, +\infty)$, let $L^p(\mu)$ be the Banach space of equivalence classes of functions which are real-valued, $\mathcal{B}(\mathbb{R}^d)$-measurable and such that 
\begin{align}\label{eq:def_norm_Lp_pfinite}
\|f\|_{L^p(\mu)} : = \left(\int_{\mathbb{R}^d} \left| f(x)\right|^p \mu(dx) \right)^{\frac{1}{p}} < + \infty. 
\end{align}
For a linear operator $T$ between two Banach spaces $\left(\mathcal{X} , \|\cdot\|_{\mathcal{X}}\right)$ and $\left( \mathcal{Y} , \|\cdot\|_{\mathcal{Y}}\right)$, the operator norm is denoted by $\| \cdot \|_{\mathcal{X} \rightarrow \mathcal{Y}}$ and is defined by 
\begin{align}\label{eq:def_operatornorm}
\|  T \|_{\mathcal{X} \rightarrow \mathcal{Y}} : = \underset{x \in \mathcal{X},\, \|x\|_{\mathcal{X}} \ne 0}{\sup} \dfrac{\left\|  T(x) \right\|_{\mathcal{Y}}}{\|x\|_{\mathcal{X}}}.
\end{align}
Similarly, for any $m$-multilinear form $F$ defined on $(\mathbb{R}^d)^m$ and real-valued, the operator norm is defined by
\begin{align}\label{eq:def_operatornom_rform}
\|F\|_{\operatorname{op}} := \sup \{|F(u_1 , \dots, u_m)| : \, \|u_i\| = 1, \, i \in \{1, \dots, m\}\}. 
\end{align}

In the sequel, we are interested in a very specific set of probability measures on $\mathbb{R}^d$, namely, the infinitely divisible ones. A random vector $X$ from a probability space $\left(\Omega, \mathcal{F}, \mathbb{P}\right)$ to $(\mathbb{R}^d , \mathcal{B}(\mathbb{R}^d))$ is said to be infinitely divisible with triplet $(b , Q, \nu)$ if its characteristic function $\varphi_X$ admits the following representation:  for all $\xi \in \mathbb{R}^d$,
\begin{align}\label{eq:cf_inf_div}
\varphi_X(\xi) = \exp \left( i \langle \xi ; b \rangle - \frac{\langle Q\xi ; \xi \rangle}{2} + \int_{\mathbb{R}^d} \left(e^{i \langle u ; \xi \rangle} - 1 - i \langle u ; \xi \rangle\bbone_{\|u\| \leq 1}\right) \nu(du) \right), 
\end{align}
where $b$ is a vector of $\mathbb{R}^d$, $Q$ is a non-negative definite symmetric matrix of size $d \times d$ with real entries and $\nu$ is a Borel positive measure on $\mathbb{R}^d$ such that 
\begin{align}\label{eq:defi_Levy_measure}
\nu \left(\{0\}\right) = 0 ,\quad \int_{\mathbb{R}^d} \left(1 \wedge \|u\|^2\right) \nu(du) < +\infty. 
\end{align}
The representation \eqref{eq:cf_inf_div} is mainly the one to be used with the (unique) generating triplet
$(b,Q,\nu)$.~However, other types of representations are also possible 
and two of them are presented
next.  First, if $\nu$ is such that $\int_{\|u\|\le 1} \|u\|\nu (du)<+\infty$,
then \eqref{eq:cf_inf_div} becomes
\begin{equation}\label{eq:cf_inf_div2}
\varphi_X(\xi)=\exp\left(i \langle b_0;\xi \rangle-\frac{1}{2}\langle \xi;Q \xi \rangle +\int_{\mathbb{R}^d}\left(e^{i \langle \xi; u\rangle}-1\right)\nu(du)
\right),
\end{equation}
where $b_0 = b-\int_{\|u\|\le 1}u\nu (du)$ is called the {\it drift} of $X$. This representation is expressed as $X\sim ID(b_0,Q,\nu)_0$.  Second, if $\nu$ is such that $\int_{\|u\|>1} \|u\|\nu(du)<+\infty$, then \eqref{eq:cf_inf_div} becomes
\begin{equation}\label{eq:cf_inf_div3}
\varphi_X(\xi)=\exp\left(i \langle b_1;\xi \rangle-\frac{1}{2}\langle \xi;Q \xi \rangle +\int_{\mathbb{R}^d}\left(e^{i \langle \xi; u\rangle}-1-i\langle \xi;u \rangle\right)\nu(du)
\right),
\end{equation}
where $b_1=b+\int_{\|u\|>1} u\nu (du)$ is called the {\it center} of $X$. In turn, this last representation is now written as $X\sim ID(b_1,Q,\nu)_1$. In fact, $b_1=\mathbb{E} X$  and, for any
$p>0$, $\mathbb{E} \|X\|^p<+\infty$ is equivalent to $\int_{\|u\|>1}\|u\|^p\nu (du)
<+\infty$.  Also, for any $r>0$, $\mathbb{E} e^{r\|X\|}<+\infty$ is equivalent to $$\int_{\|u\|>1} e^{r\|u\|}\nu (du)<+\infty.$$ 
In the sequel, we assume that the Gaussian part of the ID random vectors under consideration is null, namely, $Q = 0$.

Let us illustrate briefly the subset of non-degenerate symmetric $\alpha$-stable probability measures on $\mathbb{R}^d$ for which our methodology applies when $\alpha \in (1,2)$. Let $\mu_\alpha^{\operatorname{rot}}$ be the probability measure defined through its Fourier transform, for all $\xi \in \mathbb{R}^d$, by 
\begin{align}\label{eq:characteristic_function_rot_inv}
\widehat{\mu^{\operatorname{rot}}_\alpha} \left( \xi \right) = \exp \left( - \frac{\|\xi\|^\alpha}{2}\right). 
\end{align}
Clearly, $\mu_\alpha^{\operatorname{rot}}$ is a non-degenerate symmetric $\alpha$-stable probability measure on $\mathbb{R}^d$ with spectral measure proportional to the spherical part of the Lebesgue measure on $\mathbb{S}^{d-1}$. Another interesting example of a non-degenerate symmetric $\alpha$-stable probability measure on $\mathbb{R}^d$ is the product measure $\mu_{\alpha,d}$ defined through its Fourier transform, for all $\xi \in \mathbb{R}^d$, by 
\begin{align}\label{eq:characteristic_function_ind}
\widehat{\mu_{\alpha,d}} \left( \xi \right) = \exp \left( - \|\xi\|^\alpha_\alpha\right),  
\end{align}
where $\|\xi\|^\alpha_\alpha = \sum_{k} |\xi_k|^\alpha$. $\mu_{\alpha,d}$ is the law of an $\alpha$-stable random vector of $\mathbb{R}^d$ with independent coordinates which marginals are distributed according to a one-dimensional symmetric $\alpha$-stable probability measure on $\mathbb{R}$. 

The class of SD probability measures on $\mathbb{R}^d$ is naturally connected with limit theorems. More precisely, thanks to \cite[Theorem 15.3]{Sato_15}, if $(Z_n)_{n \geq 1}$ is a sequence of independent random vectors of $\mathbb{R}^d$, if $(b_n)_{n \geq 1}$ is a sequence of positive reals and if $(c_n)_{n \geq 1}$ is a sequence of deterministic vectors of $\mathbb{R}^d$ such that the sequence $(S_n)_{n \geq 1}$ defined, for all $n \geq 1$, by
\begin{align}\label{eq:def_sum}
S_n = b_n \sum_{k=1}^n Z_k + c_n,
\end{align}
converges in law to a probability measure $\mu$ on $\mathbb{R}^d$ and such that $\{b_nZ_k:\, k \in \{1, \dots, n\},\, n \in \{1, \dots, \}\}$ is a null array, then $\mu$ is SD. Conversely, for all SD probability measure $\mu$ on $\mathbb{R}^d$, it is possible to find $(Z_n)_{n \geq 1}$, $(b_n)_{n \geq 1}$ and $(c_n)_{n \geq 1}$ as previously such that $S_n \overset{\mathcal{L}}{\longrightarrow} \mu$, as $n$ tends to $+\infty$ (here and in the sequel, $ \overset{\mathcal{L}}{\longrightarrow}$ denotes convergence in law). Moreover, if $\mu$ is non-trivial, then $b_n \rightarrow 0$ and $b_{n+1} /b_n \rightarrow 1$, as $n$ tends to $+\infty$ (see \cite[Lemma 15.4.]{Sato_15}).~In the next sections, we will be interested in the following simple limit theorem: let $\mu$ be a non-trivial SD probability measure on $\mathbb{R}^d$ and let $\hat{\mu}$ be its Fourier transform. Let $(Z_k)_{k \geq 1}$ be a sequence of independent random vectors of $\mathbb{R}^d$ with characteristic functions $\varphi_k$ given, for all $\xi \in \mathbb{R}^d$ and all $k \geq 1$, by 
\begin{align}\label{eq:def_cf_k}
\varphi_k(\xi) = \dfrac{\hat{\mu}((k+1)\xi)}{\hat{\mu}(k \xi)}.  
\end{align} 
Let $(S_n)_{n \geq 1}$ be the sequence of random vectors defined, for all $n \geq 1$, by 
\begin{align}\label{eq:def_Sn_simple}
S_n = \frac{1}{n} \sum_{k=1}^n Z_k.
\end{align}
Then, standard Fourier analysis ensures that $(S_n)_{n \geq 1}$ converges in law to $\mu$. 

One of the main objectives of these notes is to provide quantitative versions of the previous limit theorems by means of Stein's method combined with spectral methods.~Stein's method for SD and for $\alpha$-stable probability measures on $\mathbb{R}^d$, $d\geq 1$, has been developed recently (see \cite{AH18_1,AH19_2,AH20_3,AH22_5}). Let us recall the metrics of convergence investigated in \cite{AH18_1,AH19_2,AH20_3,AH22_5} as well as the main results regarding the Stein equation for SD and for $\alpha$-stable probability measures and the regularity estimates of the associated solution.~The $1$-Wasserstein distance between two probability measures on $\mathbb{R}^d$, $\mu_1$ and $\mu_2$, with finite first moment, is defined by
\begin{align}\label{eq:def_W1}
W_1(\mu_1, \mu_2) : = \inf \int_{\mathbb{R}^d} \|x-y\| \pi(dx,dy), 
\end{align}
where the infimum is taken over the set of probability measures $\pi$ on $\mathbb{R}^{2d}$ such that the first $d$-dimensional marginal is $\mu_1$ and the second one is $\mu_2$. By Kantorovich-Rubinstein duality theorem, the $1$-Wasserstein distance admits the following representation: for all $\mu_1$, $\mu_2$ probability measures on $\mathbb{R}^d$ with finite first moment,   
\begin{align}\label{eq:KR_duality}
W_1(\mu_1, \mu_2) = \underset{h \in \operatorname{Lip}_1}{\sup} \left| \int_{\mathbb{R}^d} h(x) \mu_1(dx) - \int_{\mathbb{R}^d} h(x) \mu_2(dx) \right|, 
\end{align}
where $\operatorname{Lip}_1$ is the set of Lipschitz functions on $\mathbb{R}^d$ with Lipschitz constant not greater than $1$. Now, thanks to \cite[Lemma $5.2$]{AH22_5}, the following representation formula holds true: for all $\mu_1$, $\mu_2$ probability measures on $\mathbb{R}^d$ with finite first moment, 
\begin{align}\label{eq:reduction_formula}
W_1(\mu_1, \mu_2) = \underset{h \in \mathcal{C}_c^{\infty}(\mathbb{R}^d), \, |h|_{\operatorname{Lip}}\leq 1}{\sup} \left| \int_{\mathbb{R}^d} h(x) \mu_1(dx) - \int_{\mathbb{R}^d} h(x) \mu_2(dx) \right|,
\end{align}
where $ |h|_{\operatorname{Lip}}$ is given by 
\begin{align}\label{eq:Lip_seminorm}
 |h|_{\operatorname{Lip}} = \underset{x,y \in \mathbb{R}^d,\, x\ne y}{\sup} \dfrac{|h(x) - h(y)|}{\|x - y\|}. 
\end{align}
Next, for all $r \geq 1$, let $\nabla^r$ be the $r$-th derivative seen as a $r$-multilinear form when acting on functions. For all $h$ from $\mathbb{R}^d$ to $\mathbb{R}$ which is $r$-times continuously differentiable on $\mathbb{R}^d$, let $M_\ell(h)$ be defined, for all $1\leq \ell \leq r$, by 
\begin{align}\label{eq:def_iso}
M_\ell(h) = \underset{x \in \mathbb{R}^d}{\sup} \left\| \nabla^\ell(h)(x)\right\|_{\operatorname{op}} = \underset{x\ne y}{\sup} \dfrac{\|\nabla^{\ell-1}(h)(x) - \nabla^{\ell-1}(h)(y)\|_{\operatorname{op}}}{\|x - y\|}. 
\end{align}
Moreover, set $M_0(h) = \sup_{x \in \mathbb{R}^d} |h(x)|$, for all $h$ bounded continuous function on $\mathbb{R}^d$. Now, for all $r \geq 0$, let $\mathcal{H}_r$ be the space of real-valued bounded functions defined on $\mathbb{R}^d$ which are $r$-times continuously differentiable on $\mathbb{R}^d$ and such that, 
\begin{align*}
\underset{0\leq \ell \leq r}{\max} M_\ell(h) \leq 1. 
\end{align*} 
Finally, for all $r \geq 1$, the smooth $r$-Wasserstein distance between two probability measures $\mu_1$ and $\mu_2$ is defined by 
\begin{align}\label{eq:def_smooth_wasserstein_dist}
d_{W_r}(\mu_1 , \mu_2) := \underset{h \in \mathcal{H}_r}{\sup} \left| \int_{\mathbb{R}^d} h(x) \mu_1(dx) - \int_{\mathbb{R}^d} h(x) \mu_2(dx) \right|. 
\end{align}
As for the $1$-Wasserstein distance, the smooth Wasserstein distances admit a reduction formula (see \cite[Lemma A.2 of the Appendix]{AH19_2}): Namely, for all $r \geq 1$ and all $\mu_1$, $\mu_2$ probability measures on $\mathbb{R}^d$, 
\begin{align}\label{eq:reduction_representation_smooth_wasserstein}
d_{W_r}(\mu_1 , \mu_2) := \underset{h \in \mathcal{H}_r \cap \mathcal{C}_c^{\infty}(\mathbb{R}^d)}{\sup} \left| \int_{\mathbb{R}^d} h(x) \mu_1(dx) - \int_{\mathbb{R}^d} h(x) \mu_2(dx) \right|. 
\end{align}
The following set of inequalities holds true: for all $r \geq 1$ and all $\mu_1$, $\mu_2$ probability measures on $\mathbb{R}^d$ with finite first moment,  
\begin{align}\label{ineq:order_metric}
d_{W_r}(\mu_1, \mu_2) \leq d_{W_1}(\mu_1, \mu_2) \leq W_1(\mu_1 , \mu_2). 
\end{align} 
At this point, we are ready to recall some of the main results contained in \cite{AH19_2} regarding Stein's method for SD and for $\alpha$-stable probability measures with finite first moment in a multivariate setting. Let us start with the general SD situation (see \cite[Proposition $3.5$ and Proposition $3.6$]{AH19_2}).

\begin{prop}\label{prop:Stein_Method_SD_finite_first_moment}
Let $X$ be a non-degenerate self-decomposable random vector in $\mathbb{R}^d$ without Gaussian component, with law $\mu_X$, with characteristic function $\varphi_X$, with L\'evy measure $\nu$ and such that $\mathbb{E} \|X\|<\infty$. Moreover, let the function $k_x$ given by \eqref{eq:polar_decomposition_SD} satisfies, for all $a,b \in (0,+\infty)$ with $a<b$, 
\begin{align}\label{eq:cond_kfunction}
\sup_{x\in \mathbb{S}^{d-1}}\sup_{r\in (a,b)}k_x(r)<+\infty.
\end{align}
Let $h\in \mathcal{H}_2\cap \mathcal{C}^{\infty}_c(\mathbb{R}^d)$ and let $(P^\nu_t)_{t\geq 0}$ be the semigroup of operators defined, for all $t \geq 0$ and all $x \in \mathbb{R}^d$, by
\begin{align}\label{eq:def_SG_SD}
P^\nu_t(h)(x) = \int_{\mathbb{R}^d} h\left(xe^{-t} + y\right)\mu_t(dy), \quad \widehat{\mu_t}(\xi) = \dfrac{\varphi_X(\xi)}{\varphi_X(e^{-t} \xi)}, \quad \xi \in \mathbb{R}^d. 
\end{align} 
Let $f_h$ be the function defined, for all $x \in \mathbb{R}^d$, by
\begin{align}\label{eq:solution_stein_equation_SD}
f_h(x) = - \int_0^{+\infty} (P_t^{\nu}(h)(x) - \mathbb{E} h (X)) dt.
\end{align}
Then, $f_h$ is a strong solution to the non-local partial differential equation: for all $x \in \mathbb{R}^d$
\begin{align}\label{eq:stein_equation_SD_prop}
\langle \mathbb{E} X-x;\nabla (f_h)(x) \rangle+\int_{\mathbb{R}^d}\langle \nabla(f_h)(x+u)-\nabla(f_h)(x) ;u\rangle \nu(du)=h(x)-\mathbb{E} h(X).
\end{align}
Moreover, $f_h$ is twice continuously differentiable on $\mathbb{R}^d$ and is such that
\begin{align}\label{ineq:regularity_estimate_SD}
M_1(f_h)\leq 1,\quad\quad M_2(f_h)\leq \frac{1}{2}.
\end{align}
\end{prop}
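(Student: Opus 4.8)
The plan is to exploit the fact that $(P^\nu_t)_{t\geq 0}$ is a Markov semigroup admitting $\mu_X$ as invariant measure, whose infinitesimal generator is precisely the operator $\cala$ appearing on the left-hand side of \eqref{eq:stein_equation_SD_prop}. First I would record the elementary properties of the semigroup. Since $X$ is self-decomposable, for each $t\geq 0$ the function $\xi\mapsto\hat{\mu_t}(\xi)=\varphi_X(\xi)/\varphi_X(e^{-t}\xi)$ is a characteristic function, so $\mu_t$ is a genuine probability measure and $P^\nu_t$ maps bounded (resp.\ Lipschitz) functions to bounded (resp.\ Lipschitz) functions; a computation in Fourier variables gives $\hat{\mu_{t+s}}(\xi)=\hat{\mu_t}(\xi)\hat{\mu_s}(e^{-t}\xi)$, which yields the semigroup property $P^\nu_{t+s}=P^\nu_t\circ P^\nu_s$, and $P^\nu_t(h)(x)\to\bbe h(X)$ as $t\to+\infty$ because $e^{-t}x\to 0$ and $\mu_t\Rightarrow\mu_X$. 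The key structural identity is the commutation relation obtained by differentiating under the integral sign in \eqref{eq:def_SG_SD}: for every integer $\ell\geq 1$ and $h$ smooth enough,
\[
\nabla^\ell\big(P^\nu_t(h)\big)(x)=e^{-\ell t}\,P^\nu_t\big(\nabla^\ell h\big)(x),
\]
whence the contraction estimates $M_\ell(P^\nu_t h)\leq e^{-\ell t}M_\ell(h)$.

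Next I would check that \eqref{eq:solution_stein_equation_SD} defines a finite, twice continuously differentiable function. Using self-decomposability in the form $X\overset{\mathcal{L}}{=}e^{-t}X'+X_t$ with $X'\sim\mu_X$ independent of $X_t\sim\mu_t$, one writes $P^\nu_t(h)(x)-\bbe h(X)=\bbe[h(xe^{-t}+X_t)-h(e^{-t}X'+X_t)]$, and since $M_1(h)\leq 1$ this is bounded in absolute value by $e^{-t}(\|x\|+\bbe\|X\|)$. Combined with the trivial bound $2M_0(h)\leq 2$ near $t=0$ and with $\bbe\|X\|<+\infty$, this makes the integral in \eqref{eq:solution_stein_equation_SD} absolutely convergent for each $x$, locally uniformly in $x$. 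Differentiating twice under the integral sign—legitimate because $\|\nabla^\ell(P^\nu_t h)(x)\|_{\operatorname{op}}\leq e^{-\ell t}M_\ell(h)$ is integrable in $t$ for $\ell\in\{1,2\}$ and $x\mapsto P^\nu_t(\nabla^\ell h)(x)$ is continuous—shows $f_h\in\mathcal{C}^2(\bbr^d)$ with
\[
\nabla^\ell(f_h)(x)=-\int_0^{+\infty}e^{-\ell t}\,P^\nu_t(\nabla^\ell h)(x)\,dt,\qquad \ell\in\{1,2\},
\]
so that $M_1(f_h)\leq M_1(h)\int_0^{+\infty}e^{-t}dt\leq 1$ and $M_2(f_h)\leq M_2(h)\int_0^{+\infty}e^{-2t}dt\leq\tfrac12$, which are the claimed regularity bounds \eqref{ineq:regularity_estimate_SD}.

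It remains to verify that $f_h$ solves \eqref{eq:stein_equation_SD_prop} pointwise. I would first note that $\cala(f_h)(x)$ is well defined: near $u=0$ the integrand $\langle\nabla f_h(x+u)-\nabla f_h(x);u\rangle$ is $O(M_2(f_h)\|u\|^2)$, integrable against $\nu$ by \eqref{eq:defi_Levy_measure}, while for $\|u\|>1$ it is $O(M_1(f_h)\|u\|)$, integrable since $\bbe\|X\|<+\infty$ forces $\int_{\|u\|>1}\|u\|\,\nu(du)<+\infty$. Then the core of the argument is the Kolmogorov-type identity $\tfrac{d}{dt}P^\nu_t(h)(x)=\cala(P^\nu_t h)(x)=P^\nu_t(\cala h)(x)$, valid for $h\in\calh_2\cap\mathcal{C}_c^{\infty}(\bbr^d)$; integrating it from $0$ to $T$, letting $T\to+\infty$ using $P^\nu_T(h)(x)\to\bbe h(X)$, and exchanging $\cala$ with the time integral yields
\[
\cala(f_h)(x)=-\int_0^{+\infty}\cala\big(P^\nu_t h\big)(x)\,dt=-\int_0^{+\infty}\tfrac{d}{dt}P^\nu_t(h)(x)\,dt=h(x)-\bbe h(X).
\]

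The main obstacle is precisely this last step: justifying the Kolmogorov equation and, above all, the interchange of the nonlocal operator $\cala$—which carries the singular integral against $\nu$—with the improper time integral. This is where hypothesis \eqref{eq:cond_kfunction} on the radial profile $k_x$ enters, giving enough control on $\nu$ at intermediate scales to produce an integrable majorant for $\int_0^{+\infty}\!\int_{\bbr^d}\big|\langle\nabla(P^\nu_t h)(x+u)-\nabla(P^\nu_t h)(x);u\rangle\big|\,\nu(du)\,dt$: one combines the bound $\|\nabla(P^\nu_t h)(x)\|_{\operatorname{op}}\leq e^{-t}$ (effective for large $\|u\|$, together with $\int_{\|u\|>1}\|u\|\nu(du)<+\infty$) with $\|\nabla^2(P^\nu_t h)(x)\|_{\operatorname{op}}\leq e^{-2t}/2$ (effective for $\|u\|\leq 1$), after which Fubini's theorem and dominated convergence close the argument. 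Alternatively, one may bypass the generator identity by working first with the finite-horizon functions $\int_0^T(P^\nu_t(h)(x)-\bbe h(X))\,dt$, for which the analogue of \eqref{eq:stein_equation_SD_prop} holds by direct differentiation, and then passing to the limit $T\to+\infty$ using the estimates above.
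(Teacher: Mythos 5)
The paper does not actually prove this proposition here; it is recalled verbatim from \cite[Propositions~3.5 and 3.6]{AH19_2}, so there is no ``paper's own proof'' to compare against. Your overall strategy — read the left-hand side of \eqref{eq:stein_equation_SD_prop} as the generator of the Mehler-type semigroup $(P^\nu_t)_{t\geq 0}$, use the commutation $\nabla^\ell(P^\nu_t h)=e^{-\ell t}P^\nu_t(\nabla^\ell h)$ to get the regularity estimates, and then integrate the Kolmogorov identity in $t$ — is indeed the natural route and coincides with the approach of \cite{AH19_2}. The derivation of \eqref{ineq:regularity_estimate_SD}, the well-definedness of $f_h$ via the self-decomposable identity $X\overset{\mathcal{L}}{=}e^{-t}X'+X_t$, and the final identity $\mathcal{A}(f_h)=h-\bbe h(X)$ are all correct as sketched.

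There is, however, a misattribution in your handling of the hypothesis \eqref{eq:cond_kfunction}. The integrable majorant you describe for the Fubini step,
\[
\big|\langle\nabla(P^\nu_t h)(x+u)-\nabla(P^\nu_t h)(x);u\rangle\big|\leq\min\Big(\tfrac12 e^{-2t}\|u\|^2,\;2e^{-t}\|u\|\Big),
\]
is integrable against $dt\otimes\nu(du)$ using only the Lévy condition $\int_{\|u\|\leq 1}\|u\|^2\nu(du)<+\infty$ and the moment condition $\int_{\|u\|>1}\|u\|\nu(du)<+\infty$ (equivalent to $\bbe\|X\|<+\infty$). Neither of these relies on \eqref{eq:cond_kfunction}, so that hypothesis does not enter at the stage where you say it does. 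In the source proof, \eqref{eq:cond_kfunction} is used earlier, to control the L\'evy measure $\nu_t$ of $\mu_t$ (whose radial density involves the increments $k_x(r)-k_x(re^t)$) and to justify the radial integration by parts that passes from the ``increment'' form of the generator acting on $\mu_t$ to the ``gradient'' form appearing in \eqref{eq:stein_equation_SD_prop}. Relatedly, you assert the Kolmogorov identity $\frac{d}{dt}P^\nu_t(h)(x)=\mathcal{A}(P^\nu_t h)(x)$ as the core step but do not actually derive it; this is the step that needs work, and the clean way to close it for $h\in\mathcal{C}_c^\infty(\bbr^d)$ is a Fourier computation: write $P^\nu_t(h)(x)=\frac{1}{(2\pi)^d}\int\mathcal{F}(h)(\xi)e^{ie^{-t}\langle x;\xi\rangle}\hat{\mu_t}(\xi)d\xi$, differentiate under the integral using the Schwartz decay of $\mathcal{F}(h)$, and identify the resulting multiplier $-ie^{-t}\langle x;\xi\rangle+e^{-t}\langle\xi;\nabla\Psi(e^{-t}\xi)\rangle$ with the Fourier symbol of $\mathcal{A}$ applied to $P^\nu_t h$. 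Spelling this out would turn your outline into a complete proof.
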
 
\noindent
Regarding the $\alpha$-stable probability measures on $\mathbb{R}^d$, with $\alpha \in (1,2)$, we focus on the non-degenerate and symmetric ones in the sequel.~Moreover, a well-known integrated gradient estimate (see \cite[Theorem $2.1$]{KS_13}) reminiscent of the non-degenerate Gaussian case (see \cite[Lemma $4.3$]{AH22_5})~allows to consider $h \in \mathcal{C}_c^{\infty}(\mathbb{R}^d)$ with $|h|_{\operatorname{Lip}} \leq 1$. 

\begin{prop}\label{prop:Stein_Method_Stable_NDS}
Let $d \geq 1$ be an integer, let $\alpha \in (1,2)$ and let $X_\alpha$ be a non-degenerate symmetric $\alpha$-stable random vector of $\mathbb{R}^d$ without Gaussian component, with law $\mu_\alpha$ and with L\'evy measure $\nu_\alpha$.~Let $h \in \mathcal{C}^{\infty}_c(\mathbb{R}^d)$ be such that $\left| h\right|_{\operatorname{Lip}} \leq 1$ and let $(P^{\nu_\alpha}_t)_{t \geq 0}$ be the semigroup of operators defined, for all $t \geq 0$ and all $x \in \mathbb{R}^d$, by 
\begin{align}\label{eq:def_SG_stable}
P_t^{\nu_\alpha}(h)(x) = \int_{\mathbb{R}^d} h \left(xe^{-t} + \left(1-e^{- \alpha t}\right)^{\frac{1}{\alpha}} y\right) \mu_\alpha(dy).
\end{align}
Let $f_h$ be given, for all $x \in \mathbb{R}^d$, by 
\begin{align}\label{eq:solution_stein_equation_NDS_stable}
f_h(x) = - \int_0^{+\infty} (P_t^{\nu_\alpha}(h)(x) - \mathbb{E} h (X_\alpha)) dt.
\end{align}
Then, $f_h$ is well-defined, twice continuously differentiable on $\mathbb{R}^d$ and is a strong solution to the non-local partial differential equation: for all $x \in \mathbb{R}^d$, 
\begin{align}\label{eq:stein_equation_Stable}
- \langle x;\nabla (f_h)(x) \rangle+\int_{\mathbb{R}^d}\langle \nabla(f_h)(x+u)-\nabla(f_h)(x) ;u\rangle \nu_\alpha(du)=h(x)-\mathbb{E} h(X_\alpha).
\end{align}
Finally, 
\begin{align}\label{ineq:regularity_estimate_NDS_stable}
M_1(f_h) \leq 1, \quad M_2(f_h) \leq C_{\alpha,d},
\end{align}
for some $C_{\alpha,d}>0$ depending on $\alpha$ and on $d$ only.
\end{prop}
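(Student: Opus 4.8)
My plan is to reduce the structural assertions — $f_h$ well-defined, twice continuously differentiable, solving \eqref{eq:stein_equation_Stable} — to Proposition~\ref{prop:Stein_Method_SD_finite_first_moment}, and then to prove the two estimates in \eqref{ineq:regularity_estimate_NDS_stable} \emph{with constants independent of} $M_2(h)$ by exploiting the explicit semigroup \eqref{eq:def_SG_stable} together with the smoothing property of the $\alpha$-stable convolution semigroup. For the reduction, I would first observe that $X_\alpha$ meets the hypotheses of Proposition~\ref{prop:Stein_Method_SD_finite_first_moment}: it is self-decomposable; $\bbe\|X_\alpha\|<+\infty$ since $\alpha>1$ and $\bbe X_\alpha=0$ by symmetry; and, comparing \eqref{eq:polar_decomposition_SD} with \eqref{eq:polar_decomposition_stable}, the radial profile of $\nu_\alpha$ is proportional to $k_x(r)=r^{-\alpha}$, non-increasing in $r$ and satisfying \eqref{eq:cond_kfunction}. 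A one-line Fourier computation, $\varphi_{X_\alpha}(\xi)/\varphi_{X_\alpha}(e^{-t}\xi)=\hat\mu_\alpha((1-e^{-\alpha t})^{1/\alpha}\xi)$, identifies the semigroup \eqref{eq:def_SG_stable} with the semigroup \eqref{eq:def_SG_SD} associated with $X=X_\alpha$, so that $h\mapsto f_h$ from \eqref{eq:solution_stein_equation_NDS_stable} is the linear functional \eqref{eq:solution_stein_equation_SD}. Given $h\in\mathcal{C}^{\infty}_c(\bbr^d)$ with $|h|_{\operatorname{Lip}}\le 1$ (assume $h\not\equiv 0$, else $f_h=0$), set $C:=\max_{0\le\ell\le 2}M_\ell(h)\in(0,+\infty)$; then $h/C\in\mathcal{H}_2\cap\mathcal{C}^{\infty}_c(\bbr^d)$, Proposition~\ref{prop:Stein_Method_SD_finite_first_moment} applies to $h/C$, and by linearity $f_h=Cf_{h/C}$ is well-defined, twice continuously differentiable on $\bbr^d$, and — after multiplying \eqref{eq:stein_equation_SD_prop} by $C$ and using $\bbe X_\alpha=0$, $\nu=\nu_\alpha$ — solves \eqref{eq:stein_equation_Stable}.

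Next I would handle $M_1(f_h)$. Differentiating \eqref{eq:def_SG_stable} under the integral gives $\nabla(P^{\nu_\alpha}_t(h))(x)=e^{-t}\,\bbe[\nabla h(xe^{-t}+(1-e^{-\alpha t})^{1/\alpha}X_\alpha)]$, hence $\|\nabla(P^{\nu_\alpha}_t(h))\|_\infty\le e^{-t}\|\nabla h\|_\infty\le e^{-t}$; since this is integrable in $t$ and continuous in $x$, one may differentiate \eqref{eq:solution_stein_equation_NDS_stable} under the integral to get $\nabla f_h=-\int_0^{+\infty}\nabla(P^{\nu_\alpha}_t(h))\,dt$, and therefore $M_1(f_h)\le\int_0^{+\infty}e^{-t}\,dt=1$. (The estimate $|P^{\nu_\alpha}_t(h)(x)-\bbe h(X_\alpha)|\le e^{-t}\|x\|+|1-(1-e^{-\alpha t})^{1/\alpha}|\,\bbe\|X_\alpha\|$, integrable in $t$, also gives the well-posedness of $f_h$ directly from \eqref{eq:solution_stein_equation_NDS_stable}.)

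The heart of the matter is $M_2(f_h)$, and it is here that non-degeneracy is used. I would write $P^{\nu_\alpha}_t(h)(x)=(Q_{1-e^{-\alpha t}}h)(e^{-t}x)$, where $Q_s$ is the $\alpha$-stable convolution semigroup $Q_s h=h\ast p_s$, $p_s(z)=s^{-d/\alpha}p_1(s^{-1/\alpha}z)$, and $p_1$ the density of $\mu_\alpha$ — which exists and is $\mathcal{C}^\infty$ with $\nabla p_1\in L^1(\bbr^d)$, since \eqref{eq:ND_CNS} forces $|\hat\mu_\alpha(\xi)|\le e^{-c\|\xi\|^\alpha}$ for some $c>0$. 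Using the identity $\nabla^2(Q_s h)=\nabla(Q_s(\nabla h))$ together with the gradient estimate $\|\nabla Q_s\|_{L^\infty\to L^\infty}\le C_{\alpha,d}\,s^{-1/\alpha}$ of \cite[Theorem~2.1]{KS_13}, the chain rule yields
\begin{align*}
M_2(P^{\nu_\alpha}_t(h))=e^{-2t}\,\|\nabla^2(Q_{1-e^{-\alpha t}}h)\|_\infty\le C_{\alpha,d}\,e^{-2t}\,(1-e^{-\alpha t})^{-1/\alpha}\,\|\nabla h\|_\infty\le C_{\alpha,d}\,e^{-2t}\,(1-e^{-\alpha t})^{-1/\alpha}.
\end{align*}
Since $\alpha\in(1,2)$ we have $1/\alpha<1$, so the right-hand side behaves like $t^{-1/\alpha}$ as $t\downarrow 0$ and decays exponentially as $t\to+\infty$, hence is integrable on $(0,+\infty)$; this legitimates differentiating \eqref{eq:solution_stein_equation_NDS_stable} twice under the integral (the integrand being continuous, which re-establishes $f_h\in\mathcal{C}^2$) and gives $M_2(f_h)\le C_{\alpha,d}\int_0^{+\infty}e^{-2t}(1-e^{-\alpha t})^{-1/\alpha}\,dt<+\infty$, a constant depending only on $\alpha$ and $d$.

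The main obstacle is this last step: it hinges on the genuine smoothing of the \emph{non-degenerate} symmetric $\alpha$-stable semigroup, i.e. the bound $\|\nabla Q_s\|_{L^\infty\to L^\infty}\lesssim s^{-1/\alpha}$, which ultimately rests on $p_1\in\mathcal{C}^\infty(\bbr^d)$ with $\nabla p_1\in L^1(\bbr^d)$ — a consequence of the non-degeneracy condition \eqref{eq:ND_CNS} and a genuinely finite-dimensional feature with no analogue for general self-decomposable laws; it is also the source of the dependence of $C_{\alpha,d}$ on $d$ and of its blow-up as $\alpha\downarrow 1$ (since $\int_0^1 t^{-1/\alpha}\,dt=\alpha/(\alpha-1)$). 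Everything else — the Fourier identification of the two semigroups, interchanging $\nabla$ and $\nabla^2$ with $\int_0^{+\infty}dt$, and continuity of the integrands — is routine given the integrable bounds established above.
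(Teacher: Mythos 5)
Your proposal is correct and follows essentially the same route the paper sketches: the structural claims (well-posedness, $\mathcal{C}^2$ regularity, the Stein equation) via Proposition~\ref{prop:Stein_Method_SD_finite_first_moment}, and the $\alpha$- and $d$-dependent Hessian bound via the convolution factorization $P_t^{\nu_\alpha}(h)(x)=(Q_{1-e^{-\alpha t}}h)(e^{-t}x)$, an integration by parts to shift one derivative onto the kernel, and the integrated gradient estimate $\int_{\bbr^d}\|\nabla(p_\alpha)(x)\|\,dx<+\infty$ from \cite[Theorem~2.1]{KS_13}, combined with the observation that $\alpha>1$ makes $\int_0^{+\infty}e^{-2t}(1-e^{-\alpha t})^{-1/\alpha}\,dt$ finite. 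The rescaling $h/C$ to fit $\mathcal{H}_2$ is a harmless convenience; the paper's own terse proof invokes exactly these ingredients.
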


\begin{proof}
The proof is a combination and an adaptation of \cite[Proposition $3.5$ and Proposition $3.6$]{AH19_2} using the fact that $h \in \mathcal{C}^{\infty}_c(\mathbb{R}^d)$ with $\left| h\right|_{\operatorname{Lip}} \leq 1$ together with an integration by parts and with 
\begin{align}\label{eq:integrated_gradient_estimate}
\int_{\mathbb{R}^d} \|\nabla(p_\alpha)(x)\| dx < + \infty,
\end{align}
where $p_\alpha$ is the Lebesgue density of the probability measure $\mu_\alpha$.
\end{proof}

\begin{rem}\label{rem:Stein_Equation_Solution_NDS}
Let us compare Proposition \ref{prop:Stein_Method_Stable_NDS} with the corresponding results available in the literature.~In \cite[Theorem $3$, (i)]{Chen_Nourdin_Xu_Yang_23}, regularity estimates for the solution to the Stein equation \eqref{eq:stein_equation_Stable} are obtained when the target law is a multivariate strictly $\alpha$-stable probability measure, with $\alpha \in (1,2)$, on $\mathbb{R}^d$ which satisfies the following set of assumptions (see \cite[Lemma 2]{Chen_Nourdin_Xu_Yang_23}): 
\begin{itemize}
\item $\nu_\alpha$ associated with $\mu_\alpha$ is a $\gamma$-measure with $\gamma \in (d-\alpha,d]$. Namely, there exists $c>0$ such that, for all $x \in \mathbb{S}^{d-1}$ and all $r \in (0,1/2)$, 
\begin{align*}
\nu_\alpha \left(\mathcal{B}(x,r)\right) \leq cr^\gamma, 
\end{align*}
where $\mathcal{B}(x,r)$ is the Euclidean ball centered at $x$ and with radius $r$; 
\item $\nu_\alpha$ is symmetric, i.e., for all $B \in \mathcal{B}(\mathbb{R}^d)$, $\nu_\alpha(-B) = \nu_\alpha(B)$;
\item $\mu_\alpha$ is absolutely continuous with respect to the $d$-dimensional Lebesgue measure.  
\end{itemize}
Under this set of assumptions, \cite[Theorem $3$, (i)]{Chen_Nourdin_Xu_Yang_23} ensures that $f_h$, with $\left| h\right|_{\operatorname{Lip}} \leq 1$, is such that 
\begin{align*}
M_1(f_h) \leq C_1 \quad M_2(f_h) \leq C_2, 
\end{align*}
for some positive constants $C_1,C_2$ depending on $\alpha$, on $d$ and on the spherical component of $\nu_\alpha$. Proposition \ref{prop:Stein_Method_Stable_NDS} extends the first order and the second order regularity estimates of \cite[Theorem $3$, (i)]{Chen_Nourdin_Xu_Yang_23} to the whole set of non-degenerate symmetric $\alpha$-stable probability measures, with $\alpha \in (1,2)$. Note, in particular, that the product probability measure $\mu_{\alpha,d}$ characterized by \eqref{eq:characteristic_function_ind} is not covered by \cite[Lemma $2$ and Theorem $3$, (i)]{Chen_Nourdin_Xu_Yang_23} whereas it is an interesting example of a non-degenerate symmetric $\alpha$-stable probability measure. 
\end{rem}
\noindent
Now, let us recall and prove results regarding Stein's method for the non-degenerate symmetric $1$-stable (Cauchy) probability measures on $\mathbb{R}^d$. For this purpose, let $\mathcal{A}_1$ be the non-local operator, defined for all $f \in \mathcal{S}(\mathbb{R}^d)$ and all $x \in \mathbb{R}^d$, by 
\begin{align}\label{eq:1_stable_symmetric_diff}
\mathcal{A}_1(f)(x) = \int_{\mathbb{R}^d} \left(f(x+u) - f(x) - \langle u ; \nabla(f)(x)\rangle\bbone_{\|u\|\leq 1}\right)\nu_1(du),
\end{align}
where $\nu_1$ is the L\'evy measure of a non-degenerate symmetric $1$-stable probability measure on $\mathbb{R}^d$. The next proposition is partly contained in \cite[Theorem $4.2$]{AH20_3}.

\begin{prop}\label{prop:stein_solution_cauchy}
Let $d \geq 1$ be an integer and let $\mu_1$ be a non-degenerate symmetric $1$-stable probability measure on $\mathbb{R}^d$ with L\'evy measure $\nu_1$. Let $h \in \mathcal{H}_2 \cap \mathcal{C}_c^{\infty}(\mathbb{R}^d)$ and let $(P_t^{\nu_1})_{t \geq 0}$ be the semigroup of operators defined, for all $t \geq 0$ and all $x \in \mathbb{R}^d$, by
\begin{align}\label{eq:1_stable_symmetric_sg}
P_t^{\nu_1}(h)(x) = \int_{\mathbb{R}^d} h \left(e^{-t}x + \left(1- e^{-t}\right)y\right)\mu_1(dy). 
\end{align}
Let $f_h$ be given, for all $x \in \mathbb{R}^d$, by
\begin{align}\label{eq:1_stable_Stein_sol}
f_h(x) = - \int_0^{+\infty} \left(P_t^{\nu_1}(h)(x) - \mathbb{E} h(X_1)\right) dt, \quad X_1 \sim \mu_1.
\end{align}
Then, $f_h$ is well-defined, infinitely differentiable on $\mathbb{R}^d$ and is a strong solution to the non-local partial differential equation: for all $x \in \mathbb{R}^d$, 
\begin{align}\label{eq:1_stable_Stein_equation}
- \langle x ; \nabla(f_h)(x) \rangle + \mathcal{A}_1(f_h)(x)= h(x) - \mathbb{E} h(X_1),
\end{align}
where $\mathcal{A}_1$ is defined by \eqref{eq:1_stable_symmetric_diff}. Moreover, 
\begin{align}\label{eq:1_stable_Linfinity_estimate_1}
M_1(f_h) \leq 1, \quad M_2(f_h) \leq \frac{1}{2}. 
\end{align}
Finally, for all $x \in \mathbb{R}^d$ and all $u \in \mathbb{R}^d$ such that $\| u \|\geq 1$, 
\begin{align}\label{eq:1_stable_logarithmic_increment}
\left| f_h(x+u) - f_h(x) \right| \leq 2 \left(1+ \ln\left(\|u\|\right)\right),
\end{align}
and, for all $x \in \mathbb{R}^d$, 
\begin{align}\label{eq:1_stable_Linfinity_estimate_2}
\left| \mathcal{A}_1(f_h)(x) \right| \leq C_2 , \quad \left| \langle x ; \nabla(f_h)(x) \rangle \right| \leq C_3,
\end{align}
for some $C_2 , C_3 >0$ not depending on $x$.
\end{prop}

\begin{proof}
The existence of $f_h$, \eqref{eq:1_stable_Stein_equation} and \eqref{eq:1_stable_Linfinity_estimate_1} follow from \cite[Theorem $4.2$]{AH20_3}. So, it remains to prove \eqref{eq:1_stable_logarithmic_increment} and \eqref{eq:1_stable_Linfinity_estimate_2}. Then, for all $x \in \mathbb{R}^d$, all $u \in \mathbb{R}^d$ such that $\|u\| \geq 1$ and all $t \geq 0$, 
\begin{align*}
\left|P^{\nu_1}_t(h)(x+u) - P^{\nu_1}_t(h)(x)\right| & =\bigg| \int_{\mathbb{R}^d} \mu_1(dy) \bigg(h \left(e^{-t}(x+u) + (1-e^{-t}) y\right) \\
& \quad\quad- h \left(xe^{-t} + (1-e^{-t})y\right)\bigg) \bigg|  , \\
& \leq 2 \int_{\mathbb{R}^d} 1 \wedge \| e^{-t} u \| \mu_1(dy) = 2( 1 \wedge e^{-t} \|u\|), 
 \end{align*}
since, for $h \in \mathcal{H}_2$, for all $x \in \mathbb{R}^d$ and all $u \in \mathbb{R}^d$, 
\begin{align*}
\left| h \left(x+u\right) - h \left(x\right) \right| \leq 2( 1\wedge \|u\|).
\end{align*}
Thus, for all $x \in \mathbb{R}^d$ and all $u \in \mathbb{R}^d$ such that $\|u\| \geq 1$, 
\begin{align*}
\left| f_h(x+u) - f_h(x) \right| & \leq \int_0^{+\infty} \left| P^{\nu_1}_t(h)(x+u) - P^{\nu_1}_t(h)(x) \right| dt  , \\
& \leq 2 \int_0^{+\infty} 1 \wedge e^{-t} \|u\| dt =2 \left( \ln(\|u\|) + \int_{\ln\left(\|u\|\right)}^{+\infty} e^{-t} \| u \| dt \right) =2 (1 + \ln(\|u\|)),
\end{align*}
and so we are done with \eqref{eq:1_stable_logarithmic_increment}. Now, thanks to \eqref{eq:1_stable_logarithmic_increment} and to \eqref{eq:1_stable_Linfinity_estimate_1}, for all $x \in \mathbb{R}^d$, 
\begin{align*}
\left| \mathcal{A}_1(f_h)(x) \right| & = \left| \int_{\mathbb{R}^d} \left(f_h(x+u) - f_h(x) - \langle u ; \nabla(f_h)(x) \rangle\bbone_{\|u\| \leq 1}\right) \nu_1(du) \right| , \\
& \leq \int_{\| u \| \geq 1} \left| f_h(x+u) -f_h(x) \right| \nu_1(du) \\
&\quad\quad + \int_{\| u \| \leq 1} \left| f_h(x+u) - f_h(x) - \langle u ; \nabla(f_h)(x) \rangle \right| \nu_1(du) , \\
& \leq 2 \int_{\|u\|\geq 1} \left(1+ \ln(\|u\|)\right) \nu_1(du) + \frac{1}{4} \int_{\|u\|\leq 1} \|u\|^2 \nu_1(du).
\end{align*}
Finally, since $f_h$ is a strong solution to the Stein equation \eqref{eq:1_stable_Stein_equation}, for all $x \in \mathbb{R}^d$, 
\begin{align*}
\left| \langle x ; \nabla(f_h)(x) \rangle \right| & \leq \left|  \mathcal{A}_1(f_h)(x) \right| + \left| h(x) - \mathbb{E} h(X_1)\right|, \\
& \leq 2+ C_2, 
\end{align*}
and so we are done.  
\end{proof}

\begin{rem}\label{rem:literature_1stablemlt_stein}
Proposition \ref{prop:stein_solution_cauchy} should be compared with \cite[Theorem $3$ (ii)]{Chen_Nourdin_Xu_Yang_23} where regularity estimates for the solution to the Stein equation are obtained when $\alpha = 1$ and when the test function $h$ belongs to the space of functions from $\mathbb{R}^d$ to $\mathbb{R}$ such that, for all $x,y \in \mathbb{R}^d$,
\begin{align*}
\left| h(x) - h(y) \right| \leq \|x-y\| \wedge \|x-y\|^\beta,
\end{align*}
for some $0<\beta<1$.~Note, in particular, that $\nu_1$ in there must verify the assumptions of \cite[Lemma $2$]{Chen_Nourdin_Xu_Yang_23}.
\end{rem}
\noindent
Based on the previous proposition, let us present alternative representations for $\mathcal{A}_1(f_h)(x)$, for all $x \in \mathbb{R}^d$. This lemma is important to investigate stability estimates for the non-degenerate symmetric $1$-stable probability measure on $\mathbb{R}^d$.

\begin{lem}\label{lem:alternative_representations}
Let $\mathcal{A}_1$ be the non-local linear operator defined by \eqref{eq:1_stable_symmetric_diff}. Let $h \in \mathcal{H}_2 \cap \mathcal{C}_c^{\infty}(\mathbb{R}^d)$ and let $f_h$ be defined by \eqref{eq:1_stable_Stein_sol}. Then, for all $x \in \mathbb{R}^d$, 
\begin{align}\label{eq:alternative_representations}
\mathcal{A}_1(f_h)(x) & = \int_{\mathbb{R}^d} \langle u ; \nabla(f_h)(x+u) - \nabla(f_h)(x)\bbone_{\|u\| \leq 1}\rangle \nu_1(du), \nonumber \\
& = \frac{1}{2} \int_{\mathbb{R}^d} \langle u ; \nabla(f_h)(x+u) - \nabla(f_h)(x-u) \rangle \nu_1(du).
\end{align}
\end{lem}

\begin{proof}
The proof of these representations follows from an integration by parts in the radial variable which needs to be justified because of the singular nature of $\nu_1(du)$.~Thanks to the polar decomposition of the L\'evy measure $\nu_1$, for all $x \in \mathbb{R}^d$, 
\begin{align*}
\mathcal{A}_1(f_h)(x) = \int_{(0,+\infty) \times \mathbb{S}^{d-1}} \left(f_h(x+ry) - f(x) - \langle ry ; \nabla(f_h)(x)\bbone_{r \leq 1} \rangle\right) \frac{dr}{r^2} \sigma(dy),
\end{align*}
where $\sigma$ is the spherical part of $\nu_1$. Let us fix $y \in \mathbb{S}^{d-1}$ and $x \in \mathbb{R}^d$. Then, 
\begin{align*}
\int_0^{+\infty} \left(f_h(x+ry) - f(x) - \langle ry ; \nabla(f_h)(x)\bbone_{r \leq 1} \rangle\right) \frac{dr}{r^2} & = \int_0^1  \big(f_h(x+ry) - f(x) \\
&\quad\quad - \langle ry ; \nabla(f_h)(x) \rangle\big) \frac{dr}{r^2}  \\ 
&\quad\quad + \int_1^{+\infty} \left(f_h(x+ry) - f_h(x)\right) \frac{dr}{r^2}. 
\end{align*}
Now, recall that, for all $r\in (0, +\infty)$, 
\begin{align*}
\frac{d}{dr}\left(-r^{-1}\right) = \frac{1}{r^2} , \quad \frac{d}{dr} \left( f_h(x+ry) - f_h(x) \right) = \langle y ; \nabla(f_h)(x+ry) \rangle.
\end{align*}
Moreover, for all $r \in [1, +\infty)$, 
\begin{align*}
\left| \langle y ; \nabla(f_h)(x+ry) \right| \leq \frac{1}{r} \left| \langle x + ry ; \nabla(f_h)(x+ry) \rangle - \langle x ; \nabla(f_h)(x+ry) \rangle \right| \leq \frac{1}{r} \left( C_3 + \|x\| \right). 
\end{align*}
Finally, thanks to \eqref{eq:1_stable_logarithmic_increment},
\begin{align*}
\underset{r \rightarrow +\infty}{\lim} \frac{1}{r} \left(f_h(x+ry) - f_h(x)\right) = 0. 
\end{align*}
By a first integration by parts, 
\begin{align*}
\int_1^{+\infty} \left(f_h(x+ry) - f_h(x)\right) \frac{dr}{r^2} = \int_1^{+\infty} \langle y ; \nabla(f_h)(x+ry) \rangle \frac{dr}{r} + f_h(x+y) - f_h(x). 
\end{align*} 
Similarly, since $M_2(f_h) \leq 1/2$, 
\begin{align*}
\int_0^1 \left(f_h(x+ry) - f_h(x) - \langle ry ; \nabla(f_h)(x) \rangle\right) \frac{dr}{r^2} & = \int_0^1 \langle y ; \nabla(f_h)(x+ry) - \nabla(f_h)(x) \rangle \frac{dr}{r} \\
&\quad\quad - \left(f_h(x+y) - f_h(x) - \langle y ; \nabla(f_h)(x)\rangle\right). 
\end{align*}
Thus,
\begin{align*}
\int_0^{+\infty} \left(f_h(x+ry) - f_h(x) - \langle ry ; \nabla(f_h)(x)\rangle\bbone_{r \leq 1}\right) \frac{dr}{r^2} & = \int_0^{+\infty} \langle y ; \nabla(f_h)(x+ry) \\
&\quad\quad - \nabla(f_h)(x)\bbone_{r\leq 1}\rangle \frac{dr}{r} \\
&\quad\quad + \langle y ; \nabla(f_h)(x)\rangle. 
\end{align*}
Integrating with respect to $\sigma$ over $\mathbb{S}^{d-1}$ and using the fact that $\int_{\mathbb{S}^{d-1}} y \sigma(dy) = 0$, 
\begin{align*}
\mathcal{A}_1(f_h)(x) = \int_{\mathbb{R}^d} \langle u ; \nabla(f_h)(x+u) - \nabla(f_h)(x)\bbone_{\|u\|\leq 1} \rangle \nu_1(du). 
\end{align*}
The second representation follows from a similar analysis and from the fact that, for all $x \in \mathbb{R}^d$, 
\begin{align}\label{eq:alternative_representation_3}
\mathcal{A}_1(f_h)(x) = \frac{1}{2} \int_{\mathbb{R}^d} \left(f_h(x+u) + f_h(x-u) - 2f_h(x)\right) \nu_1(du).
\end{align}
This concludes the proof of the lemma. 
\end{proof}
\noindent
Finally, to end this preliminary section, let us introduce the main new ingredient which allows us to obtain quantitative versions of the limit theorems mentioned before: the weighted Poincar\'e-type inequalities. 

\begin{defi}\label{def:wnl_Poincare_ineq} 
Let $\mu$ be a probability measure on $\mathbb{R}^d$, let $\nu$ be a L\'evy measure on $\mathbb{R}^d$ and let $\omega$ be a positive Borel function defined on $(0,+\infty)$ such that 
\begin{align}\label{eq:compatibility_condition}
\int_{\|u\| \leq 1} \|u\|^2 \omega(\|u\|) \nu(du) < + \infty, \quad \int_{\|u\| \geq 1} \omega(\|u\|) \nu(du) < + \infty. 
\end{align}
Then, $\mu$ satisfies a weighted Poincar\'e-type inequality with weight function $\omega$ and L\'evy measure $\nu$, if, for all $f \in \mathcal{C}^1_b(\mathbb{R}^d, \mathbb{R}^d)$ such that $\int_{\mathbb{R}^d} f(x) \mu(dx) = 0$, 
\begin{align}\label{ineq:wnl_Poincare_ineq}
\int_{\mathbb{R}^d} \left\| f(x)\right\|^2 \mu(dx) \leq \int_{\mathbb{R}^d} \int_{\mathbb{R}^d} \left \| f(x+u) -f(x) \right\|^2 \omega\left(\|u\|\right) \nu(du) \mu(dx). 
\end{align}
\end{defi}

\begin{rem}\label{rem:examples}
(i) Thanks to \cite[Theorem $4.1$]{Chen_85} or to \cite[Corollary $2$]{Houdre_al_98}, ID probability measures on $\mathbb{R}^d$ verify weighted Poincar\'e-type inequalities with weight function identically equal to $1$.~More precisely, if $\mu$ is an ID probability measure on $\mathbb{R}^d$ with L\'evy measure $\nu$, then, for all $f \in \mathcal{C}^1_b(\mathbb{R}^d)$ such that $\int_{\mathbb{R}^d} f(x) \mu(dx) = 0$, 
\begin{align}\label{ineq:Poincar_type_ID_pm}
\int_{\mathbb{R}^d} |f(x)|^2 \mu(dx) \leq \int_{\mathbb{R}^d}\int_{\mathbb{R}^d} |f(x+u)-f(x)|^2 \nu(du) \mu(dx).
\end{align}
(ii) Let $\mu$ be a SD probability measure on $\mathbb{R}^d$ with parameters $(b, 0 ,\nu)$, with finite first moment and let $\hat{\mu}$ be its Fourier transform. Let us assume that $\mu$ is centered. Let $(Z_k)_{k \geq 1}$ be a sequence of independent random vectors of $\mathbb{R}^d$ with characteristic functions $(\varphi_k)_{k \geq 1}$ given by \eqref{eq:def_cf_k} and let $(S_n)_{n \geq 1}$ be defined by \eqref{eq:def_Sn_simple}. Then, by standard computations, the characteristic function of $S_n$, $n \geq 1$, is given, for all $\xi \in \mathbb{R}^d$, by 
\begin{align}\label{eq:fc_Sn}
\varphi_{S_n}(\xi) = \mathbb{E} e^{i \langle \xi ; S_n \rangle} = \exp\left(\int_{\mathbb{R}^d} \left(e^{i \langle u ; \xi \rangle}-1 - i \langle u ; \xi \rangle\right) \tilde{\nu}_n(du) \right), 
\end{align}
where,
\begin{align}\label{eq:Levy_measure_Sn}
\tilde{\nu}_n(du) =\bbone_{\mathbb{S}^{d-1}}(x)\bbone_{(0,+\infty)}(r) \left(k_x\left(\frac{n r}{n+1}\right) - k_x\left(nr\right)\right)\frac{dr}{r} \sigma(dx),
\end{align}
with $\sigma$ and $k_x$ given by \eqref{eq:polar_decomposition_SD}. Then, it is clear that $S_n$ is an ID random vector of $\mathbb{R}^d$ with L\'evy measure $\tilde{\nu}_n$, for all $n \geq 1$. By point (i), for all $n \geq 1$ and all $f \in \mathcal{C}_b^1(\mathbb{R}^d)$ such that $\mathbb{E} f(S_n) = 0$, 
\begin{align}\label{ineq:Poincare_inequality_simple_case}
\mathbb{E} f(S_n)^2 \leq \mathbb{E} \int_{\mathbb{R}^d} |f(S_n+u) - f(S_n)|^2 \tilde{\nu}_n(du). 
\end{align}
Moreover, if $k_x$ is independent of the $x$ variable and positive on $(0, +\infty)$, then, for all $n \geq 1$, 
\begin{align}\label{eq:weigthed_representation}
\tilde{\nu}_{n}(du) =\bbone_{\mathbb{S}^{d-1}}(x)\bbone_{(0,+\infty)}(r) \dfrac{k\left(\frac{n r}{n+1}\right) - k\left(nr\right)}{k(r)}\frac{k(r)dr}{r} \sigma(dx) = \omega_n(\|u\|)\nu(du),
\end{align}
where $\omega_n(r) = (k(nr/(n+1)) - k(nr))/k(r)$, for all $r \in (0,+\infty)$. Thus, the law of $S_n$ verifies a weighted Poincar\'e-type inequality with weight function $\omega_n$ and L\'evy measure $\nu$. 
\end{rem}
\noindent
The next lemma investigates the stability of the weighted Poincar\'e-type inequalities with respect to the convolution operation.~Recall that for two Borel positive measures $\mu_1$ and $\mu_2$ on $\mathbb{R}^d$, the convolution measure, denoted by $\mu_1 \ast \mu_2$, is the image measure of the product measure $\mu_1 \otimes \mu_2$ under the mapping $T$ from $\mathbb{R}^d \times \mathbb{R}^d$ to $\mathbb{R}^d$ defined, for all $(x,y) \in \mathbb{R}^{2d}$, by $T(x,y) = x+y$. Namely, $\mu_1 \ast \mu_2 = (\mu_1 \otimes \mu_2) \circ T^{-1}$. 

\begin{lem}\label{lem:convolution_weighted_poincare_type_ineq}
Let $\nu$ be a L\'evy measure on $\mathbb{R}^d$.~Let $\mu_1$ and $\mu_2$ be two probability measures on $\mathbb{R}^d$. Let $\omega_1$ and $\omega_2$ be two positive Borel functions defined on $(0,+\infty)$ such that $\mu_1$ and $\mu_2$ satisfy weighted Poincar\'e-type inequalities with respective weight functions $\omega_1$ and $\omega_2$.~Then, $\mu_1 \ast \mu_2$ verifies a weighted Poincar\'e-type inequality with weight function $\omega_1 + \omega_2$. Namely, for all $f \in \mathcal{C}_b^1(\mathbb{R}^d , \mathbb{R}^d)$ such that $\int_{\mathbb{R}^d} f(x) (\mu_1 \ast \mu_2)(dx) = 0$, 
\begin{align}\label{ineq:poincare_type_convolution}
\int_{\mathbb{R}^d} \|f(x)\|^2 (\mu_1 \ast \mu_2)(dx) & \leq \int_{\mathbb{R}^d} \int_{\mathbb{R}^d} \left\| f(x+u)-f(x)\right\|^2 \bigg(\omega_1(\|u\|) \nonumber \\
&\quad\quad+ \omega_2(\|u\|) \bigg) \nu(du) (\mu_1 \ast \mu_2)(dx).
\end{align}
\end{lem}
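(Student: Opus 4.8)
The plan is to reduce the convolution estimate to the two given weighted Poincaré-type inequalities by conditioning on one of the two independent coordinates. Let $X_1 \sim \mu_1$ and $X_2 \sim \mu_2$ be independent, so that $X_1 + X_2 \sim \mu_1 \ast \mu_2$. Given $f \in \mathcal{C}_b^1(\bbr^d,\bbr^d)$ with $\int_{\bbr^d} f(x)(\mu_1 \ast \mu_2)(dx) = 0$, the first step is the standard variance decomposition: write $g(y) := \int_{\bbr^d} f(y + x_2)\mu_2(dx_2) = \bbe[f(y + X_2)]$ and observe that
\begin{align}\label{eq:proof_var_decomp}
\bbe\|f(X_1+X_2)\|^2 = \bbe\bigl\|f(X_1+X_2) - g(X_1)\bigr\|^2 + \bbe\|g(X_1)\|^2,
\end{align}
which holds because, conditionally on $X_1$, the vector $f(X_1+X_2) - g(X_1)$ is centered, hence orthogonal (componentwise) to the $\sigma(X_1)$-measurable term $g(X_1)$. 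Note $g \in \mathcal{C}_b^1(\bbr^d,\bbr^d)$ and $\int g\,d\mu_1 = \int f\,d(\mu_1\ast\mu_2) = 0$, so $g$ is an admissible test function for $\mu_1$.

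The second step treats the two terms on the right of \eqref{eq:proof_var_decomp} separately. For the second term, apply the weighted Poincaré inequality for $\mu_1$ with weight $\omega_1$ to $g$, then expand $g(x+u) - g(x) = \bbe[f(x+u+X_2) - f(x+X_2)]$ and use Jensen's inequality (conditionally on $X_1 = x$) to pull the square inside the expectation over $X_2$; Tonelli then lets us recognize the result as $\int\int \|f(x+u)-f(x)\|^2 \omega_1(\|u\|)\nu(du)(\mu_1\ast\mu_2)(dx)$. For the first term, condition on $X_1 = x$: the map $x_2 \mapsto f(x + x_2) - g(x)$ lies in $\mathcal{C}_b^1(\bbr^d,\bbr^d)$ and is $\mu_2$-centered by the definition of $g$, so the weighted Poincaré inequality for $\mu_2$ with weight $\omega_2$ gives
\begin{align}\label{eq:proof_cond_bound}
\bbe\bigl[\|f(x+X_2)-g(x)\|^2\bigr] \leq \int_{\bbr^d}\int_{\bbr^d}\|f(x+x_2+u)-f(x+x_2)\|^2\omega_2(\|u\|)\nu(du)\,\mu_2(dx_2);
\end{align}
integrating \eqref{eq:proof_cond_bound} against $\mu_1(dx)$ and using Tonelli identifies the bound as $\int\int\|f(x+u)-f(x)\|^2\omega_2(\|u\|)\nu(du)(\mu_1\ast\mu_2)(dx)$. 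Adding the two bounds yields \eqref{ineq:poincare_type_convolution}.

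The only genuine subtlety — and the step I would be most careful about — is the bookkeeping needed to justify the applications of Jensen/Tonelli and, more importantly, to check that the various integrals are finite so that the manipulations are legitimate rather than $\infty \leq \infty$ statements. Here the compatibility condition \eqref{eq:compatibility_condition} on $(\omega_i,\nu)$ together with the fact that $f$ and $\nabla f$ are bounded does the job: the mean value inequality gives $\|f(x+u)-f(x)\|^2 \leq (\|\nabla f\|_\infty^2 \|u\|^2)\wedge(4\|f\|_\infty^2)$, which is $\omega_i(\|u\|)\nu(du)$-integrable uniformly in $x$ after splitting the $u$-integral at $\|u\|=1$. One should also note that $\omega_1+\omega_2$ automatically satisfies \eqref{eq:compatibility_condition} with respect to $\nu$, so the conclusion is a bona fide weighted Poincaré-type inequality in the sense of Definition \ref{def:wnl_Poincare_ineq}. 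Everything else is the routine conditioning argument outlined above.
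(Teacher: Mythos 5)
Your proof is correct and is essentially the same argument as the paper's: the conditional-variance (ANOVA) decomposition you write in \eqref{eq:proof_var_decomp} is exactly what the paper obtains by applying the $\mu_1$-Poincar\'e inequality to $\tau_z(f)$, developing the square, and then integrating in $z$ against $\mu_2$ (with the roles of $\mu_1$ and $\mu_2$ interchanged, which is immaterial since $\mu_1\ast\mu_2=\mu_2\ast\mu_1$). Both proofs then apply the two weighted Poincar\'e-type inequalities to the conditional and marginal pieces and use Jensen's inequality once, relying on the commutation of the difference operator with translation; your presentation via conditioning is merely a more probabilistic restatement of the paper's computation.
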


\begin{proof}
The proof is very similar to the corresponding property for the classical Poincar\'e inequality (see \cite{BU_84}) and is based on the fact that the first order difference operator commutes with the translation operator. In the sequel, let us denote by $\tau_z$ the translation operator in the direction $z \in \mathbb{R}^d$ defined, for all $f\in \mathcal{C}_b^1(\mathbb{R}^d)$, by 
\begin{align*}
\tau_z(f)(\cdot) = f(\cdot+z).  
\end{align*}
Next, let $z \in \mathbb{R}^d$ and let $f \in \mathcal{C}_b^1(\mathbb{R}^d)$ be such that $\int_{\mathbb{R}^d} f(x)(\mu_1 \ast \mu_2)(dx) = 0$. Then, since $\mu_1$ verifies a weighted Poincar\'e-type inequality with weight function $\omega_1$, 
\begin{align*}
\int_{\mathbb{R}^d} \left| \tau_z(f)(x) - \int_{\mathbb{R}^d} \tau_z(f)(x) \mu_1(dx) \right|^2 \mu_1(dx) & \leq \int_{\mathbb{R}^d} \int_{\mathbb{R}^d} \left| \tau_z(f)(x+u)-\tau_z(f)(x)\right|^2 \\
&\quad\quad \times \omega_1(\|u\|) \nu(du) \mu_1(dx). 
\end{align*}
Developing the square gives,  
\begin{align*}
\int_{\mathbb{R}^d} \left| \tau_z(f)(x)\right|^2 \mu_1(dx) & \leq \int_{\mathbb{R}^d} \int_{\mathbb{R}^d} \left| \tau_z(f)(x+u)-\tau_z(f)(x)\right|^2 \omega_1(\|u\|) \nu(du) \mu_1(dx) \\
& \quad\quad + \left| \int_{\mathbb{R}^d} \tau_z(f)(x) \mu_1(dx) \right|^2.
\end{align*}
Now, integrating the previous inequality in the $z$ variable with respect to $\mu_2$, using the weighted Poincar\'e-type inequality for $\mu_2$ and  using Jensen's inequality, 
\begin{align*}
\int_{\mathbb{R}^d} \left| f(x+z)\right|^2 \mu_1(dx) \mu_2(dz) & \leq \int_{\mathbb{R}^{2d}} \int_{\mathbb{R}^d} \left| \tau_z(f)(x+u)-\tau_z(f)(x)\right|^2 \omega_1(\|u\|) \nu(du) \mu_1(dx) \mu_2(dz) \\
& \quad\quad + \int_{\mathbb{R}^d} \left| \int_{\mathbb{R}^d} \tau_z(f)(x) \mu_1(dx) \right|^2 \mu_2(dz) , \\
& \leq \int_{\mathbb{R}^{2d}} \int_{\mathbb{R}^d} \left| \tau_z(f)(x+u)-\tau_z(f)(x)\right|^2 \omega_1(\|u\|) \nu(du) \mu_1(dx) \mu_2(dz) \\
& \quad + \int_{\mathbb{R}^{d}} \int_{\mathbb{R}^d} \left|  \int_{\mathbb{R}^d} \tau_{z+u}(f)(x) \mu_1(dx)-  \int_{\mathbb{R}^d} \tau_z(f)(x) \mu_1(dx) \right|^2 \\
&\quad\quad \times \omega_2(\|u\|) \nu(du) \mu_2(dz) , \\
& \leq \int_{\mathbb{R}^{2d}} \int_{\mathbb{R}^d} \left| \tau_z(f)(x+u)-\tau_z(f)(x)\right|^2 \omega_1(\|u\|) \nu(du) \mu_1(dx) \mu_2(dz) \\
& + \int_{\mathbb{R}^{2d}} \int_{\mathbb{R}^d} \left| \tau_z(f)(x+u)-\tau_z(f)(x)\right|^2 \omega_2(\|u\|) \nu(du) \mu_2(dz) \mu_1(dx).
\end{align*}
The definition of the convolution operation concludes the proof of the lemma.
\end{proof}
\noindent 
As an application of the previous lemma and to end this preliminaries section, let us discuss how weighted Poincar\'e-type inequalities can fit in the framework of the generalized central limit theorems discussed in \eqref{eq:def_sum}. Let $\nu$ be a L\'evy measure on $\mathbb{R}^d$, let $(Z_k)_{k \geq 1}$ be a sequence of independent random vectors of $\mathbb{R}^d$ with laws $(\mu_k)_{k \geq 1}$ and let $(b_n)_{n \geq 1}$ be a sequence of positive reals such that 
\begin{align}\label{eq:assumption_bn}
b_n \longrightarrow 0 , \quad \frac{b_{n+1}}{b_n} \longrightarrow 1,
\end{align}
as $n$ tends to $+\infty$. Let $(S_n)_{n \geq 1}$ be the sequence of random vectors of $\mathbb{R}^d$ defined, for all $n \geq 1$, by
\begin{align}\label{eq:def_Sn_general}
S_n = b_n \sum_{k =1}^n Z_k.  
\end{align}
Let us assume that there exists a sequence of positive Borel functions $(\omega_k)_{k \geq 1}$ defined on $(0,+\infty)$ verifying \eqref{eq:compatibility_condition} with respect to $\nu$, such that, for all $k \geq 1$ and all $f \in \mathcal{C}^1_b(\mathbb{R}^d)$ with $\mathbb{E} f(Z_k) = 0$, 
\begin{align}\label{ineq:assumption_Poincare}
\mathbb{E} f(Z_k)^2 \leq \mathbb{E} \int_{\mathbb{R}^d} |f(Z_k+u) -f(Z_k)|^2 \omega_k(\|u\|) \nu(du). 
\end{align}
Now, recall that, for all $c>0$, $T_c$ denotes the scaling operator defined on functions in the following way: for all $f$ $\mathcal{B}(\mathbb{R}^d)$-measurable real-valued function defined on $\mathbb{R}^d$ and all $x \in \mathbb{R}^d$, 
\begin{align}\label{eq:defi_scaling_operator}
T_c(f)(x) = f(M_c(x)) = f(cx),
\end{align}
where $M_c$ is the multiplication operator by the positive real $c$.~Based on the previous notations, the law of $S_n$, denoted by $\tilde{\mu}_n$, is then given, for all $n \geq 1$, by
\begin{align}\label{eq:decomposition_law_S_n}
\tilde{\mu}_n = \left(\mu_1 \ast \mu_2 \ast \cdots \ast \mu_n\right) \circ T_{b_n}^{-1}.
\end{align}
Based on Lemma \ref{lem:convolution_weighted_poincare_type_ineq} and on the previous decomposition for $\tilde{\mu}_n$, for all $f \in \mathcal{C}_b^1(\mathbb{R}^d)$ with $\mathbb{E} f(S_n) = 0$, 
\begin{align*}
\int_{\mathbb{R}^d} f(x)^2 \tilde{\mu}_n(dx) & \leq \int_{\mathbb{R}^d} \int_{\mathbb{R}^d} \left| \Delta_u(T_{b_n}(f))(x)\right|^2 \left(\sum_{k = 1}^n \omega_k(\| u \|) \right) \nu(du)( \mu_1 \ast \cdots \ast \mu_n)(dx). 
\end{align*}
At this point, let us assume that $\nu$ is the L\'evy measure of a SD probability measure on $\mathbb{R}^d$ with polar decomposition given by 
\begin{align}\label{eq:more_structure_levy}
\nu(du) =\bbone_{\mathbb{S}^{d-1}}(x)\bbone_{(0,+\infty)}(r) \frac{k(r)dr}{r} \sigma(dx),
\end{align}
where $k$ is positive on $(0,+\infty)$. Now, for all $n \geq 1$, 
\begin{align*}
\int_{\mathbb{R}^d} \int_{\mathbb{R}^d} & \left| \Delta_u(T_{b_n}(f))(x)\right|^2 \left(\sum_{\ell = 1}^n \omega_\ell(\| u \|) \right) \nu(du)( \mu_1 \ast \cdots \ast \mu_n)(dx)  \\
& = \int_{\mathbb{R}^d} \int_{\mathbb{R}^d} \left| f( x +  u) - f( x) \right|^2 \left(\sum_{\ell = 1}^n \omega_\ell\left(\frac{\|u\|}{b_n}\right) \right) \frac{k\left(\frac{\|u\|}{b_n}\right)}{k(\|u\|)} \nu(du) \tilde{\mu}_n(dx). 
\end{align*}
Then, for all $n \geq 1$, $\tilde{\mu}_n$ verifies a weighted Poincar\'e-type inequality with weight function $\tilde{\omega}_n$ given, for all $n \geq 1$ and all $r \in (0,+\infty)$, by
\begin{align}\label{eq:weight_function_gene_CLT}
\tilde{\omega}_{n}(r) = \left(\sum_{j = 1}^n \omega_j\left(\frac{r}{b_n}\right) \right) \frac{k\left(\frac{r}{b_n}\right)}{k(r)},
\end{align}
and with L\'evy measure $\nu$.

\section{Stability Estimate for SD probability measures with finite second moment}\label{sec:stability_estimate_finite_second_moment}
\noindent
Now, let us prove Theorem \ref{thm:stability_estimate_second_moment} using the Stein's method results developed in \cite{AH19_2} (and recalled in the previous section) and using the weighted Poincar\'e-type inequality.\\
\\
\textit{Proof of Theorem \ref{thm:stability_estimate_second_moment}}. To start, let $\mathcal{E}_{\nu,\omega_X}$ be the bilinear symmetric non-negative definite form defined, for all $f,g \in \mathcal{S}(\mathbb{R}^d, \mathbb{R}^d)$, by
\begin{align}\label{eq:weighted_form_E}
\mathcal{E}_{\nu,\omega_X}(f,g) = \int_{\mathbb{R}^d} \int_{\mathbb{R}^d} \langle f(x+u) - f(x) ; g(x+u) - g(x)\rangle \omega_X(\|u\|)\nu(du) \mu_X(dx).
\end{align} 
Using the abstract results \cite[Theorem $5.10$]{AH20_3} or \cite[Theorem 2.1]{AH22_5}, there exists $\tau_{\mu_X}$ an element of $\mathcal{D}(\mathcal{E}_{\nu, \omega_X})$ such that for all $f \in \mathcal{D}(\mathcal{E}_{\nu, \omega_X})$, 
\begin{align}\label{eq;stein_kernel_identity}
\mathcal{E}_{\nu, \omega_X}(\tau_{\mu_X} , f) = \langle x , f \rangle_{L^2(\mu_X)},
\end{align}
where $\langle \cdot ; \cdot\rangle_{L^2(\mu_X)}$ is the classical scalar product on $L^2(\mu_X)$ and $\mathcal{D}(\mathcal{E}_{\nu, \omega_X})$ is the $L^2(\mu_X)$-domain of the form $\mathcal{E}_{\nu, \omega_X}$ (actually its closure since the condition $\mu_X \ast \omega_X \nu << \mu_X$ ensures that the form is closable). Moreover, by Cauchy-Schwarz inequality,
\begin{align*}
\mathcal{E}_{\nu, \omega_X}(\tau_{\mu_X} , \tau_{\mu_X}) & =  \langle x , \tau_{\mu_X} \rangle_{L^2(\mu_X)} , \\
& \leq \left(\int_{\mathbb{R}^d} \| x \|^2 \mu_X(dx) \right)^{\frac{1}{2}} \left(\int_{\mathbb{R}^d} \|\tau_{\mu_X}(x)\|^2 \mu_X(dx)\right)^{\frac{1}{2}} , \\
& \leq \int_{\mathbb{R}^d} \|x\|^2 \mu_X(dx). 
\end{align*}
Now, let $h \in \mathcal{H}_2 \cap \mathcal{C}_c^{\infty}(\mathbb{R}^d)$. Then, by Proposition \ref{prop:Stein_Method_SD_finite_first_moment}, for all $x \in \mathbb{R}^d$, 
\begin{align}\label{eq:Stein_equation_SD}
- \langle x ; \nabla(f_h)(x) \rangle + \int_{\mathbb{R}^d} \langle \nabla(f_h)(x+u) - \nabla(f_h)(x) ; u \rangle \nu(du) = h(x) - \mathbb{E} h(Z),
\end{align}
where $Z \sim \mu$. Integrating with respect to $\mu_X$ and using \eqref{eq;stein_kernel_identity}, 
\begin{align*}
\mathbb{E} h(X) - \mathbb{E} h(Z) & = \mathbb{E} \left( - \langle X ; \nabla(f_h)(X) \rangle + \int_{\mathbb{R}^d} \langle \nabla(f_h)(X+u) - \nabla(f_h)(X) ; u \rangle \nu(du)\right) , \\
& = \mathbb{E} \bigg( - \int_{\mathbb{R}^d} \langle \nabla(f_h)(X+u) - \nabla(f_h)(X) ; \tau_{\mu_X}(X+u)- \tau_{\mu_X}(X) \rangle \omega_X(\|u\|) \nu(du) \\
& \quad+ \int_{\mathbb{R}^d} \langle \nabla(f_h)(X+u) - \nabla(f_h)(X) ; u \rangle \nu(du)\bigg). 
\end{align*}
Next,
\begin{align}\label{eq:two_error_terms}
\mathbb{E} h(X) - \mathbb{E} h(Z) & = \mathbb{E} \bigg( - \int_{\mathbb{R}^d} \langle \nabla(f_h)(X+u) - \nabla(f_h)(X) ; \tau_{\mu_X}(X+u)- \tau_{\mu_X}(X) - u \rangle \nonumber \\
&\quad \times \omega_X(\|u\|) \nu(du) \nonumber \\
&\quad + \int_{\mathbb{R}^d} \langle \nabla(f_h)(X+u) - \nabla(f_h)(X) ; u \rangle( 1-\omega_X(\|u\|)) \nu(du)\bigg). 
\end{align}
Let us start to bound the second term using the $L^\infty$-bounds on the solution to the Stein's equation \eqref{eq:Stein_equation_SD}. Then,
\begin{align*}
\left| \mathbb{E} \int_{\mathbb{R}^d} \langle \nabla(f_h)(X+u) - \nabla(f_h)(X) ; u \rangle( 1-\omega_X(\|u\|)) \nu(du) \right| & \leq \frac{1}{2}\int_{\mathbb{R}^d} \|u\|^2 \left|\omega_X(\|u\|) - 1\right|\nu(du).
\end{align*}
For the first term, using Cauchy-Schwarz inequality twice, 
\begin{align*}
| (I) | & \leq \frac{1}{2} \mathbb{E} \int_{\mathbb{R}^d} \| u \| \left\| \tau_{\mu_X}(X+u)- \tau_{\mu_X}(X) - u \right\| \omega_X(\|u\|) \nu(du) , \\
& \leq \frac{1}{2}  \left(\int_{\mathbb{R}^d} \| u \|^2 \omega_X(\| u \|) \nu(du)\right)^{\frac{1}{2}} \left(\mathbb{E} \int_{\mathbb{R}^d} \left\| \tau_{\mu_X}(X+u)- \tau_{\mu_X}(X) - u \right\|^2 \omega_X(\|u\|) \nu(du)  \right)^{\frac{1}{2}} , \\
& \leq \frac{1}{2} \left(\int_{\mathbb{R}^d} \| u \|^2 \omega_X(\| u \|) \nu(du)\right)^{\frac{1}{2}} S_{\omega_X} \left(\mu_X | \mu\right),
\end{align*}
where $S_{\omega_X} \left(\mu_X | \mu\right)$ is the weighted Stein discrepancy between $\mu_X$ and $\mu$ with weight $\omega_X$.~Next, developing the square 
\begin{align*}
\mathbb{E} \int_{\mathbb{R}^d} \left\| \tau_{\mu_X}(X+u)- \tau_{\mu_X}(X) - u \right\|^2 & \omega_X(\|u\|) \nu(du)  = \mathcal{E}_{\nu,\omega_X}(\tau_{\mu_X},\tau_{\mu_X}) \\
&+ \int_{\mathbb{R}^d} \|u\|^2 \omega_X(\|u\|) \nu(du)  \\
& - 2 \mathbb{E} \int_{\mathbb{R}^d} \langle \tau_{\mu_X}(X+u) - \tau_{\mu_X}(X) ; u \rangle \omega_X( \|u\| ) \nu(du) , \\
& \leq \int_{\mathbb{R}^d} \|x\|^2 \mu_X(dx) + \int_{\mathbb{R}^d} \|u\|^2 \omega_X(\|u\|) \nu(du) \\
&\quad\quad - 2 \int_{\mathbb{R}^d} \|x\|^2 \mu_X(dx).
\end{align*}
Thus, 
\begin{align*}
S_{\omega_X}(\mu_X | \mu) \leq \sqrt{ \int_{\mathbb{R}^d} \|u\|^2 \omega_X(\|u\|) \nu(du) - \int_{\mathbb{R}^d} \|x\|^2 \mu_X(dx) }. 
\end{align*}
Putting everything together, we arrive at: for all $h \in \mathcal{H}_2 \cap \mathcal{C}_c^{\infty}(\mathbb{R}^d)$, 
\begin{align*}
|\mathbb{E} h(X) - \mathbb{E} h(Z) | & \leq \frac{1}{2}  \left(\int_{\mathbb{R}^d} \| u \|^2 \omega_X(\| u \|) \nu(du)\right)^{\frac{1}{2}} \\
&\quad \quad \times  \sqrt{ \int_{\mathbb{R}^d} \|u\|^2 \omega_X(\|u\|) \nu(du) - \int_{\mathbb{R}^d} \|x\|^2 \mu_X(dx) } \\
& \quad\quad+ \frac{1}{2} \int_{\mathbb{R}^d} \|u\|^2 |\omega_X(\|u\|)-1| \nu(du). 
\end{align*}
This concludes the proof of the theorem. $\square$

\begin{rem}\label{rem:applications_forms}
(i) A direct application of Theorem \ref{thm:stability_estimate_second_moment} provides a stability estimate when the law of $X$ satisfies a weighted Poincar\'e-type inequality with constant weight function $C$ and with L\'evy measure $\nu$. Indeed, if for all $f \in \mathcal{C}^1_b(\mathbb{R}^d, \mathbb{R}^d)$ such that $\mathbb{E} f(X) = 0$, 
\begin{align}\label{ineq:weighted_Poincare_inequality_constant_weight}
\int_{\mathbb{R}^d} \|f(x)\|^2 \mu_X(dx) \leq C \int_{\mathbb{R}^d}\int_{\mathbb{R}^d} \|f(x+u) - f(x)\|^2 \nu(du) \mu_X(dx),
\end{align} 
then, provided that $\mu_X \ast \nu << \mu_X$, 
\begin{align}\label{ineq:stability_estimate_constant_function_1}
d_{W_2}(\mu_X , \mu) & \leq \frac{1}{2} \sqrt{C}  \left(\int_{\mathbb{R}^d} \| u \|^2 \nu(du)\right)^{\frac{1}{2}}  \sqrt{ C \int_{\mathbb{R}^d} \|u\|^2\nu(du) - \int_{\mathbb{R}^d} \|x\|^2 \mu_X(dx) } \nonumber \\
& \quad\quad+ \frac{1}{2} \left( \int_{\mathbb{R}^d} \|u\|^2\nu(du) \right) |C-1|.
\end{align}
In particular, if $\int_{\mathbb{R}^d} \|u\|^2 \nu(du) = \int_{\mathbb{R}^d} \|x\|^2 \mu_X(dx)$, then 
\begin{align}\label{ineq:stability_estimate_constant_function_2}
d_{W_2}(\mu_X , \mu) & \leq \frac{1}{2} \sqrt{C}  \left(\int_{\mathbb{R}^d} \| u \|^2 \nu(du)\right) \sqrt{ C  - 1 } \nonumber \\
& \quad\quad+ \frac{1}{2} \left( \int_{\mathbb{R}^d} \|u\|^2\nu(du) \right) |C-1|.
\end{align}
(ii) Let $\mu_X$ be a non-degenerate probability measure on $\mathbb{R}^d$ and let $\nu$ be a non-degenerate L\'evy measure on $\mathbb{R}^d$.~Let us assume that $\mu_X \ast \nu << \mu_X$ and let us consider the bilinear symmetric non-negative definite form defined, for all $f,g \in \mathcal{C}_b^1(\mathbb{R}^d, \mathbb{R}^d)$, by 
\begin{align*}
\mathcal{E}_{\nu,1}(f,g) : = \int_{\mathbb{R}^d} \int_{\mathbb{R}^d} \langle f(x+u) - f(x) ; g(x+u) - g(x)\rangle \nu(du) \mu_X(dx).  
\end{align*}
It is a symmetric form on $L^2(\mu_X, \mathbb{R}^d)$ in the sense of \cite[Chapter $1$]{FOT_10}. There are at least two ways to build Dirichlet forms which are closed extensions of $\left(\mathcal{E}_{\nu,1}, \mathcal{C}_b^1(\mathbb{R}^d, \mathbb{R}^d)\right)$. First, consider the following subset of $L^2(\mu_X, \mathbb{R}^d)$,
\begin{align}\label{eq:L2_maximal_domain}
D \left(\mathcal{E}_{\nu,1}\right) : = \{f \in L^2(\mu_X, \mathbb{R}^d) : \, \mathcal{E}_{\nu,1}(f,f)<+\infty\}. 
\end{align}
Since $\mu_X \ast \nu << \mu_X$, $\left(\mathcal{E}_{\nu,1}, D \left(\mathcal{E}_{\nu,1}\right)\right)$ is a Dirichlet form in the sense of \cite[Chapter $1$]{FOT_10}.~Moreover, the assumption $\mu_X \ast \nu << \mu_X$ ensures that $\left(\mathcal{E}_{\nu,1}, \mathcal{C}_b^1(\mathbb{R}^d, \mathbb{R}^d)\right)$ is closable and so one can consider its closure which is denoted by $\left(\overline{\mathcal{E}_{\nu,1}}, \mathcal{D}(\overline{\mathcal{E}_{\nu, 1}})\right)$. Finally, it is clear that
\begin{align}\label{eq:rep_domain_closure}
\mathcal{D}(\overline{\mathcal{E}_{\nu, 1}}) = \overline{\mathcal{C}_b^1(\mathbb{R}^d,\mathbb{R}^d)}^{\| \cdot \|_{L^2+\mathcal{E}_{\nu,1}}},
\end{align}
where, for all $f \in D \left(\mathcal{E}_{\nu,1}\right)$, 
\begin{align}\label{eq:norm_nl}
\| f \|^2_{L^2+\mathcal{E}_{\nu,1}} = \|f\|^2_{L^2(\mu_X, \mathbb{R}^d)} + \mathcal{E}_{\nu,1}(f,f). 
\end{align}
By standard approximation arguments, $\left(\overline{\mathcal{E}_{\nu,1}}, \mathcal{D}(\overline{\mathcal{E}_{\nu, 1}})\right)$ is a regular Dirichlet form on $L^2(\mu_X, \mathbb{R}^d)$. In general, $\left(\overline{\mathcal{E}_{\nu,1}}, \mathcal{D}(\overline{\mathcal{E}_{\nu, 1}})\right) \subset \left(\mathcal{E}_{\nu,1}, D \left(\mathcal{E}_{\nu,1}\right)\right)$ and it is not known if equality holds (see, e.g., \cite{U_07,SU_12} for related results on $L^2(\mathbb{R}^d,dx)$ and see Lemmas \ref{lem:reduction_cc} and \ref{lem:reduction_csb} and Proposition \ref{prop:markov_uniqueness_rot_inv} of the Appendix section for the case $\mu_X= \mu_\alpha^{\operatorname{rot}}$ and $\nu= \nu_\alpha^{\operatorname{rot}}$, with $\alpha \in (0,2)$).~This question will be investigated elsewhere in full generality.\\
(iii) Note that in the proof of Theorem \ref{thm:stability_estimate_second_moment}, the following two facts have been used: for all $f \in \mathcal{D}(\overline{\mathcal{E}_{\nu, \omega_X}}),
$
\begin{align}\label{eq:nullity}
\overline{\mathcal{E}_{\nu, \omega_X}}(f, 1) = 0,
\end{align}
and, for all $f \in L^2(\mu_X , \mathbb{R}^d)$ and all $t \geq 0$, 
\begin{align}\label{eq:mass_conservation}
\int_{\mathbb{R}^d} P_t(f)(x) \mu_X(dx) = \int_{\mathbb{R}^d} f(x)\mu_X(dx),
\end{align}
where $(P_t)_{t \geq 0}$ is the $\mathcal{C}_0$-semigroup of linear symmetric contractions generated by \\$\left(\overline{\mathcal{E}_{\nu,\omega_X}}, \mathcal{D}(\overline{\mathcal{E}_{\nu, \omega_X}})\right)$. \eqref{eq:mass_conservation} follows from an integration by parts combined with \eqref{eq:nullity}.
\end{rem}

Now, let us apply the previous result to obtain explicit rates of convergence for the generalized central limit theorems already put forward in Section \ref{sec:notations_preliminaries}.~For this purpose, let us recall the framework of the canonical example. Let $(Z_k)_{k \geq 1}$ be a sequence of independent random vectors of $\mathbb{R}^d$ which sequence of characteristic functions $\left(\varphi_k\right)_{k \geq 1}$ is given, for all $k \geq 1$ and all $ \xi \in \mathbb{R}^d$, by
\begin{align*}
\varphi_k(\xi) = \dfrac{\hat{\mu}((k+1) \xi)}{\hat{\mu}(k \xi)}, 
\end{align*}
where $\hat{\mu}$ is the Fourier transform of $\mu$. Let $(S_n)_{n \geq 1}$ be the sequence of random vectors of $\mathbb{R}^d$, defined, for all $n \geq 1$, by
\begin{align*}
S_n = \frac{1}{n} \sum_{k =1}^n Z_k. 
\end{align*}
Then, thanks to Remark \ref{rem:examples}, (ii), $S_n$, $n \geq 1$, is infinitely divisible with characteristic function given by \eqref{eq:fc_Sn} and L\'evy measure $\tilde{\nu}_n$ defined by \eqref{eq:Levy_measure_Sn}.~Thanks to \cite[Lemma $4.1$]{Chen_85}, $\tilde{\mu}_n \ast \tilde{\nu}_n << \tilde{\mu}_n$, for all $n \geq 1$, where $\tilde{\mu}_n$ denotes the law of $S_n$. As already underlined in Remark \ref{rem:examples}, Point $(ii)$, the law of $S_n$, $n \geq 1$, verifies a weighted Poincar\'e-type inequality with weight function $\omega_n$ given, for all $r \in (0, +\infty)$, by  
\begin{align}\label{eq:omega_n_simple_case}
\omega_n(r) = \frac{1}{k(r)} \left(k\left(\dfrac{nr}{n+1}\right) - k \left(nr\right)\right),
\end{align}
and with L\'evy measure $\nu$ (see \eqref{ineq:Poincare_inequality_simple_case}). Finally, since $S_n$ is infinitely divisible, for all $n \geq 1$, and since $\mu$ has finite second moment, 
\begin{align}\label{eq:second_moment_simple_case}
\int_{\mathbb{R}^d} \|u\|^2 \omega_n \left(\|u\|\right) \nu(du) = \int_{\mathbb{R}^d} \|u\|^2 \tilde{\nu}_n(du) = \mathbb{E} \|S_n\|^2 <+\infty. 
\end{align}
Thus, the following corollary of Theorem \ref{thm:stability_estimate_second_moment} holds true.

\begin{cor}\label{cor:simple_case_finite_second_moment}
Let $d \geq 1$ be an integer, let $(S_n)_{n \geq 1}$ be the sequence of random vectors of $\mathbb{R}^d$ with law $\tilde{\mu}_n$, $n \geq 1$. Then, for all $n \geq 1$, 
\begin{align}\label{ineq:rates_convergence_simple_case_second_moment}
d_{W_2}(\tilde{\mu}_n , \mu) \leq \frac{1}{n} \left(1+ \frac{1}{n}\right) \left(\int_{\mathbb{R}^d} \|u\|^2 \nu(du)\right).
\end{align}
\end{cor}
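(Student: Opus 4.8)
The plan is to apply Theorem \ref{thm:stability_estimate_second_moment} directly to $X = S_n$ with the explicit weight function $\omega_n$ given by \eqref{eq:omega_n_simple_case}, and then to estimate each of the two error terms on the right-hand side of \eqref{eq:stability_estimate_second_moment}. The hypotheses of the theorem have already been verified in the discussion preceding the corollary: the second-moment condition $\int_{\bbr^d}\|u\|^2\omega_n(\|u\|)\nu(du)<+\infty$ holds by \eqref{eq:second_moment_simple_case}, the absolute continuity $\tilde\mu_n\ast\tilde\nu_n\ll\tilde\mu_n$ holds by \cite[Lemma $4.1$]{Chen_85} (noting that $\omega_n\nu=\tilde\nu_n$), and the weighted Poincar\'e-type inequality \eqref{ineq:Poinc_type_weights} for $\mu_X=\tilde\mu_n$ with weight $\omega_n$ and L\'evy measure $\nu$ is precisely \eqref{ineq:Poincare_inequality_simple_case}. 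So the corollary follows once we bound the two terms.

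First I would handle the first error term. By \eqref{eq:second_moment_simple_case}, $\int_{\bbr^d}\|u\|^2\omega_n(\|u\|)\nu(du)=\bbe\|S_n\|^2$, while $\int_{\bbr^d}\|x\|^2\tilde\mu_n(dx)=\bbe\|S_n\|^2$ as well; hence the quantity under the square root, $\int_{\bbr^d}\|u\|^2\omega_n(\|u\|)\nu(du)-\int_{\bbr^d}\|x\|^2\tilde\mu_n(dx)$, vanishes identically. Therefore the entire first summand in \eqref{eq:stability_estimate_second_moment} is zero, and the bound reduces to
\begin{align*}
d_{W_2}(\tilde\mu_n,\mu)\leq \frac{1}{2}\int_{\bbr^d}\|u\|^2\bigl|\omega_n(\|u\|)-1\bigr|\nu(du).
\end{align*}

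Next I would estimate this remaining term. Using the polar decomposition \eqref{eq:polar_decomposition_stability} of $\nu$ together with the formula \eqref{eq:omega_n_simple_case} for $\omega_n$, one writes
\begin{align*}
\int_{\bbr^d}\|u\|^2\bigl|\omega_n(\|u\|)-1\bigr|\nu(du)=\int_{\mathbb{S}^{d-1}}\int_0^{+\infty}r^2\left|\frac{k(nr/(n+1))-k(nr)}{k(r)}-1\right|\frac{k(r)}{r}\,dr\,\sigma(dx).
\end{align*}
The key point is that $k$ is non-increasing, so for $r>0$ we have $k(nr/(n+1))\geq k(r)\geq k(nr)$ (since $nr/(n+1)\leq r\leq nr$ for $n\geq 1$), whence the quantity inside the absolute value is non-negative and we may drop the bars. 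Expanding, the integral equals $\int_{\bbr^d}\|u\|^2\omega_n(\|u\|)\nu(du)-\int_{\bbr^d}\|u\|^2\nu(du)$ whenever the latter is finite, i.e. $\bbe\|S_n\|^2-\int_{\bbr^d}\|u\|^2\nu(du)$; but this difference must be bounded more carefully since $\bbe\|S_n\|^2\to\int_{\bbr^d}\|u\|^2\nu(du)$. Instead I would estimate $r^2(k(nr/(n+1))-k(nr))/r$ pointwise by comparison with the target integrand $r^2k(r)/r$ after the change of variables $r\mapsto r/n$ and $r\mapsto (n+1)r/n$: substituting $s=nr$ gives $\int_0^{+\infty}(s/n)^2 k(s(n+1)^{-1}n)\,ds/s\cdot(\text{scaling})$, and one checks that the total mass rescales by the factor $(1/n)^2\cdot(((n+1)/n)^2-1)=\frac{1}{n^2}\cdot\frac{2n+1}{n^2}$... more transparently, $\int_{\bbr^d}\|u\|^2\tilde\nu_n(du)=\bbe\|S_n\|^2=\frac{1}{n^2}\sum_{k=1}^n\bbe\|Z_k\|^2$ and $\bbe\|Z_k\|^2=\int_{\bbr^d}\|u\|^2\bigl(k(u/(k+1))\cdot\text{(polar)}\bigr)$ telescopes so that $\sum_{k=1}^n\bbe\|Z_k\|^2$ can be compared to $(n+1)^2-1$ times $\frac{1}{?}$; carrying this out yields $\bbe\|S_n\|^2=\frac{(n+1)^2-1}{n^2}\int_{\bbr^d}\|u\|^2\nu(du)\cdot\frac{1}{?}$. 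The clean statement is $\int_{\bbr^d}\|u\|^2\omega_n(\|u\|)\nu(du)=\frac{1}{n}\bigl(1+\frac1n\bigr)\int_{\bbr^d}\|u\|^2\nu(du)$, obtained by the substitution $v=nu$ and $v=\frac{n+1}{n}u$ in the two pieces of $\omega_n\nu$ and using the scaling of the radial part; since $\omega_n\geq$ its positive part and $\int\|u\|^2(\omega_n-1)^+\nu\leq\int\|u\|^2\omega_n\nu$, and in fact here $\omega_n\nu-\nu$ is a signed measure whose total variation in the weight $\|u\|^2$ equals $\int\|u\|^2\omega_n\nu+\int\|u\|^2\nu$ minus twice the common mass... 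The main obstacle is precisely this bookkeeping: showing $\frac12\int_{\bbr^d}\|u\|^2|\omega_n(\|u\|)-1|\nu(du)\leq\frac1n(1+\frac1n)\int_{\bbr^d}\|u\|^2\nu(du)$. Granting the scaling identity $\int_{\bbr^d}\|u\|^2\omega_n(\|u\|)\nu(du)=\frac{1}{n}(1+\frac1n)\int_{\bbr^d}\|u\|^2\nu(du)$ (which also makes \eqref{eq:second_moment_simple_case} explicit) and using $|\omega_n-1|\leq\omega_n+1$ would give the factor $\frac12(\frac{1}{n}(1+\frac1n)+1)$, which is too large; the correct route is to note $k(nr/(n+1))-k(nr)-k(r)$ changes sign, split the $r$-integral at the point(s) where $\omega_n(r)=1$, and bound $|\omega_n-1|$ on each piece by the larger of the two, ultimately telescoping the radial integrals $\int_0^\infty r\,k(cr)\,dr$ against each other. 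I expect this sign-splitting and telescoping to be the one genuinely delicate computation; everything else is substitution into Theorem \ref{thm:stability_estimate_second_moment} and the observation that the Stein-discrepancy term vanishes because $S_n$ is infinitely divisible so its Stein kernel is the identity.
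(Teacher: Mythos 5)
Your setup is correct: you correctly verify the hypotheses of Theorem~\ref{thm:stability_estimate_second_moment}, correctly observe via \eqref{eq:second_moment_simple_case} that the Stein-discrepancy term vanishes because $\int_{\bbr^d}\|u\|^2\omega_n(\|u\|)\nu(du)=\bbe\|S_n\|^2=\int_{\bbr^d}\|x\|^2\tilde\mu_n(dx)$, and correctly reduce the problem to bounding $\tfrac12\int_{\bbr^d}\|u\|^2|\omega_n(\|u\|)-1|\nu(du)$. That part matches the paper exactly.

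The gap is in the final estimate, and it begins with a false claim. You assert that because $k$ is non-increasing, the numerator $k(nr/(n+1))-k(nr)-k(r)$ is non-negative and the absolute value can be dropped. This is wrong: monotonicity gives $k(nr/(n+1))\geq k(r)\geq k(nr)\geq 0$, but that tells you nothing about the sign of $k(nr/(n+1))-k(nr)-k(r)$, since you are subtracting a second non-negative term. For example, with $k(r)=e^{-r}$ and $n=1$ the numerator is $e^{-r/2}-2e^{-r}$, which is negative for $r$ small. You later acknowledge that the quantity changes sign, contradicting the earlier claim, but then you propose splitting the $r$-integral at the zeros of $\omega_n-1$ and ``telescoping,'' which you do not carry out and which in any case would be unworkable: for a general non-increasing $k$ the zero set of $\omega_n-1$ need not be a single point, nor even explicitly locatable, so the split cannot be performed in closed form.

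The fix is much simpler than what you propose and requires no sign analysis at all. After passing to spherical coordinates the remaining term is
\begin{align*}
\frac{1}{2}\,\sigma(\mathbb{S}^{d-1})\int_0^{+\infty}\left|k\!\left(\tfrac{nr}{n+1}\right)-k(nr)-k(r)\right|\,r\,dr,
\end{align*}
and the triangle inequality together with $k(nr/(n+1))\geq k(r)$ gives the pointwise bound
\begin{align*}
\left|k\!\left(\tfrac{nr}{n+1}\right)-k(nr)-k(r)\right|\leq k(nr)+\left(k\!\left(\tfrac{nr}{n+1}\right)-k(r)\right),
\end{align*}
where both summands on the right are non-negative. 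Each of the three resulting radial integrals is of the form $\int_0^{+\infty}r\,k(cr)\,dr=c^{-2}\int_0^{+\infty}r\,k(r)\,dr$, so the bound becomes $\tfrac12\bigl(\tfrac{1}{n^2}+(1+\tfrac1n)^2-1\bigr)\int_{\bbr^d}\|u\|^2\nu(du)=\tfrac1n(1+\tfrac1n)\int_{\bbr^d}\|u\|^2\nu(du)$. This is precisely what the paper does, and it sidesteps the sign-change issue entirely.
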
 

\begin{proof}
Taking into account \eqref{eq:second_moment_simple_case} and applying Theorem \ref{thm:stability_estimate_second_moment}, for all $n \geq 1$, 
\begin{align*}
d_{W_2}(\tilde{\mu}_n , \mu) \leq \frac{1}{2} \int_{\mathbb{R}^d} \|u\|^2 |\omega_n(\|u\|)-1| \nu(du),
\end{align*}
where $\omega_n$ is given by \eqref{eq:omega_n_simple_case}. Now, using spherical coordinates and standard changes of variables, for all $n \geq1$, 
\begin{align*}
d_{W_2}(\tilde{\mu}_n , \mu) & \leq  \frac{1}{2} \int_{\mathbb{R}^d} \|u\|^2 |\omega_n(\|u\|)-1| \nu(du) , \\
& \leq \frac{1}{2} \int_{\mathbb{R}^d} \|u\|^2 \left|\dfrac{k\left(\frac{n \|u\|}{n+1}\right)-k(n \|u\|)}{k(\|u\|)} -1\right| \nu(du) , \\
& \leq \frac{1}{2} \sigma\left(\mathbb{S}^{d-1}\right) \left(\int_0^{+\infty} \left|k\left(\frac{n r}{n+1}\right)-k(nr)-k(r)\right| rdr \right) , \\
& \leq \frac{1}{2} \sigma\left(\mathbb{S}^{d-1}\right) \left(\int_0^{+\infty} rk(rn) dr + \int_0^{+\infty} r \left(k\left(\frac{n r}{n+1}\right) - k(r)\right) dr \right) , \\
& \leq \frac{1}{2} \left( \frac{1}{n^2} + \left(1 + \frac{1}{n}\right)^2 - 1\right) \left(\int_{\mathbb{R}^d} \|u\|^2 \nu(du) \right).
\end{align*}
This concludes the proof of the corollary. 
\end{proof}
\noindent
To end this section, let us consider the more general example discussed at the end of Section \ref{sec:notations_preliminaries} after the proof of Lemma \ref{lem:convolution_weighted_poincare_type_ineq}.~For this purpose, let us adopt the notations and the assumptions of this paragraph.~Moreover, based on the previously discussed example, let us make the following assumption: for all $k \geq 1$, 
\begin{align}\label{eq:cond_absolute_continuity_Zk}
\mu_k \ast \omega_k \nu << \mu_k, 
\end{align}
where $\mu_k$ is the law of the random vector $Z_k$, for all $k \geq 1$. First, let us prove the following technical lemma.

\begin{lem}\label{lem:propa_absolute_continuity}
Let $d \geq 1$ be an integer, Let $\nu$ be a L\'evy measure on $\mathbb{R}^d$ and let $(Z_k)_{k \geq 1}$ be a sequence of independent random vectors of $\mathbb{R}^d$ such that the associated sequence of laws $(\mu_k)_{k \geq 1}$ verifies \eqref{eq:cond_absolute_continuity_Zk} for a sequence of positive Borel functions $(\omega_k)_{k \geq 1}$ defined on $(0,+\infty)$ satisfying \eqref{eq:compatibility_condition} with respect to $\nu$. Then, for all $n \geq 1$, 
\begin{align}\label{eq:absolute_continuity_convolutions}
\left(\mu_1 \ast \dots \ast \mu_n\right) \ast \left(\sum_{j=1}^n \omega_j\right) \nu << \left(\mu_1 \ast \dots \ast \mu_n\right). 
\end{align}
\end{lem}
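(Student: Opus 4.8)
The plan is to prove \eqref{eq:absolute_continuity_convolutions} by induction on $n$, the base case $n=1$ being exactly the hypothesis \eqref{eq:cond_absolute_continuity_Zk}. So assume the statement holds for $n$, write $\mu^{(n)} = \mu_1 \ast \dots \ast \mu_n$ and $\Omega_n = \sum_{j=1}^n \omega_j$, and we must show
\begin{align*}
\left(\mu^{(n)} \ast \mu_{n+1}\right) \ast \left(\Omega_n + \omega_{n+1}\right)\nu << \mu^{(n)} \ast \mu_{n+1}.
\end{align*}
I would first reduce this, by linearity (a sum of two positive measures is absolutely continuous with respect to $\rho$ iff each summand is), to the two separate statements $\left(\mu^{(n)} \ast \mu_{n+1}\right) \ast \Omega_n \nu << \mu^{(n)} \ast \mu_{n+1}$ and $\left(\mu^{(n)} \ast \mu_{n+1}\right) \ast \omega_{n+1}\nu << \mu^{(n)} \ast \mu_{n+1}$.

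The key structural fact I would use is the following general lemma about convolutions: if $\rho_1 << \sigma_1$ and $\rho_2$ is any finite Borel positive measure, then $\rho_1 \ast \rho_2 << \sigma_1 \ast \rho_2$. This is immediate from the definition of convolution as a pushforward of a product measure: if $(\sigma_1 \ast \rho_2)(A) = 0$, then by Fubini $\int \sigma_1(A - y)\rho_2(dy) = 0$, so $\sigma_1(A-y) = 0$ for $\rho_2$-a.e.\ $y$, hence $\rho_1(A-y) = 0$ for $\rho_2$-a.e.\ $y$ by $\rho_1 << \sigma_1$, whence $(\rho_1 \ast \rho_2)(A) = 0$. (A mild technical point: the $\omega_j\nu$ here need not be finite near the origin, but \eqref{eq:compatibility_condition} with respect to $\nu$ guarantees $\Omega_n \nu$ and $\omega_{n+1}\nu$ are $\sigma$-finite, and one applies the argument on the complement of a shrinking ball around $0$ and takes a monotone limit; alternatively one decomposes $\omega_j\nu$ into its restriction to $\{\|u\|\le1\}$ and $\{\|u\|>1\}$.) Applying this with $\rho_1 = \mu^{(n)} \ast \Omega_n \nu$, $\sigma_1 = \mu^{(n)}$ (absolute continuity valid by the induction hypothesis) and $\rho_2 = \mu_{n+1}$, and using commutativity and associativity of convolution, gives
\begin{align*}
\left(\mu^{(n)} \ast \mu_{n+1}\right) \ast \Omega_n \nu = \left(\mu^{(n)} \ast \Omega_n \nu\right) \ast \mu_{n+1} << \mu^{(n)} \ast \mu_{n+1},
\end{align*}
which settles the first summand. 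The second summand is handled symmetrically: apply the same general lemma with $\rho_1 = \mu_{n+1} \ast \omega_{n+1}\nu$, $\sigma_1 = \mu_{n+1}$ (valid by \eqref{eq:cond_absolute_continuity_Zk} applied to $k = n+1$) and $\rho_2 = \mu^{(n)}$, to get $\left(\mu_{n+1} \ast \omega_{n+1}\nu\right) \ast \mu^{(n)} << \mu_{n+1} \ast \mu^{(n)}$, i.e.\ $\left(\mu^{(n)}\ast\mu_{n+1}\right)\ast \omega_{n+1}\nu << \mu^{(n)}\ast\mu_{n+1}$. Adding the two absolute-continuity relations completes the induction.

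The only real obstacle is the $\sigma$-finiteness/integrability bookkeeping around the singularity of $\omega_j \nu$ at the origin; once one is careful that Fubini's theorem applies to the relevant pushforward identities (which it does, since each $\omega_j\nu$ is $\sigma$-finite under \eqref{eq:compatibility_condition} and the $\mu_k$ are probability measures), the argument is purely formal manipulation of convolutions. Everything else is a clean two-line induction built on the elementary pushforward lemma.
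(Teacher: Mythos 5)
Your proposal is correct, and it fills in precisely the argument the paper leaves as "standard observations plus induction." The key structural ingredient you isolate — that $\rho_1 \ll \sigma_1$ implies $\rho_1 \ast \rho_2 \ll \sigma_1 \ast \rho_2$ for any (suitably $\sigma$-finite) $\rho_2$, via the disintegration $(\sigma_1 \ast \rho_2)(A) = \int \sigma_1(A-y)\,\rho_2(dy)$ — is exactly what makes the inductive step go: apply it once with the induction hypothesis $\mu^{(n)} \ast \Omega_n \nu \ll \mu^{(n)}$ and $\rho_2 = \mu_{n+1}$, and once with the hypothesis at rank $n+1$ and $\rho_2 = \mu^{(n)}$, then add. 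Your remark on the $\sigma$-finiteness of $\omega_j \nu$ near the origin (guaranteed by \eqref{eq:compatibility_condition}, which makes Tonelli legitimate in the pushforward identity) is the right technical caveat and the only place where care beyond formal manipulation is needed. This is, to the extent the paper's two-line proof reveals an approach, the same approach.
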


\begin{proof}
The case $n =2$ follows from standard observations. The general case follows by induction in $n$. 
\end{proof}
\noindent
Then, let us conclude this section with the following general quantitative bound.

\begin{cor}\label{cor:general_case_second_moment}
Let $d \geq 1$ be an integer and let $(b_n)_{n \geq 1}$ be a sequence of positive reals such that 
\begin{align*}
b_n \longrightarrow 0, \quad \frac{b_{n+1}}{b_n} \longrightarrow 1,
\end{align*}
as $n$ tends to $+\infty$. Let $(Z_k)_{k \geq 1}$ be a sequence of independent centered random vectors of $\mathbb{R}^d$ with finite second moment and with laws $(\mu_k)_{k \geq 1}$ such that there exists a sequence of positive Borel functions $(\omega_k)_{k \geq 1}$ defined on $(0,+\infty)$ such that, for all $k \geq 1$, 
\begin{itemize}
\item $\mu_k \ast \omega_k \nu << \mu_k$;
\item $\omega_k$ verifies \eqref{eq:compatibility_condition} with respect to $\nu$ and, for all $f \in \mathcal{C}_b^1(\mathbb{R}^d , \mathbb{R}^d)$ such that $\mathbb{E} f(Z_k) = 0$, 
\begin{align}\label{ineq:weighted_Poincare_type_ineq}
\mathbb{E} \|f(Z_k)\|^2 \leq \int_{\mathbb{R}^d} \int_{\mathbb{R}^d} \|f(z+u)-f(z)\|^2 \omega_k(\|u\|) \nu(du) \mu_k(dz). 
\end{align}
\end{itemize}
Let $(S_n)_{n \geq 1}$ be the sequence of random vectors of $\mathbb{R}^d$ defined, for all $n \geq 1$, by
\begin{align*}
S_n = b_n \sum_{k=1}^n Z_k,
\end{align*}
and with law $\tilde{\mu}_n$. Then, for all $n \geq 1$, 
\begin{align*}
d_{W_2}(\tilde{\mu}_n , \mu) & \leq \frac{1}{2}  \left(\int_{\mathbb{R}^d} \| u \|^2 \tilde{\omega}_n(\| u \|) \nu(du)\right)^{\frac{1}{2}}  \sqrt{ \int_{\mathbb{R}^d} \|u\|^2 \tilde{\omega}_n(\|u\|) \nu(du) - \int_{\mathbb{R}^d} \|x\|^2 \tilde{\mu}_n(dx) } \nonumber \\
& \quad\quad+ \frac{1}{2} \int_{\mathbb{R}^d} \|u\|^2 \left| \tilde{\omega}_n(\|u\|)-1\right| \nu(du),
\end{align*}
where $\tilde{\omega}_n$ is defined by \eqref{eq:weight_function_gene_CLT}. In particular, if $\mathbb{E} \|Z_k\|^2 = \int_{\mathbb{R}^d} \|u\|^2 \omega_k(\|u\|) \nu(du)$, for all $k \geq 1$, then for all $n \geq 1$, 
\begin{align*}
d_{W_2}(\tilde{\mu}_n , \mu) & \leq \frac{1}{2}\int_{\mathbb{R}^d} \|u\|^2 \left| \tilde{\omega}_n(\|u\|)-1\right| \nu(du).
\end{align*}
\end{cor}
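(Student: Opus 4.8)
The plan is to apply Theorem~\ref{thm:stability_estimate_second_moment} directly to $X = S_n$, having first verified the three hypotheses of that theorem with $\mu_X = \tilde\mu_n$ and $\omega_X = \tilde\omega_n$. Recall from the discussion preceding \eqref{eq:weight_function_gene_CLT} that $\tilde\mu_n = (\mu_1\ast\cdots\ast\mu_n)\circ T_{b_n}^{-1}$ and that, using Lemma~\ref{lem:convolution_weighted_poincare_type_ineq} $n-1$ times together with the scaling relation $\nu(du)$-to-$\nu(du)$ induced by the polar decomposition \eqref{eq:more_structure_levy}, the law $\tilde\mu_n$ satisfies the weighted Poincar\'e-type inequality \eqref{ineq:wnl_Poincare_ineq} with weight $\tilde\omega_n$ given by \eqref{eq:weight_function_gene_CLT} and L\'evy measure $\nu$. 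This is exactly the third bullet of Theorem~\ref{thm:stability_estimate_second_moment}. For the first bullet, $\int_{\bbr^d}\|u\|^2\tilde\omega_n(\|u\|)\nu(du) = \bbe\|S_n\|^2 < +\infty$ by the change of variables giving $\tilde\omega_n(\|u\|)\nu(du)$ as the L\'evy measure of the infinitely divisible vector $S_n$ (each $Z_k$ being centered and $\nu$-dominated in the sense of \eqref{ineq:weighted_Poincare_type_ineq}, so $\bbe\|Z_k\|^2<+\infty$, hence $\bbe\|S_n\|^2<+\infty$ since $S_n$ is ID with this L\'evy measure). For the second bullet, $\tilde\mu_n\ast\tilde\omega_n\nu \ll \tilde\mu_n$, I would invoke Lemma~\ref{lem:propa_absolute_continuity}: the assumption $\mu_k\ast\omega_k\nu\ll\mu_k$ propagates to $(\mu_1\ast\cdots\ast\mu_n)\ast(\sum_{j=1}^n\omega_j)\nu \ll \mu_1\ast\cdots\ast\mu_n$, and the absolute continuity is preserved under the scaling map $T_{b_n}$, which yields the claim after tracking how $\sum_j\omega_j$ transforms into $\tilde\omega_n$ under scaling.

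With all three hypotheses in place, Theorem~\ref{thm:stability_estimate_second_moment} applied with $X = S_n$, $\mu_X=\tilde\mu_n$, $\omega_X = \tilde\omega_n$ gives precisely the stated bound
\begin{align*}
d_{W_2}(\tilde{\mu}_n , \mu) & \leq \frac{1}{2}  \left(\int_{\bbr^d} \| u \|^2 \tilde{\omega}_n(\| u \|) \nu(du)\right)^{\frac{1}{2}}  \sqrt{ \int_{\bbr^d} \|u\|^2 \tilde{\omega}_n(\|u\|) \nu(du) - \int_{\bbr^d} \|x\|^2 \tilde{\mu}_n(dx) } \\
& \quad\quad+ \frac{1}{2} \int_{\bbr^d} \|u\|^2 \left| \tilde{\omega}_n(\|u\|)-1\right| \nu(du).
\end{align*}
For the ``in particular'' statement, I would observe that the assumption $\bbe\|Z_k\|^2 = \int_{\bbr^d}\|u\|^2\omega_k(\|u\|)\nu(du)$ for every $k$, combined with independence of the $Z_k$ and the scaling $S_n = b_n\sum_k Z_k$, forces $\int_{\bbr^d}\|x\|^2\tilde\mu_n(dx) = \bbe\|S_n\|^2 = b_n^2\sum_{k=1}^n\bbe\|Z_k\|^2 = \int_{\bbr^d}\|u\|^2\tilde\omega_n(\|u\|)\nu(du)$ (the last equality again by the change-of-variables identification of $\tilde\omega_n\nu$ as the L\'evy measure of $S_n$). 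Hence the first term on the right-hand side vanishes and only the second term survives.

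The main technical obstacle is the bookkeeping in the second bullet: one must check that $\tilde\mu_n\ast\tilde\omega_n\nu \ll \tilde\mu_n$ genuinely follows from $\mu_k\ast\omega_k\nu\ll\mu_k$, which requires combining Lemma~\ref{lem:propa_absolute_continuity} with a scaling argument and confirming that the weight $\tilde\omega_n$ of \eqref{eq:weight_function_gene_CLT} is exactly the image of $\sum_j\omega_j$ under the change of variables $u\mapsto u/b_n$ weighted by the Radon--Nikodym factor $k(\|u\|/b_n)/k(\|u\|)$ coming from \eqref{eq:more_structure_levy}; once these identifications are pinned down, the rest is a direct citation of Theorem~\ref{thm:stability_estimate_second_moment} and Corollary~\ref{cor:simple_case_finite_second_moment}-style elementary manipulation. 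No new estimates are needed beyond those already established.
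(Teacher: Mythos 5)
Your overall strategy---apply Theorem~\ref{thm:stability_estimate_second_moment} to $\mu_X = \tilde\mu_n$, $\omega_X = \tilde\omega_n$ after checking the three hypotheses via the discussion around \eqref{eq:weight_function_gene_CLT}, Lemma~\ref{lem:convolution_weighted_poincare_type_ineq} and Lemma~\ref{lem:propa_absolute_continuity}, then carry out the change-of-variables computation for the ``in particular'' clause---is the same as the paper's terse proof.

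However, your justification of the first hypothesis of Theorem~\ref{thm:stability_estimate_second_moment} contains a real error. You assert that $\tilde\omega_n(\|u\|)\nu(du)$ is ``the L\'evy measure of the infinitely divisible vector $S_n$'' and that $\int_{\bbr^d}\|u\|^2\tilde\omega_n(\|u\|)\nu(du) = \bbe\|S_n\|^2$. Neither claim holds here: unlike Corollary~\ref{cor:simple_case_finite_second_moment}, the $Z_k$ in Corollary~\ref{cor:general_case_second_moment} are \emph{not} assumed infinitely divisible---they are arbitrary centered random vectors of finite second moment satisfying weighted Poincar\'e-type inequalities---so $S_n$ is in general not infinitely divisible and $\tilde\omega_n\nu$ has no interpretation as its L\'evy measure. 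The identity $\int_{\bbr^d}\|u\|^2\tilde\omega_n(\|u\|)\nu(du) = \bbe\|S_n\|^2$ is exactly the content of the ``in particular'' hypothesis $\bbe\|Z_k\|^2 = \int_{\bbr^d}\|u\|^2\omega_k(\|u\|)\nu(du)$; absent it, one can only infer $\bbe\|S_n\|^2 \leq \int_{\bbr^d}\|u\|^2\tilde\omega_n(\|u\|)\nu(du)$, which is precisely what makes the square-root term in the stated bound a non-negative real number. The finiteness needed for the first bullet should instead be read off the pure change-of-variables identity $\int_{\bbr^d}\|u\|^2\tilde\omega_n(\|u\|)\nu(du) = b_n^2\sum_{\ell=1}^n\int_{\bbr^d}\|u\|^2\omega_\ell(\|u\|)\nu(du)$, which holds without any infinite-divisibility assumption. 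Aside from this misattribution---importing the canonical ID example into the general case---your argument reproduces the structure of the paper's proof.
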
 

\begin{proof}
The first bound is a direct consequence of the stability estimate of Theorem \ref{thm:stability_estimate_second_moment} together with the fact that the law of $S_n$ satisfies a weighted Poincar\'e-type inequality with weight function $\tilde{\omega}_n$ and L\'evy measure $\nu$, for all $n \geq 1$. For the second bound, under the assumption that $\mathbb{E} \|Z_k\|^2 = \int_{\mathbb{R}^d} \|u\|^2 \omega_k(\|u\|) \nu(du)$, for $k \geq 1$, 
\begin{align*}
\int_{\mathbb{R}^d} \|u\|^2 \tilde{\omega}_{n}\left(\|u\|\right) \nu(du) & = \int_{\mathbb{R}^d} \|u\|^2 \left(\sum_{\ell = 1}^n \omega_\ell\left(\frac{\|u\|}{b_n}\right)\right) \dfrac{k\left(\frac{\|u\|}{b_n}\right)}{k(\|u\|)} \nu(du) , \\
& = \int_{(0, +\infty) \times \mathbb{S}^{d-1}} r^2 \left(\sum_{\ell = 1}^n \omega_\ell \left(\frac{r}{b_n}\right)\right) \dfrac{k \left(\frac{r}{b_n}\right)}{k(r)} \dfrac{k(r) dr}{r} \sigma(dy) , \\
& = b_n^2 \sigma\left(\mathbb{S}^{d-1}\right) \int_0^{+\infty} r \left(\sum_{\ell = 1}^n \omega_\ell \left(r\right)\right)k(r)dr, \\
& = b_n^2 \sum_{\ell = 1}^{n} \int_{\mathbb{R}^d} \|u\|^2 \omega_\ell\left(\|u\|\right) \nu(du) , \\
& =b_n^2 \sum_{\ell = 1}^n \mathbb{E} \|Z_\ell\|^2 = \mathbb{E} \|S_n\|^2.  
\end{align*} 
This concludes the proof of the corollary. 
\end{proof}

\section{Stability Estimate for non-degenerate Symmetric $\alpha$-stable Probability Measures with $\alpha \in (1,2)$.}\label{sec:stability_alpha_stable_pm_12}
\noindent
In this section, we prove Theorems \ref{thm:stability_estimate_NDS_stable}, \ref{thm:all_dimensions_NDS} and \ref{thm:quantitative_approximation_Ls}.~The main idea in \cite{AH20_3} to obtain stability estimates, in the rotationally invariant case, is to use a smooth truncation procedure to build a sequence of ``approximate" eigenvectors: let $g$ and $g_{R}$ be the functions defined, for all $R>0$ and all $x \in \mathbb{R}^d$, by 
\begin{align}\label{eq:eigen_approximate_eigenvector}
g(x) = x , \quad g_{R}(x) = x \exp\left( - \frac{\|x\|^2}{R^2}\right). 
\end{align}
Next, let us prove an analogue of \cite[Lemma $5.6$ and Lemma $5.7$]{AH20_3}.
\begin{prop}\label{prop:Weyl_sequence_general_case}
Let $d \geq 1$ be an integer, let $\alpha \in (1,2)$, let $\mu_\alpha$ be the non-degenerate symmetric $\alpha$-stable probability measure on $\mathbb{R}^d$ which Fourier transform is given by \eqref{eq:fourier_symmetric_stable} and let $\nu_\alpha$ be the associated L\'evy measure on $\mathbb{R}^d$. For $R>0$, let $g_{R}$ be the function defined by \eqref{eq:eigen_approximate_eigenvector}. Then,
\begin{align}\label{eq:Weyl_sequence}
\underset{R \rightarrow +\infty}{\lim} \mathcal{E}^{\nu_\alpha} \left(g_R ; g_R\right) - \langle g_R ; g_R \rangle_{L^2(\mu_\alpha)} = 0,
\end{align}
where $\mathcal{E}^{\nu_\alpha}$ is defined, for all $f_1, f_2 \in \mathcal{C}_b^1(\mathbb{R}^d,\mathbb{R}^d)$, by
\begin{align}\label{eq:form_ND_symmetric_case}
\mathcal{E}^{\nu_\alpha}(f_1 , f_2) = \int_{\mathbb{R}^d} \int_{\mathbb{R}^d} \langle f_1(x+u)-f_1(x) ; f_2(x+u)-f_2(x) \rangle \nu_\alpha(du) \mu_\alpha(dx). 
\end{align}
\end{prop}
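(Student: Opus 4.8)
The plan is to run the ``approximate (Weyl) eigenvector'' argument underlying \cite[Lemmas $5.6$ and $5.7$]{AH20_3}. The starting point is that the identity map $g(x)=x$ is a generalized eigenvector of the $\alpha$-stable Ornstein--Uhlenbeck operator attached to $(P_t^{\nu_\alpha})_{t\ge0}$: componentwise, $-\langle x;\nabla g_j(x)\rangle+\int_{\mathbb{R}^d}\langle\nabla g_j(x+u)-\nabla g_j(x);u\rangle\nu_\alpha(du)=-x_j$, so $\mathcal{E}^{\nu_\alpha}(g,g)$ and $\langle g;g\rangle_{L^2(\mu_\alpha)}$ ought to coincide. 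Since $\alpha<2$, however, $g\notin L^2(\mu_\alpha)$ and $\int_{\mathbb{R}^d}\|u\|^2\nu_\alpha(du)=+\infty$, so both quantities are actually infinite; the Gaussian cutoff $g_R$ regularizes $g$ while allowing one to isolate the exact cancellation between them.

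First I would record the identity produced by the invariance of $\mu_\alpha$ under $(P_t^{\nu_\alpha})$: the right-hand side of the Stein equation \eqref{eq:stein_equation_Stable} has zero $\mu_\alpha$-average, while its left-hand side, read as acting on an arbitrary $\phi\in\mathcal{S}(\mathbb{R}^d)$, is the generator of $(P_t^{\nu_\alpha})$ applied to $\phi$ and thus also has zero $\mu_\alpha$-average; hence
\[
\langle g;\nabla\phi\rangle_{L^2(\mu_\alpha)}=\mathcal{E}^{\nu_\alpha}(\nabla\phi,g),\qquad \phi\in\mathcal{S}(\mathbb{R}^d).
\]
Next, observe that $g_R=\nabla\Phi_R$ with $\Phi_R(x)=-\tfrac{R^2}{2}e^{-\|x\|^2/R^2}\in\mathcal{S}(\mathbb{R}^d)$, and write $g_R=g-F_R$ with $F_R(x)=x(1-e^{-\|x\|^2/R^2})$. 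Applying the identity to $\phi=\Phi_R$ and expanding bilinearly,
\[
\mathcal{E}^{\nu_\alpha}(g_R,g_R)-\langle g_R;g_R\rangle_{L^2(\mu_\alpha)}=\langle F_R;g_R\rangle_{L^2(\mu_\alpha)}-\mathcal{E}^{\nu_\alpha}(g_R,F_R),
\]
each term of the right-hand side being finite for fixed $R$ since $g_R$ is bounded, $\|F_R(x+u)-F_R(x)\|\le C_d\|u\|$ uniformly (see the proof of Theorem \ref{thm:stability_estimate_rotationally_invariant}), and $\int_{\|u\|>1}\|u\|\nu_\alpha(du)<+\infty$ as $\alpha>1$.

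The heart of the proof, and the step I expect to be the main obstacle, is that each of $\langle F_R;g_R\rangle_{L^2(\mu_\alpha)}$ and $\mathcal{E}^{\nu_\alpha}(g_R,F_R)$ diverges at the rate $R^{2-\alpha}$ as $R\to+\infty$, so the limit cannot be taken termwise and one must display the precise cancellation of the leading order. Following \cite[Lemmas $5.6$ and $5.7$]{AH20_3}, I would substitute $\Delta_u F_R=u-\Delta_u g_R$ in $\mathcal{E}^{\nu_\alpha}(g_R,F_R)$, split the $u$-integral at $\|u\|=1$, and estimate on $\{\|u\|\le1\}$ through the Taylor bound $\|\Delta_u F_R(x)-\langle\nabla F_R(x);u\rangle\|\le C_d R^{-2}(1+\|x\|)\|u\|^2$ with $\int_{\|u\|\le1}\|u\|^2\nu_\alpha(du)<+\infty$, and on $\{\|u\|>1\}$ through $\|\Delta_u F_R(x)\|\le C_d\|u\|$ with $\int_{\|u\|>1}\|u\|\nu_\alpha(du)<+\infty$, using throughout the pointwise bounds $\|F_R(x)\|\le\min(\|x\|,\|x\|^3/R^2)$ and $\|\nabla F_R(x)\|\le C_d\|x\|^2/R^2$. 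Feeding in the polar decomposition \eqref{eq:polar_decomposition_stable} of $\nu_\alpha$ and the scaling \eqref{eq:def_scale_invariance}, and rescaling $x=R\tilde x$, $u=R\tilde u$, the leading contributions of the two terms become the same radial integral against a rescaled measure and cancel exactly. The genuinely new point relative to \cite{AH20_3} is anisotropy: in the rotationally invariant case rotation invariance of $\mu_\alpha^{\operatorname{rot}}$ and $\nu_\alpha^{\operatorname{rot}}$ trivializes this bookkeeping, whereas here one must carry the spectral measure $\lambda$ (equivalently the spherical part $\sigma$) through the estimates, using the explicit form \eqref{eq:fourier_symmetric_stable} of $\hat{\mu}_\alpha$ and the non-degeneracy \eqref{eq:ND_CNS}, which ensures that $\mu_\alpha$ has a bounded density with the expected $\|x\|^{-\alpha}$ tail.

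Once the leading $R^{2-\alpha}$ contributions are matched, the remainder is finished by dominated convergence: for fixed $x,u$ the integrands tend to $0$ (one has $F_R(x)\to0$, $\nabla F_R(x)\to0$, $\Delta_u F_R(x)\to0$, and $\nabla g_R(x)$ tends to the identity matrix), while the splitting above furnishes on each piece an $R$-uniform integrable majorant once the Gaussian gain $e^{-\|x\|^2/R^2}$ is used; this needs only $\int_{\mathbb{R}^d}\|x\|^\beta\mu_\alpha(dx)<+\infty$ for some $\beta\in(1,\alpha)$, true for every symmetric $\alpha$-stable law. Letting $R\to+\infty$ then yields $\mathcal{E}^{\nu_\alpha}(g_R,g_R)-\langle g_R;g_R\rangle_{L^2(\mu_\alpha)}\to0$, which is the claim.
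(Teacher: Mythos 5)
Your identity
\[
\mathcal{E}^{\nu_\alpha}(g_R,g_R)-\langle g_R;g_R\rangle_{L^2(\mu_\alpha)}=\langle F_R;g_R\rangle_{L^2(\mu_\alpha)}-\mathcal{E}^{\nu_\alpha}(g_R,F_R)
\]
is correct, but it does not make progress: once you insert $\Delta_u F_R=u-\Delta_u g_R$ and invoke the Stein identity $\langle g;\nabla\phi\rangle_{L^2(\mu_\alpha)}=\int\!\!\int\langle\Delta_u\nabla\phi(x);u\rangle\nu_\alpha(du)\mu_\alpha(dx)$ with $\phi=\Phi_R$, you find $\mathcal{E}^{\nu_\alpha}(g_R,F_R)=\langle g;g_R\rangle_{L^2(\mu_\alpha)}-\mathcal{E}^{\nu_\alpha}(g_R,g_R)$ and $\langle F_R;g_R\rangle_{L^2(\mu_\alpha)}=\langle g;g_R\rangle_{L^2(\mu_\alpha)}-\|g_R\|^2_{L^2(\mu_\alpha)}$, so the right-hand side collapses back to the left-hand side. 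The rewriting is a tautology, and the substitutions you propose afterwards just cycle through it. The real content of the proposition is the cancellation of the two $R^{2-\alpha}$ divergences, and your proposal does not actually display it.

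The estimates you sketch also do not suffice. On $\{\|u\|>1\}$, the only uniform-in-$R$ bounds are $\|\Delta_u g_R(x)\|\le C\|u\|$ (via $M_1(g_R)\le C$) and $\|\Delta_u F_R(x)\|\le C_d\|u\|$, whose product is not $\nu_\alpha$-integrable on $\{\|u\|>1\}$; the alternative $\|\Delta_u g_R\|\le 2\|g_R\|_\infty\sim R$ diverges. So the $\{\|u\|>1\}$ piece of $\mathcal{E}^{\nu_\alpha}(g_R,F_R)$ has no $R$-uniform majorant, and dominated convergence fails exactly where the $R^{2-\alpha}$ blow-up lives. To extract the leading constant by the rescaling $x=R\tilde x$, $u=R\tilde u$ you would then need the tail asymptotics $p_\alpha(R\tilde x)R^{d+\alpha}\to$ (density of $\nu_\alpha$), together with a matching of the resulting constants. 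This is a regular-variation statement that is genuinely direction-dependent in the anisotropic case (for a discrete spectral measure the pointwise ratio can vanish along directions outside $\operatorname{supp}\sigma$), so your assertion that ``$\mu_\alpha$ has a bounded density with the expected $\|x\|^{-\alpha}$ tail'' is not a uniform or uncontroversial fact — it is precisely the part you would have to prove, and the non-degeneracy condition \eqref{eq:ND_CNS} alone does not give it.

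The paper avoids all of this by working entirely in Fourier: using the Gaussian formula \eqref{eq:formula_gaussian} it expresses $\|g_{R,j}\|^2_{L^2(\mu_\alpha)}$ and $\mathcal{E}^{\nu_\alpha}(g_{R,j},g_{R,j})$ as double $\xi$-integrals, performs one integration by parts in $\xi_{2,j}$, rescales $\xi_i\mapsto\xi_i/R$, and pulls the exact $R^{2-\alpha}$ and $R^{2-2\alpha}$ prefactors out of the homogeneous functions $\tau_j$ and $m_\alpha$ by scale invariance \eqref{eq:def_scale_invariance}. The $R^{2-\alpha}$ terms then differ by $\tau_j(\xi_2)$ through the algebraic identity
\[
\int_{\bbr^d} iu_j e^{i\langle u;\xi_2\rangle}\left(e^{i\langle u;\xi_1\rangle}-1\right)\nu_\alpha(du)-\tau_j(\xi_1+\xi_2)=-\tau_j(\xi_2),
\]
and a further integration by parts in $\xi_1$ downgrades this remaining piece to $R^{2-2\alpha}\to0$. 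Only $\hat\mu_\alpha(\xi/R)\to1$ is needed, never any spatial tail estimate of $p_\alpha$; this is why the argument passes from $\mu_\alpha^{\operatorname{rot}}$ to arbitrary non-degenerate symmetric $\mu_\alpha$ essentially for free. If you want to rescue the spatial route you would need to supply the missing regular-variation input for general spectral measures and carry the constants explicitly — a substantially heavier argument than the Fourier one.
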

\begin{proof}
The proof relies on Fourier methods as in \cite[Lemma 5.6 and Lemma 5.7]{AH20_3}~but with one integration by parts only and with a careful analysis of the leading terms.~First, by Fourier inversion, for all $R>0$ and all $j \in \{1, \dots, d\}$, 
\begin{align*}
\mathbb{E} g_{R,j}(X_\alpha)^2 = \frac{1}{(2\pi)^{2d}} \int_{\mathbb{R}^{2d}} \mathcal{F}(g_{R,j})(\xi_1) \mathcal{F}(g_{R,j})(\xi_2) \widehat{\mu_\alpha} \left(\xi_1 + \xi_2\right) d\xi_1 d\xi_2, 
\end{align*}
where $X_\alpha \sim \mu_\alpha$ and where $g_{R,j}(x) = x_j \exp\left(- \|x\|^2/R^2\right)$, for all $x \in \mathbb{R}^d$ and all $j \in \{1, \dots, d\}$.~Now, classical Fourier analysis ensures, for all $R>0$ and all $\xi_1 \in \mathbb{R}^d$, that
\begin{align}\label{eq:formula_gaussian}
\mathcal{F}(g_{R,j})(\xi_1) &  = \int_{\mathbb{R}^d} x_j \exp\left( - \frac{\|x\|^2}{R^2}\right) \exp(- i  \langle x ; \xi_1 \rangle) dx , \nonumber \\
& = i \partial_{\xi_{1,j}} \left( \int_{\mathbb{R}^d} \exp\left( - \frac{\|x\|^2}{R^2}\right) \exp(- i  \langle x ; \xi_1 \rangle) dx\right)  , \nonumber \\
& = i \pi^{\frac{d}{2}} R^d \partial_{\xi_{1,j}} \left( \exp\left( -\frac{R^2}{4} \langle \xi_1 , \xi_1 \rangle \right)\right),
\end{align}
and similarly for $\mathcal{F}(g_{R,j})(\xi_2)$. Thus, for all $R>0$, 
\begin{align*}
\mathbb{E} g_{R,j}(X_\alpha)^2 & =   -\frac{\pi^d R^{2d}}{(2\pi)^{2d}} \int_{\mathbb{R}^{2d}} \partial_{\xi_{1,j}} \left( \exp\left( -\frac{R^2}{4} \langle \xi_1 , \xi_1 \rangle \right)\right)\partial_{\xi_{2,j}} \left( \exp\left( -\frac{R^2}{4} \langle \xi_2 , \xi_2 \rangle \right)\right) \\
& \quad\quad \times \widehat{\mu_\alpha}\left(\xi_1 + \xi_2\right) d\xi_1 d\xi_2 , \\
& = \frac{\pi^d R^{2d}}{(2\pi)^{2d}} \int_{\mathbb{R}^{2d}} \partial_{\xi_{1,j}} \left( \exp\left( -\frac{R^2}{4} \|\xi_1\|^2 \right)\right) \exp\left( -\frac{R^2}{4} \|\xi_2\|^2 \right) \\
&\quad\quad \times \partial_{\xi_{2,j}}\left(\widehat{\mu_\alpha}\left(\xi_1 + \xi_2\right)\right) d\xi_1 d\xi_2 , \\
& = - \frac{\pi^d R^{2d}}{(2\pi)^{2d}}  \int_{\mathbb{R}^{2d}}\frac{R^2}{2} \xi_{1, j} \exp\left( -\frac{R^2}{4} \|\xi_1\|^2 \right) \exp\left( -\frac{R^2}{4} \|\xi_2\|^2 \right) \\
&\quad\quad \times \tau_{j}(\xi_1+ \xi_2) \widehat{\mu_{\alpha}}(\xi_1+ \xi_2) d\xi_1 d\xi_2 , \\
&= - \frac{\pi^d R}{(2\pi)^{2d}} \int_{\mathbb{R}^{2d}} \frac{\xi_{1, j}}{2} \exp\left( -\frac{\|\xi_1\|^2}{4} \right) \exp\left( -\frac{\|\xi_2\|^2}{4} \right) \\
&\quad\quad \times  \tau_{j}\left(\frac{\xi_1+ \xi_2}{R}\right) \widehat{\mu_{\alpha}}\left(\frac{\xi_1+ \xi_2}{R}\right) d\xi_1 d\xi_2,
\end{align*}
with $\tau_j(\xi_1 + \xi_2) = \int_{\mathbb{R}^d} iu_j \left(e^{i \langle u ; \xi_1 +\xi_2 \rangle}-1\right)\nu_\alpha(du)$. Thanks to scale invariance, for all $R>0$,
\begin{align*}
\mathbb{E} g_{R,j}(X_\alpha)^2 & =  - \frac{\pi^d R^{2-\alpha}}{(2\pi)^{2d}} \int_{\mathbb{R}^{2d}} \frac{\xi_{1, j}}{2} \exp\left( -\frac{\|\xi_1\|^2}{4} \right) \exp\left( -\frac{\|\xi_2\|^2}{4} \right) \\
& \quad\quad \times  \tau_{j}\left(\xi_1+ \xi_2\right) \hat{\mu}_{\alpha}\left(\frac{\xi_1+ \xi_2}{R}\right) d\xi_1 d\xi_2. 
\end{align*}
Now, for the second term, thanks to a similar reasoning, for all $R>0$, 
\begin{equation}\label{eq:analysis_form_stable}
E^{\nu_\alpha} \left(g_{R, j } ; g_{R , j} \right) := \frac{1}{(2\pi)^{2d}} \int_{\mathbb{R}^{2d}} \mathcal{F}(g_{R,j})(\xi_1) \mathcal{F}(g_{R,j})(\xi_2) \widehat{\mu_\alpha}(\xi_1 + \xi_2) m_{\alpha}(\xi_1; \xi_2) d\xi_1 d\xi_2, 
\end{equation}
where $m_\alpha (\xi_1 , \xi_2) = \int_{\mathbb{R}^d} \nu_\alpha(du) \left(e^{ i \langle u ; \xi_1 \rangle}-1\right) \left(e^{ i \langle u ; \xi_2 \rangle}-1\right)$. Then, as previously, for all $R>0$, 
\begin{align*}
E^{\nu_\alpha} \left(g_{R, j } ; g_{R , j} \right) & = -\frac{\pi^{d} R^{2d}}{(2 \pi)^{2d}} \int_{\mathbb{R}^{2d}}  \partial_{\xi_{1,j}} \left( \exp\left( -\frac{R^2}{4} \|\xi_1\|^2 \right)\right)\partial_{\xi_{2,j}} \left( \exp\left( -\frac{R^2}{4} \|\xi_2\|^2 \right)\right)  \\
& \quad \quad \widehat{\mu_\alpha} \left(\xi_1 + \xi_2\right) m_{\alpha}(\xi_1; \xi_2) d\xi_1 d\xi_2 , \\
&= -\frac{\pi^{d} R^{2d}}{(2 \pi)^{2d}} \int_{\mathbb{R}^{2d}} \frac{R^2}{2} \xi_{1,j} \exp\left( -\frac{R^2}{4} \|\xi_1\|^2 \right) \exp\left( -\frac{R^2}{4} \|\xi_2\|^2 \right) \\
& \quad\quad \partial_{\xi_{2,j}}\left( \widehat{\mu_\alpha} \left(\xi_1 + \xi_2\right) m_{\alpha}(\xi_1; \xi_2) \right)d\xi_1 d\xi_2, \\
& = - \frac{\pi^{d} R^{2d}}{(2 \pi)^{2d}} \int_{\mathbb{R}^{2d}} \frac{R^2}{2} \xi_{1,j} \exp\left( -\frac{R^2}{4} \|\xi_1\|^2 \right) \exp\left( -\frac{R^2}{4} \|\xi_2\|^2 \right) \\
& \quad\quad \times \bigg(\tau_j(\xi_1+\xi_2) \widehat{\mu_\alpha}\left(\xi_1 + \xi_2\right) m_{\alpha}(\xi_1; \xi_2)  \\
& \quad\quad +\widehat{\mu_\alpha}\left(\xi_1 + \xi_2\right) \int_{\mathbb{R}^d} i u_j e^{i \langle u ; \xi_2 \rangle }\left(e^{i \langle u ; \xi_1 \rangle}-1\right) \nu_\alpha(du) \bigg) d\xi_1 d\xi_2 \\
& = - \frac{\pi^{d} R}{(2 \pi)^{2d}} \int_{\mathbb{R}^{2d}} \frac{\xi_{1,j}}{2} \exp\left( -\frac{\|\xi_1\|^2}{4} \right) \exp\left( -\frac{\|\xi_2\|^2}{4} \right) \\
& \quad\quad \times \bigg(\tau_j\left(\frac{\xi_1+\xi_2}{R}\right) \widehat{\mu_\alpha}\left(\frac{\xi_1 + \xi_2}{R}\right) m_{\alpha}\left(\frac{\xi_1}{R}; \frac{\xi_2}{R}\right)  \\
& \quad\quad +\widehat{\mu_\alpha}\left(\frac{\xi_1 + \xi_2}{R}\right) \int_{\mathbb{R}^d} i u_j e^{i \langle u ; \frac{\xi_2}{R} \rangle }\left(e^{i \langle u ; \frac{\xi_1}{R} \rangle}-1\right) \nu_\alpha(du) \bigg) d\xi_1 d\xi_2 , \\
& = - \frac{\pi^{d} R}{(2 \pi)^{2d}} \int_{\mathbb{R}^{2d}} \frac{\xi_{1,j}}{2} \exp\left( -\frac{\|\xi_1\|^2}{4} \right) \exp\left( -\frac{\|\xi_2\|^2}{4} \right) \\
& \quad\quad \times \bigg(R^{1- 2\alpha}\tau_j\left(\xi_1+\xi_2\right) \widehat{\mu_\alpha}\left(\frac{\xi_1 + \xi_2}{R}\right) m_{\alpha}\left(\xi_1; \xi_2\right)  \\
& \quad\quad +R^{1 - \alpha}\widehat{\mu_\alpha}\left(\frac{\xi_1 + \xi_2}{R}\right) \int_{\mathbb{R}^d} i u_j e^{i \langle u ; \xi_2 \rangle }\left(e^{i \langle u ; \xi_1 \rangle}-1\right) \nu_\alpha(du) \bigg) d\xi_1 d\xi_2.
\end{align*}
Thanks to the previous computations, one sees that $\mathcal{E}^{\nu_\alpha}(g_{R} , g_{R})$ is the sum of two terms: a term of order $R^{2-2\alpha}$ which goes to $0$ as $R$ tends to $+\infty$ and a term of order $R^{2 - \alpha}$ similar to the one appearing in the formula for $\mathbb{E} g_{R,j}(X_\alpha)^2$. Finally, for all $\xi_1 , \xi_2 \in \mathbb{R}^{d}$, 
\begin{align*}
 \int_{\mathbb{R}^d} i u_j e^{i \langle u ; \xi_2 \rangle }\left(e^{i \langle u ; \xi_1 \rangle}-1\right) \nu_\alpha(du)  - \tau_j(\xi_1 + \xi_2) & = \int_{\mathbb{R}^d} i u_j \bigg(e^{i \langle u ; \xi_1 + \xi_2 \rangle}- e^{i \langle u ; \xi_2 \rangle } \\
 &\quad\quad -e^{i \langle u ; \xi_1 + \xi_2 \rangle}+1 \bigg) \nu_\alpha(du) , \\
 & = - \int_{\mathbb{R}^d} i u_j \left(e^{i \langle u ; \xi_2 \rangle}-1\right)\nu_\alpha(du) , \\
 & = -\tau_j(\xi_2).
\end{align*}
Then, for the difference of the two leading terms,
{\allowdisplaybreaks
\begin{align*}
\frac{\pi^{d} R^{2-\alpha}}{(2 \pi)^{2d}} & \int_{\mathbb{R}^{2d}} \frac{\xi_{1,j}}{2} \exp\left( -\frac{\|\xi_1\|^2}{4} \right) \exp\left( -\frac{\|\xi_2\|^2}{4} \right)\widehat{\mu_\alpha}\left(\frac{\xi_1 + \xi_2}{R}\right) \tau_j(\xi_2) d\xi_1 d\xi_2 , \\
& = - \frac{\pi^{d} R^{2-\alpha}}{(2 \pi)^{2d}} \int_{\mathbb{R}^{2d}} \partial_{\xi_{1,j}} \left( \exp\left( -\frac{\|\xi_1\|^2}{4}\right) \right)\exp\left( -\frac{\|\xi_2\|^2}{4} \right) \\
&\quad\quad \times \widehat{\mu_\alpha}\left(\frac{\xi_1 + \xi_2}{R}\right) \tau_j(\xi_2) d\xi_1 d\xi_2 , \\
& =  \frac{\pi^{d} R^{2-\alpha}}{(2 \pi)^{2d}} \int_{\mathbb{R}^{2d}} \exp\left( -\frac{\|\xi_1\|^2}{4} \right) \exp\left( -\frac{\|\xi_2\|^2}{4} \right) \\
&\quad\quad \times \partial_{\xi_{1,j}} \left(\widehat{\mu_\alpha}\left(\frac{\xi_1 + \xi_2}{R}\right)\right) \tau_j(\xi_2) d\xi_1 d\xi_2 , \\
& =  \frac{\pi^{d} R^{2-\alpha}}{(2 \pi)^{2d}} \int_{\mathbb{R}^{2d}} \exp\left( -\frac{\|\xi_1\|^2}{4} \right) \exp\left( -\frac{\|\xi_2\|^2}{4} \right) \frac{1}{R} \tau_j\left(\frac{\xi_1+\xi_2}{R}\right) \\
&\quad\quad \times \widehat{\mu_\alpha}\left(\frac{\xi_1 + \xi_2}{R}\right)\tau_j(\xi_2) d\xi_1 d\xi_2 , \\
& =  \frac{\pi^{d} R^{2-2\alpha}}{(2 \pi)^{2d}} \int_{\mathbb{R}^{2d}} \exp\left( -\frac{\|\xi_1\|^2}{4} \right) \exp\left( -\frac{\|\xi_2\|^2}{4} \right)\tau_j\left(\xi_1 + \xi_2\right) \\
&\quad\quad \times \widehat{\mu_\alpha}\left(\frac{\xi_1 + \xi_2}{R}\right)\tau_j(\xi_2) d\xi_1 d\xi_2,
\end{align*}
}
which clearly tends to $0$ as $R$ tends to $+\infty$. This concludes the proof of the proposition. 
\end{proof}
\noindent
Then, thanks to the previous observation and thanks to Proposition \ref{prop:Stein_Method_Stable_NDS}, we are ready to prove Theorem \ref{thm:stability_estimate_NDS_stable}.\\
\\
\textit{Proof of Theorem \ref{thm:stability_estimate_NDS_stable}}.
The proof is very similar to the one of \cite[Theorem $5.15$]{AH20_3}.~Let $h \in \mathcal{C}_c^{\infty}\left(\mathbb{R}^d\right)$ be such that $|h|_{\operatorname{Lip}} \leq 1$.  First, by Proposition \ref{prop:Stein_Method_Stable_NDS}, 
\begin{align*}
\mathbb{E} h(X) - h(X_\alpha) = \mathbb{E} \left( - \langle X ; \nabla(f_h)(X)\rangle + \int_{\mathbb{R}^d} \langle \nabla(f_h)(X+u) - \nabla(f_h)(X)  ; u\rangle \nu_\alpha(du) \right),
\end{align*}
where $X \sim \mu_X$ and where $X_\alpha \sim \mu_\alpha$. Now, the main idea is to introduce error terms which are easy to control. First, note that, for all $R>0$, 
\begin{align*}
\left| \mathbb{E} \left( - \langle X ; \nabla(f_h)(X)\rangle \right) + \mathbb{E} \left( \langle g_R(X) ; \nabla(f_h)(X) \rangle \right)\right| & \leq M_1(f_h) \mathbb{E} \| X - g_R(X) \| , \\
& \leq \mathbb{E} \| X - g_R(X) \|,
\end{align*}
which clearly tends to $0$ as $R$ tends to $+\infty$. So, we are left to bound the following error term: for all $R>0$, 
\begin{align*}
E_{R,1} : = \left|\mathbb{E} \left( - \langle g_R(X) ; \nabla(f_h)(X) \rangle + \int_{\mathbb{R}^d} \langle \nabla(f_h)(X+u) - \nabla(f_h)(X) ; u \rangle \nu_\alpha(du) \right) \right|.
\end{align*}
Now, for all $R>0$, $g_R \in \mathcal{S}(\mathbb{R}^d, \mathbb{R}^d)$ and $\int_{\mathbb{R}^d} g_R(x) \mu_X(dx) = 0$, so that, 
\begin{align*}
\mathbb{E} \langle g_R(X) ; \nabla(f_h)(X) \rangle & = \mathcal{E}_{\nu_\alpha, \omega_X} \left( G_{0^+}(g_R); \nabla(f_h)\right) , \\
& = \int_{\mathbb{R}^d} \int_{\mathbb{R}^d} \langle \Delta_u(G_{0^+}(g_R))(x) ; \Delta_u(\nabla(f_h))(x) \rangle \omega_X(\|u\|) \nu_\alpha(du) \mu_X(dx),
\end{align*}
where $\Delta_u(f)(x) = f(x+u) - f(x)$, where $\mathcal{E}_{\nu_\alpha, \omega_X}$ is defined by \eqref{eq:weighted_form_NDS_gen}, where
\begin{align*}
G_{0^+}(g_R) = \int_0^{+\infty} P_t(g_R) dt,
\end{align*}
and where $(P_t)_{t \geq 0}$ is the symmetric contraction semigroup generated by the smallest closed extension of $\left(\mathcal{E}_{\nu_\alpha, \omega_X}, \mathcal{C}_b^1(\mathbb{R}^d , \mathbb{R}^d) \right)$. Thus, for all $R>0$, 
\begin{align*}
E_{R,1} & = \bigg| \mathbb{E} \bigg( -\int_{\mathbb{R}^d} \langle \Delta_u(G_{0^+}(g_R))(X) ; \Delta_u(\nabla(f_h))(X) \rangle \omega_X(\|u\|) \nu_\alpha(du) \\
&\quad\quad   + \int_{\mathbb{R}^d} \langle \Delta_u(\nabla(f_h))(X) ; u \rangle \nu_\alpha(du) \bigg) \bigg|. 
\end{align*}
Next, for all $R>0$,
\begin{align*}
E_{R,1} & \leq \left| \mathbb{E} \int_{\mathbb{R}^d} \langle \Delta_u(\nabla(f_h))(X) ; u \rangle \left(1 - \omega_X(\|u\|)\right)\nu_\alpha(du) \right| \\
& \quad\quad + \bigg| \mathbb{E} \bigg( -\int_{\mathbb{R}^d} \langle \Delta_u(G_{0^+}(g_R))(X) ; \Delta_u(\nabla(f_h))(X) \rangle \omega_X(\|u\|) \nu_\alpha(du) \\
& \quad\quad + \int_{\mathbb{R}^d} \langle \Delta_u(\nabla(f_h))(X) ; u \rangle \omega_X(\|u\|) \nu_\alpha(du) \bigg) \bigg|.
\end{align*}
For the first term on the right-hand side of the previous inequality, cutting the integral in $u$ into two parts:
\begin{align}\label{ineq:easy_bound}
\left| \mathbb{E} \int_{\mathbb{R}^d} \langle \Delta_u(\nabla(f_h))(X) ; u \rangle \left(1 - \omega_X(\|u\|)\right)\nu_\alpha(du) \right| & \leq 2 \int_{\mathcal{B}(0,1)^c} \|u \| \left| \omega_X( \|u\| )-1 \right|\nu_\alpha(du) \nonumber \\
& + C_{\alpha,d} \int_{\mathcal{B}(0,1)} \|u \|^2 \left| \omega_X( \|u\| )-1 \right| \nu_\alpha(du),
\end{align}
where $C_{\alpha,d}$ is given by Proposition \ref{prop:Stein_Method_Stable_NDS}.~So, it remains to deal with
\begin{align}\label{eq:error_term_R12}
E_{R,1,2} & = \bigg| \mathbb{E} \bigg( -\int_{\mathbb{R}^d} \langle \Delta_u(G_{0^+}(g_R))(X) ; \Delta_u(\nabla(f_h))(X) \rangle \omega_X(\|u\|) \nu_\alpha(du) \nonumber \\
& \quad\quad + \int_{\mathbb{R}^d} \langle \Delta_u(\nabla(f_h))(X) ; u \rangle \omega_X(\|u\|) \nu_\alpha(du) \bigg) \bigg| , \quad R>0. 
\end{align}
Observe that, for all $R>0$, 
\begin{align*}
&\bigg| \mathbb{E} \int_{\mathbb{R}^d} \langle \Delta_u(\nabla(f_h))(X) ; g_R(X+u) - g_R(X)- u \rangle \omega_X(\|u\|) \nu_\alpha(du)\bigg| \nonumber \\
&\quad \quad \leq  2 \mathbb{E} \int_{\mathcal{B}(0,1)^c} \| F_R(X+u) - F_R(X) \| \omega_X(\|u\|) \nu_\alpha(du) \\
&\quad \quad + C_{\alpha,d} \mathbb{E} \int_{\mathcal{B}(0,1)} \| u \| \| F_R(X+u) - F_R(X) \| \omega_X(\|u\|) \nu_\alpha(du),
\end{align*}
where, for all $x \in \mathbb{R}^d$ and all $R>0$, 
\begin{align*}
F_R(x) = x \left(1 - \exp\left( - \frac{\|x\|^2}{R^2}\right)\right).
\end{align*}
Moreover, for all $x,u \in \mathbb{R}^d$, 
\begin{align*}
\underset{R \longrightarrow +\infty}{\lim} \| F_R(x+u) - F_R(x) \| = 0,
\end{align*}
and, for all $x \in \mathbb{R}^d$, all $R>0$ and all $ j , k \in \{1 , \cdots, d\}$, 
\begin{align*}
\partial_k \left(F_{R,j}(x)\right) = \left\{
    \begin{array}{ll}
         & \left(1 - \exp\left(-\frac{\|x\|^2}{R^2}\right)\right) + 2 \frac{x_j^2}{R^2} \exp\left( - \frac{\|x\|^2}{R^2}\right)  \mbox{if } j=k ,\\
         & 2 \frac{x_jx_k}{R^2} \exp\left( - \frac{\|x\|^2}{R^2}\right) \mbox{if not}.
    \end{array}
\right.
\end{align*}
Thus, for all $x,u \in \mathbb{R}^d$ and all $R>0$,
\begin{align*}
\| F_R(x+u) - F_R(x) \| \leq C_d \|u\|,
\end{align*}
for some $C_d >0$. Then, uniformly in $h \in \mathcal{C}_c^{\infty}(\mathbb{R}^d)$ with $|h|_{\operatorname{Lip}} \leq 1$, 
\begin{align*}
\bigg| \mathbb{E} \int_{\mathbb{R}^d} \langle \Delta_u(\nabla(f_h))(X) ; g_R(X+u) - g_R(X)- u \rangle \omega_X(\|u\|) \nu_\alpha(du)\bigg| \longrightarrow 0,
\end{align*}
as $R$ tends to $+\infty$. To conlcude, it remains to control the following error term: for all $R>0$, 
\begin{align*}
E_{R,1,2,2} : = \left| \mathcal{E}_{\nu_\alpha , \omega_X} ( \nabla(f_h); G_{0^+}\left(g_R\right) - g_R ) \right|.
\end{align*}
By Cauchy-Schwarz inequality, for all $R>0$, 
\begin{align*}
E_{R,1,2,2}  \leq \left( \mathcal{E}_{\nu_\alpha , \omega_X} \left(  \nabla(f_h) ;  \nabla(f_h) \right)\right)^{\frac{1}{2}} \left(\mathcal{E}_{\nu_\alpha , \omega_X} \left(  G_{0^+}\left(g_R\right) - g_R ; G_{0^+}\left(g_R\right) - g_R \right)\right)^{\frac{1}{2}}.
\end{align*}
Now, developing the square, for all $R>0$, 
\begin{align*}
\mathcal{E}_{\nu_\alpha , \omega_X} \left(  G_{0^+}\left(g_R\right) - g_R ; G_{0^+}\left(g_R\right) - g_R \right) & = \mathcal{E}_{\nu_\alpha , \omega_X} \left(  G_{0^+}\left(g_R\right) ; G_{0^+}\left(g_R\right) \right) + \mathcal{E}_{\nu_\alpha , \omega_X} \left(  g_R ; g_R \right) \\
&\quad\quad - 2 \mathcal{E}_{\nu_\alpha , \omega_X} \left(  G_{0^+}\left(g_R\right) ; g_R \right). 
\end{align*}
Moreover, for all $R>0$, 
\begin{align*}
\mathcal{E}_{\nu_\alpha , \omega_X} \left(  G_{0^+}\left(g_R\right) ; g_R \right) & = \langle g_R ; g_R \rangle_{L^2(\mu_X)} , \\
 \mathcal{E}_{\nu_\alpha , \omega_X} \left(  G_{0^+}\left(g_R\right) ; G_{0^+}\left(g_R\right) \right) & = \langle g_R ; G_{0^+}\left(g_R\right) \rangle_{L^2(\mu_X)} \\
& \leq \| g_R\|_{L^2(\mu_X)} \| G_{0^+}\left(g_R\right) \|_{L^2(\mu_X)} , \\
& \leq \| g_R\|^2_{L^2(\mu_X)}. 
\end{align*}
Thus, for all $R>0$, 
\begin{align*}
\mathcal{E}_{\nu_\alpha , \omega_X} \left(  G_{0^+}\left(g_R\right) -g_R ; G_{0^+}\left(g_R\right) -g_R \right)  \leq \mathcal{E}_{\nu_\alpha , \omega_X} \left(  g_R ; g_R \right) - \langle g_R ; g_R \rangle_{L^2(\mu_X)},
\end{align*}
This concludes the proof of the theorem. $\square$

\begin{rem}\label{rem:discussion_condition_constant_weight_function}
(i) The condition
\begin{align*}
\underset{R \longrightarrow +\infty}{\limsup} \left(\mathcal{E}_{\nu_\alpha, \omega_X}(g_R , g_R) - \langle g_R ; g_R \rangle_{L^2(\mu_X)}\right) \leq \delta,
\end{align*}
is natural since Proposition \ref{prop:Weyl_sequence_general_case} ensures that 
\begin{align*}
\underset{R \longrightarrow +\infty}{\lim} \left(\mathcal{E}^{\nu_\alpha}(g_R , g_R) - \langle g_R ; g_R \rangle_{L^2(\mu_\alpha)} \right) = 0,
\end{align*}
where $\mathcal{E}^{\nu_\alpha}$ is given by \eqref{eq:form_ND_symmetric_case}.\\
(ii) Assume that $\mu_X$ verifies a weighted Poincar\'e-type inequality with constant weight function equal to $C$ and with L\'evy measure $\nu_\alpha$. Then, under the other assumptions of Theorem \ref{thm:stability_estimate_NDS_stable}, the stability estimate boils down to  
\begin{align}\label{eq:stability_estimate_rotation_invariant_constant_weight}
W_1\left(\mu_X , \mu_\alpha\right) & \leq \sqrt{C} 
\left(C_{\alpha,d}^2 \int_{\|u\| \leq 1} \|u\|^2 \nu_\alpha(du) + 4 \int_{\|u\| \geq 1}\nu_\alpha(du)\right)^{\frac{1}{2}}\sqrt{\delta} \nonumber \\
& \quad\quad + \bigg(2 \int_{\mathcal{B}(0,1)^c} \|u \|\nu_\alpha(du) + C_{\alpha,d} \int_{\mathcal{B}(0,1)} \|u \|^2\nu_\alpha(du)\bigg) \left| C - 1 \right|,
\end{align}
which should be compared to \cite[Theorem 5.15]{AH20_3}. \\
(iii) Let us assume that $\delta = 0$. Then, from the proof of Theorem \ref{thm:stability_estimate_NDS_stable}, it is clear that the following Stein-type identity holds true: for all $f \in \mathcal{S}(\mathbb{R}^d)$, 
\begin{align}\label{eq:Stein_type_identity_mu_X}
\langle g ; \nabla(f) \rangle_{L^2(\mu_X)} = \int_{\mathbb{R}^d} \int_{\mathbb{R}^d} \langle u ; \nabla(f)(x+u) - \nabla(f)(x) \rangle \omega\left( \|u\| \right) \nu_\alpha(du) \mu_X(dx), 
\end{align}
where $g(x) = x$, for all $x \in \mathbb{R}^d$. Then, using Fourier inversion formula, the Fourier transform of $\mu_X$ verifies the following partial differential equation: for all $\xi \in \mathbb{R}^d$, 
\begin{align}\label{eq:edp_fourier_transform_muX}
\langle \xi ; \nabla(\widehat{\mu_X})(\xi) \rangle = \widehat{\mu_X}\left(\xi\right) \left(\int_{\mathbb{R}^d} \left(e^{i \langle u ; \xi \rangle}-1\right) \langle i \xi ; u \rangle \omega_X(\|u\|) \nu_\alpha(du)\right).
\end{align}
Moreover, $ \widehat{\mu_X}(0) = 1$. Then, for all $\xi \in \mathbb{R}^d$, 
\begin{align*}
\widehat{\mu_X}(\xi) = \exp \left( \int_{\mathbb{R}^d} \left(e^{i \langle u ; \xi \rangle} -1 - i \langle u ; \xi \rangle\right) \omega_X(\|u\|) \nu_\alpha(du)\right). 
\end{align*}
(iv) At this point, let us briefly discuss the different semigroups generated by closed extension(s) of the bilinear form $\mathcal{E}_{\nu_\alpha, \mu_\alpha}$ given, for all $f_1 , f_2 \in \mathcal{C}_b^1(\mathbb{R}^d)$, by
\begin{align}\label{eq:form_nua_mua}
\mathcal{E}_{\nu_\alpha, \mu_\alpha}(f_1, f_2) = \int_{\mathbb{R}^{2d}} (f_1(x+u) - f_1(x))(f_2(x+u) - f_2(x))\nu_\alpha(du) \mu_\alpha(dx).
\end{align}
Recall that $(P^{\nu_\alpha}_t)_{t \geq 0}$ denotes the $\alpha$-stable Ornstein-Uhlenbeck semigroup given, for all $f \in \mathcal{S}(\mathbb{R}^d)$, all $x \in \mathbb{R}^d$ and all $t \geq 0$, by
\begin{align*}
P^{\nu_\alpha}_t(f)(x) = \int_{\mathbb{R}^d} f\left(xe^{-t} + \left(1-e^{-\alpha t}\right)^{\frac{1}{\alpha}}y \right) \mu_\alpha(dy). 
\end{align*}
Since the measure $\mu_\alpha$ is invariant for the semigroup $\left(P_t^{\nu_\alpha}\right)_{t \geq 0}$, by standard arguments, $\left(P_t^{\nu_\alpha}\right)_{t \geq 0}$ admits a continuous extension from $L^2(\mu_\alpha)$ to itself. Then, let us denote by $\left((P_t^{\nu_\alpha})^*\right)_{t \geq 0}$ the dual semigroup of $\left(P_t^{\nu_\alpha}\right)_{t \geq 0}$ which is, then, continuous from $L^2(\mu_\alpha)$ to itself. As proved in \cite[Theorem 5.4]{AH20_4}, for all $s,t \geq 0$, the operators $P_t^{\nu_\alpha}$ and $(P_s^{\nu_\alpha})^*$ commute so that the continuous family of linear contractions $\left(\mathcal{P}_t\right)_{t \geq 0}$ defined, for all $t \geq 0$, by 
\begin{align}\label{eq:carre_mehler_sg}
\mathcal{P}_t = P_{\frac{t}{\alpha}}^{\nu_\alpha} \circ (P_{\frac{t}{\alpha}}^{\nu_\alpha})^* = (P_{\frac{t}{\alpha}}^{\nu_\alpha})^* \circ P_{\frac{t}{\alpha}}^{\nu_\alpha},
\end{align}
is a $\mathcal{C}_0$-semigroup on $L^2(\mu_\alpha)$.~This semigroup of operators is called the ``carr\'e de Mehler" semigroup. Next, since $\mu_\alpha$ is infinitely divisible on $\mathbb{R}^d$ with L\'evy measure $\nu_\alpha$, $\mu_\alpha \ast \nu_\alpha << \mu_\alpha$ (see, e.g., \cite[Lemma $4.1.$]{Chen_85}). Now, the form $\left(\mathcal{E}_{\nu_\alpha, \mu_\alpha}, \mathcal{C}_b^1(\mathbb{R}^d)\right)$ is closable and let us denote by $\left( \overline{\mathcal{E}_{\nu_\alpha, \mu_\alpha}},  D \left(\overline{\mathcal{E}_{\nu_\alpha, \mu_\alpha}} \right)\right)$ its closure on $L^2(\mu_\alpha)$.~The $\mathcal{C}_0$-semigroup of symmetric contractions on $L^2(\mu_\alpha)$ associated with $\left( \overline{\mathcal{E}_{\nu_\alpha, \mu_\alpha}},  D \left(\overline{\mathcal{E}_{\nu_\alpha, \mu_\alpha}} \right)\right)$ is denoted by $(P_t)_{t\geq 0}$. Finally, 
\begin{align}\label{eq:closure_compact_support_rep}
D \left(\overline{\mathcal{E}_{\nu_\alpha, \mu_\alpha}} \right) = \overline{\mathcal{C}_c^{\infty}(\mathbb{R}^d)}^{\| \cdot \|_{H+E}}, 
\end{align}
where $\| \cdot \|_{H+E}$ is defined by 
\begin{align*}
\| f \|^2_{H+E} = \| f \|^2_{L^2(\mu_\alpha)} + \mathcal{E}_{\nu_\alpha, \mu_\alpha}(f ,f).  
\end{align*}
Thanks to Theorem \ref{thm:thesame1} of the appendix section, both semigroups $(\mathcal{P}_t)_{t \geq 0}$ and $(P_t)_{t \geq 0}$ coincide under the assumption that 
\begin{align}\label{eq:cond_uniform_boundedness_remark}
\left\| \dfrac{\nabla(p_\alpha)}{p_\alpha} \right\|_{\infty} < +\infty,
\end{align}
where $p_\alpha$ is the Lebesgue density of the probability measure $\mu_\alpha$. The previous condition is clearly satisfied when $\mu_\alpha = \mu_\alpha^{\operatorname{rot}}$ since in this situation the following pointwise bounds hold true: for all $x \in \mathbb{R}^d$, 
\begin{align}\label{eq:two_sided_bounds}
p_\alpha(x) \asymp \frac{1}{\left(1+ \|x\|\right)^{\alpha + d}}, \quad \left\| \nabla(p_\alpha)(x) \right\| \leq \dfrac{M_{\alpha,d}}{(1+\|x\|)^{\alpha+d+1}},
\end{align}
where $\asymp$ means that $p_\alpha$ is bounded from below and from above by the function $x \mapsto 1/(1+\|x\|)^{\alpha+d}$ up to some constants depending on $\alpha$ and on $d$ and where $M_{\alpha,d}>0$. Finally, condition \eqref{eq:cond_uniform_boundedness_remark} is also verified for the $\alpha$-stable probability measure $\mu_{\alpha,d}$ defined by \eqref{eq:characteristic_function_ind}. 
\end{rem}
\noindent

Now, let us discuss applications of the previous stability theorems to generalized central limit theorems with $\alpha$-stable limiting laws for $\alpha \in (1,2)$. As in Section \ref{sec:stability_estimate_finite_second_moment}, let us start with the canonical example presented in \eqref{eq:def_cf_k}.~So, let $(Z_k)_{k \geq 1}$ be a sequence of independent random vectors of $\mathbb{R}^d$ with characteristic function defined, for all $k \geq 1$ and all $\xi \in \mathbb{R}^d$, by 
\begin{align}\label{eq:cf_canonical_case_NDS_stable}
\varphi_k(\xi) = \dfrac{\widehat{\mu_\alpha}\left((k+1) \xi\right)}{\widehat{\mu_\alpha}\left(k \xi\right)},
\end{align}
where $\widehat{\mu_\alpha}$ is the Fourier transform of $\mu_\alpha$ given by \eqref{eq:fourier_symmetric_stable}.~Thanks to stability, for all $k \geq 1$ and all $\xi \in \mathbb{R}^d$, 
\begin{align*}
\varphi_k(\xi) = \exp\left(\left((k+1)^\alpha - k^\alpha \right) \int_{\mathbb{R}^d} \left(e^{i \langle u ; \xi \rangle} - 1- i \langle u ; \xi \rangle \right) \nu_\alpha(du)\right).
\end{align*}
Then, the characteristic function of the random vector $S_n$ is given, for all $n \geq 1$ and all $\xi \in \mathbb{R}^d$, by
\begin{align*}
\varphi_{S_n}(\xi) = \exp\left(\left(\left(1+\frac{1}{n}\right)^\alpha - \frac{1}{n^\alpha} \right) \int_{\mathbb{R}^d} \left(e^{i \langle u ; \xi \rangle} - 1- i \langle u ; \xi \rangle \right) \nu_\alpha(du)\right),
\end{align*}
from which one deduces the following equality in law: $S_n =_{\mathcal{L}} \left(\left(1+\frac{1}{n}\right)^\alpha - \frac{1}{n^\alpha} \right)^{\frac{1}{\alpha}} X_\alpha$, for all $n \geq 1$, where $X_\alpha \sim \mu_\alpha$. Then, the law of $S_n$ verifies a weighted Poincar\'e-type inequality with weight function $\omega_n = \left(\left(1+\frac{1}{n}\right)^\alpha - \frac{1}{n^\alpha} \right)$ and with L\'evy measure $\nu_\alpha$, for all $n \geq 1$.~In order to apply Theorem \ref{thm:stability_estimate_NDS_stable},~it remains to prove, for all $n \geq 1$, that
\begin{align}\label{eq:weyl_condition}
\underset{R \longrightarrow +\infty}{\lim} \left(\mathcal{E}_{\nu_\alpha , \omega_n}(g_R , g_R) - \langle g_R ; g_R \rangle_{L^2(\tilde{\mu}_n)} \right) = 0,
\end{align}
where $S_n \sim \tilde{\mu}_n$ and where $\mathcal{E}_{\nu_\alpha , \omega_n}$ is defined, for all $f_1, f_2 \in \mathcal{C}_b^1(\mathbb{R}^d , \mathbb{R}^d)$, by 
\begin{align*}
\mathcal{E}_{\nu_\alpha , \omega_n}(f_1 , f_2) & =\left( \left(1+\frac{1}{n}\right)^{\alpha} - \frac{1}{n^{\alpha}}\right) \\
& \quad \times \int_{\mathbb{R}^d} \int_{\mathbb{R}^d} \langle f_1(x+u) - f_1(x) ; f_2(x+u) - f_2(x) \rangle \nu_\alpha(du) \tilde{\mu}_n(dx). 
\end{align*}
Moreover, it is clear that $\tilde{\mu}_n = \mu_\alpha \circ T_{a_n}^{-1}$ with,
\begin{align*}
a_n = \left(\left(1+\frac{1}{n}\right)^\alpha - \frac{1}{n^{\alpha}}\right)^{\frac{1}{\alpha}}, \quad n \geq 1. 
\end{align*}
Thus, by scale invariance, for all $n \geq 1$ and all $R>0$, 
\begin{align*}
\mathcal{E}_{\nu_\alpha , \omega_n}(g_R , g_R) = \mathcal{E}^{\nu_\alpha}(g_R^n ; g_R^n), 
\end{align*}
where $g_R^n(x) = g_R(a_n x)$, for all $x \in \mathbb{R}^d$. Similarly, $\|g_R^n\|_{L^2(\mu_\alpha)} = \|g_R\|_{L^2(\tilde{\mu}_n)}$, for all $R>0$ and all $n \geq 1$.~Proposition \ref{prop:Weyl_sequence_general_case} ensures that \eqref{eq:weyl_condition} holds true. A straightforward application of Theorem \ref{thm:stability_estimate_NDS_stable} gives, for all $n \geq 1$,
\begin{align}\label{ineq:quantitative_canonical_example_NDS_stable}
W_1(\tilde{\mu}_n , \mu_\alpha) \leq \bigg(2 \int_{\mathcal{B}(0,1)^c} \|u \| \nu_\alpha(du)+ C_{\alpha,d} \int_{\mathcal{B}(0,1)} \|u \|^2 \nu_\alpha(du) \bigg) \left| \left(1+\frac{1}{n}\right)^\alpha - \frac{1}{n^\alpha} -1 \right|.
\end{align}
\noindent
Next, let us provide the proof of Theorem \ref{thm:all_dimensions_NDS}.\\
\\
\textit{Proof of Theorem \ref{thm:all_dimensions_NDS}}. First, let us prove the spectral condition. Let $\mathcal{E}_{\nu_\alpha, \omega_n}$ be the bilinear form defined, for all $f_1,f_2 \in \mathcal{C}^1_b\left(\mathbb{R}^d, \mathbb{R}^d\right)$, by 
\begin{align}\label{def:bilinear_form}
\mathcal{E}_{\nu_\alpha, \omega_n}\left(f_1,f_2\right) : = \int_{\mathbb{R}^d} \int_{\mathbb{R}^d} \langle \Delta_u(f_1)(x) ;\Delta_u(f_2)(x) \rangle \omega_n(\|u\|) \nu_{\alpha}(du) \mu_n^\alpha(dx),
\end{align}
with, for all $n \geq 1$ and all $u \in \mathbb{R}^d \setminus \{0\}$, 
\begin{align}\label{eq:definition_omegan}
\omega_n\left(\| u \|\right) = n k \left(n^{\frac{1}{\alpha}} \|u\|\right)\|u\|^\alpha.
\end{align}
For all $R>0$, let $g_R$ be the function defined by \eqref{eq:eigen_approximate_eigenvector}. Then, for all $R>0$, all $j \in \{1, \dots, d\}$ and all $n \geq 1$,
\begin{align}\label{eq:fourier_rep_norm_2_alldim}
\| g_{R,j} \|^2_{L^2(\mu_n^\alpha)} & = \int_{\mathbb{R}^d} g_{R,j}(x)^2 \mu_n^\alpha(dx) = \frac{1}{(2\pi)^d} \int_{\mathbb{R}^d} \mathcal{F}(g^2_{R,j})(\xi) \widehat{\mu_n^{\alpha}}(\xi) d\xi , \nonumber \\
& = \frac{R^{d+2}}{(2\pi)^d} \int_{\mathbb{R}^d} \mathcal{F}(g^2_{1,j})(R\xi) \widehat{\mu_n^{\alpha}}(\xi) d\xi = \frac{R^{2}}{(2\pi)^d} \int_{\mathbb{R}^d} \mathcal{F}(g^2_{1,j})(\xi) \widehat{\mu_n^{\alpha}}\left(\frac{\xi}{R}\right) d\xi , \nonumber \\
& = \frac{i R^2}{\left(2\pi\right)^d} \int_{\mathbb{R}^d} \partial_{\xi_j}\left(\mathcal{F}(x_j \exp\left(- 2\|x\|^2\right))(\xi)\right) \widehat{\mu_n^{\alpha}}\left(\frac{\xi}{R}\right) d\xi , \nonumber \\
& = - \frac{i R^2}{\left(2\pi\right)^d} \int_{\mathbb{R}^d} \mathcal{F}(x_j \exp\left(- 2\|x\|^2\right))(\xi) \partial_{\xi_j} \left(\widehat{\mu_n^{\alpha}}\left(\frac{\xi}{R}\right)\right) d\xi. 
\end{align}
But, for all $\xi \in \mathbb{R}^d$, all $R>0$, all $j \in \{1 , \dots, d\}$ and all $n \geq 1$, 
$$ \partial_{\xi_j} \left(\widehat{\mu_n^{\alpha}}\left(\frac{\xi}{R}\right)\right) = \left(i\int_{(0, +\infty) \times \mathbb{S}^{d-1}} \theta_j \left(e^{i \langle r\theta ; \xi \rangle}-1\right) n  k\left(n^{\frac{1}{\alpha}} R r\right)dr \sigma(d\theta) \right) \widehat{\mu_n^\alpha}\left(\frac{\xi}{R}\right).$$
Thus, for all $R>0$, all $j \in \{1 , \dots, d\}$ and all $n \geq 1$,
\begin{align*} \| g_{R,j} \|^2_{L^2(\mu_n^\alpha)} & =  \frac{R^2}{\left(2\pi\right)^d} \int_{\mathbb{R}^d} \mathcal{F}(x_j \exp\left(- 2\|x\|^2\right))(\xi) \left(\int_{(0, +\infty) \times \mathbb{S}^{d-1}} \theta_j \left(e^{i \langle r\theta ; \xi \rangle}-1\right) n  k\left(n^{\frac{1}{\alpha}} R r\right)dr \sigma(d\theta) \right) \\
& \quad\quad \times \widehat{\mu_n^\alpha}\left(\frac{\xi}{R}\right) d\xi.
\end{align*}
But, for all $u \in \mathbb{R}^d \setminus \{0\}$ and all $n \geq 1$, 
\begin{align*}
\underset{R \longrightarrow +\infty}{\lim} R^\alpha n k \left(n^{\frac{1}{\alpha}} R \| u \|\right) = \frac{1}{\|u\|^\alpha}. 
\end{align*}
Moreover, for all $u \in \mathbb{R}^d \setminus \{0\}$, all $n \geq 1$ and all $R>0$, 
\begin{align*}
R^\alpha n k \left(n^{\frac{1}{\alpha}} R \| u \|\right) \leq \underset{z \in (0,+\infty)}{\sup} \left(z^\alpha k(z)\right) \frac{1}{\|u\|^\alpha}. 
\end{align*}
Then, for all $n \geq 1$ and all $j \in \{1, \dots, d\}$, 
\begin{align*}
\frac{\| g_{R,j} \|^2_{L^2(\mu_n^\alpha)}}{R^{2-\alpha}} & = \frac{1}{(2\pi)^d} \int_{\mathbb{R}^d} \mathcal{F}(x_j \exp\left(- 2\|x\|^2\right))(\xi) \\
 & \quad\quad \times \left(\int_{(0,+\infty) \times \mathbb{S}^{d-1}} \theta_j \left(e^{i \langle r \theta ; \xi \rangle}-1\right) n R^\alpha k\left(n^{\frac{1}{\alpha}} R r\right)dr\sigma(d\theta) \right)  \widehat{\mu_n^\alpha}\left(\frac{\xi}{R}\right) d\xi \\
&\quad\quad \longrightarrow \frac{1}{(2\pi)^d} \int_{\mathbb{R}^d} \mathcal{F}(x_j \exp\left(- 2\|x\|^2\right))(\xi) \left(\int_{\mathbb{R}^d} u_j \left(e^{i \langle u ; \xi \rangle}-1\right)\nu_\alpha(du)\right)d\xi,
\end{align*}
as $R$ tends to $+\infty$. Let us deal now with the term given, for all $R>0$ and all $n \geq 1$, by
\begin{align*}
\mathcal{E}_{\nu_\alpha, \omega_n}\left(g_R,g_R\right) = \int_{\mathbb{R}^d} \int_{\mathbb{R}^d} \langle \Delta_u(g_R)(x) ; \Delta_u(g_R)(x) \rangle \omega_n(\| u \|) \nu_\alpha(du) \mu_n^\alpha(dx). 
\end{align*} 
Now, the following Fourier representation formula holds true for the ``carr\'e du champs" operator applied to $g_{R,j}$: for all $R>0$, all $j \in \{1, \dots, d\}$ and all $x \in \mathbb{R}^d$, 
\begin{align*}
\int_{\mathbb{R}^d} (\Delta_u(g_{R,j})(x))^2 \omega_n(\|u\|) \nu_{\alpha}(du) & = \frac{1}{\left(2\pi\right)^{2d}} \int_{\mathbb{R}^d} \int_{\mathbb{R}^d} \mathcal{F}(g_{R,j})(\xi_1) \mathcal{F}(g_{R,j})(\xi_2) \\
&\quad\quad \times m_n(\xi_1, \xi_2) e^{i \langle x ; \xi_1+\xi_2 \rangle} d\xi_1d\xi_2 , \\
& =  \frac{R^{2d+2}}{\left(2\pi\right)^{2d}} \int_{\mathbb{R}^d} \int_{\mathbb{R}^d}\mathcal{F}(g_{1,j})(R\xi_1) \mathcal{F}(g_{1,j})(R\xi_2)\\
&\quad\quad \times m_n(\xi_1, \xi_2) e^{i \langle x ; \xi_1+\xi_2 \rangle} d\xi_1d\xi_2 , \\
& = \frac{R^{2}}{\left(2\pi\right)^{2d}} \int_{\mathbb{R}^d} \int_{\mathbb{R}^d}\mathcal{F}(g_{1,j})(\xi_1) \mathcal{F}(g_{1,j})(\xi_2)\\
&\quad\quad \times m_n\left(\frac{\xi_1}{R}, \frac{\xi_2}{R}\right) e^{i \langle x ; \frac{\xi_1+\xi_2}{R} \rangle} d\xi_1d\xi_2,
\end{align*}
where, for all $\xi_1, \xi_2 \in \mathbb{R}^d$ and all $n \geq 1$, 
\begin{align*}
m_n(\xi_1, \xi_2) = \int_{(0,+\infty) \times \mathbb{S}^{d-1}} \left(e^{i \langle r \theta ; \xi_1 \rangle }-1\right) \left(e^{i \langle r \theta ; \xi_2\rangle} - 1\right) n k \left(n^{\frac{1}{\alpha}} r\right) \frac{dr\sigma(d\theta)}{r}. 
\end{align*}
Thus, 
\begin{align*}
\mathcal{E}_{\nu_\alpha, \omega_n}\left(g_R,g_R\right) =  \frac{R^{2}}{\left(2\pi\right)^{2d}} \sum_{j = 1}^d \int_{\mathbb{R}^d} \int_{\mathbb{R}^d}\mathcal{F}(g_{1,j})(\xi_1) \mathcal{F}(g_{1,j})(\xi_2)m_n\left(\frac{\xi_1}{R}, \frac{\xi_2}{R}\right) \widehat{\mu_n^\alpha}\left(\frac{\xi_1+\xi_2}{R}\right) d\xi_1d\xi_2. 
\end{align*}
Now, by integration by parts, for all $j \in \{1 , \dots, d \}$, 
\begin{align*}
\int_{\mathbb{R}^d} \int_{\mathbb{R}^d}\mathcal{F}(g_{1,j})(\xi_1) \mathcal{F}(g_{1,j})(\xi_2)m_n\left(\frac{\xi_1}{R}, \frac{\xi_2}{R}\right) & \widehat{\mu_n^\alpha}\left(\frac{\xi_1+\xi_2}{R}\right) d\xi_1d\xi_2 \\
&= i\int_{\mathbb{R}^d} \int_{\mathbb{R}^d}\partial_{\xi_{1,j}}\left(\mathcal{F}(\exp\left(- \|x\|^2\right))(\xi_1)\right) \\
&\quad\times \mathcal{F}(g_{1,j})(\xi_2)  m_n\left(\frac{\xi_1}{R}, \frac{\xi_2}{R}\right) \widehat{\mu_n^\alpha}\left(\frac{\xi_1+\xi_2}{R}\right) d\xi_1d\xi_2, \\
& =-i \int_{\mathbb{R}^d} \int_{\mathbb{R}^d} \mathcal{F}(\exp\left(- \|x\|^2\right))(\xi_1) \mathcal{F}(g_{1,j})(\xi_2) \\
& \quad \times \dfrac{\partial}{\partial \xi_{1,j}}\left[m_n\left(\frac{\xi_1}{R}, \frac{\xi_2}{R}\right) \widehat{\mu_n^\alpha}\left(\frac{\xi_1+\xi_2}{R}\right)\right] d\xi_1d\xi_2.
\end{align*}
But, for all $R>0$ and all $n \geq 1$, 
\begin{align*}
\dfrac{\partial}{\partial \xi_{1,j}}\left[m_n\left(\frac{\xi_1}{R}, \frac{\xi_2}{R}\right) \widehat{\mu_n^\alpha}\left(\frac{\xi_1+\xi_2}{R}\right)\right] = I_{R,n,j}(\xi_1, \xi_2) + II_{R,n,j}(\xi_1, \xi_2),
\end{align*}
where, 
\begin{align*}
I_{R,n,j}(\xi_1, \xi_2) &  = i \left(\int_{(0,+\infty) \times \mathbb{S}^{d-1}}\theta_j e^{i \langle r\theta ;\xi_1 \rangle} \left(e^{i \langle r\theta ;\xi_2 \rangle}-1\right) n k\left(n^{\frac{1}{\alpha}} R r \right) dr \sigma(d\theta)\right)\widehat{\mu_n^\alpha}\left(\frac{\xi_1+\xi_2}{R}\right), \\
II_{R,n,j}(\xi_1, \xi_2) & = m_{n} \left(\frac{\xi_1}{R} , \frac{\xi_2}{R}\right)  \left(i\int_{(0,+\infty) \times \mathbb{S}^{d-1}} \theta_j \left(e^{i \langle r\theta ; \xi_1+ \xi_2 \rangle}-1\right) n  k\left(n^{\frac{1}{\alpha}} R r\right)dr \sigma(d\theta) \right) \\
&\quad\quad \times \widehat{\mu_n^\alpha}\left(\frac{\xi_1+ \xi_2}{R}\right).
\end{align*}
Now, for all $j \in \{1,\dots, d\}$ fixed, the difference between the two quadratic terms can be written as:
\begin{align*}
\frac{R^{2-\alpha}}{\left(2\pi\right)^{2d}} & \int_{\mathbb{R}^d} \int_{\mathbb{R}^d} \mathcal{F}(\exp\left(- \|x\|^2\right))(\xi_1) \mathcal{F}(g_{1,j})(\xi_2)\widehat{\mu_n^\alpha}\left(\frac{\xi_1+\xi_2}{R}\right) \\
&\quad\quad \times \left(\int_{(0,+\infty)\times\mathbb{S}^{d-1}} \theta_j e^{i \langle r\theta ;\xi_1 \rangle} \left(e^{i \langle r\theta ;\xi_2 \rangle}-1\right) n R^\alpha k\left(n^{\frac{1}{\alpha}} R r \right)dr\sigma(d\theta)\right)d\xi_1d\xi_2 \\
&\quad\quad + \frac{R^{2-2\alpha}}{(2\pi)^{2d}}\int_{\mathbb{R}^d} \int_{\mathbb{R}^d} \mathcal{F}(\exp\left(- \|x\|^2\right))(\xi_1) \mathcal{F}(g_{1,j})(\xi_2)\widehat{\mu_n^\alpha}\left(\frac{\xi_1+\xi_2}{R}\right) \\
&\quad\quad \times R^\alpha m_{n} \left(\frac{\xi_1}{R} , \frac{\xi_2}{R}\right) \left(\int_{(0,+\infty) \times \mathbb{S}^{d-1}} \theta_j \left(e^{i \langle r\theta ; \xi_1+ \xi_2 \rangle}-1\right) n R^\alpha k\left(n^{\frac{1}{\alpha}} R r\right)dr \sigma(d\theta) \right)d\xi_1d\xi_2 \\
&\quad\quad - \frac{R^{2-\alpha}}{(2\pi)^d} \int_{\mathbb{R}^d} \mathcal{F}\left(x_j \exp\left(-2\|x\|^2\right)\right)(\xi)  \\
& \quad\quad \times \left(\int_{(0,+\infty) \times \mathbb{S}^{d-1}}\theta_j \left(e^{i \langle r \theta ; \xi \rangle}-1\right) n R^\alpha k\left(n^{\frac{1}{\alpha}} R r\right)dr\sigma(d\theta) \right) \widehat{\mu_n^\alpha}\left(\frac{\xi}{R}\right) d\xi.
\end{align*}
Moreover, 
\begin{align*}
& R^\alpha m_{n} \left(\frac{\xi_1}{R} , \frac{\xi_2}{R}\right) \longrightarrow \int_{\mathbb{R}^d} \left(e^{i \langle u ; \xi_1 \rangle}-1\right)\left(e^{i \langle u ;\xi_2 \rangle}-1\right) \nu_\alpha(du), \\
& \int_{(0,+\infty) \times \mathbb{S}^{d-1}}\theta_j \left(e^{i \langle r\theta ; \xi_1+ \xi_2 \rangle}-1\right) n R^\alpha k\left(n^{\frac{1}{\alpha}} R r\right)dr \sigma(d\theta) \longrightarrow \int_{\mathbb{R}^d} u_j \left(e^{i \langle u ; \xi_1+ \xi_2 \rangle}-1\right) \nu_\alpha(du),
\end{align*}
as $R$ tends to $+\infty$. Hence, the middle term disappears in the limit and it remains to study the asymptotic of the term:
\begin{align*}
\frac{R^{2-\alpha}}{\left(2\pi\right)^{2d}} & \int_{\mathbb{R}^d} \int_{\mathbb{R}^d} \mathcal{F}(\exp\left(- \|x\|^2\right))(\xi_1) \mathcal{F}(g_{1,j})(\xi_2)\widehat{\mu_n^\alpha}\left(\frac{\xi_1+\xi_2}{R}\right) \\
& \times \bigg(\int_{(0,+\infty)\times \mathbb{S}^{d-1}}\bigg[\theta_j e^{i \langle r\theta ;\xi_1 \rangle} \left(e^{i \langle r\theta ;\xi_2 \rangle}-1\right) \\
&\quad\quad - \theta_j \left(e^{i \langle r\theta ;\xi_2 +\xi_1 \rangle}-1\right) \bigg]n R^\alpha k\left(n^{\frac{1}{\alpha}} R r \right)dr\sigma(d\theta)\bigg)d\xi_1d\xi_2 \\
& = \frac{R^{2-\alpha}}{\left(2\pi\right)^{2d}}  \int_{\mathbb{R}^d} \int_{\mathbb{R}^d} \mathcal{F}(\exp\left(- \|x\|^2\right))(\xi_1) \mathcal{F}(g_{1,j})(\xi_2)\widehat{\mu_n^\alpha}\left(\frac{\xi_1+\xi_2}{R}\right) \\
&\quad\quad \times \left(\int_{\mathbb{R}^d}\theta_j  \left(1-e^{i \langle r\theta ;\xi_1 \rangle}\right) n R^\alpha k\left(n^{\frac{1}{\alpha}} R r \right)dr \sigma(d\theta)\right)d\xi_1d\xi_2. 
\end{align*}
Observe that, for all $j \in \{1 , \dots, d\}$, $g_{1,j}(0) = 0$. Thus, the remaining term can be written as: 
\begin{align*}
\frac{R^{2-\alpha}}{\left(2\pi\right)^{2d}} & \int_{\mathbb{R}^d} \int_{\mathbb{R}^d} \mathcal{F}(\exp\left(- \|x\|^2\right))(\xi_1) \mathcal{F}(g_{1,j})(\xi_2)\left(\widehat{\mu_n^\alpha}\left(\frac{\xi_1+\xi_2}{R}\right) -1\right)\\
&\quad\quad \times \left(\int_{(0,+\infty) \times \mathbb{S}^{d-1}} \theta_j  \left(1-e^{i \langle r \theta ;\xi_1 \rangle}\right) n R^\alpha k\left(n^{\frac{1}{\alpha}} R r \right)dr\sigma(d\theta)\right)d\xi_1d\xi_2 , \\
 = \frac{R^{2-\alpha}}{\left(2\pi\right)^{2d}} & \int_{\mathbb{R}^d} \int_{\mathbb{R}^d} \mathcal{F}(\exp\left(- \|x\|^2\right))(\xi_1) \mathcal{F}(g_{1,j})(\xi_2) \psi_n^\alpha \left(\frac{\xi_1+\xi_2}{R}\right) \int_0^1 \exp\left(t \psi_n^\alpha \left(\frac{\xi_1+\xi_2}{R}\right)\right) dt\\
&\quad\quad \times \left(\int_{(0,+\infty) \times \mathbb{S}^{d-1}} \theta_j  \left(1-e^{i \langle r\theta ;\xi_1 \rangle}\right) n R^\alpha k\left(n^{\frac{1}{\alpha}} R r \right)dr\sigma(d\theta)\right)d\xi_1d\xi_2,
\end{align*}
with, 
\begin{align*}
\psi_n^\alpha \left(\frac{\xi_1+\xi_2}{R}\right) = \int_{(0,+\infty)\times \mathbb{S}^{d-1}} \left(e^{i \langle r\theta ; \xi_1 + \xi_2 \rangle}-1- i \langle r\theta ; \xi_1+ \xi_2 \rangle\right) n k \left(n^{\frac{1}{\alpha}} r R\right) \frac{dr \sigma(d\theta)}{r}.
\end{align*}
But, 
\begin{align*}
R^\alpha \psi_n^\alpha \left(\frac{\xi_1+\xi_2}{R}\right) \longrightarrow \int_{\mathbb{R}^d} \left(e^{i \langle u ; \xi_1 + \xi_2 \rangle}-1- i \langle u ; \xi_1+ \xi_2 \rangle\right) \nu_\alpha(du) , \quad R\longrightarrow +\infty. 
\end{align*}
This concludes the proof of the spectral condition.~The end of the proof follows from a straightforward application of Theorem \ref{thm:stability_estimate_NDS_stable} and from the explicit expression of  the weight $\omega_n$ together with the asymptotic expansion \eqref{eq:second_order_expansion_NDS} $\square$.

\begin{rem}\label{rem:pareto-type_examples}
In this remark, let us consider two examples of Pareto-type distribution for which one can apply Theorem \ref{thm:all_dimensions_NDS}. Let $f_1$ be the one-sided Pareto density defined, for all $x \geq 0$, by
\begin{align}\label{eq:pareto_one_sided_density}
f_1(x) = \dfrac{\alpha}{\left(1+x\right)^{\alpha+1}}, \quad \alpha \in (0,2).
\end{align}
This probability density function is \textit{subexponential} (see \cite{Asmussen_Foss_Korshunov_03,Watanabe_Yamamuro_10,Watanabe_22}) and the associated probability measure on $\mathbb{R}_+$ is self-decomposable (see \cite[Appendix B, Section 3]{Steutel_Harn_04}).~Then, \cite[Theorem 1.3]{Watanabe_Yamamuro_10} ensures that
\begin{align}\label{eq:equivalent_kf_Pareto_one_sided}
k_1(x) \sim xf_1(x) \sim \frac{\alpha}{x^\alpha}, \quad x\rightarrow +\infty,  
\end{align}
where $k_1$ is the $k$-function of the L\'evy measure of the one-sided Pareto distribution on $\mathbb{R}_+$.~Moreover, thanks to \cite{Th_77}, the one-sided Pareto distribution with density $f_1$ belongs to the set of generalized gamma convolutions (GGC) which is a subset of the set of self-decomposable distributions on $\mathbb{R}_+$. As such and thanks to \cite{Th_77}, $k_1$ admits the following integral representation: for all $u>0$, 
\begin{align}\label{eq:LTrep_kfunction_onesided}
k_1(u) = \int_0^{+\infty} \exp( - u y) u_1(y)dy,
\end{align}  
where $u_1$ is a probability density function on $(0,+\infty)$ such that $\int_{(0,1]} |\log(y)| u_1(y)dy< +\infty$ and $\int_{(1, +\infty)} u_1(y)dy/y<+\infty$. Then, at once, $k_1(0) = 1$ and $k_1$ is smooth on $(0,+\infty)$.~Now, let $Y_1$ and $Y_2$ be two independent non-negative random variables with identical law $\rho_1$ which density is given by $f_1$ with $\alpha \in (1,2)$. Let $Y$ be the centered real-valued random variable defined by $Y = Y_1 - Y_2$. Then, for all $\xi \in \mathbb{R}$, 
\begin{align}\label{eq:characteristic_function_Y}
\mathbb{E} [e^{i\xi Y}]  & = \exp \left( \int_{\mathbb{R}} \left(e^{i u \xi} - 1 - iu\xi \right) \frac{k_1(|u|)}{|u|}du\right).
\end{align}
From the previous representation, it follows that $Y$ is self-decomposable on $\mathbb{R}$ with $k$-function given by $u \mapsto k_1(|u|)$. So, to apply Theorem \ref{thm:all_dimensions_NDS}, it remains to prove the asymptotic expansion \eqref{eq:second_order_expansion_NDS} for the symmetrization of $k_1$ at infinity.~But this follows from the asymptotic expansion of the function $u_1$ at $0^+$ (see \cite{Th_77}) together with a Tauberian-type argument. Theorem \ref{thm:all_dimensions_NDS} and the previous lines of reasoning can be extended as well to the symmetric (or double) Pareto distribution with density function $f_2$ given, for all $x \in \mathbb{R}$, by 
\begin{align}
f_2(x) = \frac{\alpha}{2} \dfrac{1}{\left(1+|x|\right)^{\alpha+1}}, 
\end{align}
which is self-decomposable and EGGC (see, e.g., \cite[Chapter VI, Section 12, Example 12.20]{Steutel_Harn_04} and \cite[Chapter $7$, Section $7.3$ page $118$]{Bon_92}). In particular, a random variable $X_\alpha$, distributed with respect to the double Pareto distribution with parameter $\alpha \in (1,2)$, is a variance-mixture of the standard normal distribution and the following representation in law holds true:
\begin{align}\label{eq:rep_law_double_pareto}
X_\alpha =_d \sqrt{2Y_\alpha} Z,
\end{align}
where $=_d$ stands for equality in law, where $Z$ is a standard normal random variable and where $Y_\alpha$ is independent of $Z$ and distributed according to the absolutely continuous law given by the density $g$ in \cite[Chapter VI, Section $12$, Example 12.20]{Steutel_Harn_04} with $r = \alpha+1$. Moreover, the law of $Y_\alpha$ belongs to the set of GGC and the symmetric extended Th\"orin measure of the double Pareto distribution restricted to $\mathbb{R}_+^*$ is the image measure of the Th\"orin measure of the law of $Y_\alpha$ by the square root mapping. Finally, adapting the techniques of \cite{Th_77} to the law of $Y_\alpha$, one obtains an asymptotic expansion at $0^+$ of the Lebesgue density of the Th\"orin measure of $Y_\alpha$ from which one can deduce the asymptotic expansion at $+\infty$ of the $k$-function of the double Pareto distribution. 
\end{rem}

Finally, let us consider another example of initial law which $k$-function does not verify all the assumptions of Theorem \ref{thm:all_dimensions_NDS} and which is a simple instance of \textit{layered stable distributions} (see \cite{HK_07,RS_10}). For this purpose, let $\alpha \in (1,2)$ and let $\beta \in (\alpha, 2)$. Let $k_{\alpha,\beta}^L$ be the function defined, for all $r \in (0,+\infty)$, by
\begin{align}\label{eq:kfunction_L}
k_{\alpha,\beta}^L(r) = \frac{1}{r^{\beta}}\bbone_{(0,1]}(r) + \frac{1}{r^\alpha}\bbone_{(1,+\infty)}(r). 
\end{align}
Then, let $\nu_{\alpha,\beta}^L$ be the L\'evy measure defined through the polar decomposition
\begin{align*}
\nu_{\alpha,\beta}^L(du) =\bbone_{(0,+\infty)}(r)\bbone_{\mathbb{S}^{d-1}}(y) \dfrac{k_{\alpha,\beta}^L(r) }{r}dr \sigma(dy),
\end{align*}
where $\sigma$ is a positive finite symmetric measure on $\mathcal{B}\left(\mathbb{S}^{d-1}\right)$ such that 
\begin{align*}
\underset{e \in \mathbb{S}^{d-1}}{\inf} \int_{\mathbb{S}^{d-1}} \left| \langle e ; y \rangle \right|^\alpha \sigma(dy)>0. 
\end{align*}
First, the following limit theorem holds true.  

\begin{thm}\label{thm:limit_theorem_layered_stable_distributions}
Let $d \geq 1$ be an integer, let $\alpha \in (1,2)$ and let $\mu_\alpha$ be a non-degenerate symmetric $\alpha$-stable probability measure on $\mathbb{R}^d$ with associated L\'evy measure $\nu_\alpha$ which spherical component is denoted by $\sigma$. Let $\beta \in (\alpha,2)$ and let $\mu^L_{\alpha,\beta}$ be the probability measure on $\mathbb{R}^d$ which Fourier transform is given, for all $\xi \in \mathbb{R}^d$, by
\begin{align}\label{eq:TF_Layered_stable}
\widehat{\mu^L_{\alpha,\beta}}(\xi) = \exp \left( \int_{\mathbb{R}^d} \left(e^{i \langle \xi ; u \rangle} - 1 - i \langle \xi ;u \rangle\right) \nu^L_{\alpha,\beta}(du)\right),
\end{align}
where $\nu^L_{\alpha,\beta}$ is defined by \eqref{eq:Levymeasure_L} with spherical component $\sigma$.~Let $(Z_k)_{k \geq 1}$ be a sequence of i.i.d. random vectors of $\mathbb{R}^d$ such that $Z_1 \sim \mu^L_{\alpha,\beta}$. Let $(S_n)_{n \geq 1}$ be the sequence of random vectors defined, for all $n \geq 1$, by
\begin{align*}
S_n  = \frac{1}{n^{\frac{1}{\alpha}}} \sum_{k = 1}^n Z_k. 
\end{align*}
Then, 
\begin{align}\label{eq:limit_theorem}
S_n \overset{\mathcal{L}}{\longrightarrow} X_\alpha \sim \mu_\alpha,
\end{align}
as $n$ tends to $+\infty$.
\end{thm}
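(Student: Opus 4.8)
The plan is to prove the convergence in law by showing pointwise convergence of the characteristic functions of $S_n$ towards $\hat{\mu}_\alpha$ and then invoking L\'evy's continuity theorem. First I would compute the characteristic function of $S_n$ explicitly. Since the $Z_k$ are i.i.d. with common characteristic function $\hat{\mu}^L_{\alpha,\beta}$ and $S_n = n^{-1/\alpha}\sum_{k=1}^n Z_k$, one gets, for all $\xi \in \bbr^d$,
\begin{align*}
\varphi_{S_n}(\xi) = \left(\hat{\mu}^L_{\alpha,\beta}\!\left(\frac{\xi}{n^{1/\alpha}}\right)\right)^n = \exp\left( n \int_{\bbr^d} \left(e^{i \langle \xi/n^{1/\alpha} ; u \rangle} - 1 - i \langle \xi/n^{1/\alpha} ;u \rangle\right) \nu^L_{\alpha,\beta}(du)\right).
\end{align*}
Then I would perform the change of variables $u = n^{1/\alpha} v$ inside the integral, using the polar decomposition \eqref{eq:Levymeasure_L}: writing $u = r y$ with $r > 0$, $y \in \mathbb{S}^{d-1}$, the substitution $r = n^{1/\alpha} \rho$ turns $n \, k^L_{\alpha,\beta}(r)\, dr/r$ into $n \, k^L_{\alpha,\beta}(n^{1/\alpha}\rho)\, d\rho/\rho$. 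So the exponent becomes
\begin{align*}
\int_{\bbr^d} \left(e^{i \langle \xi ; v \rangle} - 1 - i \langle \xi ;v \rangle\right)\, n\, k^L_{\alpha,\beta}\!\left(n^{1/\alpha}\|v\|\right)\, \frac{dv}{\|v\|}\cdot\frac{1}{\|v\|^{d-1}}\,\bbone_{\mathbb{S}^{d-1}}\,\sigma,
\end{align*}
and the point is that $n \, k^L_{\alpha,\beta}(n^{1/\alpha}\rho) \to \rho^{-\alpha}$ pointwise in $\rho > 0$ as $n \to \infty$: indeed for fixed $\rho$ and $n$ large enough we have $n^{1/\alpha}\rho > 1$, so $k^L_{\alpha,\beta}(n^{1/\alpha}\rho) = (n^{1/\alpha}\rho)^{-\alpha} = n^{-1}\rho^{-\alpha}$ exactly, hence $n\, k^L_{\alpha,\beta}(n^{1/\alpha}\rho) = \rho^{-\alpha}$ for all $n$ large. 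This identifies the limiting exponent with the L\'evy exponent of $\mu_\alpha$, whose L\'evy measure has polar decomposition $\bbone_{(0,+\infty)}(r)\bbone_{\mathbb{S}^{d-1}}(y)\, dr/r^{\alpha+1}\,\sigma(dy)$ as in \eqref{eq:polar_decomposition_stable}.

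To make the passage to the limit rigorous I would split the $v$-integral into $\{\|v\| \le 1\}$ and $\{\|v\| > 1\}$ and apply dominated convergence on each piece. On $\{\|v\| \le 1\}$, the elementary bound $|e^{i\langle\xi;v\rangle} - 1 - i\langle\xi;v\rangle| \le \frac{1}{2}\|\xi\|^2\|v\|^2$ together with $n\, k^L_{\alpha,\beta}(n^{1/\alpha}\|v\|) \le C \|v\|^{-\alpha}$ (which follows since $k^L_{\alpha,\beta}(r) \le r^{-\alpha}$ for $r \ge 1$ and $r^{-\beta} \le r^{-\alpha}$ for $r \le 1$ when $\beta > \alpha$, noting $n\, k^L_{\alpha,\beta}(n^{1/\alpha}\|v\|) \le n (n^{1/\alpha}\|v\|)^{-\beta} = n^{1-\beta/\alpha}\|v\|^{-\beta} \le \|v\|^{-\beta}$ for $n^{1/\alpha}\|v\| \le 1$ and $\beta > \alpha \Rightarrow 1 - \beta/\alpha < 0$; on this region $\|v\|^{-\beta} \cdot \|v\|^2$ is integrable against $dv/\|v\|$ over the unit ball since $2 - \beta + d - 1 > 0$ holds for $\beta < 2$) gives an integrable dominating function, so dominated convergence applies. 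On $\{\|v\| > 1\}$, the integrand of the linear term needs care because $\langle\xi;v\rangle$ is not integrable at infinity against $\|v\|^{-\alpha-1}$; however, by symmetry of $\sigma$ the linear term integrates to zero on the sphere for each fixed radius, so I would instead work with $e^{i\langle\xi;v\rangle}-1$, which is bounded, against the finite measure $n\, k^L_{\alpha,\beta}(n^{1/\alpha}\|v\|)\,dv/\|v\|\cdots$ restricted to $\{\|v\|>1\}$ (finite since $n\, k^L_{\alpha,\beta}(n^{1/\alpha}\|v\|) = \|v\|^{-\alpha}$ exactly for $n$ large and $\|v\| > 1$, and $\int_{\|v\|>1}\|v\|^{-\alpha}\,dv/\|v\|\cdots < \infty$ because $\alpha > 0$), where again dominated convergence (or even direct equality for large $n$) yields the limit.

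The only genuinely delicate point is the treatment of the drift/linear compensating term on the large-jump region: since the spectral measure $\sigma$ is symmetric, the $-i\langle\xi;v\rangle$ term contributes nothing to the integral over $\bbr^d$, but one should organize the argument so that this cancellation is used before taking absolute values — concretely, I would keep the compensated integrand $e^{i\langle\xi;v\rangle} - 1 - i\langle\xi;v\rangle$ on the small-jump region and replace it by $e^{i\langle\xi;v\rangle} - 1$ on the large-jump region, legitimate precisely by symmetry of $\sigma$, and analogously reorganize the target exponent so that both sides are matched term by term. Once pointwise convergence $\varphi_{S_n}(\xi) \to \hat{\mu}_\alpha(\xi)$ is established for every $\xi$, L\'evy's continuity theorem — together with the fact that $\hat{\mu}_\alpha$ is continuous at the origin (it is the characteristic function of the non-degenerate symmetric $\alpha$-stable law $\mu_\alpha$ with spectral measure proportional to $\sigma$, cf.\ \eqref{eq:fourier_symmetric_stable}) — gives $S_n \overset{\mathcal{L}}{\longrightarrow} X_\alpha$, which is \eqref{eq:limit_theorem}.
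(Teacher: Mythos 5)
Your proof follows the same route as the paper's: compute $\varphi_{S_n}$, change variables to exhibit the rescaled $k$-function $n\,k^L_{\alpha,\beta}(n^{1/\alpha}r)$, observe its pointwise limit $r^{-\alpha}$, dominate, apply Lebesgue's theorem, and conclude by L\'evy continuity. The core argument is correct.

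Two small inaccuracies in your write-up, neither of which is fatal but both worth flagging. First, you initially assert $n\,k^L_{\alpha,\beta}(n^{1/\alpha}\|v\|)\le C\|v\|^{-\alpha}$ and say this ``follows since $r^{-\beta}\le r^{-\alpha}$ for $r\le 1$ when $\beta>\alpha$''; that inequality is reversed (for $r\le1$ and $\beta>\alpha$ one has $r^{-\beta}\ge r^{-\alpha}$). The derivation you give inside the parenthesis is actually the correct one, yielding $n\,k^L_{\alpha,\beta}(n^{1/\alpha}\|v\|)\le \|v\|^{-\beta}\bbone_{\{n^{1/\alpha}\|v\|\le 1\}}+\|v\|^{-\alpha}\bbone_{\{n^{1/\alpha}\|v\|>1\}}\le \|v\|^{-\beta}\bbone_{(0,1]}(\|v\|)+\|v\|^{-\alpha}$, which is exactly the dominating function the paper uses; the $\|v\|^{-\alpha}$ bound alone would be false near $0$. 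Second, you claim the linear compensating term $-i\langle\xi;v\rangle$ is not integrable at infinity against $\|v\|^{-\alpha-1}$, and therefore reorganize the large-jump region via symmetry of $\sigma$. But since $\alpha\in(1,2)$, one has $\int_1^{+\infty} r\cdot r^{-\alpha-1}\,dr=\int_1^{+\infty} r^{-\alpha}\,dr<+\infty$, so the compensated integrand is already dominated uniformly on $(0,+\infty)$ (by $O(r^2)$ near $0$ and $O(r)$ at infinity, multiplied by the dominating $k$-function); the symmetry step is therefore unnecessary. The paper applies dominated convergence directly with the single dominating function $\bbone_{(0,1]}(r)r^{-\beta}+r^{-\alpha}$, without splitting or invoking symmetry.
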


\begin{proof}
The proof is a simple application of L\'evy's continuity theorem. First, let us compute the characteristic function of $S_n$, for all $n \geq 1$. By standard computations, for all $n \geq 1$ and all $\xi \in \mathbb{R}^d$, 
\begin{align}\label{eq:characteristic_function_Sn_Ls}
\varphi_{S_n}(\xi)
& = \exp\left( \int_{\mathbb{R}^d} \left(e^{i \langle u ; \xi \rangle} - 1 - i\langle u ; \xi \rangle\right) \nu_{\alpha,\beta}^{L,n}(du) \right), 
\end{align}
with,
\begin{align}\label{eq:Levymeasure_Sn_Ls}
\nu_{\alpha,\beta}^{L,n}(du) =\bbone_{(0,+\infty)}(r)\bbone_{\mathbb{S}^{d-1}}(y) \frac{nk^L_{\alpha,\beta}\left(n^{\frac{1}{\alpha}}r\right)}{r} dr \sigma(dy). 
\end{align} 
Now, observe that, for all $r \in (0,+\infty)$, since $\beta \in (\alpha, 2)$, 
\begin{align}\label{eq:pointwise_convergence_Ls}
nk^L_{\alpha,\beta}\left(n^{\frac{1}{\alpha}}r\right) \longrightarrow \frac{1}{r^\alpha}\bbone_{(0,+\infty)}(r),  
\end{align}
as $n$ tends to $+\infty$. Moreover, for all $r \in (0,+\infty)$ and all $n \geq 1$, 
\begin{align}\label{ineq:uniform_domination_Ls}
nk^L_{\alpha,\beta}\left(n^{\frac{1}{\alpha}}r\right) & = \frac{1}{n^{\frac{\beta}{\alpha}-1}} \frac{1}{r^\beta}\bbone_{(0, n^{-1/\alpha}]}(r) + \frac{1}{r^\alpha}\bbone_{(n^{-1/\alpha}, +\infty)}(r) , \nonumber \\
& \leq \frac{1}{r^{\beta}}\bbone_{(0,1]}(r) + \frac{1}{r^{\alpha}}\bbone_{(0,+\infty)}(r). 
\end{align}
Thus, a direct application of the Lebesgue dominated convergence theorem ensures that, for all $\xi \in \mathbb{R}^d$, 
\begin{align*}
\varphi_{S_n}(\xi) \longrightarrow \widehat{\mu_\alpha}(\xi),
\end{align*}
as $n$ tends to $+\infty$. This concludes the proof of the theorem.
\end{proof}
\noindent 
Now, let us prove Theorem \ref{thm:quantitative_approximation_Ls} which is a quantitative version of the previous limit theorem as an application of the stability result Theorem \ref{thm:stability_estimate_NDS_stable}.\\
\\
\textit{Proof of Theorem \ref{thm:quantitative_approximation_Ls}}. In order to obtain \eqref{ineq:rate_W1_LayeredSimple}, let us apply Theorem \ref{thm:stability_estimate_NDS_stable} to the probability measure $\mu_n^{\alpha,\beta}$ and $\mu_\alpha$, for all $n \geq 1$.~First, thanks to \eqref{eq:characteristic_function_Sn_Ls} and to \eqref{eq:Levymeasure_Sn_Ls}, $\mu_n^{\alpha,\beta}$ is a symmetric non-degenerate ID probability measure on $\mathbb{R}^d$ with L\'evy measure $\nu_{\alpha,\beta}^{L,n}$ such that, for all $\delta \in [1, \alpha)$, 
\begin{align}\label{eq:moment_condition}
\int_{\mathbb{R}^d} \|x\|^\delta \mu^{\alpha,\beta}_n(dx) < +\infty.
\end{align}
Thus, $\mu_n^{\alpha,\beta} \ast \nu_{\alpha,\beta}^{L,n} << \mu_n^{\alpha,\beta}$ and, for all $f \in \mathcal{C}_b^1(\mathbb{R}^d, \mathbb{R}^d)$ such that $\mu_n^{\alpha,\beta}(f) = 0$, 
\begin{align*}
\int_{\mathbb{R}^d} \| f(x)\|^2 \mu_n^{\alpha,\beta}(dx) \leq \int_{\mathbb{R}^d} \int_{\mathbb{R}^d} \| f(x+u)- f(x) \|^2  \nu_{\alpha,\beta}^{L,n}(du)\mu_n^{\alpha,\beta}(dx). 
\end{align*}
Now, for all $n \geq 1$, 
\begin{align}\label{eq:weight_function}
\nu_{\alpha,\beta}^{L,n}(du) = \omega^{\alpha,\beta}_n(\|u\|) \nu_\alpha(du), 
\end{align}
where $\omega_n^{\alpha,\beta}$ is defined, for all $r \in (0,+\infty)$, by
\begin{align*}
\omega_n^{\alpha,\beta}(r) = r^\alpha n k_{\alpha,\beta}^L\left(n^{\frac{1}{\alpha}}r\right) =  \frac{1}{n^{\frac{\beta}{\alpha}-1}} \frac{1}{r^{\beta-\alpha}}\bbone_{(0, n^{-1/\alpha}]}(r) +\bbone_{(n^{-1/\alpha},+\infty)}(r). 
\end{align*}
Note that, since $\beta \in (\alpha,2)$, for all $r \in (0,+\infty)$, 
\begin{align}\label{eq:pointwise_convergence_weight_Ls}
\omega_n^{\alpha,\beta}(r) \longrightarrow 1, 
\end{align}
as $n$ tends to $+\infty$.~The first three conditions of Theorem \ref{thm:stability_estimate_NDS_stable} are fulfilled and so it remains to prove the spectral condition: for all $n \geq 1$, 
\begin{align}\label{eq:spectral_condition_Ls}
\underset{R \longrightarrow +\infty}{\lim}\left(\mathcal{E}_{\alpha,\beta}^{L,n} (g_R,g_R) - \|g_R\|^2_{L^2(\mu_n^{\alpha,\beta})}\right) = 0,
\end{align}
where $\mathcal{E}_{\alpha,\beta}^{L,n}$ is defined, for all $h_1,h_2 \in \mathcal{C}_b^1(\mathbb{R}^d, \mathbb{R}^d)$, by
\begin{align}\label{eq:quadratic_form_Ls}
\mathcal{E}_{\alpha,\beta}^{L,n} (h_1,h_2) = \int_{\mathbb{R}^d} \int_{\mathbb{R}^d} \langle \Delta_u(h_1)(x) ; \Delta_u(h_2)(x)\rangle \nu_{\alpha,\beta}^{L,n}(du) \mu^{\alpha,\beta}_n(dx). 
\end{align}
First, by computations similar to the ones of the proof of Theorem \ref{thm:all_dimensions_NDS}, for all $R>0$ and all $j \in \{1, \dots, d\}$, 
\begin{align*}
\|g_{R,j}\|^2_{L^2(\mu_n^{\alpha,\beta})} = - \frac{i R^2}{\left(2\pi\right)^d} \int_{\mathbb{R}^d} \mathcal{F}(x_j \exp\left(- 2\|x\|^2\right))(\xi) \partial_{\xi_j} \left(\widehat{\mu_n^{\alpha,\beta}}\left(\frac{\xi}{R}\right)\right) d\xi. 
\end{align*}
Now, for all $n \geq 1$ and all $\xi \in \mathbb{R}^d$, 
\begin{align*}
\partial_{\xi_j} \left(\widehat{\mu_n^{\alpha,\beta}}\left(\xi\right) \right) = \left(i \int_{\mathbb{R}^d} u_j \left(e^{i \langle u ;\xi \rangle}-1\right) \nu_{\alpha,\beta}^{L,n}(du) \right) \widehat{\mu_n^{\alpha,\beta}}(\xi). 
\end{align*}
Thus, for all $n \geq 1$ and all $R>0$, 
\begin{align*}
\|g_{R,j}\|^2_{L^2(\mu_n^{\alpha,\beta})} & = \frac{R^{2-\alpha}}{\left(2\pi\right)^d} \int_{\mathbb{R}^d} \mathcal{F}(x_j \exp\left(- 2\|x\|^2\right))(\xi) \widehat{\mu_n^{\alpha,\beta}}\left(\frac{\xi}{R}\right) \\
&\quad\quad \times \left(\int_{(0,+\infty) \times \mathbb{S}^{d-1}} ry_j \left(e^{i \langle ry ;\xi \rangle}-1\right) nR^\alpha k_{\alpha,\beta}^L(n^{\frac{1}{\alpha}}Rr) \frac{dr}{r} \sigma(dy)\right)d\xi. 
\end{align*}
Now, for all $r>0$, all $n \geq 1$ and all $R>0$, 
\begin{align*}
nR^\alpha k^L_{\alpha, \beta}\left(n^{\frac{1}{\alpha}} R r\right) = \dfrac{1}{n^{\frac{\beta}{\alpha}-1} R^{\beta-\alpha} r^{\beta}}\bbone_{(0,n^{-1/\alpha}R^{-1}]}(r) + \frac{1}{r^\alpha}\bbone_{(n^{-1/\alpha}R^{-1},+\infty)}(r). 
\end{align*}
Thus, for all $r>0$ and all $n \geq 1$, 
\begin{align*}
nR^\alpha k^L_{\alpha, \beta}\left(n^{\frac{1}{\alpha}} R r\right) \longrightarrow \frac{1}{r^\alpha},
\end{align*}
as $R$ tends to $+\infty$. Moreover, for all $n \geq 1$, all $R \geq1$ and all $r>0$, 
\begin{align*}
nR^\alpha k^L_{\alpha, \beta}\left(n^{\frac{1}{\alpha}} R r\right) \leq \frac{1}{r^\beta}\bbone_{(0,1]}(r) + \frac{1}{r^\alpha}\bbone_{(0,+\infty)}(r).  
\end{align*}
Thus, a straightforward application of the Lebesgue dominated convergence theorem ensures that, 
\begin{align}\label{eq:limit_squared_norm_Ls}
\dfrac{\|g_{R,j}\|^2_{L^2(\mu_n^{\alpha,\beta})}}{R^{2-\alpha}} \longrightarrow \frac{1}{(2\pi)^d} \int_{\mathbb{R}^d} \mathcal{F}(x_j \exp\left(- 2\|x\|^2\right))(\xi)\left(\int_{\mathbb{R}^d} u_j \left(e^{i \langle u ;\xi \rangle}-1\right) \nu_\alpha(du)\right)d\xi. 
\end{align} 
Next, let us deal with the term regarding the quadratic form. First, for all $x \in \mathbb{R}^d$, all $R>0$, all $n \geq 1$ and all $j \in \{1, \dots, d\}$,
\begin{align}\label{eq:squared_field_Ls}
\int_{\mathbb{R}^d} \left|\Delta_u(g_{R,j})(x)\right|^2 \nu_{\alpha,\beta}^{L,n}(du) & = \frac{1}{(2\pi)^{2d}} \int_{\mathbb{R}^{2d}} \mathcal{F}\left(g_{R,j}\right)(\xi_1) \mathcal{F}\left(g_{R,j}\right)(\xi_2) e^{i \langle x ; \xi_1+\xi_2 \rangle} \nonumber \\
&\quad \times m_{\alpha,\beta}^{L,n}\left(\xi_1, \xi_2\right) d\xi_1 d\xi_2, 
\end{align}
with, for all $(\xi_1 , \xi_2) \in \mathbb{R}^{2d}$, 
\begin{align}\label{eq:symbol_squared_field_Ls}
m_{\alpha,\beta}^{L,n}\left(\xi_1, \xi_2\right) = \int_{\mathbb{R}^d} \left(e^{i \langle u ;\xi_1 \rangle}-1\right)\left(e^{i \langle u ; \xi_2 \rangle}-1\right) \nu_{\alpha,\beta}^{L,n}(du).
\end{align}
Then, by standard computations, for all $R>0$, all $j \in \{1, \dots, d\}$ and all $x \in \mathbb{R}^d$, 
\begin{align*}
\int_{\mathbb{R}^d} \left|\Delta_u(g_{R,j})(x)\right|^2 \nu_{\alpha,\beta}^{L,n}(du) & =  \frac{R^{2}}{\left(2\pi\right)^{2d}} \int_{\mathbb{R}^d} \int_{\mathbb{R}^d}\mathcal{F}(g_{1,j})(\xi_1) \mathcal{F}(g_{1,j})(\xi_2)\\
&\quad\quad \times m^{L,n}_{\alpha,\beta}\left(\frac{\xi_1}{R}, \frac{\xi_2}{R}\right) e^{i \langle x ; \frac{\xi_1+\xi_2}{R} \rangle} d\xi_1d\xi_2.
\end{align*}
Thus, as in the proof of Theorem \ref{thm:all_dimensions_NDS}, 
\begin{align*}
\mathcal{E}_{\alpha,\beta}^{L,n}\left(g_R,g_R\right) & =  \frac{R^{2}}{\left(2\pi\right)^{2d}} \sum_{j = 1}^d \int_{\mathbb{R}^d} \int_{\mathbb{R}^d}\mathcal{F}(g_{1,j})(\xi_1) \mathcal{F}(g_{1,j})(\xi_2)m_{\alpha,\beta}^{L,n}\left(\frac{\xi_1}{R}, \frac{\xi_2}{R}\right) \\
&\quad \times \widehat{\mu_n^{\alpha, \beta}}\left(\frac{\xi_1+\xi_2}{R}\right) d\xi_1d\xi_2. 
\end{align*}
Now, by integration by parts, for all $j \in \{1 , \dots, d \}$, 
\begin{align*}
\int_{\mathbb{R}^d} \int_{\mathbb{R}^d}\mathcal{F}(g_{1,j})(\xi_1) \mathcal{F}(g_{1,j})(\xi_2)& m_{\alpha,\beta}^{L,n}\left(\frac{\xi_1}{R}, \frac{\xi_2}{R}\right) \widehat{\mu_n^{\alpha, \beta}}\left(\frac{\xi_1+\xi_2}{R}\right) d\xi_1d\xi_2 \\
& = i\int_{\mathbb{R}^d} \int_{\mathbb{R}^d}\partial_{\xi_{1,j}}\left(\mathcal{F}(\exp\left(- \|x\|^2\right))(\xi_1)\right) \\
&\quad\quad \times \mathcal{F}(g_{1,j})(\xi_2)  m_{\alpha,\beta}^{L,n}\left(\frac{\xi_1}{R}, \frac{\xi_2}{R}\right) \widehat{\mu_n^{\alpha, \beta}}\left(\frac{\xi_1+\xi_2}{R}\right) d\xi_1d\xi_2, \\
& =-i \int_{\mathbb{R}^d} \int_{\mathbb{R}^d} \mathcal{F}(\exp\left(- \|x\|^2\right))(\xi_1) \mathcal{F}(g_{1,j})(\xi_2) \\
& \quad\quad \times \dfrac{\partial}{\partial \xi_{1,j}}\left[m_{\alpha,\beta}^{L,n}\left(\frac{\xi_1}{R}, \frac{\xi_2}{R}\right) \widehat{\mu_n^{\alpha, \beta}}\left(\frac{\xi_1+\xi_2}{R}\right)\right] d\xi_1d\xi_2.
\end{align*}
But, for all $R>0$ and all $n \geq 1$, 
\begin{align*}
\dfrac{\partial}{\partial \xi_{1,j}}\left[m_{\alpha,\beta}^{L,n}\left(\frac{\xi_1}{R}, \frac{\xi_2}{R}\right) \widehat{\mu_n^{\alpha, \beta}}\left(\frac{\xi_1+\xi_2}{R}\right)\right] = I^L_{R,n,j}(\xi_1, \xi_2) + II^L_{R,n,j}(\xi_1, \xi_2),
\end{align*}
where,
\begin{align*}
I^L_{R,n,j}(\xi_1, \xi_2) &  = i \left(\int_{(0,+\infty)\times \mathbb{S}^{d-1}}ry_j e^{i \langle ry ;\xi_1 \rangle} \left(e^{i \langle ry ;\xi_2 \rangle}-1\right) n k_{\alpha,\beta}^{L}\left(n^{\frac{1}{\alpha}} R r \right) \frac{dr}{r} \sigma(dy)\right) \\
&\quad\quad \times \widehat{\mu_n^{\alpha, \beta}}\left(\frac{\xi_1+\xi_2}{R}\right), \\
II^L_{R,n,j}(\xi_1, \xi_2) & = m_{\alpha,\beta}^{L,n} \left(\frac{\xi_1}{R} , \frac{\xi_2}{R}\right)  \left(i\int_{(0,+\infty) \times \mathbb{S}^{d-1}} ry_j \left(e^{i \langle ry ; \xi_1+ \xi_2 \rangle}-1\right) n  k_{\alpha,\beta}^L\left(n^{\frac{1}{\alpha}} R r\right)\frac{dr}{r} \sigma(dy)\right) \\
&\quad\quad \times \widehat{\mu_n^{\alpha, \beta}}\left(\frac{\xi_1+ \xi_2}{R}\right).
\end{align*}
Now, for all $j \in \{1,\dots, d\}$ fixed, the difference between the two quadratic terms can be written as:
\begin{align*}
\frac{R^{2-\alpha}}{\left(2\pi\right)^{2d}} & \int_{\mathbb{R}^d} \int_{\mathbb{R}^d} \mathcal{F}(\exp\left(- \|x\|^2\right))(\xi_1) \mathcal{F}(g_{1,j})(\xi_2)\widehat{\mu_n^{\alpha, \beta}}\left(\frac{\xi_1+\xi_2}{R}\right) \\
&\quad\quad \times \left(\int_{(0, +\infty) \times \mathbb{S}^{d-1}}ry_j e^{i \langle ry ;\xi_1 \rangle} \left(e^{i \langle ry ;\xi_2 \rangle}-1\right) n R^\alpha k_{\alpha,\beta}^L\left(n^{\frac{1}{\alpha}} R r \right) \frac{dr}{r} \sigma(dy)\right)d\xi_1d\xi_2 \\
&\quad\quad + \frac{R^{2-2\alpha}}{(2\pi)^{2d}}\int_{\mathbb{R}^d} \int_{\mathbb{R}^d} \mathcal{F}(\exp\left(- \|x\|^2\right))(\xi_1) \mathcal{F}(g_{1,j})(\xi_2)\widehat{\mu_n^{\alpha, \beta}}\left(\frac{\xi_1+\xi_2}{R}\right) \\
&\quad\quad \times R^\alpha m_{\alpha,\beta}^{L,n} \left(\frac{\xi_1}{R} , \frac{\xi_2}{R}\right) \left(\int_{(0,+\infty)\times\mathbb{S}^{d-1}} ry_j \left(e^{i \langle ry ; \xi_1+ \xi_2 \rangle}-1\right) n R^\alpha k_{\alpha,\beta}^L\left(n^{\frac{1}{\alpha}} R r\right)\frac{dr}{r} \right)d\xi_1d\xi_2 \\
&\quad\quad - \frac{R^{2-\alpha}}{(2\pi)^d} \int_{\mathbb{R}^d} \mathcal{F}\left(x_j \exp\left(-2\|x\|^2\right)\right)(\xi)\widehat{\mu_n^{\alpha, \beta}}\left(\frac{\xi}{R}\right)  \\
& \quad\quad \times \left(\int_{(0,+\infty)\times\mathbb{S}^{d-1}} ry_j \left(e^{i \langle ry ; \xi \rangle}-1\right) n R^\alpha k^L_{\alpha,\beta}\left(n^{\frac{1}{\alpha}} R r\right)\frac{dr}{r} \sigma(dy) \right) d\xi.
\end{align*}
Moreover, 
\begin{align*}
& R^\alpha m_{\alpha,\beta}^{L,n} \left(\frac{\xi_1}{R} , \frac{\xi_2}{R}\right) \longrightarrow \int_{\mathbb{R}^d} \left(e^{i \langle u ; \xi_1 \rangle}-1\right)\left(e^{i \langle u ;\xi_2 \rangle}-1\right) \nu_\alpha(du), \\
& \int_{(0,+\infty)\times\mathbb{S}^{d-1}} ry_j \left(e^{i \langle ry ; \xi_1+ \xi_2 \rangle}-1\right) n R^\alpha k^L_{\alpha, \beta}\left(n^{\frac{1}{\alpha}} R r\right)\frac{dr}{r} \sigma(dy) \longrightarrow \int_{\mathbb{R}^d} u_j \left(e^{i \langle u ; \xi_1+ \xi_2 \rangle}-1\right) \nu_\alpha(du),
\end{align*}
as $R$ tends to $+\infty$. Hence, the middle term disappears in the limit and it remains to study the asymptotic of the term:
\begin{align*}
\frac{R^{2-\alpha}}{\left(2\pi\right)^{2d}} & \int_{\mathbb{R}^d} \int_{\mathbb{R}^d} \mathcal{F}(\exp\left(- \|x\|^2\right))(\xi_1) \mathcal{F}(g_{1,j})(\xi_2)\widehat{\mu_n^{\alpha, \beta}}\left(\frac{\xi_1+\xi_2}{R}\right)d\xi_1d\xi_2 \\
&\times \bigg(\int_{(0, +\infty) \times \mathbb{S}^{d-1}}\left[ry_j e^{i \langle ry ;\xi_1 \rangle} \left(e^{i \langle ry ;\xi_2 \rangle}-1\right) - ry_j \left(e^{i \langle ry ;\xi_2 +\xi_1 \rangle}-1\right) \right] \\
&\times n R^\alpha k_{\alpha,\beta}^L\left(n^{\frac{1}{\alpha}} R r \right) \frac{dr}{r} \sigma(dy)\bigg) \\
& = \frac{R^{2-\alpha}}{\left(2\pi\right)^{2d}}  \int_{\mathbb{R}^d} \int_{\mathbb{R}^d} \mathcal{F}(\exp\left(- \|x\|^2\right))(\xi_1) \mathcal{F}(g_{1,j})(\xi_2)\widehat{\mu_n^{\alpha, \beta}}\left(\frac{\xi_1+\xi_2}{R}\right) \\
&\quad\quad \times \left(\int_{(0,+\infty)\times\mathbb{S}^{d-1}} ry_j  \left(1-e^{i \langle ry ;\xi_1 \rangle}\right) n R^\alpha k^L_{\alpha, \beta}\left(n^{\frac{1}{\alpha}} Rr \right) \frac{dr}{r} \sigma(dy)\right)d\xi_1d\xi_2. 
\end{align*}
Observe that, for all $j \in \{1 , \dots, d\}$, $g_{1,j}(0) = 0$. Thus, the remaining term can be written as: 
\begin{align*}
\frac{R^{2-\alpha}}{\left(2\pi\right)^{2d}} & \int_{\mathbb{R}^d} \int_{\mathbb{R}^d} \mathcal{F}(\exp\left(- \|x\|^2\right))(\xi_1) \mathcal{F}(g_{1,j})(\xi_2)\left(\widehat{\mu_n^{\alpha, \beta}}\left(\frac{\xi_1+\xi_2}{R}\right) -1\right)\\
&\quad\quad \times \left(\int_{(0,+\infty)\times\mathbb{S}^{d-1}} ry_j  \left(1-e^{i \langle ry ;\xi_1 \rangle}\right) n R^\alpha k_{\alpha,\beta}^L\left(n^{\frac{1}{\alpha}} R r \right) \frac{dr}{r} \sigma(dy)\right)d\xi_1d\xi_2 , \\
 = \frac{R^{2-\alpha}}{\left(2\pi\right)^{2d}} & \int_{\mathbb{R}^d} \int_{\mathbb{R}^d} \mathcal{F}(\exp\left(- \|x\|^2\right))(\xi_1) \mathcal{F}(g_{1,j})(\xi_2) \psi_n^{\alpha, \beta} \left(\frac{\xi_1+\xi_2}{R}\right) \int_0^1 \exp\left(t \psi_n^{\alpha, \beta} \left(\frac{\xi_1+\xi_2}{R}\right)\right) dt\\
&\quad\quad \times \left(\int_{(0,+\infty)\times\mathbb{S}^{d-1}} ry_j  \left(1-e^{i \langle ry ;\xi_1 \rangle}\right) n R^\alpha k_{\alpha,\beta}^L\left(n^{\frac{1}{\alpha}} R r \right) \frac{dr}{r}\sigma(dy)\right)d\xi_1d\xi_2,
\end{align*}
with, 
\begin{align*}
\psi_n^{\alpha, \beta} \left(\frac{\xi_1+\xi_2}{R}\right) = \int_{(0,+\infty)\times\mathbb{S}^{d-1}} \left(e^{i \langle ry ; \xi_1 + \xi_2 \rangle}-1- i \langle ry ; \xi_1+ \xi_2 \rangle\right) n k_{\alpha,\beta}^L \left(n^{\frac{1}{\alpha}} r R\right) \frac{dr}{r}\sigma(dy).
\end{align*}
But, 
\begin{align*}
R^\alpha \psi_n^{\alpha, \beta} \left(\frac{\xi_1+\xi_2}{R}\right) \longrightarrow \int_{\mathbb{R}^d} \left(e^{i \langle u ; \xi_1 + \xi_2 \rangle}-1- i \langle u ; \xi_1+ \xi_2 \rangle\right) \nu_\alpha(du) , \quad R\longrightarrow +\infty. 
\end{align*}
This concludes the proof of the spectral condition. So, thanks to Theorem \ref{thm:stability_estimate_NDS_stable}, for all $n \geq 1$, 
\begin{align*}
W_1\left(\mu_n^{\alpha,\beta} , \mu_\alpha \right) & \leq 2 \int_{\|u\|\geq 1} \|u \| \left| \omega_n^{\alpha,\beta}( \|u\| )-1 \right|\nu_\alpha(du) \nonumber \\
& \quad\quad + C_{\alpha,d} \int_{\|u\|\leq 1} \|u \|^2 \left| \omega_n^{\alpha,\beta}( \|u\| )-1 \right| \nu_\alpha(du).
\end{align*}
Now, recall that, for all $r>0$ and all $n \geq 2$, 
\begin{align*}
\omega_n^{\alpha,\beta}(r) =  \frac{1}{n^{\frac{\beta}{\alpha}-1}} \frac{1}{r^{\beta-\alpha}}\bbone_{(0, n^{-1/\alpha}]}(r) +\bbone_{(n^{-1/\alpha},+\infty)}(r).
\end{align*}
Thus, for all $r>1$ and all $n \geq 2$, $\omega_n^{\alpha,\beta}(r) = 1$, so that 
\begin{align*}
\int_{\|u\|\geq 1} \|u \| \left| \omega_n^{\alpha,\beta}( \|u\| )-1 \right|\nu_\alpha(du) = 0. 
\end{align*}
Moreover, by spherical coordinates, for all $n \geq 2$,
\begin{align*}
\int_{\|u\|\leq 1} \|u \|^2 \left| \omega_n^{\alpha,\beta}( \|u\| )-1 \right| \nu_\alpha(du) = \sigma\left(\mathbb{S}^{d-1}\right) \int_0^1 \left| \omega_n^{\alpha,\beta}( r )-1 \right| \dfrac{dr}{r^{\alpha-1}}. 
\end{align*}
Now, for all $r \in (0,1)$, 
\begin{align*}
\left| \omega_n^{\alpha,\beta}( r )-1 \right| \leq \frac{1}{n^{\frac{\beta}{\alpha}-1}} \frac{1}{r^{\beta-\alpha}}\bbone_{(0, n^{-1/\alpha}]}(r) + \left|\bbone_{(n^{-1/\alpha},+\infty)}(r) -1 \right|. 
\end{align*}
Thus, for all $n \geq 2$, 
\begin{align*}
\int_0^1 \left| \omega_n^{\alpha,\beta}( r )-1 \right| \dfrac{dr}{r^{\alpha-1}} & = \int_0^{n^{-\frac{1}{\alpha}}} \left| \omega_n^{\alpha,\beta}( r )-1 \right| \dfrac{dr}{r^{\alpha-1}}, \\
& \leq \int_0^{n^{- \frac{1}{\alpha}}} \frac{1}{n^{\frac{\beta}{\alpha}-1}} \frac{1}{r^{\beta-\alpha}} \frac{dr}{r^{\alpha-1}}
+ \int_0^{n^{-\frac{1}{\alpha}}} \frac{dr}{r^{\alpha-1}},\\
& \leq \left(\int_0^1\frac{dr}{r^{\beta-1}}\right) \frac{1}{n^{\frac{2}{\alpha}-1}} + \frac{1}{2-\alpha} \frac{1}{n^{\frac{2}{\alpha}-1}}. 
\end{align*}
This concludes the proof of the theorem. $\square$

\begin{rem}\label{rem:generalization_layered_stable_distributions}
Based on the proofs of Theorem \ref{thm:limit_theorem_layered_stable_distributions} and of Theorem \ref{thm:quantitative_approximation_Ls}, it is clearly possible to obtain similar quantitative estimates in $1$-Wasserstein distance for initial laws which $k$-function, denoted by $k_L$, verifies the following conditions: $k_L$ is a Borel measurable function from $(0,+\infty)$ to $\mathbb{R}_+$ such that 
\begin{align}\label{eq:asymptotics}
\underset{r \rightarrow +\infty}{\lim} r^\alpha k_L(r) = 1, \quad \underset{r \rightarrow 0^+}{\lim} r^\beta k_L(r) = c_{\alpha,\beta},
\end{align} 
where $\alpha \in (1,2)$, $\beta \in (\alpha,2)$ and $c_{\alpha,\beta}>0$.~Moreover, there exists a Borel measurable function $\phi$ from $(0,+\infty)$ to $\mathbb{R}_+$ such that, for all $r>0$ and all $n \geq 1$, 
\begin{align}\label{eq:hp_domination_general}
n k_L(n^{\frac{1}{\alpha}}r) \leq \phi(r) , \quad \int_{0}^{+\infty} \min(1,r^2) \dfrac{\phi(r)}{r}dr <+\infty. 
\end{align}
Finally, in order to obtain an explicit rate of convergence, as in Theorem \ref{thm:all_dimensions_NDS}, one should specify the second order asymptotic at $+\infty$ of the function $k_L$.
\end{rem}

\section{Spectral Analysis of Non-Local Operators associated with Cauchy Probability Measures on $\mathbb{R}^d$}\label{sec:spectral_cauchy}
\subsection{$L^2$-spectral properties}\label{sec:L2_Spec_Prop}
\noindent
In the sequel, $\mu_1$ denotes a non-degenerate symmetric $1$-stable probability measure on $\mathbb{R}^d$ with Fourier transform given by \eqref{eq:fourier_symmetric_stable} with $\alpha=1$.~$\widehat{\mu_1}$ is also given, for all $\xi \in \mathbb{R}^d$, by 
\begin{align}\label{eq:FT_Symmetric_Cauchy}
\widehat{\mu_1}\left(\xi\right) = \exp \left(\int_{\mathbb{R}^d} \left(e^{i \langle u ; \xi \rangle} - 1 - i \langle u ; \xi \rangle\bbone_{\|u\|\leq 1}\right) \nu_1(du)\right),
\end{align}
where $\nu_1$ is a symmetric L\'evy measure on $\mathcal{B}(\mathbb{R}^d)$ satisfying \eqref{eq:def_scale_invariance} with $\alpha=1$.~Next, $(P^{\nu_1}_t)_{t \geq 0}$ denotes the $1$-stable Ornstein-Uhlenbeck semigroup with invariant measure $\mu_1$ and given by the Mehler integral representation formula of equation \eqref{eq:1_stable_symmetric_sg}. Thanks to \cite[Proposition $5.5$]{AH20_4}, its generator is given, for all $f \in \mathcal{S}(\mathbb{R}^d)$ and all $x \in \mathbb{R}^d$, by
\begin{align}\label{eq:1_stable_symmetric_gene}
\mathcal{L}^{\nu_1}(f)(x) = - \langle x ; \nabla(f)(x) \rangle + \mathcal{A}_1(f)(x),
\end{align}
 with $\mathcal{A}_1$ defined by \eqref{eq:1_stable_symmetric_diff}.~$\mu_1$ verifies the following Poincar\'e-type inequality: for all $f \in \mathcal{S}(\mathbb{R}^d)$ such that $\int_{\mathbb{R}^d} f(x) \mu_1(dx) = 0$, 
\begin{align}\label{Poinc_type_Cauchy}
\int_{\mathbb{R}^d} f(x)^2 \mu_1(dx) \leq \int_{\mathbb{R}^{2d}} \left| f(x+u)-f(x) \right|^2\nu_1(du) \mu_1(dx). 
\end{align}
Several proofs of Inequality \eqref{Poinc_type_Cauchy} have appeared in the literature and let us refer the reader to the classical works \cite{Chen_85,Chen_Lou_87,HPA_95,Houdre_al_98,Wu_PTRF00,Rockner_Wang_03,Last_Penrose_11}.~Below, a semigroup proof of this functional inequality is presented reminiscent of the well-known proof of the Gaussian Poincar\'e inequality along the classical Ornstein-Uhlenbeck semigroup.

\begin{prop}\label{prop:Poinc_type_Cauchy}
Let $d \geq 1$ be an integer, let $\nu_1$ be a non-degenerate symmetric L\'evy measure verifying \eqref{eq:def_scale_invariance} with $\alpha = 1$ and let $\mu_1$ be the corresponding non-degenerate symmetric $1$-stable 
probability measure on $\mathbb{R}^d$. Then,  for all $f \in \mathcal{S}(\mathbb{R}^d)$ such that $\int_{\mathbb{R}^d} f(x) \mu_1(dx) = 0$,
\begin{align*}
\int_{\mathbb{R}^d} f(x)^2 \mu_1(dx) \leq \int_{\mathbb{R}^{2d}} |f(x+u)-f(x)|^2 \nu_1(du) \mu_1(dx).
\end{align*}
\end{prop}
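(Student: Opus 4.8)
The plan is to mimic the classical semigroup proof of the Gaussian Poincaré inequality, using the $1$-stable Ornstein--Uhlenbeck semigroup $(P_t^{\nu_1})_{t\geq 0}$ in place of the heat semigroup. First I would record the two basic facts about this semigroup on the Schwartz class: it is a strongly continuous semigroup on $L^2(\mu_1)$ with invariant measure $\mu_1$, so that $P_t^{\nu_1}(f)\to \int f\,d\mu_1$ in $L^2(\mu_1)$ as $t\to +\infty$ (this is where non-degeneracy of $\nu_1$ is used), and its generator on $\mathcal{S}(\bbr^d)$ is $\mathcal{L}^{\nu_1}$ given by \eqref{eq:1_stable_symmetric_gene}--\eqref{eq:1_stable_symmetric_diff}. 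For $f\in\mathcal{S}(\bbr^d)$ with $\int_{\bbr^d} f\,d\mu_1 = 0$, write
\begin{align*}
\int_{\bbr^d} f(x)^2 \mu_1(dx) = -\int_0^{+\infty} \frac{d}{dt}\left(\int_{\bbr^d} (P_t^{\nu_1}(f)(x))^2 \mu_1(dx)\right) dt = -2\int_0^{+\infty} \int_{\bbr^d} P_t^{\nu_1}(f)(x)\, \mathcal{L}^{\nu_1}(P_t^{\nu_1}(f))(x)\, \mu_1(dx)\, dt,
\end{align*}
using that $P_t^{\nu_1}(f)\to 0$ in $L^2(\mu_1)$ and $P_0^{\nu_1}(f)=f$, together with the fact that $P_t^{\nu_1}$ maps $\mathcal{S}(\bbr^d)$ into a class where this differentiation under the integral is justified.

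Next I would identify the Dirichlet form associated with $\mathcal{L}^{\nu_1}$: integrating by parts against $\mu_1$ (using mass conservation / the fact that $x\mapsto x$ is, up to the drift, compensated by the $\langle u;\nabla f\rangle\bbone_{\|u\|\le 1}$ term exactly as in the infinitely divisible setting), one obtains for $g\in\mathcal{S}(\bbr^d)$
\begin{align*}
-\int_{\bbr^d} g(x)\, \mathcal{L}^{\nu_1}(g)(x)\, \mu_1(dx) = \frac{1}{2}\int_{\bbr^d}\int_{\bbr^d} |g(x+u)-g(x)|^2 \nu_1(du)\, \mu_1(dx).
\end{align*}
This identity is the analogue, for the $1$-stable OU operator, of the classical Gaussian integration by parts $-\int g \mathcal{L}g\, d\gamma = \int |\nabla g|^2 d\gamma$, and it follows from the polarization of the carré du champ; the drift part $-\langle x;\nabla g\rangle$ together with the compensator contributes nothing after integration because $\mu_1$ is invariant. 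Applying this with $g = P_t^{\nu_1}(f)$ and plugging into the previous display gives
\begin{align*}
\int_{\bbr^d} f(x)^2 \mu_1(dx) = \int_0^{+\infty} \int_{\bbr^d}\int_{\bbr^d} |P_t^{\nu_1}(f)(x+u)-P_t^{\nu_1}(f)(x)|^2 \nu_1(du)\, \mu_1(dx)\, dt.
\end{align*}

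To conclude I would exploit the contraction/commutation structure of the Mehler formula \eqref{eq:1_stable_symmetric_sg}. Since $P_t^{\nu_1}(f)(x) = \int f(xe^{-t}+(1-e^{-t})y)\mu_1(dy)$, the first-order difference satisfies $P_t^{\nu_1}(f)(x+u) - P_t^{\nu_1}(f)(x) = \int (\Delta_{e^{-t}u} f)(xe^{-t}+(1-e^{-t})y)\, \mu_1(dy)$, so by Jensen's inequality and invariance of $\mu_1$ under $P_t^{\nu_1}$,
\begin{align*}
\int_{\bbr^d} |P_t^{\nu_1}(f)(x+u)-P_t^{\nu_1}(f)(x)|^2 \mu_1(dx) \leq \int_{\bbr^d} |f(x+e^{-t}u)-f(x)|^2 \mu_1(dx).
\end{align*}
Integrating in $u$ against $\nu_1$ and using the scaling property \eqref{eq:def_scale_invariance} with $\alpha=1$, namely that the pushforward of $\nu_1$ under $u\mapsto e^{-t}u$ equals $e^{t}\nu_1$, I get $\int_{\bbr^{2d}} |\Delta_u(P_t^{\nu_1}f)|^2 \nu_1(du)\mu_1(dx) \le e^{-t}\int_{\bbr^{2d}}|\Delta_u f|^2 \nu_1(du)\mu_1(dx)$. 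Substituting into the integral over $t$ and evaluating $\int_0^{+\infty} e^{-t}\,dt = 1$ yields exactly the claimed inequality. The main obstacle I anticipate is making the interchange-of-limits and differentiation-under-the-integral arguments rigorous for $f\in\mathcal{S}(\bbr^d)$ — in particular verifying that $t\mapsto \|P_t^{\nu_1}f\|_{L^2(\mu_1)}^2$ is differentiable with the stated derivative and that the Dirichlet-form integration by parts holds without boundary terms despite the unbounded drift; these are standard but require either a careful direct computation via the Fourier/Mehler representation or an appeal to the general theory of the carré de Mehler semigroup recalled in Remark \ref{rem:markov_uniqueness}.
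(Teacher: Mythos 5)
Your proposal is correct and follows essentially the same route as the paper's proof: differentiate the $L^2(\mu_1)$-norm along the $1$-stable Ornstein--Uhlenbeck semigroup, identify the energy dissipation rate with the carr\'e du champ $\Gamma_1(P_t^{\nu_1}f,P_t^{\nu_1}f)$ via the integration-by-parts identity $-\int g\,\mathcal{L}^{\nu_1}(g)\,d\mu_1 = \tfrac{1}{2}\int\Gamma_1(g,g)\,d\mu_1$, then use Jensen's inequality on the Mehler representation together with the $1$-stable scaling of $\nu_1$ to contract by $e^{-t}$, and integrate in $t$. The paper verifies the carr\'e du champ identity for $\mathcal{A}_1$ by a Fourier computation and invokes the distributional invariance of $e^{-t}x+(1-e^{-t})y$ under $\mu_1\otimes\mu_1$ exactly as you do, so the two arguments are the same up to presentation.
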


\begin{proof}
Let $f \in \mathcal{S}(\mathbb{R}^d)$ be such that $\int_{\mathbb{R}^d} f(x)\mu_1(dx) = 0$.~Thanks to Fourier inversion formula, for all $t \in [0,+\infty)$ and all $x \in \mathbb{R}^d$,
\begin{align*}
P^{\nu_1}_t(f)(x) = \frac{1}{(2\pi)^d} \int_{\mathbb{R}^d} \mathcal{F}(f)(\xi) e^{ i \langle x ; \xi e^{-t} \rangle} \dfrac{\widehat{\mu_1}(\xi)}{\widehat{\mu_1}(e^{-t}\xi)} d\xi.
\end{align*}
Moreover, by Fourier argument, for all $t \geq 0$ and all $x \in \mathbb{R}^d$,
\begin{align*}
\dfrac{d}{dt} \left(P^{\nu_1}_t(f)(x)\right)  = \mathcal{L}^{\nu_1}\left(P^{\nu_1}_t(f)\right)(x). 
\end{align*}
Finally, thanks to the \cite[proof of Proposition $5.5$]{AH20_4}, for all $t\in [0,+\infty)$ and all $x \in \mathbb{R}^d$,
\begin{align*}
\mathcal{L}^{\nu_1}  \left(P^{\nu_1}_t(f)\right)(x) =P^{\nu_1}_t(\mathcal{L}^{\nu_1}(f))(x).
\end{align*}
Thus,  for all $x \in \mathbb{R}^d$ and all $t \geq 0$, 
\begin{align*}
\langle x ; \nabla\left(P^{\nu_1}_t(f)\right)(x) \rangle = \mathcal{A}_1(P^{\nu_1}_t(f))(x)-P^{\nu_1}_t(\mathcal{L}_1  (f))(x).
\end{align*}
Then, differentiating under the integral sign, 
\begin{align*}
\dfrac{d}{dt} \left( \int_{\mathbb{R}^d} (P^{\nu_1}_t(f)(x))^2 \mu_1(dx) \right) & = 2 \int_{\mathbb{R}^d} P^{\nu_1}_t(f)(x) \dfrac{d}{dt} \left(P^{\nu_1}_t(f)(x)\right) \mu_1(dx) , \\
& = - 2 \int_{\mathbb{R}^d} P^{\nu_1}_t(f)(x) \langle x ; \nabla\left(P^{\nu_1}_t(f)\right)(x) \rangle \mu_1(dx) \\
& \quad \quad + 2 \int_{\mathbb{R}^d} P^{\nu_1}_t(f)(x) \mathcal{A}_1(P^{\nu_1}_t(f))(x) \mu_1(dx).
\end{align*}
Now,  by the chain rule and integration by parts, 
\begin{align*}
2 \int_{\mathbb{R}^d} P^{\nu_1}_t(f)(x) \langle x ; \nabla\left(P^{\nu_1}_t(f)\right)(x) \rangle \mu_1(dx) & = \int_{\mathbb{R}^d}  \langle x ; \nabla\left((P^{\nu_1}_t(f))^2\right)(x) \rangle \mu_1(dx) ,  \\
& = \int_{\mathbb{R}^d}  \mathcal{A}_1 \left((P^{\nu_1}_t(f))^2\right)(x) \mu_1(dx).
\end{align*}
Note that $(P^{\nu_1}_t(f))^2$ belongs to the set of infinitely continuously differentiable functions on $\mathbb{R}^d$ with bounded derivatives up to the second order and such that $\|\langle x; \nabla (f) \rangle\|_{\infty}<+\infty$.~This space is a core for the $L^p(\mu_1)$-generator of $(P_t^{\nu_1})_{t \geq 0}$, with $p \in (1,+\infty)$, and on it, this operator is equal to $\mathcal{L}^{\nu_1}$.~Moreover, it is stable under pointwise multiplication. Thus, 
\begin{align*}
- \dfrac{d}{dt} \left( \int_{\mathbb{R}^d} (P^{\nu_1}_t(f)(x))^2 \mu_1(dx) \right) = \int_{\mathbb{R}^d} \left(\mathcal{A}_1 \left((P^{\nu_1}_t(f))^2\right)(x) - 2P^{\nu_1}_t(f)(x) \mathcal{A}_1(P^{\nu_1}_t(f))(x) \right) \mu_1(dx).
\end{align*}
Now, for all $x \in \mathbb{R}^d$ and all $t \in [0,+\infty)$, 
\begin{align*}
\mathcal{A}_1 \left((P^{\nu_1}_t(f))^2\right)(x) - 2P^{\nu_1}_t(f)(x) \mathcal{A}_1(P^{\nu_1}_t(f))(x) & =  \int_{\mathbb{R}^d \times \mathbb{R}^d} \mathcal{F}(P^{\nu_1}_t(f))(\xi) \mathcal{F}(P^{\nu_1}_t(f))(\zeta) e^{i \langle x ; \xi +\zeta \rangle} \\
&\quad\quad \times \psi_1(\xi,\zeta)\frac{d \xi d\zeta}{(2\pi)^{2d}} , 
\end{align*}
where $\psi_1$ is defined, for all $\xi, \zeta \in \mathbb{R}^d$, by
\begin{align*}
\psi_1(\xi,\zeta) = \int_{\mathbb{R}^d} \left(e^{i \langle u ;\xi \rangle}-1\right) \left(e^{i \langle u ;\zeta \rangle}-1\right) \nu_1(du). 
\end{align*}
Thus,  for all $x \in \mathbb{R}^d$ and all $t \in [0,+\infty)$, 
\begin{align*}
\mathcal{A}_1 \left((P^{\nu_1}_t(f))^2\right)(x) - 2P^{\nu_1}_t(f)(x) \mathcal{A}_1(P^{\nu_1}_t(f))(x) & = \int_{\mathbb{R}^d} |P^{\nu_1}_t(f)(x+u) - P^{\nu_1}_t(f)(x)|^2 \nu_1(du),
\end{align*}
so that, 
\begin{align*}
- \dfrac{d}{dt} \left( \int_{\mathbb{R}^d} (P^{\nu_1}_t(f)(x))^2 \mu_1(dx) \right) = \int_{\mathbb{R}^d} \Gamma_1\left(P^{\nu_1}_t(f),P^{\nu_1}_t(f)\right)(x) \mu_1(dx),
\end{align*}
where, for all $f_1,f_2$ suitable and all $x \in \mathbb{R}^d$, 
\begin{align*}
\Gamma_1\left(f_1,f_2\right)(x) =  \int_{\mathbb{R}^d} (f_1(x+u)-f_1(x))(f_2(x+u)-f_2(x)) \nu_1(du).
\end{align*} 
Thanks to Jensen's inequality,  a change of variables in the radial coordinate and using the fact that, under $\mu_1 \otimes \mu_1$, $e^{-t}x + (1-e^{-t})y$ is distributed according to $\mu_1$, 
\begin{align*}
\int_{\mathbb{R}^d} \Gamma_1\left(P^{\nu_1}_t(f),P^{\nu_1}_t(f)\right)(x) \mu_1(dx) \leq e^{-t} \int_{\mathbb{R}^d} \Gamma_1\left(f,f\right)(x) \mu_1(dx).
\end{align*}
Then, for all $t \geq 0$,
\begin{align*}
- \dfrac{d}{dt} \left( \int_{\mathbb{R}^d} (P^{\nu_1}_t(f)(x))^2 \mu_1(dx) \right) \leq e^{-t} \int_{\mathbb{R}^d} \Gamma_1\left(f,f\right)(x) \mu_1(dx).
\end{align*}
Integrating with respect to $t$ and using the fact that $\mathbb{E} f(X_1) = 0$, with $X_1 \sim \mu_1$, concludes the proof of the Poincar\'e-type inequality for $\mu_1$. 
\end{proof}
\noindent
Next, let us focus our analysis on the rotationally invariant case for which the Lebesgue density is  known explicitly. Indeed, for all $x \in \mathbb{R}^d$ and all integer $d \geq 1$, 
\begin{align}\label{eq:def_rot_inv}
p_1^{\operatorname{rot}}(x) : = \dfrac{c_d}{\left(1+ \|x\|^2\right)^{\frac{d+1}{2}}}, \quad c_d := \dfrac{\Gamma\left(\frac{d+1}{2}\right)}{\pi^{\frac{d+1}{2}}}.
\end{align}
Let $\nu_1^{\operatorname{rot}}$ be the L\'evy measure on $\mathbb{R}^d$ associated with the $1$-stable probability measure $\mu_1^{\operatorname{rot}}$ whose Lebesgue density is given by \eqref{eq:def_rot_inv}. In the sequel, $\mu_1^{\operatorname{rot}}$ is called the \textit{standard Cauchy probability measure} on $\mathbb{R}^d$. Its Fourier transform is given, for all $\xi \in \mathbb{R}^d$, by
\begin{align}\label{eq:Fourier_transform_standard_Cauchy}
\widehat{\mu_1^{\operatorname{rot}}}(\xi) = \exp \left( - \|\xi\|\right). 
\end{align}
Let $(P_t^{\nu_1^{\operatorname{rot}}})_{t \geq 0}$ be the semigroup of operators defined by \eqref{eq:1_stable_symmetric_sg} with $\mu_1 = \mu_1^{\operatorname{rot}}$.~Thanks to the integral representation \eqref{eq:1_stable_symmetric_sg}, this semigroup of operators can be extended in a compatible way to semigroups of linear contractions on $L^p(\mu_1^{\operatorname{rot}})$, for all $p \in [1,+\infty)$, since $\mu_1^{\operatorname{rot}}$ is an invariant measure for the semigroup.~As previously mentioned, thanks to \cite[Proposition $5.5$]{AH20_4}, for all $p \in (1, +\infty)$, $\mathcal{S}(\mathbb{R}^d)$ is a subset of the $L^p$-domain of the generator of $(P_t^{\nu_1^{\operatorname{rot}}})_{t \geq 0}$ and, for all $f \in \mathcal{S}(\mathbb{R}^d)$ and all $x \in \mathbb{R}^d$, 
\begin{align}\label{eq:OU_Cauchy_rot_inv}
\mathcal{L}_1^{\operatorname{rot}}(f)(x) = - \langle x ; \nabla(f)(x) \rangle + \mathcal{A}_1^{\operatorname{rot}}(f)(x), 
\end{align}
where, 
\begin{align}\label{eq:non_local_square_root_Laplacian}
\mathcal{A}_1^{\operatorname{rot}}(f)(x) & =\int_{\mathbb{R}^d} \mathcal{F}(f)(\xi) e^{i \langle x ; \xi \rangle} \left( -  \|\xi\|\right) \frac{d\xi}{(2\pi)^d},  \nonumber \\
& = \int_{\mathbb{R}^d} \left(f(x+u) - f(x) -\bbone_{\| u \|\leq 1} \langle u  ; \nabla(f)(x) \rangle\right) \nu_1^{\operatorname{rot}}(du). 
\end{align}
Denoting by $\left((P_t^{\nu_1^{\operatorname{rot}}})^*\right)_{t\geq 0}$ the dual semigroup, one can consider the ``carr\'e de Mehler" semigroup associated with the standard Cauchy probability measure: for all $f \in L^2\left(\mu_1^{\operatorname{rot}}\right)$ and all $t \geq 0$, 
\begin{align}\label{eq:carre_de_mehler_sg}
\mathcal{P}^{\operatorname{rot}}_t(f) = ((P_t^{\nu_1^{\operatorname{rot}}})^* \circ P_t^{\nu_1^{\operatorname{rot}}})(f) = (P_t^{\nu_1^{\operatorname{rot}}} \circ (P_t^{\nu_1^{\operatorname{rot}}})^*)(f). 
\end{align}
The generator of the ``carr\'e de Mehler" semigroup is denoted by $\mathcal{L}_1$ in the sequel.~Using the techniques developed in \cite[Section $3$]{AH22_5}, the next proposition provides explicit formulas for the dual semigroup $((P_t^{\nu_1^{\operatorname{rot}}})^*)_{t \geq 0}$, for $(\mathcal{L}^{\operatorname{rot}}_1)^*$, the adjoint of $\mathcal{L}_1^{\operatorname{rot}}$, and for $\mathcal{L}_{1}$ on $\mathcal{S}(\mathbb{R}^d)$.  

\begin{prop}\label{lem:dual_Cauchy_SG}
Let $d \geq 1$ be an integer and let $p_1^{\operatorname{rot}}$ be defined by \eqref{eq:def_rot_inv}. Then, for all $f \in \mathcal{S}(\mathbb{R}^d)$, all $t>0$ and all $x \in \mathbb{R}^d$, 
\begin{align}\label{eq:Cauchy_dual_SG}
(P_t^{\nu_1^{\operatorname{rot}}})^*(f)(x) = \frac{1}{p_1^{\operatorname{rot}}(x)} \int_{\mathbb{R}^d} f(y) p_1^{\operatorname{rot}}(y) p_1^{\operatorname{rot}}\left(\dfrac{x-ye^{-t}}{(1-e^{-t})}\right) \frac{dy}{(1-e^{-t})^d}.
\end{align}
Moreover, for all $f \in \mathcal{S}(\mathbb{R}^d)$ and all $x \in \mathbb{R}^d$, 
\begin{align*}
& (\mathcal{L}^{\operatorname{rot}}_1)^*(f)(x) =  d f(x) + \frac{1}{p_1^{\operatorname{rot}}(x)} \langle x ; \nabla(p_1^{\operatorname{rot}}f)(x) \rangle+ \frac{1}{p_1^{\operatorname{rot}}(x)} \mathcal{A}_1^{\operatorname{rot}}(p_1^{\operatorname{rot}}f)(x), \\
& \mathcal{L}_{1}(f)(x) : = \mathcal{L}^{\operatorname{rot}}_1(f)(x) + (\mathcal{L}^{\operatorname{rot}}_1)^*(f)(x) = \dfrac{d-\|x\|^2}{1+\|x\|^2}f(x) + \mathcal{A}^{\operatorname{rot}}_1(f)(x) + \frac{1}{p_1^{\operatorname{rot}}(x)} \mathcal{A}_1^{\operatorname{rot}}(p_1^{\operatorname{rot}}f)(x).
\end{align*}
\end{prop}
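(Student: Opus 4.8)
The plan is to read off the three formulas from the Mehler integral representation \eqref{eq:mehler_formula_standard_cauchy} together with $L^2(\mu_1^{\operatorname{rot}})$-duality, and then to differentiate at $t=0^+$. First I would compute the adjoint semigroup: for $f,g\in\mathcal{S}(\bbr^d)$, write $\langle P_t^{\nu_1^{\operatorname{rot}}}(f);g\rangle_{L^2(\mu_1^{\operatorname{rot}})}$ using \eqref{eq:mehler_formula_standard_cauchy}, perform in the inner integral the change of variables $z=xe^{-t}+(1-e^{-t})y$, so that $y=(z-xe^{-t})/(1-e^{-t})$ and $dy=(1-e^{-t})^{-d}dz$, and apply Fubini (licit since $p_1^{\operatorname{rot}}$ is bounded and integrable and $f,g$ are Schwartz). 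Identifying the resulting kernel against $\int f(z)(P_t^{\nu_1^{\operatorname{rot}}})^*(g)(z)p_1^{\operatorname{rot}}(z)\,dz$ and renaming variables yields \eqref{eq:Cauchy_dual_SG}.

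Next I would obtain $(\mathcal{L}^{\operatorname{rot}}_1)^*$ by computing directly the adjoint of $\mathcal{L}^{\operatorname{rot}}_1=-\langle x;\nabla(\cdot)\rangle+\mathcal{A}_1^{\operatorname{rot}}$ against the weight $p_1^{\operatorname{rot}}$ (equivalently, by differentiating the kernel in \eqref{eq:Cauchy_dual_SG} at $t=0^+$). For the drift part, integration by parts in each coordinate gives
\begin{align*}
\int_{\bbr^d}\bigl(-\langle x;\nabla f(x)\rangle\bigr)g(x)p_1^{\operatorname{rot}}(x)\,dx=\int_{\bbr^d}f(x)\Bigl(d\,g(x)+\frac{1}{p_1^{\operatorname{rot}}(x)}\langle x;\nabla(p_1^{\operatorname{rot}}g)(x)\rangle\Bigr)p_1^{\operatorname{rot}}(x)\,dx,
\end{align*}
with no boundary terms since $f,g\in\mathcal{S}(\bbr^d)$ and $p_1^{\operatorname{rot}}$ is bounded. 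For the nonlocal part, since $p_1^{\operatorname{rot}}$ is $C^\infty$ with all derivatives polynomially bounded one has $p_1^{\operatorname{rot}}g\in\mathcal{S}(\bbr^d)$, and $\mathcal{A}_1^{\operatorname{rot}}$ is the Fourier multiplier with the real symbol $-\|\xi\|$ (see \eqref{eq:non_local_square_root_Laplacian}), hence symmetric on $L^2(\bbr^d,dx)$ on Schwartz functions; therefore $\int(\mathcal{A}_1^{\operatorname{rot}}f)\,g\,p_1^{\operatorname{rot}}\,dx=\int f\,\mathcal{A}_1^{\operatorname{rot}}(p_1^{\operatorname{rot}}g)\,dx$. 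Summing the two contributions gives the stated formula for $(\mathcal{L}^{\operatorname{rot}}_1)^*$.

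Finally, for $\mathcal{L}_{1}$, I would differentiate the carr\'e de Mehler identity $\mathcal{P}^{\operatorname{rot}}_t=P_t^{\nu_1^{\operatorname{rot}}}\circ(P_t^{\nu_1^{\operatorname{rot}}})^*$ from \eqref{eq:carre_de_mehler_sg} at $t=0^+$ on the common core $\mathcal{S}(\bbr^d)$, which yields $\mathcal{L}_1=\mathcal{L}^{\operatorname{rot}}_1+(\mathcal{L}^{\operatorname{rot}}_1)^*$ pointwise there; then I substitute the two formulas just obtained, observe that the term $-\langle x;\nabla f(x)\rangle$ coming from $\mathcal{L}^{\operatorname{rot}}_1$ cancels the $\langle x;\nabla f(x)\rangle$ hidden in $(p_1^{\operatorname{rot}}(x))^{-1}\langle x;\nabla(p_1^{\operatorname{rot}}f)(x)\rangle=\langle x;\nabla f(x)\rangle+f(x)\langle x;\nabla\log p_1^{\operatorname{rot}}(x)\rangle$, and use $\nabla\log p_1^{\operatorname{rot}}(x)=-(d+1)x/(1+\|x\|^2)$ to simplify $d+\langle x;\nabla\log p_1^{\operatorname{rot}}(x)\rangle=(d-\|x\|^2)/(1+\|x\|^2)$. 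The two nonlocal terms $\mathcal{A}_1^{\operatorname{rot}}(f)$ and $(p_1^{\operatorname{rot}})^{-1}\mathcal{A}_1^{\operatorname{rot}}(p_1^{\operatorname{rot}}f)$ survive unchanged, giving the claimed expression for $\mathcal{L}_{1}$.

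The main obstacle is not the algebra but the functional-analytic bookkeeping: one must justify that $\mathcal{S}(\bbr^d)$ lies in the $L^2(\mu_1^{\operatorname{rot}})$-domains of $(\mathcal{L}^{\operatorname{rot}}_1)^*$ and of $\mathcal{L}_{1}$, that $t\mapsto\mathcal{P}^{\operatorname{rot}}_t f$ is strongly differentiable at $0^+$ with the expected derivative, that the differentiation under the integral sign in \eqref{eq:Cauchy_dual_SG} is licit, and that $\mathcal{A}_1^{\operatorname{rot}}(p_1^{\operatorname{rot}}f)$ is well defined pointwise (which follows from $p_1^{\operatorname{rot}}f\in\mathcal{S}(\bbr^d)$ and the integrability built into the L\'evy measure $\nu_1^{\operatorname{rot}}$). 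These points are handled exactly as in \cite[Section $2$]{AH22_5} and \cite[Proposition $5.5$]{AH20_4}, to which I refer for the details.
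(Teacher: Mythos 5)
Your proposal is correct, and the route through $(\mathcal{L}_1^{\operatorname{rot}})^*$ is a genuine alternative to the paper's.

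For \eqref{eq:Cauchy_dual_SG}, you and the paper both argue by $L^2(\mu_1^{\operatorname{rot}})$-duality and a change of variables in the Mehler kernel; the paper refers this step to \cite[Lemma $2.1$]{AH22_5}, whereas you spell it out, which is fine. The divergence is in how $(\mathcal{L}_1^{\operatorname{rot}})^*$ is obtained. The paper conjugates: it introduces $T_t^1 := M_1 \circ (P_t^{\nu_1^{\operatorname{rot}}})^* \circ M_1^{-1}$ (with $M_1$ multiplication by $p_1^{\operatorname{rot}}$), reads off the generator $A_1 = d\,\mathrm{Id} + \langle x ; \nabla (\cdot)\rangle + \mathcal{A}_1^{\operatorname{rot}}$ from the Fourier representation of $T_t^1$ by differentiating at $t=0^+$, and then deduces $(\mathcal{L}_1^{\operatorname{rot}})^* = M_1^{-1} \circ A_1 \circ M_1$, which is exactly the stated expression. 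You instead compute the $L^2(\mu_1^{\operatorname{rot}})$-adjoint of $\mathcal{L}_1^{\operatorname{rot}}$ summand by summand: integration by parts against the weight $p_1^{\operatorname{rot}}$ for the drift (producing the $d f + p_1^{-1}\langle x ; \nabla(p_1 f)\rangle$ term with no boundary contribution), and $L^2(\bbr^d,dx)$-symmetry of the Fourier multiplier $\mathcal{A}_1^{\operatorname{rot}}$ for the jump part (using that $p_1^{\operatorname{rot}} g \in \mathcal{S}(\bbr^d)$ whenever $g \in \mathcal{S}(\bbr^d)$, which indeed holds since $p_1^{\operatorname{rot}}$ is $\mathcal{C}^\infty$ with all derivatives polynomially decaying). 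Both routes are sound; yours is more elementary and makes the two mechanisms (weight vs.\ multiplier symmetry) explicit, while the paper's conjugation trick is shorter once the Fourier representation of $T_t^1$ is in hand and parallels \cite{AH22_5} more closely. Finally, the paper dispatches the formula for $\mathcal{L}_1$ with ``follow easily''; your reduction, cancelling $-\langle x ; \nabla f\rangle$ against the gradient hidden in $p_1^{-1}\langle x ;\nabla(p_1 f)\rangle$ and using $\nabla\log p_1^{\operatorname{rot}}(x) = -(d+1)x/(1+\|x\|^2)$ to get $d + \langle x ; \nabla\log p_1^{\operatorname{rot}}(x)\rangle = (d-\|x\|^2)/(1+\|x\|^2)$, is exactly the omitted algebra and is correct.
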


\begin{proof}
To prove formula \eqref{eq:Cauchy_dual_SG}, one can proceed as in the proof of \cite[Lemma $3.1$]{AH22_5} by a duality argument. Now, let $\left(T^1_t\right)_{t\geq 0}$ be the continuous family of operators defined, for all $f \in \mathcal{S}(\mathbb{R}^d)$, all $t>0$ and all $x \in \mathbb{R}^d$, by
\begin{align*}
T_t^1(f)(x) =  \int_{\mathbb{R}^d} f(y) p_1^{\operatorname{rot}}\left(\dfrac{x-ye^{-t}}{(1-e^{-t})}\right) \frac{dy}{(1-e^{-t})^d}, \quad T_0(f)(x) = f(x), 
\end{align*}
and let $M_1$ be the multiplication operator by the function $p_1^{\operatorname{rot}}$. It is clear that, for all $f \in \mathcal{S}(\mathbb{R}^d)$, all $t>0$ and all $x \in \mathbb{R}^d$, 
\begin{align*}
(P_t^{\nu_1^{\operatorname{rot}}})^*(f)(x) = \left(M_1^{-1} \circ T^1_t \circ M_1\right)(f)(x). 
\end{align*}
$\left(T^1_t\right)_{t\geq 0}$ is a semigroup of operators which admits the following representation thanks to Fourier inversion formula: for all $f \in \mathcal{S}(\mathbb{R}^d)$, all $x \in \mathbb{R}^d$ and all $t>0$, 
\begin{align*}
T_t^1(f)(x) = \frac{e^{td}}{(2\pi)^d} \int_{\mathbb{R}^d} \mathcal{F}(f)(\xi) e^{i \langle x ; \xi \rangle e^t} \dfrac{\varphi_1(e^{t} \xi)}{ \varphi_1(\xi)} d\xi  , \quad \varphi_1(\xi) =\int_{\mathbb{R}^d} e^{i \langle x ; \xi \rangle} \mu^{\operatorname{rot}}_1(dx).
\end{align*}
Based on this Fourier representation formula, it is readily seen that the generator of $(T_t^1)_{t \geq 0}$ is given, for all $f \in \mathcal{S}(\mathbb{R}^d)$ and all $x \in \mathbb{R}^d$, by
\begin{align}\label{eq:generator_T_t}
A_1(f)(x) = d f(x) + \langle x ; \nabla(f)(x) \rangle + \mathcal{A}_1^{\operatorname{rot}}(f)(x).  
\end{align}
Then, the formulas for $(\mathcal{L}^{\operatorname{rot}}_1)^*(f)$ and for $\mathcal{L}_{1}(f)$ follow easily. 
\end{proof}
\noindent
To pursue our preliminary analysis, let us compute the action of the operator $\mathcal{A}_1^{\operatorname{rot}}$ on the Lebesgue density of the standard Cauchy probability measure. 

\begin{lem}\label{lem:action_square_root_laplacian}
Let $d \geq 1$ be an integer, let $p_1^{\operatorname{rot}}$ be defined by \eqref{eq:def_rot_inv} and let $\mathcal{A}_{1}^{\operatorname{rot}}$ be given by \eqref{eq:non_local_square_root_Laplacian}. Then, for all $x \in \mathbb{R}^d$, 
\begin{align}\label{eq:action_square_root_laplacian}
\mathcal{A}_1^{\operatorname{rot}}(p_1^{\operatorname{rot}})(x) =  p_1^{\operatorname{rot}}(x) \dfrac{\|x\|^2 - d}{1+\|x\|^2}.
\end{align}
Moreover, for all $x \in \mathbb{R}^d$
\begin{align}\label{eq:harmonic}
A_1(p_1^{\operatorname{rot}})(x) = 0.
\end{align}
\end{lem}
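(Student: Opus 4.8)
The plan is to identify $\mathcal{A}_1^{\operatorname{rot}}$, up to sign, with the half-Laplacian of Fourier multiplier $-\|\xi\|$ and to compute its action on $p_1^{\operatorname{rot}}$ by embedding $p_1^{\operatorname{rot}}$ into the scaling family of Cauchy densities and differentiating in the scale parameter. Concretely, for $t>0$ set $p_t(x) := t^{-d} p_1^{\operatorname{rot}}(x/t) = c_d\, t\, (\|x\|^2+t^2)^{-(d+1)/2}$, so that $p_1 = p_1^{\operatorname{rot}}$. By \eqref{eq:Fourier_transform_standard_Cauchy} and the symmetry of $p_1^{\operatorname{rot}}$, one has $\mathcal{F}(p_t)(\xi) = e^{-t\|\xi\|}$; in particular each $p_t$ is smooth and bounded together with all its derivatives, and both $\mathcal{F}(p_t)$ and $\xi\mapsto\|\xi\|\mathcal{F}(p_t)(\xi)$ belong to $L^1(\bbr^d)$, uniformly for $t$ in a neighbourhood of $1$.

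The first step is to check that the Fourier representation of $\mathcal{A}_1^{\operatorname{rot}}$ in \eqref{eq:non_local_square_root_Laplacian} remains valid on this class. For such an $f$, the singular integral $\int_{\bbr^d}(f(x+u)-f(x)-\bbone_{\|u\|\le1}\langle u;\nabla f(x)\rangle)\nu_1^{\operatorname{rot}}(du)$ converges absolutely (the integrand is $O(\|u\|^2)$ near the origin against the density $\nu_1^{\operatorname{rot}}(du)\asymp\|u\|^{-(d+1)}du$, and is bounded for $\|u\|\ge1$), and a standard truncation argument with functions in $\mathcal{C}_c^{\infty}(\bbr^d)$ combined with \eqref{eq:non_local_square_root_Laplacian} identifies this integral with $\mathcal{F}^{-1}\big((-\|\cdot\|)\mathcal{F}(f)\big)$. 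Applying this with $f=p_t$ gives $\mathcal{A}_1^{\operatorname{rot}}(p_t)(x) = \frac{1}{(2\pi)^d}\int_{\bbr^d}(-\|\xi\|)\, e^{-t\|\xi\|}\, e^{i\langle x;\xi\rangle}\,d\xi$, while differentiating the Fourier inversion formula for $p_t$ under the integral sign (legitimate by dominated convergence, $\|\xi\|e^{-t\|\xi\|}$ being dominated near $t=1$ by an integrable function) produces exactly the same expression. Hence $\mathcal{A}_1^{\operatorname{rot}}(p_t)(x) = \partial_t p_t(x)$ for every $t$ near $1$.

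It then only remains to differentiate explicitly: $\partial_t p_t(x) = c_d\,(\|x\|^2+t^2)^{-(d+3)/2}\big(\|x\|^2-d\,t^2\big)$, and setting $t=1$ yields $\mathcal{A}_1^{\operatorname{rot}}(p_1^{\operatorname{rot}})(x) = c_d(1+\|x\|^2)^{-(d+3)/2}(\|x\|^2-d) = p_1^{\operatorname{rot}}(x)\,\dfrac{\|x\|^2-d}{1+\|x\|^2}$, which is \eqref{eq:action_square_root_laplacian}. Finally, \eqref{eq:harmonic} follows directly: from $p_1^{\operatorname{rot}}(x)=c_d(1+\|x\|^2)^{-(d+1)/2}$ one computes $\nabla p_1^{\operatorname{rot}}(x) = -(d+1)\frac{x}{1+\|x\|^2}p_1^{\operatorname{rot}}(x)$, hence, with $A_1$ as in \eqref{eq:generator_T_t},
$$A_1(p_1^{\operatorname{rot}})(x) = p_1^{\operatorname{rot}}(x)\left(d - \frac{(d+1)\|x\|^2}{1+\|x\|^2} + \frac{\|x\|^2-d}{1+\|x\|^2}\right) = p_1^{\operatorname{rot}}(x)\,\frac{d(1+\|x\|^2)-(d+1)\|x\|^2+\|x\|^2-d}{1+\|x\|^2} = 0.$$

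The only genuinely delicate point is the justification of the Fourier/singular-integral identity for $\mathcal{A}_1^{\operatorname{rot}}$ outside the Schwartz class. An alternative route that sidesteps it is to recall that $(p_t)_{t>0}$ is precisely the convolution semigroup of the Cauchy process, whose generator is $\mathcal{A}_1^{\operatorname{rot}}$, so that $\partial_t p_t = \mathcal{A}_1^{\operatorname{rot}}(p_t)$ holds by construction for $t>0$; equivalently, one may note that $U(x,y):=p_{1+y}(x)$ is the bounded harmonic extension of $p_1^{\operatorname{rot}}$ to the half-space $\{y>0\}$ (since $\mathcal{F}_x(U(\cdot,y))(\xi)=e^{-(1+y)\|\xi\|}$), so that $\mathcal{A}_1^{\operatorname{rot}}(p_1^{\operatorname{rot}})(x)=\partial_y U(x,0^+)$, and compute this boundary derivative directly — both routes give the same value as above.
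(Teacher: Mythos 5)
Your proof is correct, and it takes a genuinely different route from the paper's. The paper's argument is essentially a one-liner of the abstract type: since the ``carr\'e de Mehler'' semigroup is mass conservative, $\mathcal{L}_1(1)=0$, and plugging $f=1$ into the explicit formula for $\mathcal{L}_1$ from Proposition \ref{lem:dual_Cauchy_SG} forces $\mathcal{A}_1^{\operatorname{rot}}(p_1^{\operatorname{rot}})/p_1^{\operatorname{rot}} = (\|x\|^2-d)/(1+\|x\|^2)$; the identity $A_1(p_1^{\operatorname{rot}})=0$ is then just the statement $(\mathcal{L}_1^{\operatorname{rot}})^*(1)=0$ conjugated by multiplication by $p_1^{\operatorname{rot}}$. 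You instead embed $p_1^{\operatorname{rot}}$ into the Cauchy convolution family $p_t$ and identify $\mathcal{A}_1^{\operatorname{rot}}(p_1^{\operatorname{rot}})$ with $\partial_t p_t|_{t=1}$, which is a clean explicit computation. Both proofs are sound. What your route buys is independence from Proposition \ref{lem:dual_Cauchy_SG} and from the mass-conservation input; it also makes \eqref{eq:action_square_root_laplacian} transparent rather than mysterious (it is simply the time-derivative of the Cauchy transition density), and the final cancellation in $A_1(p_1^{\operatorname{rot}})=0$ is then verified by elementary algebra. The paper's route is shorter once the duality machinery is already in place, and it spares one the (admittedly routine) verification that the Fourier/singular-integral representation of $\mathcal{A}_1^{\operatorname{rot}}$ extends off the Schwartz class to the family $p_t$ — a point you correctly flag as the only delicate step, and for which your semigroup-generator and half-space harmonic-extension alternatives both work.
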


\begin{proof}
Equation \eqref{eq:action_square_root_laplacian} can be seen as a direct consequence of $\mathcal{L}_1(1)= 0$ which follows from the fact that the semigroups $(P_t^{\nu^{\operatorname{rot}}_1})_{t \geq 0}$ and $((P_t^{\nu^{\operatorname{rot}}_1})^*)_{t \geq 0}$ are mass conservative in the sense that $P_t^{\nu^{\operatorname{rot}}_1}(1) = 1$ and $(P_t^{\nu^{\operatorname{rot}}_1})^*(1)=1$, for all $t \geq 0$. Equation \eqref{eq:harmonic} follows by direct computations. 
\end{proof}
\noindent
Now, let $\Gamma_1^{\operatorname{rot}}$ be the carr\'e du champs operator associated with $\mathcal{A}_1^{\operatorname{rot}}$ defined, for all $f,g \in \mathcal{S}(\mathbb{R}^d)$ and all $x \in \mathbb{R}^d$, by
\begin{align}\label{eq:definition_squared_field_operator}
\Gamma_1^{\operatorname{rot}}(f,g)(x) & : = \mathcal{A}_1^{\operatorname{rot}}(fg)(x) - g(x)\mathcal{A}_1^{\operatorname{rot}}(f)(x) -f(x)\mathcal{A}_1^{\operatorname{rot}}(g)(x) , \nonumber \\
& = \int_{\mathbb{R}^d} \left(f(x+u)-f(x)\right)\left(g(x+u)-g(x)\right) \nu_1^{\operatorname{rot}}(du).
\end{align}
Then, for all $x \in \mathbb{R}^d$ and all $f \in \mathcal{S}(\mathbb{R}^d)$, 
\begin{align*}
\frac{1}{p^{\operatorname{rot}}_1(x)}\mathcal{A}_1^{\operatorname{rot}}(p_1^{\operatorname{rot}}f)(x) = \frac{1}{p_1^{\operatorname{rot}}(x)} \Gamma_1^{\operatorname{rot}}(p_1^{\operatorname{rot}},f)(x) + \mathcal{A}_1^{\operatorname{rot}}(f)(x) + \frac{f(x)}{p_1^{\operatorname{rot}}(x)} \mathcal{A}_1^{\operatorname{rot}}(p_1^{\operatorname{rot}})(x),
\end{align*}
which implies that, 
\begin{align}\label{eq:smart_decomposition}
\mathcal{L}_1(f)(x) = 2\mathcal{A}_1^{\operatorname{rot}}(f)(x) + \frac{1}{p_1^{\operatorname{rot}}(x)} \Gamma_1^{\operatorname{rot}}(p_1^{\operatorname{rot}},f)(x). 
\end{align}
Finally, for all $f_1 \in \mathcal{D}(\mathcal{L}_1)$ and all $f_2 \in \mathcal{D}(\mathcal{E}_{\nu_1^{\operatorname{rot}}, \mu_1^{\operatorname{rot}}})$, 
\begin{align}\label{eq:ipp}
\mathcal{E}_{\nu_1^{\operatorname{rot}}, \mu_1^{\operatorname{rot}}} (f_1,f_2) = \langle (- \mathcal{L}_1)(f_1) ; f_2 \rangle_{L^2(\mu_1^{\operatorname{rot}})}, 
\end{align}
where $\mathcal{D}(\mathcal{L}_1)$ is the $L^2(\mu_1^{\operatorname{rot}})$-domain of the generator $\mathcal{L}_1$, where $\mathcal{E}_{\nu_1^{\operatorname{rot}}, \mu_1^{\operatorname{rot}}}$ is given, for all $f_1,f_2 \in \mathcal{C}^1_b(\mathbb{R}^d)$, by 
\begin{align}\label{eq:form_standard_cauchy_pm}
\mathcal{E}_{\nu_1^{\operatorname{rot}}, \mu_1^{\operatorname{rot}}}(f_1 , f_2) = \int_{\mathbb{R}^d} \int_{\mathbb{R}^d} (f_1(x+u)-f_1(x))(f_2(x+u)-f_2(x))\nu_1^{\operatorname{rot}}(du) \mu_1^{\operatorname{rot}}(dx),
\end{align}
and where $\mathcal{D}(\mathcal{E}_{\nu_1^{\operatorname{rot}}, \mu_1^{\operatorname{rot}}})$ is the $L^2(\mu_1^{\operatorname{rot}})$-domain of the $L^2(\mu_1^{\operatorname{rot}})$-extension of $(\mathcal{E}_{\nu_1^{\operatorname{rot}}, \mu_1^{\operatorname{rot}}}, \mathcal{C}^1_b(\mathbb{R}^d))$ compatible with the self-adjoint operator $\mathcal{L}_1$. In the remainder of this section, let us investigate $L^2(\mu_1^{\operatorname{rot}})$-spectral properties of the operators $\mathcal{L}_1^{\operatorname{rot}}$ and $\mathcal{L}_1$.~Thanks to the integral representation \eqref{eq:1_stable_symmetric_sg}, for all $x \in \mathbb{R}^d$ and all $t \geq 0$, 
\begin{align}\label{eq:pointwise_approximate_eigenvector_sg}
P_t^{\nu_1^{\operatorname{rot}}}(g_{R})(x) \longrightarrow e^{-t} g(x) = e^{-t} x ,  
\end{align}
as $R$ tends to $+\infty$. In particular, for all $x \in \mathbb{R}^d$, 
\begin{align}\label{eq:pointwise_approximate_eigenvector_gen}
\mathcal{L}_1^{\operatorname{rot}}(g_R)(x) \longrightarrow -x,
\end{align}
as $R$ tends to $+\infty$. Based on these observations, it is natural to wonder if the vector-valued function $g_R$ is an approximate eigenvector of the operator $\mathcal{L}_1^{\operatorname{rot}}$ associated with the approximate eigenvalue $-1$. More precisely, do we have, for all $j \in \{1, \dots, d\}$, 
\begin{align}\label{eq:question_approximate_eigenvector}
\underset{R \longrightarrow +\infty}{\lim} \dfrac{\|\mathcal{L}_1^{\operatorname{rot}}(g_{R,j}) + g_{R,j}\|_{L^2(\mu_1^{\operatorname{rot}})}}{\|g_{R,j}\|_{L^2(\mu_1^{\operatorname{rot}})}} =0 \quad ?
\end{align}
The next results of this section show that the answer to this question is no. First, let us prove one technical lemma. 

\begin{lem}\label{lem:limit_first_term}
Let $d \geq 1$ be an integer, let $\mu_1^{\operatorname{rot}}$ be the standard Cauchy probability measure on $\mathbb{R}^d$ and, for $R>0$, let $g_{R}$ be defined by \eqref{eq:eigen_approximate_eigenvector}. Then, for all $j \in \{1, \cdots,d\}$
\begin{align*}
\underset{R \longrightarrow +\infty}{\lim} \langle g_{R,j} \left(1 - 4 \frac{\|x\|^2}{R^2} \right) ; g_{R,j} \rangle_{L^2(\mu_1^{\operatorname{rot}})} & = c_d \left(\int_{\mathbb{S}^{d-1}} |\langle y ; e_1 \rangle|^2 \sigma_L(dy) \right) \\
&\quad\quad \times \int_{0}^{+\infty} \left(\dfrac{1}{(1+\frac{1}{r^2})^{\frac{d+1}{2}}}-1\right)dr,
\end{align*}
where $\sigma_L$ is the spherical part of the $d$-dimensional Lebesgue measure.
\end{lem}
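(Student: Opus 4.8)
The plan is to pass to the Fourier side and reduce everything to an elementary dominated-convergence argument. Since $g_{R,j}(x) = x_j \exp(-\|x\|^2/R^2)$, we have the pointwise identity
\[
g_{R,j}(x)\left(1 - 4\frac{\|x\|^2}{R^2}\right) = x_j\left(1 - 4\frac{\|x\|^2}{R^2}\right)\exp\left(-\frac{\|x\|^2}{R^2}\right),
\]
and a direct computation (differentiating the Gaussian) shows this equals, up to normalization, a second-order derivative of a Gaussian in the $j$-th coordinate; alternatively one checks that $\mathcal{F}(g_{R,j}(1 - 4\|x\|^2/R^2))(\xi) = R^{d}\,\Phi(R\xi)$ for an explicit Schwartz function $\Phi$ built from $\partial_{\xi_j}$ of a Gaussian, exactly as in \eqref{eq:formula_gaussian}. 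Thus by Parseval,
\[
\langle g_{R,j}\left(1 - 4\tfrac{\|x\|^2}{R^2}\right); g_{R,j}\rangle_{L^2(\mu_1^{\operatorname{rot}})} = \frac{1}{(2\pi)^{2d}}\int_{\bbr^{2d}} \mathcal{F}\!\left(g_{R,j}(1-4\tfrac{\|\cdot\|^2}{R^2})\right)(\xi_1)\,\mathcal{F}(g_{R,j})(\xi_2)\,\widehat{\mu_1^{\operatorname{rot}}}(\xi_1+\xi_2)\,d\xi_1 d\xi_2,
\]
and after the scaling $\xi_i \mapsto \xi_i/R$ the Gaussian factors become $R$-independent while $\widehat{\mu_1^{\operatorname{rot}}}((\xi_1+\xi_2)/R) = \exp(-\|\xi_1+\xi_2\|/R) \to 1$. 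One must track the power of $R$ carefully: the two Fourier transforms contribute $R^{d}$ and $R^{d+?}$ and the change of variables contributes $R^{-2d}$, so the whole expression should converge to a finite limit without further normalization. (This is consistent with the claimed finite right-hand side.)

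The second step is to identify the limit explicitly. After the substitution one is left with an integral of the form $\int_{\bbr^d}\mathcal{F}(\phi_1)(\xi)\,\mathcal{F}(\phi_2)(-\xi)\,d\xi$ for explicit Gaussians $\phi_1,\phi_2$, which by Parseval again equals $(2\pi)^d\int_{\bbr^d}\phi_1(x)\phi_2(x)\,dx$ — a Gaussian moment that can be evaluated in closed form. To match the stated answer $c_d\big(\int_{\mathbb{S}^{d-1}}|\langle y;e_1\rangle|^2\sigma_L(dy)\big)\int_0^{+\infty}\big((1+1/r^2)^{-(d+1)/2}-1\big)dr$ I would instead keep the computation in physical space: write $\langle g_{R,j}(1-4\|x\|^2/R^2);g_{R,j}\rangle_{L^2(\mu_1^{\operatorname{rot}})} = \int_{\bbr^d} x_j^2(1-4\|x\|^2/R^2)e^{-2\|x\|^2/R^2}p_1^{\operatorname{rot}}(x)\,dx$, pass to spherical coordinates $x = \rho y$ with $y\in\mathbb{S}^{d-1}$ and $\rho>0$, and rescale $\rho = Rr$. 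The radial part becomes
\[
R^{2}\int_0^{+\infty} r^2(1 - 4r^2)e^{-2r^2}\,\frac{c_d\,\rho^{d-1}\,R\,dr}{(1+R^2r^2)^{(d+1)/2}}\Big|_{\rho = Rr},
\]
and since $(1+R^2r^2)^{-(d+1)/2} = R^{-(d+1)}r^{-(d+1)}(1 + 1/(R^2r^2))^{-(d+1)/2}$, the powers of $R$ cancel and $(1+1/(R^2r^2))^{-(d+1)/2}\to 1$ pointwise. An integration by parts in $r$ — using that $\int_0^{+\infty} r^2(1-4r^2)e^{-2r^2}r^{-(d+1)}\,dr$ is, up to a constant, $\int_0^{+\infty}\frac{d}{dr}\big(r^{?}e^{-2r^2}\big)\,dr$ — converts the Gaussian weight into the stated kernel $(1+1/r^2)^{-(d+1)/2}-1$. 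The angular integral factors out as $\int_{\mathbb{S}^{d-1}}y_j^2\,\sigma_L(dy) = \int_{\mathbb{S}^{d-1}}|\langle y;e_1\rangle|^2\sigma_L(dy)$ by rotational symmetry.

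For the limit exchange I would apply dominated convergence: on $(1,+\infty)$ the integrand is bounded by an integrable Gaussian uniformly in $R\geq 1$, while near $r = 0$ the factor $(1+1/(R^2r^2))^{-(d+1)/2}$ is bounded by $1$ and the remaining factor $r^{2}(1-4r^2)e^{-2r^2}r^{-(d+1)}$ is integrable near $0$ precisely because, after the integration by parts that produces the kernel $(1+1/r^2)^{-(d+1)/2}-1 = O(r^{2})$ as $r\to 0$, the singularity is tamed; one should do the integration by parts \emph{before} taking limits to make this rigorous, keeping the $R$-dependent kernel $(1+1/(R^2r^2))^{-(d+1)/2}-1$ throughout. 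The main obstacle is precisely this bookkeeping: making sure the boundary terms in the integration by parts vanish (they do, because $g_R$ and its relevant antiderivatives decay at $+\infty$ and the kernel vanishes at $0$) and that the dominating function is genuinely integrable near $r=0$ uniformly in $R$. Everything else is routine Gaussian calculus and the identification of the constant $c_d$ from \eqref{eq:constant_normalization_Cauchy}.
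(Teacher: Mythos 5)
Your physical-space strategy (spherical coordinates, a $-1$ subtraction inside the kernel, dominated convergence) is the right one and matches the paper, but the mechanism you propose for inserting the $-1$ does not work and the bookkeeping has an error that, if left uncorrected, makes the limit indeterminate.

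Concretely: after the scaling $\rho = Rr$, the radial integral carries a prefactor $R^{d+2}$, and factoring $(1+R^2r^2)^{(d+1)/2} = R^{d+1}r^{d+1}(1+1/(R^2r^2))^{(d+1)/2}$ leaves $R^{d+2}\cdot R^{-(d+1)} = R$, \emph{not} $1$; so the powers of $R$ do not cancel, and simply sending $(1+1/(R^2r^2))^{-(d+1)/2}\to 1$ produces $R\cdot\int_0^\infty(1-4r^2)e^{-2r^2}dr$, which is the indeterminate form $\infty\cdot 0$. The $-1$ is inserted for free because $\int_0^\infty(1-4r^2)e^{-2r^2}dr = [\,re^{-2r^2}\,]_0^\infty = 0$ (the Hermite orthogonality the paper invokes), not because of any integration by parts: the integral you write, $\int_0^\infty r^2(1-4r^2)e^{-2r^2}r^{-(d+1)}dr$, diverges at $r=0$ for every $d\ge 2$, so the "total derivative" identification you are reaching for cannot exist in general dimension. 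Once the $-1$ is inserted via that vanishing-integral identity, a \emph{second} change of variables $r\mapsto r/R$ absorbs the leftover factor of $R$, turns the kernel $(1+1/(R^2r^2))^{-(d+1)/2}-1$ into the $R$-independent kernel $(1+1/r^2)^{-(d+1)/2}-1$ of the statement, and moves the Gaussian weight to $e^{-2r^2/R^2}(1-4r^2/R^2)\to 1$; dominated convergence then applies because $\sup_{t\ge0}|1-4t|e^{-2t}<\infty$ and $|(1+1/r^2)^{-(d+1)/2}-1|\le\min(1,C_d/r^2)$ is integrable. Without the vanishing-integral observation and the second scaling, your sketch ends in a contradiction (your pointwise limit $(1+1/(R^2r^2))^{-(d+1)/2}\to 1$ cannot produce the stated kernel $(1+1/r^2)^{-(d+1)/2}$), so this is a genuine gap rather than a presentational one.
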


\begin{proof}
Changing variables, for all $R>0$ and all $j \in \{1, \dots, d\}$, 
\begin{align*}
\langle g_{R,j} \left(1 - 4 \frac{\|x\|^2}{R^2}\right) ; g_{R,j}  \rangle_{L^2(\mu_1^{\operatorname{rot}})} & = \int_{\mathbb{R}^d} p_1^{\operatorname{rot}}(x) x_j^2 \exp\left( - 2 \frac{\|x\|^2}{R^2}\right) \left(1 - 4 \frac{\|x\|^2}{R^2}\right) dx , \\
& = \int_{\mathbb{R}^d} p_1^{\operatorname{rot}}(Rx) R^{d+2} x_j^2 \exp\left( - 2 \|x\|^2\right) \left(1 - 4 \|x\|^2\right)dx , \\
& = c_d \int_{\mathbb{R}^d} \dfrac{R^{d+2}}{\left(1+ R^2 \|x\|^2\right)^{\frac{d+1}{2}}} x_j^2 \exp\left( - 2 \|x\|^2\right) \left(1 - 4 \|x\|^2\right)dx.
\end{align*}
Recall that, 
\begin{align}
\int_0^{+\infty} e^{-2r^2}(1- 4r^2) dr = 0,
\end{align}
which can be seen as a consequence of the orthogonality relation for the Hermite polynomials. Then, using spherical coordinates, 
{\allowdisplaybreaks
\begin{align*}
\langle g_{R,j} \left(1 - 4 \frac{\|x\|^2}{R^2}\right) ; g_{R,j}  \rangle_{L^2(\mu_1^{\operatorname{rot}})} & = c_d \int_{\mathbb{R}^d} \dfrac{R^{d+2}}{\left(1+ R^2 \|x\|^2\right)^{\frac{d+1}{2}}} x_j^2 \exp\left( - 2 \|x\|^2\right) \left(1 - 4 \|x\|^2\right)dx , \\
& = c_d R \int_{\mathbb{S}^{d-1}} |\langle  y ; e_j\rangle|^2 \sigma_L(dy) \int_{0}^{+\infty}  \dfrac{R^{d+1} r^{d+1}}{\left(1+ R^2 r^2 \right)^{\frac{d+1}{2}}} \exp\left( - 2 r^2\right) \\
&\quad\quad \times \left(1 - 4 r^2\right)dr , \\
& = c_d R \int_{\mathbb{S}^{d-1}} |\langle  y ; e_j\rangle|^2 \sigma_L(dy) \int_{0}^{+\infty}  \left(\dfrac{R^{d+1} r^{d+1}}{\left(1+ R^2 r^2 \right)^{\frac{d+1}{2}}} -1 \right) \\
&\quad\quad \times \exp\left( - 2 r^2\right) \left(1 - 4 r^2\right)dr , \\
& = c_d R \int_{\mathbb{S}^{d-1}} |\langle  y ; e_j\rangle|^2 \sigma_L(dy) \int_{0}^{+\infty}  \left(\dfrac{1}{\left(1+ \frac{1}{R^2 r^2} \right)^{\frac{d+1}{2}}} -1 \right) \\
&\quad\quad \times \exp\left( - 2 r^2\right) \left(1 - 4 r^2\right)dr , \\
& = c_d  \int_{\mathbb{S}^{d-1}} |\langle  y ; e_j\rangle|^2 \sigma_L(dy) \int_{0}^{+\infty}  \left(\dfrac{1}{\left(1+ \frac{1}{r^2} \right)^{\frac{d+1}{2}}} -1 \right) \\
&\quad\quad \times \exp\left( - 2 \frac{r^2}{R^2}\right) \left(1 - 4 \frac{r^2}{R^2}\right)dr , \\
& \longrightarrow c_d  \int_{\mathbb{S}^{d-1}} |\langle  y ; e_j\rangle|^2 \sigma_L(dy) \int_{0}^{+\infty}  \left(\dfrac{1}{\left(1+ \frac{1}{r^2} \right)^{\frac{d+1}{2}}} -1 \right) dr,
\end{align*}
}
\noindent as $R$ tends to $+\infty$ (thanks to the Lebesgue dominated convergence theorem). This concludes the proof of the lemma.  
\end{proof}
\noindent
The next result answers by the negative question \eqref{eq:question_approximate_eigenvector}. 

\begin{prop}\label{prop:aps_ou_stable}
Let $d \geq 1$ be an integer, let $\mathcal{L}_1^{\operatorname{rot}}$ be the operator defined by \eqref{eq:OU_Cauchy_rot_inv} and, for $R>0$, let $g_{R}$ be defined by \eqref{eq:eigen_approximate_eigenvector}. Then, for all $j \in \{1, \dots, d\}$,
\begin{align}\label{eq:minus_one_outside}
\underset{R\longrightarrow +\infty}{\lim} \dfrac{\| \mathcal{L}^{\operatorname{rot}}_1(g_{R,j}) + g_{R,j} \|^2_{L^2(\mu_1^{\operatorname{rot}})}}{\|g_{R,j}\|_{L^2(\mu_1^{\operatorname{rot}})}^2} = \frac{3}{4}. 
\end{align}
\end{prop}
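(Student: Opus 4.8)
The plan is to reduce the ratio to explicit Gaussian--times--power integrals by the single change of variables $x = Ry$, exploiting the $1$-homogeneity of $\mathcal{A}_1^{\operatorname{rot}}$ and the explicit density $p_1^{\operatorname{rot}}$; the limit will turn out to be dimension-independent, because the spherical factor and the normalizing constant $c_d$ cancel in the quotient.

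First I would compute $\mathcal{L}_1^{\operatorname{rot}}(g_{R,j}) + g_{R,j}$ pointwise. Since $\langle x ; \nabla(g_{R,j})(x)\rangle = g_{R,j}(x)\big(1 - 2\|x\|^2/R^2\big)$, the drift part $-\langle x;\nabla(\cdot)(x)\rangle$ of \eqref{eq:OU_Cauchy_rot_inv} contributes $-g_{R,j}(x) + \tfrac{2\|x\|^2}{R^2}g_{R,j}(x)$, whence
\begin{align*}
\mathcal{L}_1^{\operatorname{rot}}(g_{R,j})(x) + g_{R,j}(x) = \tfrac{2\|x\|^2}{R^2}g_{R,j}(x) + \mathcal{A}_1^{\operatorname{rot}}(g_{R,j})(x).
\end{align*}
Next, from the Fourier representation \eqref{eq:non_local_square_root_Laplacian} of $\mathcal{A}_1^{\operatorname{rot}}$, whose symbol $-\|\xi\|$ is $1$-homogeneous (equivalently, from the scale invariance \eqref{eq:def_scale_invariance} of $\nu_1^{\operatorname{rot}}$ with $\alpha = 1$), together with $g_{R,j}(x) = R\,g_{1,j}(x/R)$, one gets the scaling identity $\mathcal{A}_1^{\operatorname{rot}}(g_{R,j})(x) = (\mathcal{A}_1^{\operatorname{rot}}g_{1,j})(x/R)$. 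Hence, at $x = Ry$,
\begin{align*}
\big(\mathcal{L}_1^{\operatorname{rot}}(g_{R,j}) + g_{R,j}\big)(Ry) = 2R\|y\|^2 g_{1,j}(y) + (\mathcal{A}_1^{\operatorname{rot}}g_{1,j})(y).
\end{align*}

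Then I would expand $\|\mathcal{L}_1^{\operatorname{rot}}(g_{R,j}) + g_{R,j}\|^2_{L^2(\mu_1^{\operatorname{rot}})}$ as the sum of $\big\|\tfrac{2\|x\|^2}{R^2}g_{R,j}\big\|^2_{L^2(\mu_1^{\operatorname{rot}})}$, twice the cross term, and $\|\mathcal{A}_1^{\operatorname{rot}}(g_{R,j})\|^2_{L^2(\mu_1^{\operatorname{rot}})}$, and estimate each after the change of variables $x = Ry$. Writing $p_1^{\operatorname{rot}}(Ry)R^d = c_d\,R^{-1}\big(R^{-2}+\|y\|^2\big)^{-(d+1)/2}$, the factor $\big(R^{-2}+\|y\|^2\big)^{-(d+1)/2}$ is bounded by $\|y\|^{-(d+1)}$ and converges pointwise to $\|y\|^{-(d+1)}$, so dominated convergence gives
\begin{align*}
\tfrac{1}{R}\,\Big\|\tfrac{2\|x\|^2}{R^2}g_{R,j}\Big\|^2_{L^2(\mu_1^{\operatorname{rot}})} &\longrightarrow 4c_d\int_{\bbr^d}\frac{\|y\|^4 g_{1,j}(y)^2}{\|y\|^{d+1}}\,dy, \\
\tfrac{1}{R}\,\|g_{R,j}\|^2_{L^2(\mu_1^{\operatorname{rot}})} &\longrightarrow c_d\int_{\bbr^d}\frac{g_{1,j}(y)^2}{\|y\|^{d+1}}\,dy,
\end{align*}
while $\|\mathcal{A}_1^{\operatorname{rot}}(g_{R,j})\|^2_{L^2(\mu_1^{\operatorname{rot}})} = \int_{\bbr^d}(\mathcal{A}_1^{\operatorname{rot}}g_{1,j})(x/R)^2\,p_1^{\operatorname{rot}}(x)\,dx \to 0$ (by boundedness of $\mathcal{A}_1^{\operatorname{rot}}g_{1,j}$ and $\mathcal{A}_1^{\operatorname{rot}}g_{1,j}(0) = 0$, the latter from the oddness of $\mathcal{F}(g_{1,j})$ in $\xi_j$), and the cross term is $O(\sqrt{R})$ by Cauchy--Schwarz, hence negligible after dividing by $\|g_{R,j}\|^2_{L^2(\mu_1^{\operatorname{rot}})}\asymp R$. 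Passing to spherical coordinates with $g_{1,j}(y)^2 = y_j^2 e^{-2\|y\|^2}$, both limiting integrals equal $c_d\big(\int_{\mathbb{S}^{d-1}}|\langle \omega ; e_j\rangle|^2\sigma_L(d\omega)\big)$ times $\int_0^{+\infty} r^4 e^{-2r^2}\,dr$, respectively $\int_0^{+\infty} e^{-2r^2}\,dr$; the spherical factor and $c_d$ cancel in the ratio, and two integrations by parts give $\int_0^{+\infty} r^4 e^{-2r^2}\,dr = \tfrac{3}{16}\int_0^{+\infty} e^{-2r^2}\,dr$, so the limit equals $4\cdot\tfrac{3}{16} = \tfrac{3}{4}$.

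The main obstacle is bookkeeping the orders of magnitude near $y = 0$: after rescaling, the density $p_1^{\operatorname{rot}}(Ry)R^d$ concentrates at the origin and is not uniformly bounded, so one must rely on the scale-invariant domination $\big(R^{-2}+\|y\|^2\big)^{-(d+1)/2}\le\|y\|^{-(d+1)}$ and verify integrability against $\|y\|^{-(d+1)}\,dy$ of every integrand that arises --- in particular $\|y\|^4 g_{1,j}(y)^2\|y\|^{-(d+1)}$ and, for the error terms, quantities controlled by $\|\mathcal{A}_1^{\operatorname{rot}}g_{1,j}\|_\infty$ --- both near $0$ (where $g_{1,j}$ and $\mathcal{A}_1^{\operatorname{rot}}g_{1,j}$ vanish) and at infinity (Gaussian decay of $g_{1,j}$, polynomial decay of $\mathcal{A}_1^{\operatorname{rot}}g_{1,j}$). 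Once these uniform bounds are in place, the remaining computations are routine.
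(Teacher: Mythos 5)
Your proof is correct and it arrives at the answer $3/4$ by a route that is genuinely different from (and somewhat more self-contained than) the paper's. The paper starts from the raw expansion $\| \mathcal{L}^{\operatorname{rot}}_1(g_{R,j}) + g_{R,j} \|^2 = \| \mathcal{L}^{\operatorname{rot}}_1(g_{R,j}) \|^2 + \|g_{R,j} \|^2 + 2\langle \mathcal{L}^{\operatorname{rot}}_1(g_{R,j}) ; g_{R,j} \rangle$, invokes the Dirichlet-form identity $2\langle (- \mathcal{L}_1^{\operatorname{rot}}) (g_{R,j}) ; g_{R,j} \rangle = \mathcal{E}_{\nu_1^{\operatorname{rot}}, \mu_1^{\operatorname{rot}}}(g_{R,j},g_{R,j})$ tied to the carr\'e de Mehler generator $\mathcal{L}_1$ via \eqref{eq:ipp}, and then uses the nontrivial spectral fact that $\mathcal{E}_{\nu_1^{\operatorname{rot}}, \mu_1^{\operatorname{rot}}}(g_{R,j} , g_{R,j})/\|g_{R,j}\|_{L^2(\mu_1^{\operatorname{rot}})}^2 \to 1$ to dispose of the last two terms, reducing the problem to $\lim \|x \cdot \nabla(g_{R,j})\|^2/\|g_{R,j}\|^2 = \int_0^{+\infty}(1-2r^2)^2e^{-2r^2}dr \big/ \int_0^{+\infty}e^{-2r^2}dr = 3/4$. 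You instead perform the cancellation $-g_{R,j}+g_{R,j}=0$ at the pointwise level first, writing the exact identity $\mathcal{L}_1^{\operatorname{rot}}(g_{R,j}) + g_{R,j} = \tfrac{2\|x\|^2}{R^2}g_{R,j} + \mathcal{A}_1^{\operatorname{rot}}(g_{R,j})$, after which the only term that survives the normalization $\|g_{R,j}\|^2 \sim c\,R$ is the drift-defect $\tfrac{2\|x\|^2}{R^2}g_{R,j}$, and the ratio reduces directly to $4\int_0^{+\infty}r^4 e^{-2r^2}dr / \int_0^{+\infty}e^{-2r^2}dr = 4\cdot\tfrac{3}{16} = \tfrac{3}{4}$; the $\mathcal{A}_1^{\operatorname{rot}}(g_{R,j})$ contribution and the cross term vanish in the limit by the same scaling and dominated-convergence argument the paper itself uses (homogeneity of $\mathcal{A}_1^{\operatorname{rot}}$ plus $\mathcal{A}_1^{\operatorname{rot}}(g_{1,j})(0)=0$). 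The two routes are algebraically equivalent --- note $(1-2r^2)^2 = 1-4r^2+4r^4$, and the paper's $1-4r^2$ piece integrates to zero against $e^{-2r^2}$, recovering your $4r^4$ contribution --- but your version avoids the generator $\mathcal{L}_1$, the integration by parts, and the limit $\mathcal{E}/\|g_R\|^2\to 1$ altogether, buying self-containment and transparency, whereas the paper's formulation emphasizes the Dirichlet-form structure that it reuses elsewhere in Section~\ref{sec:spectral_cauchy}.
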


\begin{proof}
First, for all $R>0$ and all $j \in \{1, \dots, d\}$, 
\begin{align*}
\|g_{R,j}\|^2_{L^2(\mu_1^{\operatorname{rot}})} & = \int_{\mathbb{R}^d} x_j^2 \exp \left(-2\frac{\|x\|^2}{R^2}\right) \dfrac{c_d dx}{\left(1+\|x\|^2\right)^{\frac{d+1}{2}}}, \\
& = c_d \int_{\mathbb{S}^{d-1}} \left| \langle y ; e_j \rangle \right|^2 \sigma_L(dy) \int_0^{+\infty} \exp \left(-2\frac{r^2}{R^2}\right) \dfrac{r^{d+1} dr}{\left(1+r^2\right)^{\frac{d+1}{2}}}.
\end{align*}
Thus, 
\begin{align*}
\frac{1}{R} \|g_{R,j}\|^2_{L^2(\mu_1^{\operatorname{rot}})} & = c_d \int_{\mathbb{S}^{d-1}} \left| \langle y ; e_j \rangle \right|^2 \sigma_L(dy) \int_0^{+\infty} \exp \left(-2r^2\right) \dfrac{R^{d+1} r^{d+1} dr}{\left(1+R^2 r^2\right)^{\frac{d+1}{2}}}, \\
& \longrightarrow c_d \int_{\mathbb{S}^{d-1}} \left| \langle y ; e_j \rangle \right|^2 \sigma_L(dy) \int_0^{+\infty} \exp \left(-2r^2\right)dr.
\end{align*}
Moreover, for all $R>0$ and all $ j \in \{1, \dots, d\}$,
\begin{align*}
\| \mathcal{L}^{\operatorname{rot}}_1(g_{R,j}) + g_{R,j} \|^2_{L^2(\mu_1^{\operatorname{rot}})} = \| \mathcal{L}^{\operatorname{rot}}_1(g_{R,j}) \|^2_{L^2(\mu_1^{\operatorname{rot}})}+\|g_{R,j} \|^2_{L^2(\mu_1^{\operatorname{rot}})}+2\langle \mathcal{L}^{\operatorname{rot}}_1(g_{R,j}) ; g_{R,j} \rangle_{L^2(\mu_1^{\operatorname{rot}})}.
\end{align*}
Now, since $g_{R,j} \in \mathcal{S}(\mathbb{R}^d)$, for all $R>0$, 
\begin{align*}
\mathcal{E}_{\nu_1^{\operatorname{rot}}, \mu_1^{\operatorname{rot}}}(g_{R,j},g_{R,j}) = \langle (- \mathcal{L}_1)(g_{R,j}) ; g_{R,j} \rangle_{L^2(\mu_1^{\operatorname{rot}})} = 2 \langle (- \mathcal{L}_1^{\operatorname{rot}}) (g_{R,j}) ; g_{R,j} \rangle_{L^2(\mu_1^{\operatorname{rot}})}.
\end{align*}
Moreover, by Fourier methods, 
\begin{align*}
\dfrac{\|g_{R,j} \|^2_{L^2(\mu_1^{\operatorname{rot}})}+2\langle \mathcal{L}^{\operatorname{rot}}_1(g_{R,j}) ; g_{R,j} \rangle_{L^2(\mu_1^{\operatorname{rot}})}}{\|g_{R,j}\|_{L^2(\mu_1^{\operatorname{rot}})}^2} = 1 - \dfrac{\mathcal{E}_{\nu_1^{\operatorname{rot}}, \mu_1^{\operatorname{rot}}}(g_{R,j} ; g_{R,j})}{\|g_{R,j}\|_{L^2(\mu_1^{\operatorname{rot}})}^2} \longrightarrow 0,
\end{align*}
as $R$ tends to $+\infty$. So, it remains to compute the limit of the first term\\ $ \| \mathcal{L}^{\operatorname{rot}}_1(g_{R,j}) \|^2_{L^2(\mu_1^{\operatorname{rot}})} / \|g_{R,j}\|^2_{L^2(\mu_1^{\operatorname{rot}})}$ as $R$ tends to $+\infty$. Developing the square, for all $R>0$,
\begin{align}\label{eq:decomposition_L2_norm_OU_gen}
\| \mathcal{L}^{\operatorname{rot}}_1(g_{R,j}) \|^2_{L^2(\mu_1^{\operatorname{rot}})} & = \|x \cdot \nabla(g_{R,j})\|^2_{L^2(\mu_1^{\operatorname{rot}})} + \|\mathcal{A}_1^{\operatorname{rot}} (g_{R,j}) \|^2_{L^2(\mu_1^{\operatorname{rot}})} \nonumber \\
&\quad\quad - 2\langle x \cdot \nabla(g_{R,j}) ; \mathcal{A}_1^{\operatorname{rot}} (g_{R,j}) \rangle_{L^2(\mu_1^{\operatorname{rot}})}.
\end{align}
Now, based on standard computations, for all $R>0$, all $x \in \mathbb{R}^d$ and all $j \in \{1, \dots, d\}$, 
\begin{align}\label{eq:drift_term}
 \langle x ; \nabla(g_{R,j})(x) \rangle &  = g_{R,j}(x) - 2 \sum_{k = 1}^d \dfrac{x_jx_k^2}{R^2} \exp\left( - \frac{\|x\|^2}{R^2}\right) , \nonumber \\
 & = g_{R,j}(x) \left(1 - 2 \frac{\|x\|^2}{R^2}\right).
\end{align}
For the first term in \eqref{eq:decomposition_L2_norm_OU_gen}, thanks to \eqref{eq:drift_term} and using spherical coordinates, 
\begin{align*}
\|x \cdot \nabla(g_{R,j})\|^2_{L^2(\mu_1^{\operatorname{rot}})} & = \int_{\mathbb{R}^d} g_{R,j}(x)^2 \left(1 - 2 \frac{\|x\|^2}{R^2}\right)^2 p_1^{\operatorname{rot}}(x)dx , \\
& = c_d \int_{\mathbb{S}^{d-1}} \left| \langle y ; e_j \rangle \right|^2 \sigma_L(dy) \int_0^{+\infty} \left(1 - 2 \frac{r^2}{R^2}\right)^2 \exp(-2\frac{r^2}{R^2}) \dfrac{r^{d+1}dr}{\left(1+r^2\right)^{\frac{d+1}{2}}}.
\end{align*}
Changing variables, 
\begin{align*}
\|x \cdot \nabla(g_{R,j})\|^2_{L^2(\mu_1^{\operatorname{rot}})} & = R c_d \int_{\mathbb{S}^{d-1}} \left| \langle y ; e_j \rangle \right|^2 \sigma_L(dy) \int_0^{+\infty} \left(1 - 2 r^2\right)^2 \exp(-2r^2) \dfrac{R^{d+1} r^{d+1}dr}{\left(1+R^2 r^2\right)^{\frac{d+1}{2}}}.
\end{align*}
Then, 
\begin{align*}
\frac{1}{R} \|x \cdot \nabla(g_{R,j})\|^2_{L^2(\mu_1^{\operatorname{rot}})} \longrightarrow c_d \int_{\mathbb{S}^{d-1}} \left| \langle y ; e_j \rangle \right|^2 \sigma_L(dy) \int_0^{+\infty} \left(1 - 2 r^2\right)^2 \exp(-2r^2) dr \ne 0,
\end{align*}
as $R$ tends to $+\infty$. Regarding the second term, for all $j \in \{1, \dots, d\}$,
\begin{align*}
 \|\mathcal{A}_1^{\operatorname{rot}} (g_{R,j}) \|^2_{L^2(\mu_1^{\operatorname{rot}})} & = \int_{\mathbb{R}^d} \left|  \mathcal{A}_1^{\operatorname{rot}} (g_{R,j})(x)\right|^2 \frac{c_d dx}{\left(1+\|x\|^2\right)^{\frac{d+1}{2}}} , \\
 & = \int_{\mathbb{R}^d} \left|  \mathcal{A}_1^{\operatorname{rot}} (g_{R,j})(Rx)\right|^2 \frac{R^d c_d dx}{\left(1+R^2\| x\| ^2\right)^{\frac{d+1}{2}}} , \\
 & = \int_{\mathbb{R}^d} \left|  \mathcal{A}_1^{\operatorname{rot}} (g_{1,j})(x)\right|^2 \frac{R^d c_d dx}{ \left(1+R^2 \| x\|^2\right)^{\frac{d+1}{2}}} , \\
 & = \int_{\mathbb{R}^d} \left|  \mathcal{A}_1^{\operatorname{rot}} (g_{1,j})\left(\frac{x}{R}\right)\right|^2 \frac{c_d dx}{ \left(1+  \| x\|^2\right)^{\frac{d+1}{2}}}, \\
 & \longrightarrow \int_{\mathbb{R}^d} \left|  \mathcal{A}_1^{\operatorname{rot}} (g_{1,j})\left(0\right)\right|^2 \frac{c_d dx}{ \left(1+  \| x\|^2\right)^{\frac{d+1}{2}}} = 0, 
\end{align*}
as $R$ tends to $+\infty$ (thanks to the Lebesgue dominated convergence theorem). Finally, for the last term, 
\begin{align*}
\langle x \cdot \nabla(g_{R,j}) ; \mathcal{A}_1^{\operatorname{rot}} (g_{R,j}) \rangle_{L^2(\mu_1^{\operatorname{rot}})} & = \int_{\mathbb{R}^d} x_j \exp\left(- \frac{\|x\|^2}{R^2}\right) \left(1 -2 \frac{\|x\|^2}{R^2}\right) \\
&\quad\quad \times  \mathcal{A}_1^{\operatorname{rot}} (g_{R,j})(x) \frac{c_d dx}{\left(1+\|x\|^2\right)^{\frac{d+1}{2}}}, \\
&= \int_{\mathbb{R}^d} x_j \exp\left(-\| x\|^2\right) \left(1 -2 \|x\|^2\right) \\
&\quad\quad \times  \mathcal{A}_1^{\operatorname{rot}} (g_{1,j})(x) \frac{R^{d+1} c_d dx}{\left(1+R^2 \|x\|^2\right)^{\frac{d+1}{2}}} , \\
& \longrightarrow \int_{\mathbb{R}^d} x_j \exp\left(-\|x\|^2\right) \left(1 -2 \|x\|^2\right) \\
&\quad\quad \times  \mathcal{A}_1^{\operatorname{rot}} (g_{1,j})(x) \frac{c_d dx}{ \|x\|^{d+1}} ,
\end{align*}
as $R$ tends to $+\infty$. Note in particular that the previous integral is well-defined since $ \mathcal{A}_1^{\operatorname{rot}} (g_{1,j})(0) = 0$ and $ \mathcal{A}_1^{\operatorname{rot}} (g_{1,j})$ is smooth on $\mathbb{R}^d$. Thus, for all $j \in \{1, \dots, d\}$, 
\begin{align*}
\underset{R \rightarrow +\infty}{\lim} \dfrac{\| \mathcal{L}^{\operatorname{rot}}_1(g_{R,j}) \|^2_{L^2(\mu_1^{\operatorname{rot}})}} { \|g_{R,j}\|^2_{L^2(\mu_1^{\operatorname{rot}})}} = \underset{R \rightarrow +\infty}{\lim} \dfrac{\|x \cdot \nabla(g_{R,j})\|^2_{L^2(\mu_1^{\operatorname{rot}})}} { \|g_{R,j}\|^2_{L^2(\mu_1^{\operatorname{rot}})}} = \dfrac{\int_0^{+\infty} \left(1 - 2 r^2\right)^2 \exp(-2r^2) dr}{\int_0^{+\infty} \exp \left(-2r^2\right)dr} = \frac{3}{4}.
\end{align*}
This concludes the proof of the proposition. 
\end{proof}
\noindent
To conclude this part regarding the operator $\mathcal{L}_1^{\operatorname{rot}}$, let us compute in dimension $1$,  
\begin{align}\label{eq:first_difference_one_dimension}
\underset{R \longrightarrow + \infty}{\lim} \left(\mathcal{E}^{\nu_1^{\operatorname{rot}}} (g_{R},g_{R}) - \| g_{R} \|^2_{L^2(\mu_1^{\operatorname{rot}})}\right),
\end{align}
where, for all $f_1,f_2 \in \mathcal{C}^1_b(\mathbb{R})$, 
\begin{align}\label{eq:1_stable_rot_invariant_vectorial_form}
\mathcal{E}^{\nu_1^{\operatorname{rot}}} (f_1,f_2) := \int_{\mathbb{R}^{2}}  (f_1(x+u) - f_1(x))(f_2(x+u) - f_2(x))\nu_1^{\operatorname{rot}}(du) \mu_1^{\operatorname{rot}}(dx).
\end{align}
By standard computations, for all $R >0$,
\begin{align*}
\mathcal{E}^{\nu_1^{\operatorname{rot}}} (g_{R},g_{R}) - \langle g_{R} ; g_{R} \rangle_{L^2(\mu_1^{\operatorname{rot}})}
& = \langle g_{R} \left(1 - 4 \frac{|x|^2}{R^2}\right) ; g_{R}  \rangle_{L^2(\mu_1^{\operatorname{rot}})} + 2 \langle (-\mathcal{A}_{1}^{\operatorname{rot}})(g_{R}) ; g_{R} \rangle_{L^2(\mu_1^{\operatorname{rot}})} .
\end{align*}
Then, the following result holds.

\begin{prop}\label{prop:first_difference_one_dimension}
Let $\mu_1^{\operatorname{rot}}$ be the standard one-dimensional Cauchy probability measure on $\mathbb{R}$ with L\'evy measure $\nu_1^{\operatorname{rot}}$. Let $\mathcal{E}^{\nu_1^{\operatorname{rot}}}$ be given by \eqref{eq:1_stable_rot_invariant_vectorial_form} and, for $R>0$, let $g_R$ be defined by \eqref{eq:eigen_approximate_eigenvector}.~Then, 
\begin{align}
\underset{R \longrightarrow + \infty}{\lim} \left(\mathcal{E}^{\nu_1^{\operatorname{rot}}} (g_{R},g_{R}) - \|g_R\|^2_{L^2(\mu_1^{\operatorname{rot}})}\right) = \frac{2}{\pi}. 
\end{align}
\end{prop}

\begin{proof}
First, thanks to Lemma \ref{lem:limit_first_term} and standard computations,
\begin{align*}
\langle g_{R} \left(1 - 4 \frac{| x |^2}{R^2}\right) ; g_{R}  \rangle_{L^2(\mu_1^{\operatorname{rot}})} \longrightarrow -\pi c_1 ,
\end{align*}
as $R$ tends to $+\infty$.~Regarding the non-local term, for all $R>0$, 
\begin{align*}
2  \langle (-\mathcal{A}_{1}^{\operatorname{rot}})(g_{R}) ; g_{R} \rangle_{L^2(\mu_1^{\operatorname{rot}})} & = 2 \int_{\mathbb{R}} g_{R}(x) (-\mathcal{A}_{1}^{\operatorname{rot}})(g_{R})(x) p_1^{\operatorname{rot}}(x)dx , \\
& = 2 \int_{\mathbb{R}} g_{R}(Rx) (-\mathcal{A}_{1}^{\operatorname{rot}})(g_{R})(Rx) p_1^{\operatorname{rot}}(Rx) R dx , \\
& = 2 \int_{\mathbb{R}} R x \exp\left( -x^2\right) (-\mathcal{A}_{1}^{\operatorname{rot}})(g_{1})(x) \dfrac{c_1 R dx}{\left(1+ R^2 x^2\right)}, \\
& \longrightarrow 2 \int_{\mathbb{R}} x \exp\left( - x^2\right) (-\mathcal{A}_{1}^{\operatorname{rot}})(g_{1})(x) \dfrac{c_1 dx}{x^2},
\end{align*}
as $R$ tends to $+ \infty$. Next, let us compute precisely the limit appearing in the previous equation. 
But, first, let us compute the action of the operator $\mathcal{A}_1^{\operatorname{rot}}$ on the function $g_{1}$. Since $g_{1}$ belongs to the Schwartz space, by Fourier inversion formula, for all $x \in \mathbb{R}$
\begin{align*}
(-\mathcal{A}_{1}^{\operatorname{rot}})(g_{1})(x) = \frac{1}{2\pi} \int_{\mathbb{R}} \mathcal{F}(g_{1})(\xi) |\xi| e^{i x \xi} d\xi.
\end{align*}
Now, the Fourier transform of the function $g_{1}$ is given, for all $\xi \in \mathbb{R}$, by 
\begin{align*}
\mathcal{F}(g_{1})(\xi) =  - \frac{(i\xi)}{2} \sqrt{\pi} e^{- \frac{\xi^2}{4}}.
\end{align*}
Thus, for all $x \in \mathbb{R}$, 
\begin{align*}
(-\mathcal{A}_{1}^{\operatorname{rot}})(g_{1})(x) & = -\frac{i}{4\sqrt{\pi}} \int_{\mathbb{R}} \xi e^{- \frac{\xi^2}{4}} |\xi| e^{i x \xi} d\xi , \\
& =  -\frac{i}{4\sqrt{\pi}} \left( \int_{0}^{+\infty} \xi e^{- \frac{\xi^2}{4}} |\xi| e^{i x \xi} d\xi +\int_{-\infty}^0 \xi e^{- \frac{\xi^2}{4}} |\xi| e^{i x \xi} d\xi    \right) , \\
& = -\frac{i}{4\sqrt{\pi}} \left( \int_{0}^{+\infty} \xi e^{- \frac{\xi^2}{4}} |\xi| e^{i x \xi} d\xi  - \int_{0}^{+\infty} \xi e^{- \frac{\xi^2}{4}} |\xi| e^{-i x \xi} d\xi    \right) , \\
& =  \frac{1}{2\sqrt{\pi}} \int_{0}^{+\infty} \xi^2 e^{- \frac{\xi^2}{4}} \sin\left(x\xi\right) d\xi.
\end{align*}
Now, performing two integrations by parts, for all $x \in \mathbb{R}$ with $x \ne 0$, 
\begin{align*}
\int_{0}^{+\infty} \xi^2 e^{- \frac{\xi^2}{4}} \sin\left(x\xi\right) d\xi & = - 2 \int_0^{+\infty} \dfrac{d}{d\xi}\left(e^{- \frac{\xi^2}{4}}\right) \xi \sin\left(x \xi\right) d\xi , \\
& = 2 \int_0^{+\infty} e^{ - \frac{\xi^2}{4}} \dfrac{d}{d\xi}\left(\xi \sin\left(x \xi\right)\right) d\xi , \\
& = 2 \int_0^{+\infty} e^{ - \frac{\xi^2}{4}} \sin\left(x \xi\right) d\xi + 2 x \int_0^{+\infty} e^{ - \frac{\xi^2}{4}} \xi \cos\left(x \xi\right) d\xi , \\
& = 2 \int_0^{+\infty} e^{ - \frac{\xi^2}{4}} \sin\left(x \xi\right) d\xi -4x  \int_0^{+\infty} \dfrac{d}{d\xi} \left( e^{ - \frac{\xi^2}{4}} \right)\cos\left(x \xi\right) d\xi , \\
& = 2 \int_0^{+\infty} e^{ - \frac{\xi^2}{4}} \sin\left(x \xi\right) d\xi  + 4x - 4x^2 \int_0^{+\infty} e^{ - \frac{\xi^2}{4}} \sin \left(x\xi\right) d\xi , \\
& = 4F(x) +4x - 8x^2F(x) , 
\end{align*}
where $F$ is the Dawson's integral function (see \cite[Chapter $7$ page $160$]{Olver_10} for a definition).~Then, for all $x \in \mathbb{R}$,
\begin{align}\label{eq:representation_square_root_g1_dawson}
(-\mathcal{A}_{1}^{\operatorname{rot}})(g_{1})(x) =  \frac{1}{2\sqrt{\pi}} \left(4F(x) +4x - 8x^2F(x)\right). 
\end{align}
Then, for the limiting term, thanks to Lemma \ref{lem:technical_dawson}, 
\begin{align*}
2 \int_{\mathbb{R}} x \exp\left( - x^2\right) (-\mathcal{A}_{1}^{\operatorname{rot}})(g_{1})(x) \dfrac{c_1 dx}{ |x|^{2}} & = \frac{c_1}{\sqrt{\pi}} \int_{\mathbb{R}} x \exp\left( - x^2\right) \left(4F(x) +4x - 8x^2F(x)\right) \frac{dx}{x^2} , \\
& = (2+\pi)c_1.
\end{align*}
This concludes the proof of the proposition. 
\end{proof}
\noindent
Next, let us investigate the spectral properties of the ``carr\'e de Mehler" generator $\mathcal{L}_1$.~The next lemma analyzes the asymptotic pointwise behavior of $\mathcal{L}_1(g_{R,j})$ as $R$ tends to $+\infty$, for all $j \in \{1, \dots, d\}$.

\begin{lem}\label{lem:pointwise_eigenfunction}
Let $d \geq 1$ be an integer, let $\mathcal{L}_1$ be the operator given by \eqref{eq:smart_decomposition} and, for $R>0$, let $g_{R}$ be the function defined by \eqref{eq:eigen_approximate_eigenvector}. 
Then, for all $j \in \{1, \dots, d\}$ and all $x \in \mathbb{R}^d$, 
\begin{align}\label{eq:weak_spectral}
(\mathcal{L}_1)(g_{R,j})(x) \longrightarrow -x_j,
\end{align}
as $R$ tends to $+\infty$. 
\end{lem}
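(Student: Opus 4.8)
The statement to prove is the pointwise limit $\mathcal{L}_1(g_{R,j})(x)\to -x_j$ as $R\to+\infty$, where $\mathcal{L}_1 = \mathcal{L}_1^{\operatorname{rot}} + (\mathcal{L}_1^{\operatorname{rot}})^*$ admits the representation \eqref{eq:smart_decomposition}, namely $\mathcal{L}_1(f) = 2\mathcal{A}_1^{\operatorname{rot}}(f) + \frac{1}{p_1^{\operatorname{rot}}}\Gamma_1^{\operatorname{rot}}(p_1^{\operatorname{rot}}, f)$. The plan is to treat the two pieces of this decomposition separately and add the limits.

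First I would handle the non-local term $\mathcal{A}_1^{\operatorname{rot}}(g_{R,j})$. By the scaling computation already used in the proof of Proposition \ref{prop:aps_ou_stable}, one has $\mathcal{A}_1^{\operatorname{rot}}(g_{R,j})(x) = \mathcal{A}_1^{\operatorname{rot}}(g_{1,j})(x/R)$ via the $1$-homogeneity of $\nu_1^{\operatorname{rot}}$ and the definition $g_R(x) = g_1(x/R)\cdot R$ combined with the change of variables $u\mapsto Ru$ in the integral defining $\mathcal{A}_1^{\operatorname{rot}}$. Since $g_{1,j}\in\mathcal{S}(\bbr^d)$, the function $\mathcal{A}_1^{\operatorname{rot}}(g_{1,j})$ is smooth on $\bbr^d$ with $\mathcal{A}_1^{\operatorname{rot}}(g_{1,j})(0) = 0$ (this nullity at the origin was already noted in the proof of Proposition \ref{prop:aps_ou_stable}), so $\mathcal{A}_1^{\operatorname{rot}}(g_{R,j})(x) = \mathcal{A}_1^{\operatorname{rot}}(g_{1,j})(x/R) \to \mathcal{A}_1^{\operatorname{rot}}(g_{1,j})(0) = 0$ for every fixed $x$ as $R\to+\infty$; hence $2\mathcal{A}_1^{\operatorname{rot}}(g_{R,j})(x)\to 0$.

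Second I would treat the carré-du-champ term $\frac{1}{p_1^{\operatorname{rot}}(x)}\Gamma_1^{\operatorname{rot}}(p_1^{\operatorname{rot}}, g_{R,j})(x)$. Here $\Gamma_1^{\operatorname{rot}}(p_1^{\operatorname{rot}}, g_{R,j})(x) = \int_{\bbr^d}(p_1^{\operatorname{rot}}(x+u)-p_1^{\operatorname{rot}}(x))(g_{R,j}(x+u)-g_{R,j}(x))\,\nu_1^{\operatorname{rot}}(du)$. Fix $x$. As $R\to+\infty$, $g_{R,j}(x+u)-g_{R,j}(x)\to (x+u)_j - x_j = u_j$ pointwise in $u$, and this difference is dominated uniformly in $R$ by $C_d\|u\|$ on a neighbourhood of $0$ and by $C_d$ for $\|u\|$ large (the gradient bound $\|\nabla g_R\|_\infty\le C_d$ used in the proof of Theorem \ref{thm:stability_estimate_rotationally_invariant} gives the first; boundedness of $g_R$ does not help uniformly in $R$, so one instead uses $|g_{R,j}(y)|\le C\|y\|e^{-\|y\|^2/R^2}$ together with $|p_1^{\operatorname{rot}}(x+u)-p_1^{\operatorname{rot}}(x)|$ being integrable against $\nu_1^{\operatorname{rot}}$ and decaying in $\|u\|$, since $p_1^{\operatorname{rot}}$ and its gradient decay polynomially by the analogue of \eqref{eq:two_sided_bounds}). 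Thus by dominated convergence $\Gamma_1^{\operatorname{rot}}(p_1^{\operatorname{rot}}, g_{R,j})(x)\to \int_{\bbr^d}(p_1^{\operatorname{rot}}(x+u)-p_1^{\operatorname{rot}}(x))u_j\,\nu_1^{\operatorname{rot}}(du)$, and dividing by $p_1^{\operatorname{rot}}(x)$ this limit equals $\frac{1}{p_1^{\operatorname{rot}}(x)}\int_{\bbr^d}(p_1^{\operatorname{rot}}(x+u)-p_1^{\operatorname{rot}}(x))u_j\,\nu_1^{\operatorname{rot}}(du)$. Recognizing that $\int u_j(p_1^{\operatorname{rot}}(x+u)-p_1^{\operatorname{rot}}(x))\nu_1^{\operatorname{rot}}(du) = \mathcal{A}_1^{\operatorname{rot}}(g_j p_1^{\operatorname{rot}})(x) - \mathcal{A}_1^{\operatorname{rot}}(p_1^{\operatorname{rot}})(x) g_j(x) - p_1^{\operatorname{rot}}(x)\mathcal{A}_1^{\operatorname{rot}}(g_j)(x) = \Gamma_1^{\operatorname{rot}}(p_1^{\operatorname{rot}}, g_j)(x)$ where $g_j(x) = x_j$, and using $\mathcal{L}_1(x_j) = -x_j$ (which follows from $\mathcal{L}^{\operatorname{rot}}_1(g_j) = -g_j$ and $(\mathcal{L}^{\operatorname{rot}}_1)^*(g_j)$ computed via Proposition \ref{lem:dual_Cauchy_SG}, or directly since the identity on polynomials passes to the limit), one gets that $\frac{1}{p_1^{\operatorname{rot}}(x)}\Gamma_1^{\operatorname{rot}}(p_1^{\operatorname{rot}}, g_{R,j})(x)\to -x_j$.

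Adding the two contributions gives $\mathcal{L}_1(g_{R,j})(x) = 2\mathcal{A}_1^{\operatorname{rot}}(g_{R,j})(x) + \frac{1}{p_1^{\operatorname{rot}}(x)}\Gamma_1^{\operatorname{rot}}(p_1^{\operatorname{rot}}, g_{R,j})(x)\to 0 + (-x_j) = -x_j$, as claimed. The main obstacle is the rigorous justification of the dominated-convergence argument for the carré-du-champ term: one must produce a single $\nu_1^{\operatorname{rot}}$-integrable dominating function for $(p_1^{\operatorname{rot}}(x+u)-p_1^{\operatorname{rot}}(x))(g_{R,j}(x+u)-g_{R,j}(x))$ valid for all large $R$, handling both the small-$u$ regime (where one uses $\|u\|^2$-integrability near $0$ together with the Lipschitz bound on $g_R$ and on $p_1^{\operatorname{rot}}$) and the large-$u$ regime (where one uses the polynomial decay of $p_1^{\operatorname{rot}}$, which controls $\int_{\|u\|\ge 1}\|u\|\,|p_1^{\operatorname{rot}}(x+u)|\nu_1^{\operatorname{rot}}(du)$, against the at-most-linear growth of $g_{R,j}$ after noting $e^{-\|x+u\|^2/R^2}\le 1$). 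Once this is in place, everything else is a direct identification of limits.
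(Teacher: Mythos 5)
The decomposition into $2\mathcal{A}_1^{\operatorname{rot}}(g_{R,j})$ plus the carr\'e-du-champ term $\frac{1}{p_1^{\operatorname{rot}}}\Gamma_1^{\operatorname{rot}}(p_1^{\operatorname{rot}},g_{R,j})$ is the right starting point and matches the paper's structure, and your treatment of the first term (scaling gives $\mathcal{A}_1^{\operatorname{rot}}(g_{R,j})(x)=\mathcal{A}_1^{\operatorname{rot}}(g_{1,j})(x/R)\to \mathcal{A}_1^{\operatorname{rot}}(g_{1,j})(0)=0$) is correct. The carr\'e-du-champ piece, however, has a genuine gap: the limiting integral $\int_{\bbr^d}(p_1^{\operatorname{rot}}(x+u)-p_1^{\operatorname{rot}}(x))u_j\,\nu_1^{\operatorname{rot}}(du)$ is \emph{not} absolutely convergent. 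For $\|u\|$ large, $p_1^{\operatorname{rot}}(x+u)\to 0$, so the modulus of the integrand behaves like $p_1^{\operatorname{rot}}(x)\,|u_j|$ and $\int_{\|u\|\geq 1}\|u\|\,\nu_1^{\operatorname{rot}}(du)$ diverges logarithmically. Consequently no $\nu_1^{\operatorname{rot}}$-integrable dominating function can exist for $(p_1^{\operatorname{rot}}(x+u)-p_1^{\operatorname{rot}}(x))(g_{R,j}(x+u)-g_{R,j}(x))$ uniformly in $R$, and dominated convergence cannot be invoked as stated. Your bound on the tail controls only the $p_1^{\operatorname{rot}}(x+u)$ contribution; the $p_1^{\operatorname{rot}}(x)$ contribution, multiplied by the at-most-linear growth of $g_{R,j}$, is precisely the non-integrable piece you cannot discard.

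A second, related difficulty is the final identification: you define $\Gamma_1^{\operatorname{rot}}(p_1^{\operatorname{rot}},g_j)$ with $g_j(x)=x_j$ and appeal to $\mathcal{L}_1(x_j)=-x_j$, but $g_j$ is unbounded and outside the domain of the form, and the integral defining $\Gamma_1^{\operatorname{rot}}(p_1^{\operatorname{rot}},g_j)$ is again only conditionally convergent, so one cannot simply insert $g_j$ into the pointwise decomposition \eqref{eq:smart_decomposition} and conclude. The paper sidesteps both issues by working in Fourier variables: writing $\Gamma_1^{\operatorname{rot}}(p_1^{\operatorname{rot}},g_{R,j})(x)$ as a double Fourier integral with multiplier $R\left(-\|\xi_1/R+\xi_2\|+\|\xi_1\|/R+\|\xi_2\|\right)$, which by the triangle inequality is bounded uniformly in $R$ by $2\|\xi_1\|$ while converging pointwise to $\|\xi_1\|-\langle\xi_2/\|\xi_2\|;\xi_1\rangle$. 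Since $\mathcal{F}(g_{1,j})$ is Schwartz and $\mathcal{F}(p_1^{\operatorname{rot}})(\xi_2)=e^{-\|\xi_2\|}$, the dominating function is now integrable and the DCT applies cleanly; the cancellation $\int\mathcal{F}(g_{1,j})(\xi_1)\|\xi_1\|\,d\xi_1=0$ and an integration by parts using $\nabla_\xi\left(\mathcal{F}(p_1^{\operatorname{rot}})\right)(\xi)=-\frac{\xi}{\|\xi\|}\mathcal{F}(p_1^{\operatorname{rot}})(\xi)$ yield $-x_jp_1^{\operatorname{rot}}(x)$ directly. If you want to stay in physical space, you would need at minimum to symmetrize the integral in $u\mapsto -u$ (which restores absolute convergence of the target integral) and then produce a dominating function that exploits this cancellation; the Fourier route is considerably cleaner.
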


\begin{proof}
First, for all $j \in \{1, \dots, d\}$ and all $x \in \mathbb{R}^d$, 
\begin{align*}
\mathcal{A}_1^{\operatorname{rot}}(g_{R,j})(x) \longrightarrow 0,
\end{align*}
as $R$ tends to $+\infty$. So, let us prove that, for all $j \in \{1, \dots, d\}$ and all $x \in \mathbb{R}^d$, 
\begin{align*}
\frac{1}{p_1^{\operatorname{rot}}(x)} \Gamma_1^{\operatorname{rot}}(p_1^{\operatorname{rot}},g_{R,j})(x) \longrightarrow -x_j,
\end{align*}
as $R$ tends to $+\infty$. By Fourier inversion formula, for all $j \in \{1, \dots, d\}$ and all $x \in \mathbb{R}^d$, 
\begin{align*}
\Gamma_1^{\operatorname{rot}}(p_1^{\operatorname{rot}},g_{R,j})(x) & = \frac{1}{(2\pi)^{2d}} \int_{\mathbb{R}^{2d}} R^{d+1} \mathcal{F}(g_{1,j})(R \xi_1) \mathcal{F}(p^{\operatorname{rot}}_1)(\xi_2) e^{i \langle x ; \xi_1 + \xi_2 \rangle} \\
& \quad\quad \times \left(- \|\xi_1+ \xi_2\| + \|\xi_1\| +\|\xi_2\| \right) d\xi_1 d\xi_2 , \\
& = \frac{1}{(2\pi)^{2d}} \int_{\mathbb{R}^{2d}} \mathcal{F}(g_{1,j})(\xi_1) \mathcal{F}(p^{\operatorname{rot}}_1)(\xi_2) e^{i \langle x ; \frac{\xi_1}{R} + \xi_2 \rangle}\\
&\quad\quad \times R \left(- \| \frac{\xi_1}{R}+ \xi_2\| + \frac{\|\xi_1\|}{R} +\|\xi_2\| \right) d\xi_1 d\xi_2.
\end{align*}
Now, for all $\xi_1, \xi_2 \in \mathbb{R}^d \setminus \{0\}$, 
\begin{align*}
R \left(- \| \frac{\xi_1}{R}+ \xi_2\| + \frac{\|\xi_1\|}{R} +\|\xi_2\| \right) \longrightarrow \|\xi_1\| - \langle \frac{\xi_2}{\|\xi_2\|} ; \xi_1\rangle,
\end{align*}
as $R$ tends to $+\infty$. Thus, for all $j \in \{1, \dots, d\}$ and all $x \in \mathbb{R}^d$,
\begin{align*}
\Gamma_1^{\operatorname{rot}}(p_1^{\operatorname{rot}},g_{R,j})(x) \longrightarrow  \frac{1}{(2\pi)^{2d}} \int_{\mathbb{R}^{2d}} \mathcal{F}(g_{1,j})(\xi_1) \mathcal{F}(p^{\operatorname{rot}}_1)(\xi_2) e^{i \langle x ; \xi_2 \rangle} \left(\|\xi_1\| - \langle \frac{\xi_2}{\|\xi_2\|} ; \xi_1\rangle \right) d\xi_1 d\xi_2. 
\end{align*}
Now, by antisymmetry, 
\begin{align*}
\int_{\mathbb{R}^d} \mathcal{F}(g_{1,j})(\xi_1) \|\xi_1\| d\xi_1 = 0. 
\end{align*}
Finally, by integration by parts, for all $x \in \mathbb{R}^d$ and all $j \in \{1 , \dots, d \}$, 
\begin{align*}
\frac{1}{(2\pi)^{2d}}&  \int_{\mathbb{R}^{2d}} \mathcal{F}(g_{1,j})(\xi_1) \mathcal{F}(p^{\operatorname{rot}}_1)(\xi_2) e^{i \langle x ; \xi_2 \rangle} \langle \frac{i \xi_2}{\|\xi_2\|} ; i \xi_1\rangle d\xi_1 d\xi_2 , \\
& = - \langle \nabla(g_{1,j})(0) ; x \rangle p_1^{\operatorname{rot}}(x) = -x_j p_1^{\operatorname{rot}}(x),
\end{align*}
where we have used the fact that, for all $\xi \in \mathbb{R}^d$ with $\xi \ne 0$, 
\begin{align*}
\nabla_{\xi} \left(\mathcal{F}(p_{1}^{\operatorname{rot}})\right)(\xi) = - \frac{\xi}{\| \xi \|} \mathcal{F}(p_{1}^{\operatorname{rot}})(\xi). 
\end{align*}
This concludes the proof of the lemma.  
\end{proof}
\noindent
As previously, let us investigate the $L^2$-spectral properties of the operator $\mathcal{L}_1$ and, in particular, let us locate in the spectrum the negative real $-1$.
For this purpose, let us compute the following limit (if it exists): for all $j \in \{1, \dots, d\}$,
\begin{align*}
\tilde{L}_j := \underset{R \longrightarrow +\infty}{\lim}\dfrac{\| \mathcal{L}_1 \left(g_{R,j}\right) + g_{R,j} \|^2_{L^2(\mu_1^{\operatorname{rot}})}}{\| g_{R,j} \|^2_{L^2(\mu_1^{\operatorname{rot}})}}. 
\end{align*}

\begin{prop}\label{prop:approximate_point_spectrum}
Let $d \geq 1$ be an integer, let $\mathcal{L}_1$ be the operator given by \eqref{eq:smart_decomposition} and, for $R>0$, let $g_{R}$ be the function defined by \eqref{eq:eigen_approximate_eigenvector}. 
Then, for all $j \in \{1, \dots, d\}$,
\begin{align*}
 \underset{R \longrightarrow +\infty}{\lim}\dfrac{\| \mathcal{L}_1 \left(g_{R,j}\right) + g_{R,j} \|^2_{L^2(\mu_1^{\operatorname{rot}})}}{\| g_{R,j} \|^2_{L^2(\mu_1^{\operatorname{rot}})}} = 0.
\end{align*}
\end{prop}

\begin{proof}
By straightforward computations and integration by parts, 
\begin{align*}
\| \mathcal{L}_1 \left(g_{R,j}\right) + g_{R,j} \|^2_{L^2(\mu_1^{\operatorname{rot}})} & = \| \mathcal{L}_1(g_{R,j}) \|^2_{L^2(\mu_1^{\operatorname{rot}})} + \| g_{R,j} \|^2_{L^2(\mu_1^{\operatorname{rot}})} + 2 \langle \mathcal{L}_1(g_{R,j}) ; g_{R,j}  \rangle_{L^2(\mu_1^{\operatorname{rot}})} , \\
& =  \| \mathcal{L}_1(g_{R,j}) \|^2_{L^2(\mu_1^{\operatorname{rot}})} - \mathcal{E}_{\nu_1^{\operatorname{rot}}, \mu_1^{\operatorname{rot}}}(g_{R,j} , g_{R,j}) \\
&\quad + \| g_{R,j} \|^2_{L^2(\mu_1^{\operatorname{rot}})}  - \mathcal{E}_{\nu_1^{\operatorname{rot}}, \mu_1^{\operatorname{rot}}}(g_{R,j} , g_{R,j}).
\end{align*}
But, for all $j \in \{ 1, \dots, d\}$, 
\begin{align}\label{eq:limit_ratio_1}
 \underset{R \longrightarrow +\infty}{\lim} \dfrac{\mathcal{E}_{\nu_1^{\operatorname{rot}}, \mu_1^{\operatorname{rot}}}(g_{R,j} , g_{R,j})}{\|g_{R,j}\|^2_{L^2(\mu_1^{\operatorname{rot}})}} = 1. 
\end{align}
So, let us compute the following limit: for all $j \in \{1, \dots, d\}$, 
\begin{align*}
\underset{R \longrightarrow +\infty}{\lim} \dfrac{\| \mathcal{L}_1(g_{R,j}) \|_{L^{2}(\mu_1^{\operatorname{rot}})}^2}{\| g_{R,j} \|^2_{L^2(\mu_1^{\operatorname{rot}})}}. 
\end{align*}
Thanks to \eqref{eq:smart_decomposition} and by standard computations, 
\begin{align*}
\| \mathcal{L}_1(g_{R,j}) \|^2_{L^{2}(\mu_1^{\operatorname{rot}})} & = 4 \| \mathcal{A}^{\operatorname{rot}}_1(g_{R,j}) \|^2_{L^2(\mu_1^{\operatorname{rot}})} + 4 \langle \mathcal{A}_1^{\operatorname{rot}}(g_{R,j})  ; \frac{1}{p_1^{\operatorname{rot}}}\Gamma^{\operatorname{rot}}_1\left( p_1^{\operatorname{rot}}; g_{R,j} \right) \rangle_{L^2(\mu_1^{\operatorname{rot}})} \\
&\quad\quad + \left\| \frac{1}{p_1^{\operatorname{rot}}}\Gamma^{\operatorname{rot}}_1\left( p_1^{\operatorname{rot}}; g_{R,j} \right)  \right\|^2_{L^2(\mu_1^{\operatorname{rot}})}. 
\end{align*}
Now, by the Lebesgue dominated convergence theorem, for all $j \in \{1, \dots, d\}$, 
\begin{align*}
\underset{R \longrightarrow +\infty}{\lim}  \| \mathcal{A}^{\operatorname{rot}}_1(g_{R,j}) \|^2_{L^2(\mu_1^{\operatorname{rot}})} = 0. 
\end{align*}
Moreover, by Parseval-Plancherel formula, 
\begin{align*}
\left\langle \mathcal{A}_1^{\operatorname{rot}}(g_{R,j})  ; \frac{1}{p_1^{\operatorname{rot}}}\Gamma^{\operatorname{rot}}_1\left( p_1^{\operatorname{rot}}; g_{R,j} \right) \right\rangle_{L^2(\mu_1^{\operatorname{rot}})} & = \left\langle \mathcal{A}_1^{\operatorname{rot}}(g_{R,j})  ; \Gamma^{\operatorname{rot}}_1\left( p_1^{\operatorname{rot}}; g_{R,j} \right) \right\rangle_{L^2(\mathbb{R}^d,dx)} , \\
&= \frac{1}{(2\pi)^d} \int_{\mathbb{R}^d} \mathcal{F}\left( \mathcal{A}_1^{\operatorname{rot}}(g_{R,j}) \right)(\xi) \\
& \quad \times \overline{ \mathcal{F}\left( \Gamma^{\operatorname{rot}}_1\left( p_1^{\operatorname{rot}}; g_{R,j} \right) \right)(\xi)} d\xi. 
\end{align*}
Let us compute the Fourier transform of the function $\Gamma^{\operatorname{rot}}_1\left( p_1^{\operatorname{rot}}; g_{R,j} \right)$.  First, for all $x \in \mathbb{R}^d$, all $R>0$ and all $j \in \{1, \dots, d\}$, 
\begin{align*}
\Gamma^{\operatorname{rot}}_1\left( p_1^{\operatorname{rot}}; g_{R,j} \right)(x) & = \frac{1}{(2\pi)^{2d}} \int_{\mathbb{R}^{2d}} \mathcal{F}(g_{R,j})(\xi_1) \mathcal{F}(p^{\operatorname{rot}}_1)(\xi_2)e^{i \langle x ; \xi_1 + \xi_2\rangle} \\
& \quad \times \left(- \|\xi_1+ \xi_2\| + \|\xi_1\| + \|\xi_2\|\right)d\xi_1 d\xi_2.  
\end{align*}
Then, by Fourier inversion formula, for all $\xi_1 \in \mathbb{R}^d$, all $R>0$ and all $j \in \{1, \dots, d\}$, 
\begin{align*}
\mathcal{F}\left(\Gamma^{\operatorname{rot}}_1\left( p_1^{\operatorname{rot}}; g_{R,j} \right)\right)(\xi_1) & = \frac{1}{(2\pi)^d} \int_{\mathbb{R}^d} \mathcal{F}(g_{R,j})(\xi_1-\xi_2) \mathcal{F}(p^{\operatorname{rot}}_1)(\xi_2) \left(- \|\xi_1\| + \|\xi_1-\xi_2\| + \|\xi_2\|\right) d\xi_2.
\end{align*}
Then, 
\begin{align}\label{eq:comp_crossed_term}
\left\langle \mathcal{A}_1^{\operatorname{rot}}(g_{R,j})  ; \frac{1}{p_1^{\operatorname{rot}}}\Gamma^{\operatorname{rot}}_1\left( p_1^{\operatorname{rot}}; g_{R,j} \right) \right\rangle_{L^2(\mu_1^{\operatorname{rot}})} &  = \frac{1}{(2\pi)^d} \int_{\mathbb{R}^d} \mathcal{F}(g_{R,j})(\omega) \|\omega\| \mathcal{F} \left(\Gamma_1^{\operatorname{rot}}(p_1^{\operatorname{rot}} ; g_{R,j})\right)(\omega) d\omega ,  \nonumber \\
& = \frac{R^{d+1}}{(2\pi)^d} \int_{\mathbb{R}^d} \mathcal{F}(g_{1,j})(R\omega) \|\omega\| \mathcal{F}\left(\Gamma_1^{\operatorname{rot}}(p_1^{\operatorname{rot}} ; g_{R,j})\right)(\omega) d\omega ,  \nonumber \\
& = \frac{1}{(2\pi)^d} \int_{\mathbb{R}^d} \mathcal{F}(g_{1,j})(\omega) \|\omega\| \mathcal{F}\left(\Gamma_1^{\operatorname{rot}}(p_1^{\operatorname{rot}} ; g_{R,j})\right)\left(\frac{\omega}{R}\right) d\omega.
\end{align}
Now, for all $\omega \in \mathbb{R}^d$, all $R>0$ and all $j \in \{1, \dots, d\}$, 
\begin{align*}
\mathcal{F}\left(\Gamma_1^{\operatorname{rot}}(p_1^{\operatorname{rot}} ; g_{R,j})\right)\left(\frac{\omega}{R}\right) & = \frac{1}{(2\pi)^d} \int_{\mathbb{R}^d} \mathcal{F}(g_{R,j})\left(\frac{\omega}{R}-\xi_2\right) \mathcal{F}(p^{\operatorname{rot}}_1)(\xi_2) \\
& \quad \times \left(- \left\|\frac{\omega}{R}\right\| + \left\|\frac{\omega}{R}-\xi_2\right\| + \|\xi_2\|\right) d\xi_2 , \\
& = \frac{R^{d+1}}{(2\pi)^d} \int_{\mathbb{R}^d} \mathcal{F}(g_{1,j})\left(\omega-R\xi_2\right) \mathcal{F}(p^{\operatorname{rot}}_1)(\xi_2) \\
&\quad \times \left(- \left\|\frac{\omega}{R}\right\| + \left\|\frac{\omega}{R}-\xi_2\right\| + \|\xi_2\|\right) d\xi_2 , \\
& = \frac{1}{(2\pi)^d} \int_{\mathbb{R}^d} \mathcal{F}(g_{1,j})\left(\omega-\xi_2\right) \mathcal{F}(p^{\operatorname{rot}}_1)\left(\frac{\xi_2}{R}\right) \\
&\quad \times \left(- \left\|\omega\right\| + \left\|\omega-\xi_2\right\| + \|\xi_2\|\right) d\xi_2 , \\
& \longrightarrow \frac{1}{(2\pi)^d} \int_{\mathbb{R}^d} \mathcal{F}(g_{1,j})\left(\omega-\xi_2\right) \left(- \left\|\omega\right\| + \left\|\omega-\xi_2\right\| + \|\xi_2\|\right) d\xi_2 , 
\end{align*}
as $R$ tends to $+\infty$.  Then, passing to the limit in \eqref{eq:comp_crossed_term}, 
\begin{align}\label{eq:limit_crossed_term}
\left\langle \mathcal{A}_1^{\operatorname{rot}}(g_{R,j})  ; \frac{1}{p_1^{\operatorname{rot}}}\Gamma^{\operatorname{rot}}_1\left( p_1^{\operatorname{rot}}; g_{R,j} \right) \right\rangle_{L^2(\mu_1^{\operatorname{rot}})} &  \longrightarrow \frac{1}{(2\pi)^{2d}} \int_{\mathbb{R}^{2d}} \mathcal{F}(g_{1,j})(\omega) \|\omega\| \mathcal{F}(g_{1,j})(\omega - \xi_2) \nonumber \\
&\quad\quad \times \left( - \|\omega\| + \|\omega - \xi_2\| + \|\xi_2\|\right) d\xi_2d\omega. 
\end{align}
Let us compute this limit.  First, 
\begin{align*}
\int_{\mathbb{R}^{2d}} \mathcal{F}(g_{1,j})(\omega) \|\omega\|^2 \mathcal{F}(g_{1,j})(\omega - \xi_2) d\xi_2d\omega  = \left(\int_{\mathbb{R}^d} \mathcal{F}(g_{1,j})(\omega) \|\omega\|^2 d\omega \right) \left(\int_{\mathbb{R}^d} \mathcal{F}(g_{1,j})(\xi_2) d\xi_2 \right) = 0.
\end{align*}
Moreover,  
\begin{align*}
\int_{\mathbb{R}^{2d}} \mathcal{F}(g_{1,j})(\omega) \|\omega\| \mathcal{F}(g_{1,j})(\omega - \xi_2) \|\omega - \xi_2\| d\xi_2d\omega = \left( \int_{\mathbb{R}^d} \mathcal{F}(g_{1,j})(\omega) \|\omega\| d \omega \right)^2  = 0.
\end{align*}
Finally, since the cross term is normalized by $\|g_{R,j}\|^2_{L^2(\mu_1^{\operatorname{rot}})}$, the last term in \eqref{eq:limit_crossed_term} does not contribute.  Thus, 
\begin{align*}
 \frac{1}{\|g_{R,j}\|^2_{L^2(\mu_1^{\operatorname{rot}})}}\left\langle \mathcal{A}_1^{\operatorname{rot}}(g_{R,j})  ; \frac{1}{p_1^{\operatorname{rot}}}\Gamma^{\operatorname{rot}}_1\left( p_1^{\operatorname{rot}}; g_{R,j} \right) \right\rangle_{L^2(\mu_1^{\operatorname{rot}})} \longrightarrow 0,
\end{align*}
as $R$ tends to $+\infty$. Now, by scale invariance of $\nu_1^{\operatorname{rot}}$, for all $x \in \mathbb{R}^d$ and all $R>0$, 
\begin{align*}
\Gamma^{\operatorname{rot}}_1\left( p_1^{\operatorname{rot}}; g_{R,j} \right)(x) = \Gamma^{\operatorname{rot}}_1\left( p_{1,R}^{\operatorname{rot}}; g_{1,j} \right)\left(\frac{x}{R}\right),
\end{align*}
where $p_{1,R}^{\operatorname{rot}}(x) = p_{1}^{\operatorname{rot}}(Rx)$, $x \in \mathbb{R}^d$. Thus, 
\begin{align*}
\left\| \dfrac{\Gamma^{\operatorname{rot}}_1\left( p_1^{\operatorname{rot}}; g_{R,j} \right)}{p_1^{\operatorname{rot}}}  \right\|^2_{L^2(\mu_1^{\operatorname{rot}})} & = \frac{1}{c_d} \int_{\mathbb{R}^d} \left(1+ \|x\|^2\right)^{\frac{d+1}{2}} \left|\Gamma^{\operatorname{rot}}_1\left( p_1^{\operatorname{rot}}; g_{R,j} \right)(x)\right|^2dx , \\
& = \frac{1}{c_d} \int_{\mathbb{R}^d} \left(1+ \|x\|^2\right)^{\frac{d+1}{2}} \left| \Gamma^{\operatorname{rot}}_1\left( p_{1,R}^{\operatorname{rot}}; g_{1,j} \right)\left(\frac{x}{R}\right)\right|^2dx , \\
& = \frac{R^d}{c_d} \int_{\mathbb{R}^d} \left(1+ R^2\|x\|^2\right)^{\frac{d+1}{2}} \left| \Gamma^{\operatorname{rot}}_1\left( p_{1,R}^{\operatorname{rot}}; g_{1,j} \right)\left(x\right)\right|^2dx.
\end{align*}
Next, let us prove, for all $x \in \mathbb{R}^d$ with $x \ne 0$ and all $j \in \{1, \dots, d\}$, that
\begin{align}\label{eq:limit_pointwise_renormalisation_all_dimensions}
\underset{R \longrightarrow +\infty}{\lim} R^d \Gamma_1^{\operatorname{rot}}\left(p^{\operatorname{rot}}_{1,R} ; g_{1,j}\right)(x) =- \frac{c_d}{\|x\|^{d+1}} g_{1,j}(x),
\end{align}
where $c_d$ is given by \eqref{eq:def_rot_inv}. Recall that, for all $x \in \mathbb{R}^d$, all $R>0$ and all $j \in \{1, \dots, d\}$, 
\begin{align*}
\Gamma_1^{\operatorname{rot}}\left(p^{\operatorname{rot}}_{1,R} ; g_{1,j}\right)(x) & = \frac{1}{(2\pi)^{2d}} \int_{\mathbb{R}^{2d}} \mathcal{F}\left(p^{\operatorname{rot}}_{1,R}\right)(\xi_1) \mathcal{F}\left(g_{1,j}\right)(\xi_2) e^{i \langle x ; \xi_1 + \xi_2 \rangle } \\
&\quad\quad \times\left( - \|\xi_1 + \xi_2\| + \|\xi_1\| +\|\xi_2\|\right) d\xi_1 d\xi_2 , \\
& = \frac{1}{(2\pi)^{2d}} \int_{\mathbb{R}^{2d}} \frac{1}{R^{d}} \mathcal{F}\left(p_1^{\operatorname{rot}}\right)\left(\frac{\xi_1}{R}\right)\mathcal{F}\left(g_{1,j}\right)(\xi_2) e^{i \langle x ; \xi_1 + \xi_2 \rangle } \\
&\quad\quad \times\left( - \|\xi_1 + \xi_2\| + \|\xi_1\| +\|\xi_2\|\right) d\xi_1 d\xi_2. 
\end{align*}
Thus, for all $R>0$, all $x \in \mathbb{R}^d$ and all $j \in \{1, \dots, d\}$, 
\begin{align*}
R^d\Gamma_1^{\operatorname{rot}}\left(p^{\operatorname{rot}}_{1,R} ; g_{1,j}\right)(x) & = \frac{1}{(2\pi)^{2d}} \int_{\mathbb{R}^{2d}} \exp \left(- \frac{\|\xi_1\|}{R}\right)\mathcal{F}\left(g_{1,j}\right)(\xi_2) e^{i \langle x ; \xi_1 + \xi_2 \rangle } \\
& \quad\quad \times \left( - \|\xi_1 + \xi_2\| + \|\xi_1\| +\|\xi_2\|\right) d\xi_1 d\xi_2, \\
& = \frac{1}{(2\pi)^{2d}} \int_{\mathbb{R}^d} \mathcal{F}\left(g_{1,j}\right)(\xi_2) e^{i \langle x ; \xi_2 \rangle} \bigg( \int_{\mathbb{R}^d} e^{- \frac{\|\xi_1\|}{R}} e^{i \langle \xi_1 ; x \rangle } \\
&\quad\quad \times \left( - \|\xi_1+\xi_2\| + \|\xi_1\| + \|\xi_2\|\right) d\xi_1 \bigg) d\xi_2.
\end{align*}
First, by Fourier inversion formula, for all $x \in \mathbb{R}^d$ such that $x \ne 0$, all $R>0$ and all $j \in \{1, \dots, d\}$, 
\begin{align*}
\frac{1}{(2\pi)^{2d}} \int_{\mathbb{R}^{2d}} & \mathcal{F}\left(g_{1,j}\right)(\xi_2)  \|\xi_2\| e^{i \langle x ; \xi_2 \rangle} \bigg( \int_{\mathbb{R}^d} e^{- \frac{\|\xi_1\|}{R}} e^{i \langle \xi_1 ; x \rangle }d\xi_1 \bigg) d\xi_2 = \\
& \frac{1}{(2\pi)^d} \int_{\mathbb{R}^d}  \mathcal{F}\left(g_{1,j}\right)(\xi_2)  \|\xi_2\| e^{i \langle x ; \xi_2 \rangle}  d\xi_2 \dfrac{c_d R^d}{\left(1+ R^2 \|x\|^2\right)^{\frac{d+1}{2}}} , \\
& \longrightarrow 0,
\end{align*}
as $R$ tends to $+\infty$. Moreover, thanks to Lemma \ref{lem:riesz_potential_critical} of the Appendix section, for all $x \in \mathbb{R}^d$ such that $x \ne 0$,  
\begin{align*}
\underset{R \longrightarrow +\infty}{\lim} \int_{\mathbb{R}^d} e^{- \frac{\|\xi_1\|}{R}} e^{i \langle x ; \xi_1 \rangle } \|\xi_1\| \frac{d\xi_1}{(2\pi)^d} = \dfrac{-c_d}{\|x\|^{d+1}}.
\end{align*}
Then, 
\begin{align*}
\underset{R \longrightarrow +\infty}{\lim} \frac{1}{(2\pi)^{2d}} \int_{\mathbb{R}^d} \mathcal{F}\left(g_{1,j}\right)(\xi_2) e^{i \langle x ; \xi_2 \rangle} \bigg( \int_{\mathbb{R}^d} e^{- \frac{\|\xi_1\|}{R}} e^{i \langle \xi_1 ; x \rangle } \|\xi_1\| d\xi_1 \bigg) d\xi_2 = \dfrac{-c_d g_{1,j}(x)}{\|x\|^{d+1}}.
\end{align*}
Finally, using tempered distributions, for all $x \in \mathbb{R}^d$ such that $x \ne 0$ and all $\xi_2 \in \mathbb{R}^d$, 
\begin{align*}
\mathcal{F}^{-1}\left( \| \cdot+\xi_2\| \right)(x) = - \frac{c_d e^{- i \langle x ; \xi_2 \rangle }}{\|x\|^{d+1}}.
\end{align*} 
Moreover, by antisymmetry,
\begin{align*}
\int_{\mathbb{R}^d} \mathcal{F}\left(g_{1,j}\right)(\xi_2) d\xi_2 = 0.
\end{align*}
Thus, for all $x \in \mathbb{R}^d$ such that $x \ne 0$, 
\begin{align*}
R^d \Gamma_1^{\operatorname{rot}}\left(p^{\operatorname{rot}}_{1,R} ; g_{1,j}\right)(x) & \longrightarrow -\dfrac{c_d g_{1,j}(x)}{\|x\|^{d+1}},
\end{align*}
as $R$ tends to $+\infty$. Then, 
\begin{align*}
\underset{R \longrightarrow +\infty}{\lim} \frac{1}{R} \frac{R^d}{c_d} \int_{\mathbb{R}^d} \left(1+ R^2\|x\|^2\right)^{\frac{d+1}{2}} \left| \Gamma^{\operatorname{rot}}_1\left( p_{1,R}^{\operatorname{rot}}; g_{1,j} \right)\left(x\right)\right|^2dx & = \int_{\mathbb{R}^d} x_j^2 \exp\left(-2 \|x\|^2\right) \frac{c_d dx}{\|x\|^{d+1}}.
\end{align*}
This concludes the proof of the proposition. 
\end{proof}
\noindent
The previous result implies that the negative real $-1$ belongs to the approximate point spectrum of $\mathcal{L}_1$. Based on the previous analysis, it is legitimate to introduce the following functional (informally) defined on the set of probability measure on $\mathbb{R}^d$: for all $\mu \in \mathcal{M}_1(\mathbb{R}^d)$, 
\begin{align}\label{eq:def_new_functional}
U_1(\mu) = \underset{f\in \mathcal{S}(\mathbb{R}^d) \setminus \{0\},\, \int_{\mathbb{R}^d} f(x) \mu(dx) = 0}{\sup} \dfrac{\|f\|_{L^2(\mu)}}{\|\mathcal{L}_1(f)\|_{L^2(\mu)}}. 
\end{align}
First, the following technical lemma holds true.

\begin{lem}\label{lem:U_equal_1}
Let $d \geq 1$ be an integer, let $\mu_1^{\operatorname{rot}}$ be the standard Cauchy probability measure on $\mathbb{R}^d$, let $\mathcal{L}_1$ be the operator given by \eqref{eq:smart_decomposition} and let $U_1$ be the functional defined by \eqref{eq:def_new_functional}. Then, 
\begin{align}
U_1(\mu_1^{\operatorname{rot}}) = 1. 
\end{align}
\end{lem}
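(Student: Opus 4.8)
The plan is to establish the two matching bounds $U_1(\mu_1^{\operatorname{rot}}) \le 1$ and $U_1(\mu_1^{\operatorname{rot}}) \ge 1$, the content being essentially assembled from Proposition \ref{prop:Poinc_type_Cauchy} and Lemma \ref{lem:approximate_point_spectrum}.

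For the upper bound, I would fix an arbitrary $f \in \mathcal{S}(\bbr^d) \setminus \{0\}$ with $\int_{\bbr^d} f(x) \mu_1^{\operatorname{rot}}(dx) = 0$; since $\mu_1^{\operatorname{rot}}$ has full support, $\|f\|_{L^2(\mu_1^{\operatorname{rot}})} > 0$. Combining the Poincaré-type inequality of Proposition \ref{prop:Poinc_type_Cauchy} (applicable to $\mu_1^{\operatorname{rot}}$, a non-degenerate symmetric $1$-stable probability measure), the integration-by-parts identity \eqref{eq:ipp} — which yields $\mathcal{E}_{\nu_1^{\operatorname{rot}}, \mu_1^{\operatorname{rot}}}(f,f) = \langle (-\mathcal{L}_1)(f) ; f \rangle_{L^2(\mu_1^{\operatorname{rot}})}$ because $\mathcal{S}(\bbr^d)$ lies in the relevant domains — and the Cauchy-Schwarz inequality on $L^2(\mu_1^{\operatorname{rot}})$, I obtain
\begin{align*}
\|f\|_{L^2(\mu_1^{\operatorname{rot}})}^2 \le \mathcal{E}_{\nu_1^{\operatorname{rot}}, \mu_1^{\operatorname{rot}}}(f,f) = \langle (-\mathcal{L}_1)(f) ; f \rangle_{L^2(\mu_1^{\operatorname{rot}})} \le \|\mathcal{L}_1(f)\|_{L^2(\mu_1^{\operatorname{rot}})}\, \|f\|_{L^2(\mu_1^{\operatorname{rot}})}.
\end{align*}
In particular $\mathcal{L}_1(f) \ne 0$ (else $f = 0$), so $\|\mathcal{L}_1(f)\|_{L^2(\mu_1^{\operatorname{rot}})} > 0$ and, after division, $\|f\|_{L^2(\mu_1^{\operatorname{rot}})} \le \|\mathcal{L}_1(f)\|_{L^2(\mu_1^{\operatorname{rot}})}$; if instead $\|\mathcal{L}_1(f)\|_{L^2(\mu_1^{\operatorname{rot}})} = +\infty$ the corresponding ratio is $0$. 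Taking the supremum over admissible $f$ gives $U_1(\mu_1^{\operatorname{rot}}) \le 1$.

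For the lower bound, I would test the functional on the family $g_{R,j}$, $R>0$, with $j \in \{1,\dots,d\}$ fixed. Each $g_{R,j} \in \mathcal{S}(\bbr^d)$ is nonzero, is odd (so $\int_{\bbr^d} g_{R,j}(x)\mu_1^{\operatorname{rot}}(dx) = 0$ by symmetry of $\mu_1^{\operatorname{rot}}$), and satisfies $\|\mathcal{L}_1(g_{R,j})\|_{L^2(\mu_1^{\operatorname{rot}})} < +\infty$ for each $R$, as seen in Lemma \ref{lem:approximate_point_spectrum}. That lemma gives $\varepsilon_R := \|\mathcal{L}_1(g_{R,j}) + g_{R,j}\|_{L^2(\mu_1^{\operatorname{rot}})}/\|g_{R,j}\|_{L^2(\mu_1^{\operatorname{rot}})} \to 0$ as $R \to +\infty$, and the reverse triangle inequality in $L^2(\mu_1^{\operatorname{rot}})$ yields $(1-\varepsilon_R)\|g_{R,j}\|_{L^2(\mu_1^{\operatorname{rot}})} \le \|\mathcal{L}_1(g_{R,j})\|_{L^2(\mu_1^{\operatorname{rot}})} \le (1+\varepsilon_R)\|g_{R,j}\|_{L^2(\mu_1^{\operatorname{rot}})}$, so for $R$ large enough that $\varepsilon_R < 1$,
\begin{align*}
\frac{1}{1+\varepsilon_R} \le \frac{\|g_{R,j}\|_{L^2(\mu_1^{\operatorname{rot}})}}{\|\mathcal{L}_1(g_{R,j})\|_{L^2(\mu_1^{\operatorname{rot}})}} \le \frac{1}{1-\varepsilon_R}.
\end{align*}
Letting $R \to +\infty$ shows this admissible ratio converges to $1$, hence $U_1(\mu_1^{\operatorname{rot}}) \ge 1$. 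Together with the previous paragraph this proves $U_1(\mu_1^{\operatorname{rot}}) = 1$.

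I do not expect a genuine obstacle in this argument; the only points needing a little care are the inclusion $\mathcal{S}(\bbr^d) \subset \mathcal{D}(\mathcal{L}_1)$ required to invoke \eqref{eq:ipp} (already used implicitly in the proof of Proposition \ref{prop:aps_ou_stable}), the handling of the degenerate possibilities $\mathcal{L}_1(f) = 0$ and $\|\mathcal{L}_1(f)\|_{L^2(\mu_1^{\operatorname{rot}})} = +\infty$ inside the supremum, and the mean-zero property of the test functions $g_{R,j}$.
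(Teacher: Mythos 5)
Your proof is correct and follows essentially the same route as the paper's: the Poincar\'e-type inequality plus \eqref{eq:ipp} and Cauchy--Schwarz for the upper bound, and the approximate eigenvector family $g_{R,j}$ from Lemma \ref{lem:approximate_point_spectrum} for the lower bound. The only cosmetic difference is that you recover $\|g_{R,j}\|_{L^2(\mu_1^{\operatorname{rot}})}/\|\mathcal{L}_1(g_{R,j})\|_{L^2(\mu_1^{\operatorname{rot}})}\to 1$ from the statement of Lemma \ref{lem:approximate_point_spectrum} via the reverse triangle inequality, whereas the paper simply cites the fact that this ratio limit is established inside the proof of that lemma; both are valid.
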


\begin{proof}
Thanks to Poincar\'e's inequality for the standard Cauchy probability measure, for all $f \in D(\mathcal{L}_1) \setminus \{0\}$ such that $\int_{\mathbb{R}^d} f(x) \mu_1^{\operatorname{rot}}(dx) = 0$, 
\begin{align*}
\int_{\mathbb{R}^d} f(x)^2 \mu_1^{\operatorname{rot}}(dx) \leq \mathcal{E}_{\nu_1^{\operatorname{rot}}, \mu_1^{\operatorname{rot}}}(f,f)  = \langle f ; (- \mathcal{L}_1)(f) \rangle_{L^2(\mu_1^{\operatorname{rot}})}.
\end{align*}
Now, by Cauchy-Schwarz inequality,
\begin{align*}
\|f\|^2_{L^2(\mu_1^{\operatorname{rot}})}  \leq \|f\|_{L^2(\mu_1^{\operatorname{rot}})} \|\mathcal{L}_1\left(f\right)\|_{L^2(\mu_1^{\operatorname{rot}})},
\end{align*} 
which implies that $U_1(\mu_1^{\operatorname{rot}}) \leq 1$. Moreover, since, for all $R>0$ and all $j \in \{ 1, \dots, d\}$, $g_{R,j} \in \mathcal{S}(\mathbb{R}^d)$ with $\int_{\mathbb{R}^d} g_{R,j}(x) \mu^{\operatorname{rot}}_1(dx) = 0$,
\begin{align*}
U_1(\mu_1^{\operatorname{rot}}) \geq  \dfrac{\| g_{R,j} \|_{L^2(\mu_1^{\operatorname{rot}})}}{\|\mathcal{L}_1(g_{R,j})\|_{L^2(\mu_1^{\operatorname{rot}})}}. 
\end{align*}
But, in the proof of Lemma \ref{prop:approximate_point_spectrum}, it is proved that 
\begin{align*}
\underset{R \longrightarrow +\infty}{\lim} \dfrac{\| g_{R,j} \|_{L^2(\mu_1^{\operatorname{rot}})}}{\|\mathcal{L}_1(g_{R,j})\|_{L^2(\mu_1^{\operatorname{rot}})}} = 1. 
\end{align*}
Thus, $U_1(\mu_1^{\operatorname{rot}}) \geq 1$. This concludes the proof of the lemma. 
\end{proof}
\noindent
To end this subsection, let us compute the following limit (in dimension one):
\begin{align*}
\underset{R \longrightarrow +\infty}{\lim} \left(\|g_{R}\|^2_{L^2(\mu_1^{\operatorname{rot}})} - \| \mathcal{L}_1(g_{R})\|^2_{L^2(\mu_1^{\operatorname{rot}})}\right). 
\end{align*}

\begin{prop}\label{prop:diff_limit_carre_mehler_1D}
Let $\mu_1^{\operatorname{rot}}$ be the standard Cauchy probability measure on $\mathbb{R}$, let $\mathcal{L}_1$ be the operator given by \eqref{eq:smart_decomposition} and, for $R>0$, let $g_R$ be the function defined by \eqref{eq:eigen_approximate_eigenvector}.~Then, 
\begin{align*}
\underset{R \longrightarrow +\infty}{\lim}  \left(\|g_{R}\|^2_{L^2(\mu_1^{\operatorname{rot}})} - \| \mathcal{L}_1(g_{R})\|^2_{L^2(\mu_1^{\operatorname{rot}})}\right) = - \frac{4}{\pi}.
\end{align*}
\end{prop}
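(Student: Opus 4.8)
The plan is to reduce the statement to two limits, one of which is exactly Proposition~\ref{prop:first_difference_one_dimension} and the other of which is the (strictly stronger than Lemma~\ref{lem:approximate_point_spectrum}, but still accessible) assertion that $\mathcal{L}_1(g_R)+g_R\to0$ in $L^2(\mu_1^{\operatorname{rot}})$. Since $\mathcal{L}_1$ is self-adjoint on $L^2(\mu_1^{\operatorname{rot}})$, setting $e_R:=\mathcal{L}_1(g_R)+g_R$ one has
\begin{align*}
\|g_R\|^2_{L^2(\mu_1^{\operatorname{rot}})}-\|\mathcal{L}_1(g_R)\|^2_{L^2(\mu_1^{\operatorname{rot}})}=2\langle g_R;e_R\rangle_{L^2(\mu_1^{\operatorname{rot}})}-\|e_R\|^2_{L^2(\mu_1^{\operatorname{rot}})}.
\end{align*}
By \eqref{eq:ipp} applied with $f_1=f_2=g_R\in\mathcal{S}(\bbr)$, together with the fact that in dimension one the vectorial form $\mathcal{E}^{\nu_1^{\operatorname{rot}}}$ of \eqref{eq:1_stable_rot_invariant_vectorial_form} coincides with the scalar form $\mathcal{E}_{\nu_1^{\operatorname{rot}},\mu_1^{\operatorname{rot}}}$ of \eqref{eq:form_standard_cauchy_pm} on scalar test functions, we get $2\langle g_R;e_R\rangle_{L^2(\mu_1^{\operatorname{rot}})}=-2\big(\mathcal{E}^{\nu_1^{\operatorname{rot}}}(g_R,g_R)-\|g_R\|^2_{L^2(\mu_1^{\operatorname{rot}})}\big)$, which tends to $-4/\pi$ by Proposition~\ref{prop:first_difference_one_dimension}. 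Thus the whole proof reduces to showing $\|e_R\|_{L^2(\mu_1^{\operatorname{rot}})}\to0$.

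For that I would first rewrite $e_R$ algebraically using the explicit expression of $\mathcal{L}_1$ from Proposition~\ref{lem:dual_Cauchy_SG} with $d=1$. Putting $q_R:=p_1^{\operatorname{rot}}g_R$ and using $g_R=(1+x^2)q_R/c_1$ and $1/p_1^{\operatorname{rot}}=(1+x^2)/c_1$, one obtains, for all $x\in\bbr$,
\begin{align*}
e_R(x)=\mathcal{A}_1^{\operatorname{rot}}(g_R)(x)+\frac1{c_1}\Big(2q_R(x)+(1+x^2)\,\mathcal{A}_1^{\operatorname{rot}}(q_R)(x)\Big).
\end{align*}
Now decompose $q_R=g\,p_1^{\operatorname{rot}}+\delta_R$, with $g(x)=x$ and $\delta_R(x):=(g_R(x)-x)p_1^{\operatorname{rot}}(x)=c_1x(e^{-x^2/R^2}-1)/(1+x^2)$. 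The key observation is that the operator $f\mapsto2f+(1+x^2)\mathcal{A}_1^{\operatorname{rot}}(f)$ annihilates $g\,p_1^{\operatorname{rot}}$: Lemma~\ref{lem:action_square_root_laplacian} already gives $2p_1^{\operatorname{rot}}+(1+x^2)\mathcal{A}_1^{\operatorname{rot}}(p_1^{\operatorname{rot}})\equiv c_1$, while a short Fourier computation (using $\mathcal{F}(1/(1+x^2))(\xi)=\pi e^{-|\xi|}$ and the Fourier multiplier $-|\xi|$ of $\mathcal{A}_1^{\operatorname{rot}}$) yields $\mathcal{A}_1^{\operatorname{rot}}(g\,p_1^{\operatorname{rot}})(x)=-2c_1x/(1+x^2)^2$, i.e. $(1+x^2)\mathcal{A}_1^{\operatorname{rot}}(g\,p_1^{\operatorname{rot}})=-2\,g\,p_1^{\operatorname{rot}}$. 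Hence $2q_R+(1+x^2)\mathcal{A}_1^{\operatorname{rot}}(q_R)=2\delta_R+(1+x^2)\mathcal{A}_1^{\operatorname{rot}}(\delta_R)$ and
\begin{align*}
e_R(x)=\mathcal{A}_1^{\operatorname{rot}}(g_R)(x)+\frac1{c_1}\Big(2\delta_R(x)+(1+x^2)\,\mathcal{A}_1^{\operatorname{rot}}(\delta_R)(x)\Big),
\end{align*}
consistently with the pointwise limit $e_R(x)\to0$ already recorded in Lemma~\ref{lem:pointwise_eigenfunction}.

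It then remains to estimate the two summands. The term $\mathcal{A}_1^{\operatorname{rot}}(g_R)$ is handled as in the proof of Lemma~\ref{lem:approximate_point_spectrum}: by $1$-homogeneity of $\mathcal{A}_1^{\operatorname{rot}}$ one has $\mathcal{A}_1^{\operatorname{rot}}(g_R)(x)=\mathcal{A}_1^{\operatorname{rot}}(g_1)(x/R)$, which is bounded and tends pointwise to $\mathcal{A}_1^{\operatorname{rot}}(g_1)(0)=0$ (as $g_1$ is odd), so $\|\mathcal{A}_1^{\operatorname{rot}}(g_R)\|_{L^2(\mu_1^{\operatorname{rot}})}\to0$ by dominated convergence. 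For the second summand,
\begin{align*}
\big\|2\delta_R+(1+x^2)\mathcal{A}_1^{\operatorname{rot}}(\delta_R)\big\|^2_{L^2(\mu_1^{\operatorname{rot}})}\le8c_1\int_\bbr\frac{\delta_R(x)^2}{1+x^2}\,dx+2c_1\int_\bbr(1+x^2)\big|\mathcal{A}_1^{\operatorname{rot}}(\delta_R)(x)\big|^2dx.
\end{align*}
The first integral tends to $0$ by dominated convergence (integrable envelope $c_1^2x^2/(1+x^2)^3$, pointwise limit $0$). For the second, Plancherel's identity gives $\int_\bbr(1+x^2)|\mathcal{A}_1^{\operatorname{rot}}(\delta_R)|^2=\|\delta_R'\|^2_{L^2(\bbr)}+\|x\,\mathcal{A}_1^{\operatorname{rot}}(\delta_R)\|^2_{L^2(\bbr)}$, and differentiating the symbol $-|\xi|$ one gets $\|x\,\mathcal{A}_1^{\operatorname{rot}}(\delta_R)\|_{L^2(\bbr)}\le\|\delta_R\|_{L^2(\bbr)}+\|(x\delta_R)'\|_{L^2(\bbr)}$; one then checks, by explicit differentiation and either a dominated-convergence argument or the substitution $x=Ry$, that $\|\delta_R\|_{L^2(\bbr)}$, $\|\delta_R'\|_{L^2(\bbr)}$ and $\|(x\delta_R)'\|_{L^2(\bbr)}$ all tend to $0$. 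Combining these bounds gives $\|e_R\|_{L^2(\mu_1^{\operatorname{rot}})}\to0$, which finishes the proof.

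The step I expect to be the main obstacle is the simplification of $e_R$: the two contributions $2q_R/c_1$ and $(1+x^2)\mathcal{A}_1^{\operatorname{rot}}(q_R)/c_1$ are each of order $R^{1/2}$ in $L^2(\mu_1^{\operatorname{rot}})$ and cancel only in the limit, so one must expose this cancellation structurally — through the identity $(1+x^2)\mathcal{A}_1^{\operatorname{rot}}(g\,p_1^{\operatorname{rot}})=-2\,g\,p_1^{\operatorname{rot}}$ — rather than by bounding the two terms separately. Once $e_R$ has been reduced to $\mathcal{A}_1^{\operatorname{rot}}(g_R)+c_1^{-1}\big(2\delta_R+(1+x^2)\mathcal{A}_1^{\operatorname{rot}}(\delta_R)\big)$, the remaining weighted Sobolev-type estimates on $\delta_R$ are elementary; the only mild subtlety is that the weight $(1+x^2)$ forces control of $x\,\mathcal{A}_1^{\operatorname{rot}}(\delta_R)$, and not merely of $\mathcal{A}_1^{\operatorname{rot}}(\delta_R)$.
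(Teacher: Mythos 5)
Your proof is correct and takes a genuinely different route from the paper's. The paper expands $\|\mathcal{L}_1(g_R)\|^2_{L^2(\mu_1^{\operatorname{rot}})}$ into three pieces via \eqref{eq:smart_decomposition} and laboriously computes each limit through the terms $A_R,\ldots,F_R$ and the Dawson-function identities of Lemmas \ref{lem:one_dimension_explicit_comp}--\ref{lem:technical_dawson}. You instead introduce $e_R=\mathcal{L}_1(g_R)+g_R$ and use the algebraic identity $\|g_R\|^2-\|\mathcal{L}_1(g_R)\|^2=2\langle g_R;e_R\rangle-\|e_R\|^2$: the first term reduces, via \eqref{eq:ipp}, exactly to Proposition \ref{prop:first_difference_one_dimension}, so the entire proof collapses to the new and strictly stronger assertion $\|e_R\|_{L^2(\mu_1^{\operatorname{rot}})}\to0$ (Lemma \ref{lem:approximate_point_spectrum} only gives $\|e_R\|/\|g_R\|\to0$, and $\|g_R\|\to\infty$). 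The crucial structural ingredient, absent from the paper, is the annihilation identity $(1+x^2)\mathcal{A}_1^{\operatorname{rot}}(g\,p_1^{\operatorname{rot}})=-2\,g\,p_1^{\operatorname{rot}}$, which you establish by the Fourier computation $\mathcal{A}_1^{\operatorname{rot}}(g\,p_1^{\operatorname{rot}})(x)=-2c_1x/(1+x^2)^2$; together with Lemma \ref{lem:action_square_root_laplacian} this exposes the exact cancellation between the two $O(\sqrt R)$-contributions $2q_R/c_1$ and $(1+x^2)\mathcal{A}_1^{\operatorname{rot}}(q_R)/c_1$, which is what makes the argument work. Your remaining weighted estimates on $\delta_R$ are correct; the single delicate point is that $\delta_R\notin L^1(\bbr)$ (it decays only like $1/|x|$), so $\widehat{\delta_R}$ has a jump at $\xi=0$ and the distributional derivative $\widehat{\delta_R}'$ has a Dirac mass there; this is harmless since the factor $|\xi|$ kills the jump and $-|\xi|\widehat{\delta_R}(\xi)$ is in fact $C^1$, but it deserves an explicit remark (or one could bypass it entirely by dominating $(1+x^2)|\mathcal{A}_1^{\operatorname{rot}}(\delta_R)|^2$ directly via the uniform pointwise bound of Lemma \ref{lem:one_dimension_explicit_comp}, which gives $|\mathcal{A}_1^{\operatorname{rot}}(q_R)(x)|\le C/(1+|x|)^2$). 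Your route is cleaner conceptually and avoids the explicit Dawson-integral evaluations of the paper, at the cost of the extra Fourier bookkeeping on $\delta_R$; the paper's direct computation is more mechanical but self-contained.
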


\begin{proof}
Thanks to the proof of Lemma \ref{prop:approximate_point_spectrum}, for all $R>0$, 
\begin{align*}
\| \mathcal{L}_1(g_{R}) \|^2_{L^2(\mu_1^{\operatorname{rot}})} = & = 4 \| \mathcal{A}^{\operatorname{rot}}_1(g_{R}) \|^2_{L^2(\mu_1^{\operatorname{rot}})} + 4 \langle \mathcal{A}_1^{\operatorname{rot}}(g_{R})  ; \frac{1}{p_1^{\operatorname{rot}}}\Gamma^{\operatorname{rot}}_1\left( p_1^{\operatorname{rot}}; g_{R} \right) \rangle_{L^2(\mu_1^{\operatorname{rot}})} \\
&\quad\quad + \left\| \frac{1}{p_1^{\operatorname{rot}}}\Gamma^{\operatorname{rot}}_1\left( p_1^{\operatorname{rot}}; g_{R} \right)  \right\|^2_{L^2(\mu_1^{\operatorname{rot}})}. 
\end{align*}
Moreover, still from the proof of Lemma \ref{prop:approximate_point_spectrum},
\begin{align*}
 \underset{R\longrightarrow +\infty}{\lim} \| \mathcal{A}^{\operatorname{rot}}_1(g_{R}) \|^2_{L^2(\mu_1^{\operatorname{rot}})} &  = 0 , \\
 \underset{R\longrightarrow +\infty}{\lim} \left\langle \mathcal{A}_1^{\operatorname{rot}}(g_{R})  ; \frac{1}{p_1^{\operatorname{rot}}}\Gamma^{\operatorname{rot}}_1\left( p_1^{\operatorname{rot}}; g_{R} \right) \right\rangle_{L^2(\mu_1^{\operatorname{rot}})}  & = \frac{1}{(2\pi)^{2}} \int_{\mathbb{R}^{2}} \mathcal{F}(g_{1})(\omega) |\omega| \mathcal{F}(g_{1})(\omega - \xi_2) \nonumber \\
&\quad\quad \times \left( - |\omega| + |\omega - \xi_2| + |\xi_2|\right) d\xi_2d\omega. 
\end{align*}
Then, let us deal first with the following limit:
\begin{align*}
\underset{R \longrightarrow +\infty}{\lim} \left(\|g_{R}\|^2_{L^2(\mu_1^{\operatorname{rot}})} - \left\| \frac{1}{p_1^{\operatorname{rot}}}\Gamma^{\operatorname{rot}}_1\left( p_1^{\operatorname{rot}}; g_{R} \right)  \right\|^2_{L^2(\mu_1^{\operatorname{rot}})} \right).
\end{align*}
Thanks to \eqref{eq:definition_squared_field_operator}, 
\begin{align*}
\Gamma_1^{\operatorname{rot}}(f,g)(x) = \int_{\mathbb{R}^d} \nu_1^{\operatorname{rot}}(du) \Delta_u(f)(x) \Delta_u(g)(x) = \mathcal{A}_{1}^{\operatorname{rot}}(gf)(x) - g(x)\mathcal{A}_1^{\operatorname{rot}}(f)(x)- f(x)\mathcal{A}_1^{\operatorname{rot}}(g)(x). 
\end{align*} 
Thus, for all $x \in \mathbb{R}$ and all $R>0$, 
\begin{align*}
\left| \Gamma_1^{\operatorname{rot}}(p_1^{\operatorname{rot}},g_R)(x) \right|^2 & = \left( \mathcal{A}_{1}^{\operatorname{rot}}(g_Rp_1^{\operatorname{rot}})(x) - g_R(x)\mathcal{A}_1^{\operatorname{rot}}(p_1^{\operatorname{rot}})(x)- p_1^{\operatorname{rot}}(x)\mathcal{A}_1^{\operatorname{rot}}(g_R)(x)\right)^2 , \\
& = \mathcal{A}_{1}^{\operatorname{rot}}(g_Rp_1^{\operatorname{rot}})(x)^2 + \left(g_R(x)\mathcal{A}_1^{\operatorname{rot}}(p_1^{\operatorname{rot}})(x)\right)^2 + \left(p_1^{\operatorname{rot}}(x)\mathcal{A}_1^{\operatorname{rot}}(g_R)(x)\right)^2 \\
&\quad\quad + 2 p_1^{\operatorname{rot}}(x)\mathcal{A}_1^{\operatorname{rot}}(g_R)(x)g_R(x)\mathcal{A}_1^{\operatorname{rot}}(p_1^{\operatorname{rot}})(x) \\
&\quad\quad - 2p_1^{\operatorname{rot}}(x)\mathcal{A}_1^{\operatorname{rot}}(g_R)(x)\mathcal{A}_{1}^{\operatorname{rot}}(g_Rp_1^{\operatorname{rot}})(x) \\
&\quad\quad - 2g_R(x)\mathcal{A}_1^{\operatorname{rot}}(p_1^{\operatorname{rot}})(x)\mathcal{A}_{1}^{\operatorname{rot}}(g_Rp_1^{\operatorname{rot}})(x). 
\end{align*}
Then, for all $R>0$, 
\begin{align*}
\left\| \frac{1}{p_1^{\operatorname{rot}}}\Gamma^{\operatorname{rot}}_1\left( p_1^{\operatorname{rot}}; g_{R} \right)  \right\|^2_{L^2(\mu_1^{\operatorname{rot}})} & = \int_{\mathbb{R}} \frac{1}{p_1^{\operatorname{rot}}(x)} \left| \Gamma_1^{\operatorname{rot}}(p_1^{\operatorname{rot}},g_R)(x) \right|^2 dx , \\
& = \int_{\mathbb{R}} \frac{1}{p_1^{\operatorname{rot}}(x)} \mathcal{A}_{1}^{\operatorname{rot}}(g_Rp_1^{\operatorname{rot}})(x)^2 dx \\
& \quad\quad +  \int_{\mathbb{R}} \frac{1}{p_1^{\operatorname{rot}}(x)} \left(g_R(x)\mathcal{A}_1^{\operatorname{rot}}(p_1^{\operatorname{rot}})(x)\right)^2 dx \\
& \quad\quad + \int_{\mathbb{R}} p_1^{\operatorname{rot}}(x)\left(\mathcal{A}_1^{\operatorname{rot}}(g_R)(x)\right)^2 dx \\
&\quad\quad + 2 \int_{\mathbb{R}} \mathcal{A}_1^{\operatorname{rot}}(g_R)(x)g_R(x)\mathcal{A}_1^{\operatorname{rot}}(p_1^{\operatorname{rot}})(x) dx \\
&\quad\quad - 2 \int_{\mathbb{R}} \mathcal{A}_1^{\operatorname{rot}}(g_R)(x)\mathcal{A}_{1}^{\operatorname{rot}}(g_Rp_1^{\operatorname{rot}})(x) dx \\
&\quad\quad -2 \int_{\mathbb{R}} \dfrac{1}{p_1^{\operatorname{rot}}(x)} g_R(x)\mathcal{A}_1^{\operatorname{rot}}(p_1^{\operatorname{rot}})(x)\mathcal{A}_{1}^{\operatorname{rot}}(g_Rp_1^{\operatorname{rot}})(x) dx. 
\end{align*}
Now, thanks to Lemma \ref{lem:action_square_root_laplacian}, for all $R>0$, 
\begin{align*}
\left\| \frac{1}{p_1^{\operatorname{rot}}}\Gamma^{\operatorname{rot}}_1\left( p_1^{\operatorname{rot}}; g_{R} \right)  \right\|^2_{L^2(\mu_1^{\operatorname{rot}})} & = \int_{\mathbb{R}} \frac{1}{p_1^{\operatorname{rot}}(x)} \left| \Gamma_1^{\operatorname{rot}}(p_1^{\operatorname{rot}},g_R)(x) \right|^2 dx , \\
& = \int_{\mathbb{R}} \frac{1}{p_1^{\operatorname{rot}}(x)} \mathcal{A}_{1}^{\operatorname{rot}}(g_Rp_1^{\operatorname{rot}})(x)^2 dx +  \int_{\mathbb{R}} p_1^{\operatorname{rot}}(x) \left(g_R(x) \dfrac{|x|^2-1}{|x|^2+1} \right)^2 dx \\
& \quad\quad + \int_{\mathbb{R}} p_1^{\operatorname{rot}}(x)\left(\mathcal{A}_1^{\operatorname{rot}}(g_R)(x)\right)^2 dx \\
&\quad\quad + 2 \int_{\mathbb{R}} \mathcal{A}_1^{\operatorname{rot}}(g_R)(x)g_R(x)p_1^{\operatorname{rot}}(x) \dfrac{x^2-1}{1+x^2} dx \\
&\quad\quad - 2 \int_{\mathbb{R}} \mathcal{A}_1^{\operatorname{rot}}(g_R)(x)\mathcal{A}_{1}^{\operatorname{rot}}(g_Rp_1^{\operatorname{rot}})(x) dx \\
&\quad\quad -2 \int_{\mathbb{R}} g_R(x)\dfrac{x^2-1}{1+x^2}\mathcal{A}_{1}^{\operatorname{rot}}(g_Rp_1^{\operatorname{rot}})(x) dx. 
\end{align*}
Clearly, the term $A_R$ defined, for all $R>0$, by
\begin{align*}
A_R :=  \int_{\mathbb{R}} g_R(x)^2 \left(\dfrac{|x|^2-1}{|x|^2+1} \right)^2 p_1^{\operatorname{rot}}(x) dx , 
\end{align*}
diverges as $R$ tends to $+\infty$. Thus, by the Lebesgue dominated convergence theorem
\begin{align}\label{eq:limit_diff_AR}
\int_{\mathbb{R}} g_R(x)^2 p_1^{\operatorname{rot}}(x) dx & -\int_{\mathbb{R}} g_R(x)^2 \left(\dfrac{|x|^2-1}{|x|^2+1} \right)^2 p_1^{\operatorname{rot}}(x) dx \nonumber \\
& = 4 \int_{\mathbb{R}} g_R(x)^2  \dfrac{x^2}{(1+x^2)^2} p_1^{\operatorname{rot}}(x) dx \nonumber \\
&\longrightarrow 4 \int_{\mathbb{R}} \dfrac{x^4}{(1+x^2)^2} p_1^{\operatorname{rot}}(x) dx = \frac{3}{2},
\end{align}
as $R$ tends to $+\infty$. Next, let $B_R$ be the quantity defined, for all $R>0$, by
\begin{align*}
B_R : =  \int_{\mathbb{R}} p_1^{\operatorname{rot}}(x)\left(\mathcal{A}_1^{\operatorname{rot}}(g_R)(x)\right)^2 dx.
\end{align*}
But, by antisymmetry, for all $x \in \mathbb{R}$, 
\begin{align*}
\mathcal{A}_1^{\operatorname{rot}}(g_R)(x) & = \frac{1}{2\pi} \int_{\mathbb{R}} \mathcal{F}(g_R)(\xi) e^{i x \xi} \left( - |\xi|\right) d\xi  =  \frac{R^2}{2\pi} \int_{\mathbb{R}} \mathcal{F}(g_1)(R\xi) e^{i x \xi} \left( - |\xi|\right) d\xi , \\
& = \frac{1}{2\pi} \int_{\mathbb{R}} \mathcal{F}(g_1)(\xi) e^{i \frac{x}{R} \xi} \left( - |\xi|\right) d\xi , \\
& \longrightarrow \frac{1}{2\pi} \int_{\mathbb{R}} \mathcal{F}(g_1)(\xi) \left( - |\xi|\right) d\xi = 0 ,
\end{align*}
as $R$ tends to $+\infty$. Thus, by the Lebesgue dominated convergence theorem, 
$B_R \longrightarrow 0$, as $R$ tends to $+\infty$. Now, let us consider, for all $R>0$, 
\begin{align*}
C_R & : = \int_{\mathbb{R}} \mathcal{A}_1^{\operatorname{rot}}(g_R)(x)g_R(x)p_1^{\operatorname{rot}}(x) \dfrac{x^2-1}{1+x^2} dx , \\
& = \int_{\mathbb{R}} \mathcal{A}_1^{\operatorname{rot}}(g_1)\left(\frac{x}{R}\right)g_R(x)p_1^{\operatorname{rot}}(x) \dfrac{x^2-1}{1+x^2} dx , \\
& =  \int_{\mathbb{R}} \mathcal{A}_1^{\operatorname{rot}}(g_1)\left(y\right) y \exp\left( - y^2 \right) p_1^{\operatorname{rot}}(Ry) \dfrac{R^2y^2-1}{1+R^2y^2}  R^{2} dy.
\end{align*}
From the previous representation, it is clear that, 
\begin{align}\label{eq:limit_CR}
C_R \longrightarrow \int_{\mathbb{R}} \mathcal{A}_1^{\operatorname{rot}}(g_1)\left(y\right)\exp\left( - y^2 \right) \frac{dy}{\pi y} = \frac{2}{\pi^{\frac{3}{2}}} \int_{\mathbb{R}} \left(-F(y) -y +2 y^2F(y)\right) \exp(-y^2) \frac{dy}{y} , 
\end{align}
as $R$ tends to $+\infty$. Then, it remains to deal with: for all $R>0$, 
\begin{align*}
& D_R := \int_{\mathbb{R}} \frac{1}{p_1^{\operatorname{rot}}(x)} \mathcal{A}_{1}^{\operatorname{rot}}(g_Rp_1^{\operatorname{rot}})(x)^2 dx, \quad E_R := \int_{\mathbb{R}} \mathcal{A}_1^{\operatorname{rot}}(g_R)(x)\mathcal{A}_{1}^{\operatorname{rot}}(g_Rp_1^{\operatorname{rot}})(x) dx, \\
& F_R := \int_{\mathbb{R}} g_R(x)\dfrac{x^2-1}{1+x^2}\mathcal{A}_{1}^{\operatorname{rot}}(g_Rp_1^{\operatorname{rot}})(x) dx.
\end{align*}
Let us continue with $E_R$. Then, for all $R>0$, 
\begin{align*}
E_R = \int_{\mathbb{R}} \mathcal{A}_1^{\operatorname{rot}}(g_1)\left(\frac{x}{R}\right)\mathcal{A}_{1}^{\operatorname{rot}}(g_Rp_1^{\operatorname{rot}})(x) dx.
\end{align*}
Now, for all $x \in \mathbb{R}$, 
\begin{align*}
\mathcal{A}_1^{\operatorname{rot}}(g_R p_1^{\operatorname{rot}})(x) & = \frac{1}{(2\pi)^2} \int_{\mathbb{R}^2} \mathcal{F}(p^{\operatorname{rot}}_1)(\xi) R^2 \mathcal{F}(g_1)(R\zeta) e^{i x (\xi +\zeta)} (- \left| \xi +\zeta \right| ) d\xi d\zeta, \\
& = \frac{R}{(2\pi)^2} \int_{\mathbb{R}^2} \mathcal{F}(p^{\operatorname{rot}}_1)(\xi)  \mathcal{F}(g_1)(\zeta) e^{i x (\xi +\frac{\zeta}{R})} \left(- \left| \xi +\frac{\zeta}{R} \right| \right) d\xi d\zeta , \\
& = \frac{R}{(2\pi)^2} \int_{\mathbb{R}^2} \mathcal{F}(p^{\operatorname{rot}}_1)(\xi)  \mathcal{F}(g_1)(\zeta) e^{i x \xi} \bigg[  e^{i x\frac{\zeta}{R}} \left(- \left| \xi +\frac{\zeta}{R} \right| \right)  + |\xi| \bigg] d\xi d\zeta , \\
& \longrightarrow  - \frac{2x}{x^2+1} p_1^{\operatorname{rot}}(x),
\end{align*}
as $R$ tends to $+\infty$. Then, for all $x \in \mathbb{R}$, 
\begin{align*}
\underset{R \longrightarrow +\infty}{\lim} \mathcal{A}_1^{\operatorname{rot}}(g_1)\left(\frac{x}{R}\right)\mathcal{A}_{1}^{\operatorname{rot}}(g_Rp_1^{\operatorname{rot}})(x) = 0. 
\end{align*}
Moreover, thanks to Lemma \ref{lem:one_dimension_explicit_comp}, for all $x \in \mathbb{R}$ and all $R \geq 1$, 
\begin{align*}
\left| \mathcal{A}_1^{\operatorname{rot}}(g_1)\left(\frac{x}{R}\right)\mathcal{A}_{1}^{\operatorname{rot}}(g_Rp_1^{\operatorname{rot}})(x) \right| \leq \dfrac{C}{\left(1+ |x|\right)^2},
\end{align*}
for some positive constant $C$ which does not depend on $x$ neither on $R$. Thus, $E_R$ tends to $0$, as $R$ tends to $+\infty$. So, it remains to deal with $F_R$ and $D_R$. Let us start with $D_R$. But, the convergence of $D_R$ follows from a straightforward application of the Lebesgue dominated convergence theorem since, for all $x \in \mathbb{R}$ and all $R \geq 1$, 
\begin{align*}
\frac{1}{p_1^{\operatorname{rot}}(x)} \mathcal{A}_{1}^{\operatorname{rot}}(g_Rp_1^{\operatorname{rot}})(x)^2 \leq \dfrac{D}{(1+|x|^2)},
\end{align*}
for some positive constant $D$ which does not depend on $x$ neither on $R$. Thus, 
\begin{align}\label{eq:limit_DR}
D_R \longrightarrow  \int_{\mathbb{R}} \frac{4x^2}{(x^2+1)^2} p_1^{\operatorname{rot}}(x)dx = \frac{1}{2},
\end{align}
as $R$ tends to $+\infty$. Finally, to treat the term $F_R$, one needs to use the decomposition \eqref{eq:representation_1D_exact} of Lemma \ref{lem:one_dimension_explicit_comp}. Indeed, for all $x \in \mathbb{R} \setminus \{0\}$ and all $R>0$, 
\begin{align*}
\mathcal{A}_{1}^{\operatorname{rot}}(g_Rp_1^{\operatorname{rot}})(x) = I_R(x) + J_R(x), 
\end{align*} 
with, 
\begin{align}\label{def:IR}
I_R(x) = \frac{1}{x^2 (\pi)^{\frac{3}{2}}} \exp\left(\frac{1}{R^2}\right) \bigg[  \int_0^{+\infty} \xi G_R(\xi)\sin\left(x \xi\right)d\xi + 2\int_0^{+\infty} H_R(\xi) \sin\left(x \xi\right) d\xi \bigg],
\end{align}
\begin{align}\label{def:JR}
J_R(x) = \frac{1}{x^2 (\pi)^{\frac{3}{2}}} \exp\left(\frac{1}{R^2}\right)  \left( 2 F \left(\frac{x}{R}\right) - 2\left(\frac{x}{R}\right) + 4 \left(\frac{x}{R}\right)^2 F\left(\frac{x}{R}\right)\right).
\end{align}
Moreover, thanks to the proof of Lemma \ref{lem:one_dimension_explicit_comp}, there exists $C>0$ such that, for all $x \in \mathbb{R} $ and all $R \geq 1$, 
\begin{align*}
\left| I_R(x) \right| \leq \dfrac{C}{\left(1+ |x|\right)^3}, 
\end{align*}
and, for all $x \in \mathbb{R}$,  
\begin{align*}
\underset{R \longrightarrow +\infty}{\lim} I_R(x) = - \frac{2}{\pi} \dfrac{x}{\left(1+x^2\right)^2}. 
\end{align*}
Thus, by the Lebesgue dominated convergence theorem, 
\begin{align*}
F_{R,1} := \int_{\mathbb{R}} g_R(x)\dfrac{x^2-1}{1+x^2}I_R(x) dx \longrightarrow -\frac{2}{\pi} \int_{\mathbb{R}} \dfrac{x^2-1}{1+x^2} \dfrac{x^2}{\left(1+x^2\right)^2}dx = -\frac{1}{2} ,
\end{align*}
as $R$ tends to $+\infty$. To finish the study of convergence, let us analyse $F_{R,2}$ defined, for all $R>0$, by 
\begin{align*}
F_{R,2} : = \int_{\mathbb{R}} g_R(x)\dfrac{x^2-1}{1+x^2}J_R(x) dx.
\end{align*}
Changing variables since $R\geq 1>0$, 
\begin{align*}
F_{R,2}  & =\frac{2\exp\left(\frac{1}{R^2}\right)}{(\pi)^{\frac{3}{2}}}  \int_{\mathbb{R}} \exp\left(-y^2\right)\dfrac{R^2y^2-1}{1+R^2y^2}   \left( F \left(y\right) - y + 2 y^2 F\left(y\right)\right) \frac{dy}{y} , \\
& \longrightarrow \frac{2}{(\pi)^{\frac{3}{2}}}  \int_{\mathbb{R}} \exp\left(-y^2\right)\left( F \left(y\right) - y + 2 y^2 F\left(y\right)\right) \frac{dy}{y}.
\end{align*}
Then, 
\begin{align}\label{eq:limit_FR}
F_R \longrightarrow -\frac{1}{2} + \frac{2}{(\pi)^{\frac{3}{2}}}  \int_{\mathbb{R}} \exp\left(-y^2\right)\left( F \left(y\right) - y + 2 y^2 F\left(y\right)\right) \frac{dy}{y}. 
\end{align}
Combining \eqref{eq:limit_diff_AR}, \eqref{eq:limit_CR}, \eqref{eq:limit_DR} and \eqref{eq:limit_FR}, together with the fact that 
\begin{align*}
\int_{\mathbb{R}} F(y) \exp(-y^2) \frac{dy}{y} = \frac{\pi^{\frac{3}{2}}}{4}, 
\end{align*}
one gets,
\begin{align*}
\underset{R \longrightarrow +\infty}{\lim} \left(\|g_{R}\|^2_{L^2(\mu_1^{\operatorname{rot}})} - \left\| \frac{1}{p_1^{\operatorname{rot}}}\Gamma^{\operatorname{rot}}_1\left( p_1^{\operatorname{rot}}; g_{R} \right)  \right\|^2_{L^2(\mu_1^{\operatorname{rot}})} \right) = 2.
\end{align*}
Finally, 
\begin{align*}
 \underset{R\longrightarrow +\infty}{\lim} \left\langle \mathcal{A}_1^{\operatorname{rot}}(g_{R})  ; \frac{1}{p_1^{\operatorname{rot}}}\Gamma^{\operatorname{rot}}_1\left( p_1^{\operatorname{rot}}; g_{R} \right) \right\rangle_{L^2(\mu_1^{\operatorname{rot}})}  & = - \underset{R \longrightarrow +\infty}{\lim} C_R , \\
 & = \frac{2}{\pi^{\frac{3}{2}}} \int_{\mathbb{R}} \left(F(y) +y - 2 y^2F(y)\right) \exp(-y^2) \frac{dy}{y} , \\
 &= \frac{1}{\pi} + \frac{1}{2}. 
\end{align*}
This concludes the proof of the proposition. 
\end{proof}

\subsection{Stability estimates for non-degenerate symmetric Cauchy probability measures}\label{sec:SE_NDS_Cauchy}
\noindent
In this subsection, let us prove the stability estimate of Theorem \ref{thm:stability_result_sym_Cauchy_measure} and the quantitative approximation result of Theorem \ref{thm:quantitative_approximation_CauchyLs}, when the target measure is a non-degenerate symmetric $1$-stable probability measure on $\mathbb{R}^d$.\\
\\
\textit{Proof of Theorem \ref{thm:stability_result_sym_Cauchy_measure}}. \textit{Step 1}: First, let us prove that, for all $f\in \mathcal{S}(\mathbb{R}^d)$, 
\begin{align}\label{eq:Stein_identity}
\int_{\mathbb{R}^d} \left[ - \langle x ; \nabla(f)(x) \rangle + \frac{1}{2} \int_{\mathbb{R}^d} \langle \nabla(f)(x+u) - \nabla(f)(x-u) ; u\rangle \omega_X(\|u\|) \nu_1(du)   \right] \mu_X(dx) = 0. 
\end{align} 
By L\'evy-Khintchine theorem, for all $\xi \in \mathbb{R}^d$, 
\begin{align*}
\widehat{\mu_X}(\xi) = \exp\left( \int_{\mathbb{R}^d} \left(e^{i \langle u ; \xi \rangle}-1- i \langle u ; \xi \rangle\bbone_{\|u\|\leq 1}\right) \omega_X(\|u\|) \nu_1(du) \right). 
\end{align*}
At this point, let us perform a truncation argument in the L\'evy measure of $\mu_X$. Namely, for all $R > 1$, let $\mu_{X,R}$ be the probability measure on $\mathbb{R}^d$ which Fourier transform is given, for all $\xi \in \mathbb{R}^d$, by 
\begin{align*}
\widehat{\mu_{X,R}}(\xi) & = \exp \left( \int_{\mathbb{R}^d} \left(e^{i \langle u ; \xi \rangle}-1 - i\langle u ; \xi \rangle\bbone_{\|u\| \leq 1}\right)\bbone_{1/R\leq \|u\| \leq R} \omega_X(\|u\|) \nu_1(du)\right), \\
& =  \exp \left( \int_{\mathbb{R}^d} \left(e^{i \langle u ; \xi \rangle}-1 - i\langle u ; \xi \rangle\bbone_{\|u\| \leq 1}\right)  \omega_{X,R}(\|u\|) \nu_1(du)\right),
\end{align*}
where $\omega_{X,R} (r) =\bbone_{1/R\leq r \leq R} \omega_X(r)$, for all $r>0$. Thanks to symmetry, the L\'evy-Khintchine exponent of $\mu_{X,R}$ can be written: for all $\xi \in \mathbb{R}^d$, 
\begin{align}\label{eq:rep_2_LKExp}
\int_{\mathbb{R}^d} \left(e^{i \langle u ; \xi \rangle}-1- i \langle u ; \xi \rangle\bbone_{\|u\|\leq 1}\right) \omega_{X,R}(\|u\|) \nu_1(du) &  = \frac{1}{2} \int_{\mathbb{R}^d} \left(e^{i \langle u ; \xi \rangle} + e^{- i \langle u ; \xi \rangle} - 2\right) \nonumber \\
&\quad\quad \times \omega_{X,R}(\|u\|) \nu_1(du).
\end{align}
Now, by straightforward computations, for all $r \in (0,+\infty)$ and all $y \in \mathbb{S}^{d-1}$, 
\begin{align*}
\dfrac{\partial}{\partial r}(\widehat{\mu_{X,R}})(ry) = \frac{1}{2} \left( \int_{\mathbb{R}^d} i \langle y; u \rangle \left(e^{i r \langle u ; y\rangle } - e^{- i r \langle u ; y \rangle}\right) \omega_{X,R}(\|u\|)\nu_1(du)\right) \widehat{\mu_{X,R}}(ry). 
\end{align*}
Then, the Fourier transform of $\mu_{X,R}$ verifies the following partial differential equation: for all $\xi \in \mathbb{R}^d \setminus \{0\}$, 
\begin{align}\label{eq:PDE_CF_X}
\langle \xi ; \nabla\left(\widehat{\mu_{X,R}}\right)(\xi) \rangle = \widehat{\mu_{X,R}}(\xi) m_{X,R}(\xi),
\end{align}
where, 
\begin{align}\label{eq:multiplier}
m_{X,R}(\xi) = \frac{1}{2} \int_{\mathbb{R}^d} i \langle u ; \xi \rangle \left( e^{i \langle u ; \xi \rangle} - e^{ - i  \langle u ; \xi \rangle} \right) \omega_{X,R}(\|u\|) \nu_1(du). 
\end{align}
By Fourier duality arguments, equation \eqref{eq:Stein_identity} is verified for the truncation $\mu_{X,R}$, i.e., for all $f \in \mathcal{S}(\mathbb{R}^d)$, 
\begin{align}\label{eq:Stein_identity_truncation}
\int_{\mathbb{R}^d} \left[ - \langle x ; \nabla(f)(x) \rangle + \frac{1}{2} \int_{\mathbb{R}^d} \langle \nabla(f)(x+u) - \nabla(f)(x-u) ; u\rangle \omega_{X,R}(\|u\|) \nu_1(du)   \right] \mu_{X,R}(dx) = 0.
\end{align}
Moreover, $(\mu_{X,R})_{R>1}$ converges weakly to $\mu_X$ as $R$ tends to $+\infty$, so that, for all $f \in \mathcal{S}(\mathbb{R}^d)$, 
\begin{align}\label{eq:convergence_drift_term}
\underset{R \longrightarrow +\infty}{\lim} \int_{\mathbb{R}^d} \langle x ; \nabla(f)(x) \rangle \mu_{X,R}(dx) =  \int_{\mathbb{R}^d} \langle x ; \nabla(f)(x) \rangle \mu_{X}(dx)
\end{align} 
Finally, for all $f \in \mathcal{S}(\mathbb{R}^d)$ and all $R>1$, 
\begin{align*}
\frac{1}{2}\int_{\mathbb{R}^d} & \mu_{X,R}(dx) \left[\int_{\mathbb{R}^d} \langle \nabla(f)(x+u) - \nabla(f)(x-u) ; u\rangle \omega_{X,R}(\|u\|) \nu_1(du)\right] =\\
& \frac{1}{(2\pi)^d} \int_{\mathbb{R}^d} \mathcal{F}(f)(\xi) m_{X,R}(\xi) \widehat{\mu_{X,R}}(\xi) d\xi. 
\end{align*}
Thanks to \eqref{eq:cond_semi_cv}, for all $\xi \in \mathbb{R}^d$, 
\begin{align*}
m_{X,R}(\xi) \longrightarrow m_X(\xi) =  \frac{1}{2} \int_{\mathbb{R}^d} i \langle u ; \xi \rangle \left( e^{i \langle u ; \xi \rangle} - e^{ - i  \langle u ; \xi \rangle} \right) \omega_{X}(\|u\|) \nu_1(du),
\end{align*}
as $R$ tends to $+\infty$. Now, \eqref{eq:cond_A_X} ensures that, for all $R\geq 1$ and all $\xi \in \mathbb{R}^d$, 
\begin{align*}
\left| m_{X,R}\left(\xi\right) \right| \leq  \Phi_X(\|\xi\|) \|\xi\|.
\end{align*}
The Lebesgue dominated convergence theorem ensures that \eqref{eq:Stein_identity} is satisfied by $\mu_X$, for all $f \in \mathcal{S}(\mathbb{R}^d)$. Then, by standard approximation arguments, for all $h \in \mathcal{H}_2 \cap \mathcal{C}_c^{\infty}(\mathbb{R}^d)$, 
\begin{align}\label{eq:Stein_Identity_X_Steinsol}
\int_{\mathbb{R}^d} \left[ - \langle x ; \nabla(f_h)(x) \rangle + \frac{1}{2} \int_{\mathbb{R}^d} \langle \nabla(f_h)(x+u) - \nabla(f_h)(x-u) ; u\rangle \omega_X(\|u\|) \nu_1(du)   \right] \mu_X(dx) = 0,
\end{align}
where $f_h$ is the function given by \eqref{eq:1_stable_Stein_sol}. This concludes step $1$.\\
\textit{Step 2}: Now, by Proposition \ref{prop:stein_solution_cauchy}, for all $h \in \mathcal{H}_2 \cap \mathcal{C}_c^{\infty}(\mathbb{R}^d)$ and all $x \in \mathbb{R}^d$, 
\begin{align*}
- \langle x ; \nabla(f_h)(x) \rangle + \mathcal{A}_1(f_h)(x)= h(x) - \mathbb{E} h(X_1),
\end{align*}
where $X_1 \sim \mu_1$. Thus, integrating with respect to $\mu_X$ and using \eqref{eq:Stein_Identity_X_Steinsol}, 
\begin{align*}
\mathbb{E} h(X) - \mathbb{E} h(X_1)  & = \mathbb{E} \left[- \langle X ; \nabla(f_h)(X) \rangle + \mathcal{A}_1(f_h)(X) \right] , \\
& =  \mathbb{E} \left[ - \mathcal{A}_{1, \omega_X}(f_h)(X) + \mathcal{A}_1(f_h)(X) \right], 
 \end{align*}
where, for all $x \in \mathbb{R}^d$, 
\begin{align*}
\mathcal{A}_{1, \omega_X}(f_h)(x) =  \frac{1}{2} \int_{\mathbb{R}^d} \langle \nabla(f_h)(x+u) - \nabla(f_h)(x-u) ; u\rangle \omega_X(\|u\|) \nu_1(du). 
\end{align*}
Thus, for all $h \in \mathcal{H}_2 \cap \mathcal{C}_c^{\infty}(\mathbb{R}^d)$, 
\begin{align*}
\left| \mathbb{E} h(X) - \mathbb{E} h(X_1) \right| = \frac{1}{2} \left| \mathbb{E} \left[ \int_{\mathbb{R}^d} \langle u ; \nabla(f_h)(X+u) - \nabla(f_h)(X-u) \rangle \left(\omega_X(\|u\|) - 1\right) \nu_1(du) \right] \right|. 
\end{align*}
Let us cut the integral with respect to $\mu_X$ into two parts: $\{\|x\| > 1\}$ and $\{\|x\| \leq 1\}$. Then, 
\begin{align*}
A_1 & : =  \left| \int_{\|x\| \leq 1} \int_{\mathbb{R}^d} \langle u ; \nabla(f_h)(x+u) - \nabla(f_h)(x-u) \rangle \left(\omega_X(\|u\|)-1\right) \nu_1(du) \mu_X(dx) \right| , \\
& \leq \int_{\|x\| \leq 1} \int_{\|u\|\leq 1} \left| \langle u ; \nabla(f_h)(x+u) - \nabla(f_h)(x-u) \rangle \right| \left| \omega_X(\|u\|)-1 \right| \nu_1(du) \mu_X(dx) \\
&\quad\quad + \int_{\|x\| \leq 1} \int_{\|u\|\geq 1} \left| \langle u ; \nabla(f_h)(x+u) - \nabla(f_h)(x-u) \rangle \right| \left| \omega_X(\|u\|)-1 \right| \nu_1(du) \mu_X(dx) , \\
& \leq 2 M_2(f_h) \int_{\|x\| \leq 1} \int_{\|u\|\leq 1} \|u\|^2 \left| \omega_X(\|u\|) - 1 \right| \nu_1(du) \mu_X(dx) \\
&\quad\quad + C \int_{\|x\| \leq 1} \int_{\|u\| \geq 1} \left| \omega_X(\|u\|) - 1\right| \nu_1(du) \mu_X(dx),
\end{align*}
for some $C>0$. Thus, 
\begin{align*}
A_1 \leq \int_{\|u\|\leq 1} \|u\|^2 \left| \omega_X(\|u\|) - 1 \right| \nu_1(du) + C \int_{\|u\| \geq 1} \left| \omega_X(\|u\|) - 1\right| \nu_1(du). 
\end{align*}
Moreover, 
\begin{align*}
A_2 & := \left| \int_{\|x\| > 1} \int_{\mathbb{R}^d} \langle u ; \nabla(f_h)(x+u) - \nabla(f_h)(x-u) \rangle \left(\omega_X(\|u\|)-1\right) \nu_1(du) \mu_X(dx) \right| ,  \\
& \leq \int_{\|x\| \geq 1} \int_{\|u\|\leq 1} \left| \langle u ; \nabla(f_h)(x+u) - \nabla(f_h)(x-u) \rangle \right| \left| \omega_X(\|u\|)-1 \right| \nu_1(du) \mu_X(dx) \\
&\quad\quad + \int_{\|x\| \geq 1} \int_{\|u\|\geq 1} \left| \langle u ; \nabla(f_h)(x+u) - \nabla(f_h)(x-u) \rangle \right| \left| \omega_X(\|u\|)-1 \right| \nu_1(du) \mu_X(dx) , \\
& \leq 2M_2(f_h) \int_{\|x\| \geq 1} \int_{\|u\| \leq 1} \|u\|^2 \left| \omega_X(\|u\|) -1 \right| \nu_1(du) \mu_X(dx) \\
& \quad\quad + \int_{\|x\| \geq 1} \int_{\|u\|\geq 1} \left| \langle u ; \nabla(f_h)(x+u) - \nabla(f_h)(x-u) \rangle \right| \left| \omega_X(\|u\|)-1 \right| \nu_1(du) \mu_X(dx).
\end{align*}
But, for all $x \in \mathbb{R}^d$ such that $\|x\| \geq 1$, 
\begin{align*}
\int_{\|u\|\geq 1} \left| \langle u ; \nabla(f_h)(x+u) - \nabla(f_h)(x-u) \rangle \right| \left| \omega_X(\|u\|)-1 \right| \nu_1(du) \leq A_3(x) + A_4(x), 
\end{align*}
where,
\begin{align*}
&A_3(x) := \int_{1 \leq \|u\| \leq \|x\|} \left| \langle u ; \nabla(f_h)(x+u) - \nabla(f_h)(x-u) \rangle \right| \left| \omega_X(\|u\|)-1 \right| \nu_1(du), \\
&A_4(x) := \int_{\|u\| \geq \|x\|} \left| \langle u ; \nabla(f_h)(x+u) - \nabla(f_h)(x-u) \rangle \right| \left| \omega_X(\|u\|)-1 \right| \nu_1(du). 
\end{align*}
Now, for all $x \in \mathbb{R}^d$ such that $\|x\| \geq 1$, 
\begin{align*}
A_3(x) & \leq 2 M_1(f_h) \int_{1\leq \|u\| \leq \|x\|} \|u\| \left| \omega_X(\|u\|) - 1 \right| \nu_1(du) , \\
& \leq 2 \int_{1\leq \|u\| \leq \|x\|} \|u\| \left| \omega_X(\|u\|) - 1 \right| \nu_1(du). 
\end{align*}
For $A_4$, for all $x \in \mathbb{R}^d$ such that $\|x\| \geq 1$, 
\begin{align*}
A_4(x) \leq C_5 \left(1+ \|x\|\right) \int_{\|u\| \geq \|x\|} \left| \omega_X(\|u\|) -1 \right| \nu_1(du),
\end{align*}
for some $C_5>0$. Then,
\begin{align}\label{ineq:last_bound}
 \bigg| \mathbb{E} \bigg[ \int_{\mathbb{R}^d} \langle u ; \nabla(f_h)(X+u) & - \nabla(f_h)(X-u) \rangle \left(\omega_X(\|u\|) - 1\right) \nu_1(du) \bigg] \bigg| \leq 2 \int_{\|u\| \leq 1} \|u\|^2 \nonumber\\
 &\quad\quad \times \left| \omega_X(\| u \|) - 1 \right| \nu_1(du) \nonumber \\
&\quad\quad + C_7 \int_{\|u\| \geq 1} \left| \omega_X(\|u\|) - 1 \right| \nu_1(du) \nonumber \\
& \quad\quad + 2\int_{\|x\| \geq 1} \left(\int_{1 \leq \|u\| \leq \|x\|} \|u\| \left| \omega_X(\|u\|) - 1 \right| \nu_1(du)\right) \mu_X(dx) \nonumber \\
&\quad\quad + C_8\int_{\|x\| \geq 1} \left(1+ \|x\|\right) \left(\int_{\|u\| \geq \|x\|} \left| \omega_X(\|u\|) - 1 \right| \nu_1(du)\right) \mu_X(dx),
\end{align}
for some $C_7, C_8 >0$. The conclusion of the proof follows easily. $\square$

\begin{rem}\label{rem:comments_previous_theorem}
(i) Similar quantitative estimates on $d_{W_2}(\mu_X , \mu_1)$ can be obtained using the other representations for the non-local part of $\mathcal{L}^{\nu_1}$.\\
(ii) Let us consider the trivial situation when $\omega_X$ is identically constant on $(0,+\infty)$ equal to $U_X>0$. Then, the integral appearing in \eqref{eq:cond_semi_cv} boils down to: for all $r \in (0,+\infty)$ and all $y \in \mathbb{S}^{d-1}$, 
\begin{align*}
\int_{\mathbb{R}^d} \sin\left( r \langle u ; y \rangle\right) \langle y ; u \rangle \omega_X(\|u\|) \nu_1(du) & = U_X \underset{R \longrightarrow +\infty}{\lim} \int_{\mathcal{B}(0,R)} \sin\left( r \langle u ; y \rangle\right) \langle y ; u \rangle  \nu_1(du) , \\
& = U_X  \underset{R \longrightarrow +\infty}{\lim} \int_{\mathbb{S}^{d-1}} \langle y; x \rangle \left(\int_0^R \sin \left(r \rho \langle x ; y \rangle\right) \frac{d\rho}{\rho} \right) \sigma(dx) , \\
&= U_X \int_{\mathbb{S}^{d-1}} |\langle y;x \rangle| \sigma(dx) \int_0^{+\infty} \frac{\sin(\rho)}{\rho}d\rho < +\infty,
\end{align*}
where $\sigma$ is the spherical part of $\nu_1$. Moreover, for all $r \in (0,+\infty)$, all $y \in \mathbb{S}^{d-1}$ and all $R>1$, 
\begin{align*}
\left|  \int_{\frac{1}{R} \leq \|u\| \leq R} \sin\left( r \langle u ; y \rangle\right) \langle y ; u \rangle \omega_X(\|u\|) \nu_1(du) \right| \leq U_X \sigma(\mathbb{S}^{d-1}) \underset{R \geq 1}{\sup} \left| \int_{\frac{1}{R}}^R \frac{\sin(\rho)}{\rho}d\rho \right| < +\infty.
\end{align*}
Then, the conditions on the weight $\omega_X$ are satisfied in this situation. Finally, the stability estimate \eqref{ineq:stability_result} gives
\begin{align}\label{eq:stability_cauchy_constw}
d_{W_2}(\mu_X , \mu_1) & \leq \left(\int_{\|u\| \leq 1} \|u\|^2  \nu_1(du) + M_1 \int_{\|u\| \geq 1}  \nu_1(du) \right) |U_X-1| \nonumber \\
& \quad\quad + \sigma(\mathbb{S}^{d-1}) \left(\int_{\|x\|\geq 1} \ln(\|x\|) \mu_X(dx) \right)  \left| U_X - 1 \right| \nonumber \\
& \quad\quad + 2 M_2 \sigma(\mathbb{S}^{d-1}) \left|U_X - 1\right|,
\end{align} 
for some $M_1, M_2 >0$. \\
(iii) In Theorem \ref{thm:stability_result_sym_Cauchy_measure}, one might want to relax the condition of infinite divisibility of the probability measure $\mu_X$ as in Theorem \ref{thm:stability_estimate_NDS_stable} for $\alpha \in (1,2)$.~A natural candidate for the closable non-negative definite bilinear symmetric form would be: for all $f_1, f_2 \in \mathcal{C}_b^1(\mathbb{R}^d, \mathbb{R}^d)$, 
\begin{align}\label{eq:form_central_difference}
\mathcal{E}^*_{\nu_1, \omega_X} (f_1,f_2) = \frac{1}{4} \int_{\mathbb{R}^{2d}} \langle f_1(x+u)-f_1(x-u) ; f_2(x+u) - f_2(x-u) \rangle \omega_X(\|u\|) \nu_1(du)\mu_X(dx).
\end{align}
When $\omega_X(r) = 1$, for all $r \in (0, +\infty)$, and $\mu_X = \mu_1^{\operatorname{rot}}$,  it can be proved that 
\begin{align}\label{eq:spectral_condition}
\underset{R \longrightarrow +\infty}{\lim} \left(\mathcal{E}^*_{\nu_1^{\operatorname{rot}}} (g_R,g_R) - \|g_R\|^2_{L^2(\mu^{\operatorname{rot}}_1)}\right) = 0,
\end{align}
where $g_R$ is given by \eqref{eq:eigen_approximate_eigenvector} and where, for all $f_1, f_2 \in \mathcal{C}_b^1(\mathbb{R}^d, \mathbb{R}^d)$, 
\begin{align*}
\mathcal{E}^*_{\nu_1^{\operatorname{rot}}} (f_1,f_2) = \frac{1}{4} \int_{\mathbb{R}^{2d}} \langle f_1(x+u)-f_1(x-u) ; f_2(x+u) - f_2(x-u) \rangle \nu^{\operatorname{rot}}_1(du)\mu_1^{\operatorname{rot}}(dx).
\end{align*}
However, at present, we do not know if the probability measure $\mu_1^{\operatorname{rot}}$ verifies a Poincar\'e-type inequality like \eqref{Poinc_type_Cauchy} with the energy form $\mathcal{E}^*_{\nu_1^{\operatorname{rot}}}$ instead of $\mathcal{E}^{\nu_1^{\operatorname{rot}}}$. Using standard arguments, a weak Poincar\'e-type inequality can be proved with this new bilinear form (see, e.g., \cite{Rockner_Wang_01,Wang_14,Chen_Wang_17}).\\
(iv) Next, let us consider the classical example. Let $(Z_k)_{k \geq 1}$ be a sequence of independent random vectors of $\mathbb{R}^d$ with characteristic function $(\varphi_k)_{k \geq 1}$ defined, for all $k \geq 1$ and all $\xi \in \mathbb{R}^d$, by 
\begin{align*}
\varphi_k(\xi) = \dfrac{\widehat{\mu_1}\left((k+1) \xi\right)}{\widehat{\mu_1}(k\xi)}.
\end{align*}
Let $(S_n)_{n \geq 1}$ be the sequence of random vectors of $\mathbb{R}^d$ defined, for all $n \geq 1$, by 
\begin{align*}
S_n = \frac{1}{n} \sum_{k = 1}^n Z_k.
\end{align*}
Straightforward computations ensure that $S_n =_\mathcal{L} X_1$, where $X_1 \sim \mu_1$, for all $n \geq 1$. So, both sides of \eqref{eq:stability_cauchy_constw} are equal to $0$, since $\omega_{S_n}(r) = 1$, for all $r \in (0, +\infty)$ and all $n \geq 1$.  
\end{rem}
\noindent
To conclude this section, let us consider the more involved example of Theorem \ref{thm:quantitative_approximation_CauchyLs}. Let $\beta \in (1,2)$ and let $\mu_{1,\beta}^L$ be the probability measure on $\mathbb{R}^d$ characterized in Fourier by \eqref{eq:cf_layered_cauchy}. Recall that its L\'evy measure $\nu_{1,\beta}^L$ is defined by 
\begin{align*}
\nu_{1, \beta}^L(du) =\bbone_{(0,+\infty)}(r)\bbone_{\mathbb{S}^{d-1}}(y) \left( \frac{1}{r^{\beta}}\bbone_{(0,1]}(r) + \frac{1}{r}\bbone_{(1,+\infty)}(r)\right) \dfrac{dr}{r} \sigma(dy),
\end{align*}
where $\sigma$ is a positive finite symmetric measure on $\mathcal{B}\left(\mathbb{S}^{d-1}\right)$ such that 
\begin{align*}
\underset{e \in \mathbb{S}^{d-1}}{\inf} \int_{\mathbb{S}^{d-1}} \left| \langle e ; y \rangle \right|\sigma(dy)>0. 
\end{align*}
Let $(Z_k)_{k \geq 1}$ be a sequence of i.i.d. random vectors of $\mathbb{R}^d$ such that $Z_1 \sim \mu_{1, \beta}^L$. Let $(S_n)_{n \geq 1}$ be the sequence of random vectors of $\mathbb{R}^d$ defined, for all $n \geq 1$, by 
\begin{align}\label{def:Sn_Cauchy_case}
S_n = \frac{1}{n} \sum_{k=1}^n Z_k. 
\end{align}
By a reasoning similar to the one performed in the proof of Theorem \ref{thm:limit_theorem_layered_stable_distributions} and thanks to the symmetry of $\sigma$, it is clear that $(S_n)_{n \geq 1}$ converges in law to $\mu_1$ which Fourier transform is given, for all $\xi \in \mathbb{R}^d$, by 
\begin{align*}
\widehat{\mu_1}(\xi) = \exp \left( \int_{\mathbb{R}^d} \left(e^{i \langle u ; \xi \rangle}-1 - i \langle u ; \xi \rangle\bbone_{\|u\| \leq 1}\right) \nu_1(du)\right),
 \end{align*}
 with, 
\begin{align*}
\nu_1(du) =\bbone_{(0, +\infty)}(r)\bbone_{\mathbb{S}^{d-1}}(y) \frac{dr}{r^2} \sigma(dy).
\end{align*}
Moreover, for all $n \geq 1$, the L\'evy measure of the law of $S_n$ is given by
\begin{align*}
\nu_{1,\beta}^{L,n}(du) = \omega^{1,\beta}_n(\|u\|) \nu_1(du), 
\end{align*}
where $\omega_n^{1,\beta}$ is defined, for all $r \in (0,+\infty)$, by
\begin{align}\label{eq:weight_layered_cauchy}
\omega_n^{1,\beta}(r) =  \frac{1}{n^{\beta-1}} \frac{1}{r^{\beta-1}}\bbone_{(0, n^{-1}]}(r) +\bbone_{(n^{-1},+\infty)}(r). 
\end{align}
Note that, since $\beta \in (1,2)$, for all $r \in (0,+\infty)$, 
\begin{align*}
\omega_n^{1,\beta}(r) \longrightarrow 1, \quad n \longrightarrow +\infty.
\end{align*}
For now, let us assume that the conditions \eqref{eq:cond_semi_cv} and \eqref{eq:cond_A_X} of Theorem \ref{thm:stability_result_sym_Cauchy_measure} are verified so that one can apply it.~Noting that, for all $n \geq 2$ and all $r\geq 1$, $\omega_n^{1, \beta}(r) = 1$, and performing computations similar to the ones contained in the proof of Theorem \ref{thm:quantitative_approximation_Ls}, the bound \eqref{ineq:rate_convergence_cauchy_case} of Theorem \ref{thm:quantitative_approximation_CauchyLs} holds true. The next technical lemma ensures that the conditions \eqref{eq:cond_semi_cv} and \eqref{eq:cond_A_X} are indeed verified by the weight $\omega_n^{1,\beta}$, for all $n \geq 1$. 

\begin{lem}\label{lem:technical_layered_cauchy}
Let $\beta \in (1,2)$ and let $(\omega^{1, \beta}_n)_{n \geq 1}$ be defined by \eqref{eq:weight_layered_cauchy}. Then, for all $n \geq 1$, $\omega_n^{1, \beta}$ verifies the conditions \eqref{eq:cond_semi_cv} and \eqref{eq:cond_A_X}.
\end{lem}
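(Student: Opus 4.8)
The plan is to verify the two conditions \eqref{eq:cond_semi_cv} and \eqref{eq:cond_A_X} directly by reducing everything to a one--dimensional radial integral via the polar decomposition of $\nu_1$. Recall that $\nu_1(du) = \bbone_{(0,+\infty)}(r)\bbone_{\mathbb{S}^{d-1}}(z)\,r^{-2}\,dr\,\sigma(dz)$, so for fixed $r>0$ and $y \in \mathbb{S}^{d-1}$, and writing $u = \rho z$ with $\rho>0$, $z \in \mathbb{S}^{d-1}$,
\begin{align*}
\int_{\bbr^d} \sin(r\langle u ; y\rangle)\langle y ; u\rangle\,\omega_n^{1,\beta}(\|u\|)\,\nu_1(du) = \int_{\mathbb{S}^{d-1}} \langle y ; z\rangle \left(\int_0^{+\infty} \sin(r\rho\langle z ; y\rangle)\,\omega_n^{1,\beta}(\rho)\,\frac{d\rho}{\rho}\right)\sigma(dz).
\end{align*}
Since $\sigma$ is finite, it suffices to bound the inner radial integral uniformly over $z,y$. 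First I would split the $\rho$--integral at $\rho = n^{-1}$ according to the two branches of $\omega_n^{1,\beta}$ in \eqref{eq:weight_layered_cauchy}: on $(n^{-1},+\infty)$ one has $\omega_n^{1,\beta}\equiv 1$ and the integral $\int_{n^{-1}}^{+\infty}\sin(r\rho\langle z;y\rangle)\,d\rho/\rho$ is a tail of the sine--integral function, hence bounded uniformly in all parameters (after the substitution $s = r\rho|\langle z;y\rangle|$, when $\langle z;y\rangle\neq 0$; the integrand vanishes identically when $\langle z;y\rangle = 0$). On $(0,n^{-1}]$ one has $\omega_n^{1,\beta}(\rho) = n^{1-\beta}\rho^{1-\beta}$, so the contribution is $n^{1-\beta}\int_0^{n^{-1}} \sin(r\rho\langle z;y\rangle)\rho^{-\beta}\,d\rho$; using $|\sin(r\rho\langle z;y\rangle)| \le r\rho|\langle z;y\rangle| \le r\rho$ near $0$ and $\beta \in (1,2)$, this integral converges absolutely and is $O(n^{1-\beta}\cdot r\cdot n^{\beta-2}) = O(r/n)$, which is finite. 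This establishes \eqref{eq:cond_semi_cv}.

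For \eqref{eq:cond_A_X} I would carry out the analogous estimate but now on the truncated domain $\{1/R \le \|u\| \le R\}$, which in radial coordinates is $\{1/R \le \rho \le R\}$, and take the supremum over $R \ge 1$. The point is that the bound must be \emph{uniform in $R$} with a right--hand side $\Phi_X(r)$ of at most polynomial growth in $r$. For $n \ge 2$ (the relevant range, since \eqref{ineq:rate_convergence_cauchy_case} is stated for $n \ge 2$) the two regions $(0,n^{-1}]$ and $(n^{-1},+\infty)$ interact with $[1/R,R]$ cleanly: on the overlap with $(n^{-1},+\infty)$ the integral is $\int_{\max(1/R,n^{-1})}^{R}\sin(r\rho\langle z;y\rangle)\,d\rho/\rho$, which after the substitution $s = r\rho|\langle z;y\rangle|$ becomes a partial sine--integral $\int_a^b \sin(s)\,ds/s$ whose absolute value is bounded by a universal constant (indeed by $\int_0^{+\infty}\sin(s)\,ds/s = \pi/2$ plus a fixed error), uniformly in $R$; and on the overlap with $(0,n^{-1}]$ we again bound $|\sin| \le r\rho$ and integrate the convergent $\rho^{1-\beta}$ against $\rho^{-1}$, getting a bound of the form $C(n,\beta)\,r$. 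Collecting, one may take $\Phi_X(r) = C_{n,\beta,d}(1+r)$, which has polynomial growth, so \eqref{eq:cond_A_X} holds.

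The main obstacle is the uniformity in $R$ in condition \eqref{eq:cond_A_X}: one cannot simply dominate $|\sin(r\rho\langle z;y\rangle)\langle y;u\rangle\omega_n^{1,\beta}(\|u\|)|$ by an $L^1(\nu_1)$ function, because near $\rho = +\infty$ the function $\rho \mapsto \langle y;u\rangle/\rho^2 = \langle y;z\rangle/\rho$ is not integrable against $d\rho$ — it is precisely the oscillation of the sine that produces convergence. So the estimate has to be done by hand on each truncation $[1/R,R]$, exploiting the structure of partial sine--integrals $\int_a^b \sin(s)/s\,ds$ and their uniform boundedness (a standard consequence of the second mean value theorem for integrals / Dirichlet's test). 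Once the substitution $s = r\rho|\langle z;y\rangle|$ is set up and the degenerate case $\langle z;y\rangle = 0$ is disposed of, the remaining computations are routine. I would then note that the same splitting argument, verbatim, also handles the case $n=1$ (where the branch point is at $\rho = 1$), so that the lemma holds for all $n \ge 1$ as stated, and conclude the proof.
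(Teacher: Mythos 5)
Your proof is correct and follows essentially the same route as the paper: both pass to polar coordinates, split the radial integral at $\rho = n^{-1}$ according to the two branches of $\omega_n^{1,\beta}$, estimate the small-$\rho$ piece by absolute convergence, and control the large-$\rho$ piece uniformly in $R$ via the boundedness of partial sine integrals. The only difference is cosmetic, in the small-$\rho$ estimate (you use $|\sin\theta|\le|\theta|$ to get a bound $O(r)$, the paper substitutes $s = r\rho|\langle z;y\rangle|$ to land on $r^{\beta-1}\int_0^{+\infty}|\sin(s)|s^{-\beta}\,ds$), both giving a $\Phi_X$ of polynomial growth.
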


\begin{proof}
Let $R \geq 1$. Now, for all $n \geq 1$, all $r \in (0,+\infty)$ and all $y \in \mathbb{S}^{d-1}$
\begin{align*}
\int_{\mathcal{B}(0,R)} \sin \left( r \langle u;y \rangle\right) \langle u ; y \rangle \omega_n^{1, \beta}(\|u\|) \nu_1(du) & = \int_{\mathbb{S}^{d-1}} \langle x ; y \rangle \sigma(dx) \int_0^R \sin(r \rho \langle x ; y \rangle) \omega_n^{1, \beta}(\rho) \frac{d \rho}{\rho} , \\
& =  I_R + II_R , 
\end{align*}
with, 
\begin{align*}
&I_R := \frac{1}{n^{\beta-1}} \int_{\mathbb{S}^{d-1}} \langle x ; y \rangle \sigma(dx) \int_0^R \sin\left(r \rho \langle y ; x\rangle\right)\bbone_{(0, n^{-1}]}(\rho)   \frac{d \rho}{\rho^{\beta}} , \\
&II_R : = \int_{\mathbb{S}^{d-1}} \langle x ; y \rangle \sigma(dx) \int_0^R \sin\left(r \rho \langle y ; x\rangle\right)\bbone_{(\frac{1}{n}, + \infty)}(\rho) \frac{d \rho}{\rho}.
\end{align*}
Now, it is clear that, for all $n \geq 1$, all $r>0$ and all $y \in \mathbb{S}^{d-1}$, 
\begin{align*}
&\underset{R \longrightarrow +\infty}{\lim} I_R = \frac{1}{n^{\beta-1}} \int_{\mathbb{S}^{d-1}} \langle x ; y \rangle \sigma(dx) \int_0^{\frac{1}{n}} \sin\left(r \rho \langle y ; x\rangle\right)\frac{d \rho}{\rho^{\beta}}  , \\
&\underset{R \longrightarrow +\infty}{\lim} II_R = \int_{\mathbb{S}^{d-1}} \langle x ; y \rangle \sigma(dx) \int_{\frac{1}{n}}^{+\infty} \sin\left(r \rho \langle y ; x\rangle\right)\frac{d \rho}{\rho}. 
\end{align*}
so that the condition \eqref{eq:cond_semi_cv} is verified. Moreover, for all $R>1$, 
\begin{align*}
\int_{1/R\leq \|u\|\leq R} \sin \left( r \langle u;y \rangle\right) \langle u ; y \rangle \omega_n^{1, \beta}(\|u\|) \nu_1(du) & = \int_{\mathbb{S}^{d-1}} \langle x ; y \rangle \sigma(dx) \int_{\frac{1}{R}}^R \sin(r \rho \langle x ; y \rangle) \omega_n^{1, \beta}(\rho) \frac{d \rho}{\rho} , \\
& =  III_R + IV_R ,
\end{align*}
with, 
\begin{align*}
&III_R := \frac{1}{n^{\beta-1}} \int_{\mathbb{S}^{d-1}} \langle x ; y \rangle \sigma(dx) \int_{\frac{1}{R}}^R \sin\left(r \rho \langle y ; x\rangle\right)\bbone_{(0, n^{-1}]}(\rho)   \frac{d \rho}{\rho^{\beta}} ,  \\
&IV_R := \int_{\mathbb{S}^{d-1}} \langle x ; y \rangle \sigma(dx) \int_{\frac{1}{R}}^R \sin\left(r \rho \langle y ; x\rangle\right)\bbone_{(\frac{1}{n}, + \infty)}(\rho) \frac{d \rho}{\rho}.
\end{align*}
Now, for all $R>1$, all $n \geq 1$, all $y \in \mathbb{S}^{d-1}$ and all $r>0$, 
\begin{align*}
\left| III_R \right| \leq \frac{1}{n^{\beta-1}} \int_{\mathbb{S}^{d-1}} \sigma(dx) \int_{\frac{1}{R}}^{R} \left| \sin\left(r \rho \langle x; y \rangle\right) \right| \frac{d\rho}{\rho^\beta} \leq \sigma\left(\mathbb{S}^{d-1}\right) r^{\beta-1} \int_{0}^{+\infty} \left| \sin(\rho) \right| \frac{d\rho}{\rho^\beta}. 
\end{align*}
Finally, 
\begin{align*}
\left| IV_R \right| \leq \sigma\left(\mathbb{S}^{d-1}\right) \sup_{0<a<b<+\infty} \left| \int_{a}^{b} \sin\left(\rho\right) \frac{d\rho}{\rho} \right|.
\end{align*}
This concludes the proof of the lemma.
\end{proof}

\section{Appendix}
\noindent
\begin{lem}\label{lem:riesz_potential_critical}
Let $d \geq 1$ be an integer. Then, for all $x \in \mathbb{R}^d$ such that $x \ne 0$, 
\begin{align*}
\underset{R \longrightarrow +\infty}{\lim} \int_{\mathbb{R}^d} e^{ - \frac{\|\xi_1\|}{R}} e^{i \langle x ; \xi_1 \rangle} \|\xi_1\| \frac{d\xi_1}{(2\pi)^d} = - \frac{c_d}{\|x\|^{d+1}}.
\end{align*}
\end{lem}

\begin{proof}
By change into spherical coordinates, thanks to \cite[Appendix $B.4$]{Grafakos_1} and to \cite[Formula $10.22.48$]{Olver_10}, for all $R>0$ and all $x \in \mathbb{R}^d$ such that $x \ne 0$, 
\begin{align*}
 \int_{\mathbb{R}^d} e^{ - \frac{\|\xi_1\|}{R}} e^{i \langle x ; \xi_1 \rangle} \|\xi_1\| \frac{d\xi_1}{(2\pi)^d} & = \frac{1}{(2\pi)^d} \int_{(0,+\infty)} e^{- \frac{r}{R}} r^d \left(\int_{\mathbb{S}^{d-1}} e^{i r \langle \theta ; x \rangle} \sigma_L(d\theta)\right) dr , \\
 & = \frac{1}{(2\pi)^{\frac{d}{2}}} \frac{1}{\|x\|^{\frac{d}{2}-1}} \int_{(0,+\infty)} e^{- \frac{r}{R}} r^{\frac{d}{2}+1} J_{\frac{d}{2}-1} \left(r \|x\|\right) dr , \\
 & = \frac{1}{(2\pi)^{\frac{d}{2}}}\left(\frac{1}{2}\right)^{\frac{d}{2}-1} R^{d+1} \dfrac{\Gamma(d+1)}{\Gamma\left(\frac{d}{2}\right)} {}_{2}F_{1}\left(\frac{d+1}{2}, \frac{d}{2}+1;\frac{d}{2};- R^2 \|x\|^2\right) , \\
\end{align*}
where $J_{d/2-1}$ is the Bessel function of the first kind of order $d/2-1$ and ${}_{2}F_{1}$ is the hypergeometric function (see \cite[Chapter $15$]{Olver_10}). Now, thanks to \cite[Formula $15.8.1$]{Olver_10} combined with the definition of ${}_{2}F_{1}$, for all $R>0$ and all $x\in \mathbb{R}^d$ with $x \ne 0$, 
\begin{align*}
{}_{2}F_{1}\left(\frac{d+1}{2}, \frac{d}{2}+1;\frac{d}{2};- R^2 \|x\|^2\right) = - \frac{1}{d} \dfrac{R^2 \|x\|^2-d}{(1+R^2 \|x\|^2)^{\frac{d+1}{2}+1}}. 
\end{align*} 
Then, for all $x \in \mathbb{R}^d$ such that $x \ne 0$, 
\begin{align*}
\underset{R \rightarrow +\infty}{\lim}  \int_{\mathbb{R}^d} e^{ - \frac{\|\xi_1\|}{R}} e^{i \langle x ; \xi_1 \rangle} \|\xi_1\| \frac{d\xi_1}{(2\pi)^d} = - \frac{2}{\pi^{\frac{d}{2}}} \dfrac{\Gamma\left(d+1\right)}{d 2^d \Gamma(\frac{d}{2})} \frac{1}{\|x\|^{d+1}}.
\end{align*}
The conclusion of the proof of the lemma easily follows. 
\end{proof}

\begin{lem}\label{lem:one_dimension_explicit_comp}
Let $p_1^{\operatorname{rot}}$ be the density of the standard Cauchy probability measure on $\mathbb{R}$ and let $g_{R}$ be defined, for all $x \in \mathbb{R}$ and all $R>0$, by
\begin{align*}
g_R(x) = x \exp\left(- \frac{x^2}{R^2}\right).
\end{align*}
Let $\mathcal{A}_1^{\operatorname{rot}}$ be the square root of the Laplace operator. Then, for all $x \in \mathbb{R}$ and all $R\geq 1$, 
\begin{align*}
\left| \mathcal{A}_1^{\operatorname{rot}}(g_R p_1^{\operatorname{rot}})(x) \right| \leq \dfrac{C}{\left(1+|x|\right)^2},  
\end{align*}
for some positive constant $C$ which does not depend on $R$. Moreover, for all $x \in \mathbb{R}$ with $x \ne 0$ and all $R \geq 1$, 
\begin{align}\label{eq:representation_1D_exact}
\mathcal{A}_1^{\operatorname{rot}} \left(g_R p_1^{\operatorname{rot}}\right)(x) & = \frac{1}{x^2 (\pi)^{\frac{3}{2}}} \exp\left(\frac{1}{R^2}\right) \bigg[  \int_0^{+\infty} \xi G_R(\xi)\sin\left(x \xi\right)d\xi \nonumber \\
&\quad\quad + 2\int_0^{+\infty} H_R(\xi) \sin\left(x \xi\right) d\xi \nonumber \\
&\quad\quad + 2 F \left(\frac{x}{R}\right) - 2\left(\frac{x}{R}\right) + 4 \left(\frac{x}{R}\right)^2 F\left(\frac{x}{R}\right) \bigg] ,  
\end{align}
where, for all $\xi \in \mathbb{R}$ and all $R>0$, 
\begin{align*}
& G_R(\xi) :=  \exp(-\xi)\int_{\frac{-R\xi}{2} + \frac{1}{R}}^{+\infty}\exp\left( - \omega^2\right) d\omega -\exp(\xi)\int_{\frac{R\xi}{2} + \frac{1}{R}}^{+\infty}\exp\left( - \omega^2\right) d\omega, \\
& H_R(\xi) := - \exp(-\xi)\int_{\frac{-R\xi}{2} + \frac{1}{R}}^{+\infty}\exp\left( - \omega^2\right) d\omega  -\exp(\xi)\int_{\frac{R\xi}{2} + \frac{1}{R}}^{+\infty}\exp\left( - \omega^2\right) d\omega,
\end{align*}
and $F$ is the Dawson's integral function. 
\end{lem}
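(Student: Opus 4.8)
The plan is to obtain the closed-form representation \eqref{eq:representation_1D_exact} by computing $\mathcal{A}_1^{\operatorname{rot}}(g_R p_1^{\operatorname{rot}})$ via the Fourier-multiplier definition of the square root of the Laplacian, and then to read off the decay estimate from that representation together with classical bounds on the Dawson function and on incomplete Gaussian integrals. Since $g_R p_1^{\operatorname{rot}} \in \mathcal{S}(\bbr)$ (it is a Schwartz function times the Cauchy density, which decays like $1/|x|^4$ and is smooth), Fourier inversion applies and, for all $x \in \bbr$,
\begin{align*}
\mathcal{A}_1^{\operatorname{rot}}(g_R p_1^{\operatorname{rot}})(x) = -\frac{1}{2\pi} \int_{\bbr} \mathcal{F}(g_R p_1^{\operatorname{rot}})(\xi) |\xi| e^{ix\xi} d\xi.
\end{align*}
The first step is to compute $\mathcal{F}(g_R p_1^{\operatorname{rot}})$. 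Writing $g_R p_1^{\operatorname{rot}}(x) = c_1 x e^{-x^2/R^2}/(1+x^2)$, I would use that $\mathcal{F}$ turns multiplication by $x$ into $i\frac{d}{d\xi}$ and that the Fourier transform of $e^{-x^2/R^2}/(1+x^2)$ is an explicit convolution of a Gaussian with the two-sided exponential $e^{-|\xi|}$ (this is exactly where the error function / incomplete Gaussian tails $\int_{\pm R\xi/2 + 1/R}^{+\infty} e^{-\omega^2}d\omega$ enter, after completing the square). Differentiating in $\xi$ produces the functions $G_R$ and $H_R$ in the statement. The overall prefactors $e^{1/R^2}$, $1/\pi^{3/2}$, and the $1/x^2$ come from completing the square in the Gaussian, from the normalization $c_1 = 1/\pi$, and from two integrations by parts in $\xi$ respectively (the boundary terms of those integrations by parts being what generate the Dawson-function contributions $F(x/R) - (x/R) + 2(x/R)^2 F(x/R)$, cf. the computation of $(-\mathcal{A}_1^{\operatorname{rot}})(g_1)$ in the proof of Proposition~\ref{prop:first_difference_one_dimension}). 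Throughout one uses the oddness of $g_R p_1^{\operatorname{rot}}$ to fold the integral over $\bbr$ into an integral over $(0,+\infty)$ against $\sin(x\xi)$, which is why all three integrals in \eqref{eq:representation_1D_exact} are sine transforms.

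For the decay bound $|\mathcal{A}_1^{\operatorname{rot}}(g_R p_1^{\operatorname{rot}})(x)| \leq C/(1+|x|)^2$ uniformly in $R \geq 1$, I would argue as follows. For $|x| \leq 2$ the quantity is bounded simply because $g_R p_1^{\operatorname{rot}}$ and its derivatives are bounded in $C^2$ uniformly in $R \geq 1$ (so $\mathcal{A}_1^{\operatorname{rot}}$ applied to it is bounded by a standard local estimate splitting the $\nu_1^{\operatorname{rot}}$-integral at $\|u\| = 1$, as in Lemma~\ref{lem:technical_decay_estimate}), and $1/(1+|x|)^2$ is bounded below on that region. For $|x| \geq 2$ one can apply Lemma~\ref{lem:technical_decay_estimate} with $d = 1$ to the Schwartz function $f = g_R p_1^{\operatorname{rot}}$; the only subtlety is that the constant $c(1,f)$ produced there must be controlled uniformly in $R \geq 1$, which is true because the relevant Schwartz seminorms $\sup_x (1+|x|)^M |(g_R p_1^{\operatorname{rot}})^{(k)}(x)|$ for the finitely many $(k,M)$ used in that proof are uniformly bounded for $R \geq 1$ (the factor $e^{-x^2/R^2}$ only helps, and its derivatives bring down powers of $1/R \leq 1$). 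Alternatively, and perhaps cleaner, one can read the bound directly off \eqref{eq:representation_1D_exact}: the $1/x^2$ prefactor already gives $|x|^{-2}$ decay for $|x|$ large, and one checks the three bracketed sine integrals plus the Dawson terms are bounded uniformly in $x$ and $R \geq 1$, using $|F(t)| \leq C \min(|t|, 1/|t|)$, $|t^2 F(t)| \leq C(1+|t|)$, and the uniform boundedness of $\int_a^{+\infty} e^{-\omega^2}d\omega \leq \sqrt{\pi}/2$; here $e^{1/R^2} \leq e$ for $R \geq 1$.

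The main obstacle I anticipate is the bookkeeping in the explicit Fourier computation: correctly tracking the Gaussian-tail functions through the differentiation in $\xi$ and the integrations by parts, and in particular verifying that the boundary terms assemble precisely into the stated combination of Dawson values at $x/R$. This is a finite but delicate calculation; a useful consistency check is to set $R \to +\infty$ (formally $g_R \to g_1 \cdot$, actually $g_R(x) \to x$ pointwise) and recover, after multiplying by suitable powers of $R$, the limiting formula $\mathcal{A}_1^{\operatorname{rot}}(g_1 p_1^{\operatorname{rot}})(x) \to -\tfrac{2x}{1+x^2} p_1^{\operatorname{rot}}(x)$ already used in the proof of Proposition~\ref{prop:diff_limit_carre_mehler_1D}, and to cross-check the $G_R, H_R$ structure against the representation \eqref{eq:representation_square_root_g1_dawson} of $(-\mathcal{A}_1^{\operatorname{rot}})(g_1)$. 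Once \eqref{eq:representation_1D_exact} is established, the uniform $C^2$-boundedness in $R \geq 1$ needed for the $|x| \leq 2$ regime and the elementary tail estimates on $F$ and on the Gaussian integrals for the $|x| \geq 2$ regime are routine.
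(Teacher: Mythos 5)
Your proposal is correct and follows essentially the same route as the paper: compute $\mathcal{F}(g_R p_1^{\operatorname{rot}})$ as the convolution of the Gaussian-type transform $\mathcal{F}(g_R)$ with $e^{-|\xi|}$, complete the square to produce the incomplete Gaussian tails in $G_R, H_R$, fold the integral onto $(0,\infty)$ by oddness, and integrate by parts twice to extract the $1/x^2$ factor and assemble the Dawson-function boundary terms, after which the uniform-in-$R$ bound follows from elementary estimates (Mill's ratio, $e^{1/R^2}\le e$). Your alternative of invoking Lemma~\ref{lem:technical_decay_estimate} for the decay bound with Schwartz seminorms controlled uniformly for $R\ge 1$ is also valid, though the paper obtains the bound directly from the two-fold integration by parts since the Fourier computation is needed anyway for the explicit representation.
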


\begin{proof}
By Fourier inversion formula, for all $x \in \mathbb{R}$ and all $R>0$, 
\begin{align*}
\mathcal{A}_1^{\operatorname{rot}}(g_R p_1^{\operatorname{rot}})(x) = \frac{1}{2\pi} \int_{\mathbb{R}} \mathcal{F} \left(g_R p_1^{\operatorname{rot}}\right)(\xi) e^{i \xi x} (- |\xi|) d\xi. 
\end{align*}
So, the first step is to compute the Fourier transform of the function $g_R p_1^{\operatorname{rot}}$. But, by standard Fourier analysis, this is (up to some constant) equal to the convolution product of the Fourier transforms of the functions $p_1^{\operatorname{rot}}$ and $g_R$. Now, for all $\xi \in \mathbb{R}$, 
\begin{align*}
\mathcal{F}(p_1^{\operatorname{rot}})(\xi) = \exp\left( - |\xi|\right) , \quad \mathcal{F}(g_R)(\xi) = - R^3\frac{(i\xi)}{2} \sqrt{\pi} e^{- \frac{R^2\xi^2}{4}}.
\end{align*}
Thus, for all $\xi \in \mathbb{R}$ and all $R>0$,
\begin{align*}
\left(\mathcal{F}(g_R) \ast \mathcal{F}(p_1^{\operatorname{rot}})\right)(\xi) & = \int_{\mathbb{R}} \exp\left( - |\xi - \zeta|\right) \mathcal{F}(g_R)(\zeta)d\zeta , \\
& = - R^3 \frac{i \sqrt{\pi}}{2} \int_{\mathbb{R}} \exp\left( - |\xi - \zeta|\right) \zeta \exp\left( - \frac{R^2\zeta^2}{4}\right)  d\zeta. 
\end{align*}
At this point, let us cut the previous integral into two pieces: $\{ \zeta \geq \xi\}$ and $\{ \zeta < \xi\}$. Then, for all $\xi \in \mathbb{R}$ and all $R>0$, 
\begin{align*}
 \int_{\mathbb{R}} \exp\left( - |\xi - \zeta|\right) \zeta \exp\left( - \frac{R^2\zeta^2}{4}\right)  d\zeta & =  \int_{\{ \zeta \geq \xi\}} \exp\left( - |\xi - \zeta|\right) \zeta \exp\left( - \frac{R^2\zeta^2}{4}\right)  d\zeta \\
&\quad\quad +  \int_{\{ \zeta < \xi\}} \exp\left( - |\xi - \zeta|\right) \zeta \exp\left( - \frac{R^2\zeta^2}{4}\right)  d\zeta , \\
& = \exp(\xi) \int_{\{ \zeta \geq \xi\}} \exp\left( - \zeta \right) \zeta \exp\left( - \frac{R^2\zeta^2}{4}\right)  d\zeta \\
&\quad\quad + \exp \left(-\xi\right)  \int_{\{ \zeta < \xi\}} \exp\left(\zeta\right) \zeta \exp\left( - \frac{R^2\zeta^2}{4}\right)  d\zeta.
\end{align*}
By standard computations, for all $R>0$ and all $\xi \in \mathbb{R}$, 
\begin{align*}
\int_{\xi}^{+\infty} \exp\left( - \zeta \right) \zeta \exp\left( - \frac{R^2\zeta^2}{4}\right)  d\zeta  & = \int_{\xi}^{+\infty} \zeta \exp\left( - \left(\dfrac{R^2 \zeta^2}{4} + 2 \frac{1}{R} \frac{R\zeta}{2} \right)\right) d\zeta , \\
& = \exp\left(\frac{1}{R^2}\right) \int_{\xi}^{+\infty} \zeta \exp\left( - \left( \frac{R\zeta}{2} + \frac{1}{R} \right)^2\right) d\zeta , \\
& = \left(\frac{2}{R} \right)^2 \exp\left(\frac{1}{R^2}\right) \int_{\frac{R\xi}{2} + \frac{1}{R}}^{+\infty}\left(\omega - \frac{1}{R} \right) \exp \left( - \omega^2\right) d\omega , \\
& = \left(\frac{2}{R} \right)^2 \exp\left(\frac{1}{R^2}\right) \int_{\frac{R\xi}{2} + \frac{1}{R}}^{+\infty} \omega \exp\left( - \omega^2\right) d\omega \\
&\quad\quad -  \frac{1}{R}\left(\frac{2}{R} \right)^2 \exp\left(\frac{1}{R^2}\right) \int_{\frac{R\xi}{2} + \frac{1}{R}}^{+\infty}\exp\left( - \omega^2\right) d\omega.
\end{align*}
Now, for all $b \geq 0$, 
\begin{align*}
\int_b^{+\infty} \exp\left(-z^2\right) dz = \frac{1}{2} \Gamma\left(\frac{1}{2} , b^2\right), \quad \int_b^{+\infty} z \exp\left(-z^2\right) dz = \frac{1}{2} \Gamma\left(1 , b^2\right) = \frac{1}{2} \exp\left(-b^2\right), 
\end{align*} 
and, for all $b \leq 0$, 
\begin{align*}
\int_b^{+\infty} \exp\left(-z^2\right) dz = \frac{\sqrt{\pi}}{2} + \frac{1}{2}\gamma\left(\frac{1}{2} , b^2\right),
\end{align*}
 where, for all $z>0$ and all $\beta>0$,
 \begin{align*}
 \Gamma\left(\beta , z\right) = \int_z^{+\infty} t^{\beta-1} e^{-t} dt , \quad \gamma(\beta,z) = \int_0^z t^{\beta-1} e^{-t} dt. 
 \end{align*} 
 Thus, for all $\xi \in \mathbb{R}$ and all $R>0$, 
\begin{align*}
\int_{\xi}^{+\infty} \exp\left( - \zeta \right) \zeta \exp\left( - \frac{R^2\zeta^2}{4}\right)  d\zeta  & = \frac{1}{2} \left(\frac{2}{R} \right)^2 \exp\left(\frac{1}{R^2}\right) \exp\left( - \left(\frac{R\xi}{2} + \frac{1}{R}\right)^2\right)  \\
&\quad\quad -  \frac{1}{R}\left(\frac{2}{R} \right)^2 \exp\left(\frac{1}{R^2}\right) \int_{\frac{R\xi}{2} + \frac{1}{R}}^{+\infty}\exp\left( - \omega^2\right) d\omega.
\end{align*}
Now, for the second integral, for all $\xi \in \mathbb{R}$ and all $R>0$, 
\begin{align*}
 \int_{-\infty}^{\xi} \exp\left(\zeta\right) \zeta \exp\left( - \frac{R^2\zeta^2}{4}\right)  d\zeta & = - \int_{- \xi}^{+\infty} \exp\left(- \zeta\right) \zeta \exp\left( - \frac{R^2\zeta^2}{4}\right)  d\zeta , \\
 & = - \bigg\{\frac{1}{2} \left(\frac{2}{R} \right)^2 \exp\left(\frac{1}{R^2}\right) \exp\left( - \left(\frac{-R\xi}{2} + \frac{1}{R}\right)^2\right)  \\
&\quad\quad -  \frac{1}{R}\left(\frac{2}{R} \right)^2 \exp\left(\frac{1}{R^2}\right) \int_{\frac{-R\xi}{2} + \frac{1}{R}}^{+\infty}\exp\left( - \omega^2\right) d\omega
\bigg\}. 
\end{align*}
So, the contribution of the first integral to this convolution product is: for all $\xi \in \mathbb{R}$ and all $R>0$, 
\begin{align*}
\exp(\xi)\int_{\xi}^{+\infty} \exp\left( - \zeta \right) \zeta \exp\left( - \frac{R^2\zeta^2}{4}\right)  d\zeta & = \frac{e^{\xi}}{2} \left(\frac{2}{R} \right)^2 \exp\left(\frac{1}{R^2}\right) \exp\left( - \left(\frac{R\xi}{2} + \frac{1}{R}\right)^2\right)  \\
&\quad\quad -  \frac{e^{\xi}}{R}\left(\frac{2}{R} \right)^2 \exp\left(\frac{1}{R^2}\right) \int_{\frac{R\xi}{2} + \frac{1}{R}}^{+\infty}\exp\left( - \omega^2\right) d\omega , \\
& = \frac{1}{2} \left(\frac{2}{R} \right)^2 \exp\left( - \frac{R^2\xi^2}{4}\right)  \\
&\quad\quad -  \frac{e^{\xi}}{R}\left(\frac{2}{R} \right)^2 \exp\left(\frac{1}{R^2}\right) \int_{\frac{R\xi}{2} + \frac{1}{R}}^{+\infty}\exp\left( - \omega^2\right) d\omega
\end{align*}
Similarly, for the second integral, 
\begin{align*} 
\exp\left( - \xi\right) \int_{-\infty}^{\xi} \exp\left(\zeta\right) \zeta \exp\left( - \frac{R^2\zeta^2}{4}\right)  d\zeta & =- \bigg\{\frac{1}{2} \left(\frac{2}{R} \right)^2 \exp\left( - \dfrac{R^2\xi^2}{4}\right)  \\
&\quad\quad -  \frac{\exp(-\xi)}{R}\left(\frac{2}{R} \right)^2 \exp\left(\frac{1}{R^2}\right) \int_{\frac{-R\xi}{2} + \frac{1}{R}}^{+\infty}\exp\left( - \omega^2\right) d\omega
\bigg\}. 
\end{align*}
Thus, summing these two contributions gives: for all $\xi \in \mathbb{R}$ and all $R>0$, 
\begin{align*}
 \int_{\mathbb{R}} \exp\left( - |\xi - \zeta|\right) \zeta \exp\left( - \frac{R^2\zeta^2}{4}\right)  d\zeta & =\frac{\exp(-\xi)}{R}\left(\frac{2}{R} \right)^2 \exp\left(\frac{1}{R^2}\right) \int_{\frac{-R\xi}{2} + \frac{1}{R}}^{+\infty}\exp\left( - \omega^2\right) d\omega \\
 &\quad\quad - \frac{\exp(\xi)}{R}\left(\frac{2}{R} \right)^2 \exp\left(\frac{1}{R^2}\right) \int_{\frac{R\xi}{2} + \frac{1}{R}}^{+\infty}\exp\left( - \omega^2\right) d\omega.
\end{align*} 
Then, for all $\xi \in \mathbb{R}$ and all $R>0$, 
\begin{align}\label{eq:convolution_gR_p1}
\left(\mathcal{F}(g_R) \ast \mathcal{F}(p_1^{\operatorname{rot}})\right)(\xi) & = - \frac{i \sqrt{\pi}}{2}\bigg[4 \exp(-\xi) \exp\left(\frac{1}{R^2}\right) \int_{\frac{-R\xi}{2} + \frac{1}{R}}^{+\infty}\exp\left( - \omega^2\right) d\omega \nonumber \\
 &\quad\quad - 4\exp(\xi) \exp\left(\frac{1}{R^2}\right) \int_{\frac{R\xi}{2} + \frac{1}{R}}^{+\infty}\exp\left( - \omega^2\right) d\omega \bigg].
\end{align}
Let us observe that, for all $\xi>0$, 
\begin{align*}
\underset{R \longrightarrow+\infty}{\lim}\left(\mathcal{F}(g_R) \ast \mathcal{F}(p_1^{\operatorname{rot}})\right)(\xi) = - 2i \sqrt{\pi}\exp(-\xi) \int_{-\infty}^{+\infty} \exp(-\omega^2) d\omega,
\end{align*}
 and, for all $\xi<0$, 
 \begin{align*}
\underset{R \longrightarrow+\infty}{\lim}\left(\mathcal{F}(g_R) \ast \mathcal{F}(p_1^{\operatorname{rot}})\right)(\xi) =  2i \sqrt{\pi}\exp(\xi) \int_{-\infty}^{+\infty} \exp(-\omega^2) d\omega.
\end{align*}
Thus, for all $\xi \in \mathbb{R}$ with $\xi \ne 0$, 
\begin{align*}
\underset{R \longrightarrow+\infty}{\lim}\left(\mathcal{F}(g_R) \ast \mathcal{F}(p_1^{\operatorname{rot}})\right)(\xi)  = -2i\pi \operatorname{sign}\left(\xi\right)\exp\left(- |\xi|\right). 
\end{align*}
Now, it is clear that the function $L$ defined, for all $x \in \mathbb{R}$ with $x \ne 0$, by
\begin{align*}
L(x):= \int_{\mathbb{R}} \operatorname{sign}\left(\xi\right)\exp\left(- |\xi|\right) |\xi| e^{i x\xi} d\xi = \dfrac{4ix}{(1+x^2)^2},
\end{align*}
decays as $1/|x|^3$ as $|x|$ tends to $+\infty$. Let us consider a similar integral for $R>£0$ fixed. Namely, for all $x \in \mathbb{R}$ with $x \ne 0$, 
\begin{align*}
L_R(x) := \int_{\mathbb{R}} |\xi| G_R(\xi) e^{ix \xi} d\xi, 
\end{align*} 
where $G_R$ is defined, for all $\xi \in \mathbb{R}$, by
\begin{align*}
G_R(\xi) = \exp(-\xi)\int_{\frac{-R\xi}{2} + \frac{1}{R}}^{+\infty}\exp\left( - \omega^2\right) d\omega -\exp(\xi)\int_{\frac{R\xi}{2} + \frac{1}{R}}^{+\infty}\exp\left( - \omega^2\right) d\omega.
\end{align*}
Then, for all $x \in \mathbb{R}$ with $x \ne 0$ and all $R>0$, 
\begin{align*}
L_R(x) & = \int_{0}^{+\infty} \xi G_R(\xi) e^{ix \xi} d\xi  -  \int_{-\infty}^{0} \xi G_R(\xi) e^{ix \xi} d\xi , \\
& = \int_{0}^{+\infty} \xi G_R(\xi) e^{ix \xi} d\xi + \int_{0}^{+\infty} \xi G_R(- \xi) e^{-ix \xi} d\xi ,  \\
& = 2i \int_0^{+\infty} \xi G_R(\xi) \sin\left(x \xi\right) d\xi,
\end{align*}
where we have used the fact that the function $G_R$ is odd on $\mathbb{R}$, for all $R>0$. Now, the strategy is to obtain a bound on the previous integral as $|x|$ tends to $+\infty$ which is uniform in $R$. In order to do so, let us perform several integration by parts. Then, for all $x \in \mathbb{R}$ with $x \ne 0$ and all $R>0$,
\begin{align*}
\int_0^{+\infty} \xi G_R(\xi) \sin\left(x \xi\right) d\xi & = - \dfrac{1}{x}\int_0^{+\infty} \xi G_R(\xi) \dfrac{d}{d\xi}\left(\cos\left(x \xi\right) \right)d\xi , \\
& =  \dfrac{1}{x}\int_0^{+\infty} \dfrac{d}{d\xi} \left(\xi G_R(\xi)\right) \cos\left(x \xi\right) d\xi , \\
& = \frac{1}{x} \int_0^{+\infty} \left(G_R(\xi) + \xi G_R'(\xi)\right) \cos\left(x \xi\right) d\xi , \\
& = \frac{1}{x^2} \int_0^{+\infty}  \left(G_R(\xi) + \xi G_R'(\xi)\right) \dfrac{d}{d\xi} \left(\sin\left(x \xi\right)\right) d\xi , \\
&= - \frac{1}{x^2} \int_0^{+\infty} \left(2G'_R(\xi) + \xi G''_R(\xi)\right) \sin\left(x \xi\right)d\xi , \\
&= \frac{1}{x^3} \int_0^{+\infty} \left(2G'_R(\xi) + \xi G''_R(\xi)\right) \frac{d}{d\xi}\left(\cos\left(x \xi\right)-1\right)d\xi , \\
&= -\frac{1}{x^3} \int_0^{+\infty}\frac{d}{d\xi}\left(2G'_R(\xi) + \xi G''_R(\xi)\right) (\cos\left(x \xi\right)-1)d\xi \\
&\quad\quad + \frac{1}{x^3} \left[(2G'_R(\xi) + \xi G''_R(\xi))(\cos\left(x \xi\right)-1)\right]_0^{+\infty} \\
& = -\frac{1}{x^3} \int_0^{+\infty}\left(3G''_R(\xi) + \xi G'''_R(\xi)\right) (\cos\left(x \xi\right)-1)d\xi.
\end{align*}  
At this point, let us compute the first three derivatives of the function $G_R$, for all $R>0$. Now, for all $\xi \geq 0$ and all $R>0$, 
\begin{align}\label{eq:derivative_1}
G'_R(\xi) &=- \exp(-\xi)\int_{\frac{-R\xi}{2} + \frac{1}{R}}^{+\infty}\exp\left( - \omega^2\right) d\omega  -\exp(\xi)\int_{\frac{R\xi}{2} + \frac{1}{R}}^{+\infty}\exp\left( - \omega^2\right) d\omega \nonumber \\
&\quad\quad +R \exp\left(- \left(\frac{R^2\xi^2}{4} + \frac{1}{R^2}\right)\right),
\end{align}
\begin{align}\label{eq:derivative_2}
G''_R(\xi) & = \exp(-\xi)\int_{\frac{-R\xi}{2} + \frac{1}{R}}^{+\infty}\exp\left( - \omega^2\right) d\omega -\exp(\xi)\int_{\frac{R\xi}{2} + \frac{1}{R}}^{+\infty}\exp\left( - \omega^2\right) d\omega \nonumber \\
&\quad\quad - R \left(\dfrac{R^2\xi}{2}\right) \exp\left(- \left(\dfrac{R^2\xi^2}{4} + \frac{1}{R^2} \right) \right)\nonumber \\
& = G_R(\xi) - R \left(\dfrac{R^2\xi}{2}\right) \exp\left(- \left(\dfrac{R^2\xi^2}{4} + \frac{1}{R^2} \right) \right).
\end{align}
\begin{align}\label{eq:derivative_3}
G'''_R(\xi) = G'_R(\xi) -  \frac{R^3}{2} \exp\left(-\left(\frac{R^2 \xi^2}{2} + \frac{1}{R^2}\right)\right) +R \left(\dfrac{R^2\xi}{2}\right)^2 \exp\left(- \left(\dfrac{R^2\xi^2}{4} + \frac{1}{R^2} \right) \right) .
\end{align}
Next, let us prove that one has an uniform decay of the order $1/|x|^2$. Based on the second integration by parts, for all $x \in \mathbb{R}$ with $x \ne 0$ and all $R>0$, 
\begin{align}\label{eq:two_ipp}
\int_0^{+\infty} \xi G_R(\xi) \sin\left(x \xi\right) d\xi = - \frac{1}{x^2} \int_0^{+\infty} \left(2G'_R(\xi) + \xi G''_R(\xi)\right) \sin\left(x \xi\right)d\xi.
\end{align}
Let us start with the integral with the term $G'_R()$. For all $x \in \mathbb{R} $ with $x \ne 0$ and all $R>0$, 
\begin{align*}
\left| \int_0^{+\infty} G_R'(\xi) \sin(x\xi) d\xi\right| \leq (1) + (2) + (3), 
\end{align*}
with, 
\begin{align*}
&(1) := \left|  \int_0^{+\infty} \exp(-\xi)\left(\int_{\frac{-R\xi}{2} + \frac{1}{R}}^{+\infty}\exp\left( - \omega^2\right) d\omega\right)  \sin(x\xi) d\xi \right| , \\
&(2) := \left|  \int_0^{+\infty} \exp(\xi)\left(\int_{\frac{R\xi}{2} + \frac{1}{R}}^{+\infty}\exp\left( - \omega^2\right) d\omega\right)  \sin(x\xi) d\xi \right|, \\
&(3) :=  \left|  \int_0^{+\infty} R \exp\left(- \left(\frac{R^2\xi^2}{4} + \frac{1}{R^2}\right)\right)  \sin(x\xi) d\xi \right|.
\end{align*}
Now, it is clear that, for all $x \in \mathbb{R}$ with $x \ne 0$ and all $R>0$, $(1)$ is bounded by a positive constant not depending on $R$ neither on $x$.~Regarding $(2)$, by standard Mill's ratio bound, for all $x \in \mathbb{R}$ with $x \ne 0$ and all $R\geq 1$,
\begin{align*}
(2) \leq \int_{0}^{+\infty} \exp\left(- \frac{R^2 \xi^2}{4}\right) d\xi \leq  \int_{0}^{+\infty} \exp\left(- \frac{ \xi^2}{4}\right) d\xi <+\infty. 
\end{align*}
Finally, for $(3)$, for all $x \in \mathbb{R}$ with $x \ne 0$ and all $R\geq 1$,
\begin{align*}
(3) \leq \int_0^{+\infty} R \exp\left( -\frac{R^2 \xi^2}{4}\right) d\xi \leq \int_0^{+\infty} \exp\left( -\frac{\xi^2}{4}\right) d\xi <+\infty. 
\end{align*}
This implies that there exists a positive constant $C$ such that, for all $x \in \mathbb{R}$ with $x \ne 0$ and all $R \geq 1$, 
\begin{align*}
\left| \int_0^{+\infty} G_R'(\xi) \sin(x \xi) d\xi \right| \leq C. 
\end{align*} 
A similar analysis can be performed for the integral with $\xi G''_R(\xi)$ and so one has an uniform decay at the rate $\frac{1}{|x|^2}$ when $|x|$ tends to $+\infty$. Formula \eqref{eq:representation_1D_exact} is a direct consequence of Equation \eqref{eq:two_ipp} and of the definition of the Dawson function together with integration by parts.  
\end{proof}
\noindent
In the next lemma, let us compute integrals related to the Dawson's function $F$ and find an integral representation of the Dawson function $F$. 

\begin{lem}\label{lem:technical_dawson}
Let $F$ be the Dawson's integral function defined, for all $y \geq 0$, by
\begin{align*}
F(y) = \exp(-y^2) \int_0^y e^{t^2} dt, 
\end{align*}
and extended to $\mathbb{R}$ by oddity. Then, 
\begin{align*}
\int_{-\infty}^{+\infty} F(y) \dfrac{e^{-y^2}dy}{y} = \frac{\pi^{\frac{3}{2}}}{4},  \quad \int_{-\infty}^{+\infty} x F(x) \exp\left(-x^2\right)dx = \dfrac{\sqrt{\pi}}{4}.
\end{align*}
Moreover, for all $x \in \mathbb{R}$, 
\begin{align}\label{eq:alternative_representation_Dawson_integral_func}
F(x) = \frac{1}{2} \int_0^{+\infty} e^{-\frac{\xi^2}{4}} \sin \left(x \xi\right) d\xi. 
\end{align}
\end{lem}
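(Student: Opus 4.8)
\textbf{Proof proposal for Lemma \ref{lem:technical_dawson}.}

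The plan is to handle the three claims separately, with the integral representation \eqref{eq:alternative_representation_Dawson_integral_func} proved first since the two definite-integral evaluations will follow from it by Fubini-type arguments. For \eqref{eq:alternative_representation_Dawson_integral_func}, I would start from the elementary Gaussian integral $\int_0^{+\infty} e^{-\xi^2/4}\cos(x\xi)\,d\xi = \sqrt{\pi}\,e^{-x^2}$ (a standard cosine transform of a Gaussian), and then differentiate under the integral sign in the parameter $x$: differentiating the left-hand side gives $-\tfrac12\int_0^{+\infty}\xi e^{-\xi^2/4}\sin(x\xi)\,d\xi$, while differentiating the right-hand side gives $-2\sqrt{\pi}\,x e^{-x^2}$. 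An integration by parts in $\xi$ on the left (using $\xi e^{-\xi^2/4} = -2\frac{d}{d\xi}e^{-\xi^2/4}$) then converts $\int_0^{+\infty}\xi e^{-\xi^2/4}\sin(x\xi)\,d\xi$ into $2x\int_0^{+\infty}e^{-\xi^2/4}\cos(x\xi)\,d\xi$ minus a boundary term which vanishes, and comparing with the known cosine integral yields an ODE of the form $y' + 2xy = 1$ (with $y(x) := \tfrac12\int_0^{+\infty}e^{-\xi^2/4}\sin(x\xi)\,d\xi$) after rearrangement; this is exactly the ODE satisfied by the Dawson function, and since both sides vanish at $x=0$, uniqueness of solutions gives \eqref{eq:alternative_representation_Dawson_integral_func}. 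Alternatively, and perhaps more cleanly, one can simply substitute the definition $F(x) = e^{-x^2}\int_0^x e^{t^2}\,dt$, interchange the $\xi$ and $t$ integrations in $\tfrac12\int_0^{+\infty}e^{-\xi^2/4}\sin(x\xi)\,d\xi$ after writing $\sin(x\xi)$ via its primitive, and recognize a completed-square Gaussian; I would pick whichever route is shorter once the constants are pinned down.

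For the identity $\int_{-\infty}^{+\infty} xF(x)e^{-x^2}\,dx = \sqrt{\pi}/4$, I would insert \eqref{eq:alternative_representation_Dawson_integral_func}, swap the order of integration (justified by absolute integrability, since $|F(x)|$ is bounded and $xe^{-x^2}$ is integrable), and reduce to $\tfrac12\int_0^{+\infty}e^{-\xi^2/4}\big(\int_{-\infty}^{+\infty} x e^{-x^2}\sin(x\xi)\,dx\big)\,d\xi$. The inner integral is a standard Fourier-type Gaussian moment: $\int_{-\infty}^{+\infty} x e^{-x^2}\sin(x\xi)\,dx = \tfrac{\sqrt{\pi}}{2}\,\xi\, e^{-\xi^2/4}$ (differentiate $\int e^{-x^2}\cos(x\xi)\,dx = \sqrt{\pi}e^{-\xi^2/4}$ in $\xi$). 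Substituting back gives $\tfrac{\sqrt{\pi}}{4}\int_0^{+\infty}\xi e^{-\xi^2/2}\,d\xi = \tfrac{\sqrt{\pi}}{4}$, as desired.

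For the first identity, $\int_{-\infty}^{+\infty} F(y)\,\tfrac{e^{-y^2}}{y}\,dy = \pi^{3/2}/4$, note that the integrand is even (since $F$ is odd) and has no singularity at $y=0$ because $F(y)\sim y$ there. Again I would substitute \eqref{eq:alternative_representation_Dawson_integral_func} and interchange integrals to get $\tfrac12\int_0^{+\infty}e^{-\xi^2/4}\big(\int_{-\infty}^{+\infty}\tfrac{\sin(y\xi)}{y}e^{-y^2}\,dy\big)\,d\xi$; the inner integral equals $\pi\,\mathrm{erf}(\xi/2)$ (this is the same Fourier identity used in the proof of Lemma \ref{lem:limit_nl_term}, namely $\int_0^{+\infty}\tfrac{e^{-r^2}}{r}\sin(r\rho)\,dr = \tfrac{\pi}{2}\mathrm{erf}(\rho/2)$, doubled by evenness). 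This leaves $\tfrac{\pi}{2}\int_0^{+\infty}e^{-\xi^2/4}\,\mathrm{erf}(\xi/2)\,d\xi$, which I would evaluate by an integration by parts (differentiating $\mathrm{erf}(\xi/2)$, which produces another Gaussian) or by recognizing $\int_0^{+\infty}e^{-a\xi^2}\mathrm{erf}(b\xi)\,d\xi = \tfrac{1}{\sqrt{\pi a}}\arctan(b/\sqrt{a})$ with $a = b = 1/4$, giving $\tfrac{\pi}{2}\cdot\tfrac{1}{\sqrt{\pi}/2}\cdot\tfrac{\pi}{4} = \tfrac{\pi^{3/2}}{4}$. The main obstacle throughout is bookkeeping: keeping track of the normalization constants in the Gaussian Fourier transforms and carefully justifying each Fubini swap via dominated-convergence or absolute-integrability estimates (the factor $1/y$ requires a moment of care near the origin, handled by the bound $|F(y)/y|\le 1$). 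None of the steps is conceptually deep; the risk is purely in arithmetic slips with the constants.
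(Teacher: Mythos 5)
Your proof is correct (modulo two typographical slips: differentiating the cosine identity in $x$ gives $-\int_0^{+\infty}\xi e^{-\xi^2/4}\sin(x\xi)\,d\xi$, with no extra factor $\tfrac12$; and in your closed-form for $\int_0^{+\infty}e^{-a\xi^2}\operatorname{erf}(b\xi)\,d\xi$ you should take $a=1/4$, $b=1/2$ rather than $a=b=1/4$, though the numerical evaluation you actually carry out is the right one). Your route is, however, genuinely different from the paper's. The paper establishes the two definite integrals directly from the power series $F(y)=\tfrac{\sqrt{\pi}}{2}\sum_{p\ge 0}\frac{(-1)^p y^{2p+1}}{\Gamma(p+3/2)}$, integrating term by term against $e^{-y^2}$, and -- because the resulting numerical series is only conditionally convergent -- it has to introduce a contraction $F_\varepsilon(y)=F(\varepsilon y)$ and invoke Abel's continuity theorem as $\varepsilon\to1^-$; the representation \eqref{eq:alternative_representation_Dawson_integral_func} is then proved last, again by matching power series, using the Gamma duplication formula. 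You instead prove \eqref{eq:alternative_representation_Dawson_integral_func} first (either via the ODE $y'+2xy=1$ with $y(0)=0$, or by a direct Gaussian-convolution computation), and then deduce both definite integrals from it by Fubini and standard one-line Gaussian Fourier evaluations. The trade-off: your route avoids the Abel-summation regularization entirely and replaces series manipulations with two routine Fubini swaps (both absolutely convergent, since $|F|\le C$ and $|F(y)/y|\le 1$), so it is somewhat more robust and arguably cleaner; the paper's route is self-contained at the level of power-series identities and does not lean on Gaussian Fourier transform tables. Both are elementary and both work.
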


\begin{proof}
The proof is a direct application of Abel's continuity theorem together with a series expansion of the function $F$. Combining the representation \cite[$7.5.1$ page $162$]{Olver_10} together with the series expansion \cite[7.6.3 page $162$]{Olver_10}, one gets, for all $y \geq 0$, 
\begin{align}\label{eq:series_expansion_Dawson_Function}
F(y) = \dfrac{\sqrt{\pi}}{2} \sum_{p=0}^{+\infty} \dfrac{(-1)^p y^{2p+1}}{\Gamma\left(p+1+\frac{1}{2}\right)}. 
\end{align}
Now, let $\varepsilon \in (1/2,1)$ and let $F_\varepsilon$ be defined, for all $y \geq 0$, by
\begin{align*}
F_{\varepsilon}(y) = F(\varepsilon y). 
\end{align*}
Then, 
\begin{align*}
\int_{-\infty}^{+\infty} F_\varepsilon(y) \dfrac{e^{-y^2}dy}{y} & = 2 \int_{0}^{+\infty} F_\varepsilon(y) \dfrac{e^{-y^2}dy}{y}  , \\
& = \sqrt{\pi} \int_{0}^{+\infty} \sum_{p=0}^{+\infty} \dfrac{(-1)^p y^{2p+1} \varepsilon^{2p+1}}{\Gamma\left(p+1+\frac{1}{2}\right)} \dfrac{e^{-y^2}dy}{y}.
\end{align*}
But, for all $p \geq 0$, 
\begin{align*}
\int_0^{+\infty} y^{2p} e^{-y^2} dy = \frac{1}{2} \Gamma(p + \frac{1}{2}), 
\end{align*}
and, 
\begin{align*}
\sum_{p = 0}^{+\infty} \dfrac{\Gamma( p + \frac{1}{2})}{\Gamma(p + \frac{1}{2} + 1)} \varepsilon^{2p + 1} < +\infty,
\end{align*}
since $\varepsilon \in (1/2,1)$. 
Thus, for all $\varepsilon \in (1/2,1)$, 
\begin{align*}
\int_{-\infty}^{+\infty} F_\varepsilon(y) \dfrac{e^{-y^2}dy}{y}  = \frac{\sqrt{\pi}}{2} \sum_{p = 0}^{+\infty} \dfrac{(-1)^p \varepsilon^{2p+1}}{p+\frac{1}{2}}.
\end{align*}
Letting $\varepsilon$ tends to $1^-$, 
\begin{align*}
\int_{-\infty}^{+\infty} F(y) \dfrac{e^{-y^2}dy}{y} =  \frac{\sqrt{\pi}}{2} \sum_{p = 0}^{+\infty} \dfrac{(-1)^p}{p+\frac{1}{2}}.
\end{align*}
Finally, using the series expansion of the derivative of the arctan function in $]-1,1[$, one gets,
\begin{align*}
\sum_{p=0}^{+\infty} \dfrac{(-1)^p}{p + \frac{1}{2}} = \frac{\pi}{2}. 
\end{align*}  
The exact theoretical value of the other integral can be proved similarly. Finally, to prove equation \eqref{eq:alternative_representation_Dawson_integral_func}, let us expand in series the right-hand side of equation \eqref{eq:alternative_representation_Dawson_integral_func}. For this purpose, recall that, for all $\theta \in \mathbb{R}$, 
\begin{align}\label{eq:sinus_serie}
\sin \left(\theta\right) = \sum_{p = 0}^{+\infty} \dfrac{(-1)^p \theta^{2p+1}}{(2p+1)!}. 
\end{align} 
Then, for all $x \geq 0$, 
\begin{align*}
\int_0^{+\infty} e^{- \frac{\xi^2}{4}} \sin(x\xi) d\xi & = \sum_{p = 0}^{+\infty} \dfrac{(-1)^p x^{2p+1}}{(2p+1)!} \int_0^{+\infty} e^{- \frac{\xi^2}{4}} \xi^{2p+1}d\xi , \\
& =\sum_{p = 0}^{+\infty} \dfrac{(-1)^p x^{2p+1}2^{2p+2}}{(2p+1)!} \int_0^{+\infty} e^{- \omega^2} \omega^{2p+1}d\omega. 
\end{align*}
But, for all $p \geq 0$,
\begin{align*}
\int_0^{+\infty} e^{-\omega^2} \omega^{2p+1}d \omega = \frac{1}{2}\int_0^{+\infty}e^{-u} u^{p+\frac{1}{2}} \frac{du}{\sqrt{u}} = \frac{p!}{2}. 
\end{align*}
Thus, for all $x \geq 0$, 
\begin{align*}
\frac{1}{2}\int_0^{+\infty} e^{- \frac{\xi^2}{4}} \sin(x\xi) d\xi = \sum_{p = 0}^{+\infty} \dfrac{(-1)^p x^{2p+1}p! 2^{2p}}{(2p+1)!}. 
\end{align*}
Now, by the multiplication formula for the Gamma function (see, e.g., \cite[formula $5.5.6$ page $138$]{Olver_10}), for all $p \geq 0$,
\begin{align*}
\Gamma(p+1) \Gamma\left(p+1+\frac{1}{2}\right) = \sqrt{\pi} 2^{-(2p+1)} \Gamma\left(2(p+1)\right).
\end{align*}
This concludes the proof of the lemma. 
\end{proof}

\begin{lem}\label{lem:reduction_cc}
Let $d \geq 1$ be an integer, let $\mu$ be a non-degenerate probability measure on $\mathbb{R}^d$ and let $\nu$ be a non-degenerate L\'evy measure on $\mathbb{R}^d$ such that $\mu \ast \nu << \mu$. Let $\|\cdot\|_{H+E}$ be the norm defined, for all $f \in \mathcal{C}_b^1(\mathbb{R}^d)$, by
\begin{align}\label{eq:norm_sobolev_nl}
\| f\|^2_{H+E} := \|f\|^2_{L^2(\mu)} + \mathcal{E}_{\nu,\mu}(f,f), 
\end{align}
where $\mathcal{E}_{\nu,\mu}$ is given, for all $f ,g \in \mathcal{C}_b^1(\mathbb{R}^d)$, by 
\begin{align}\label{eq:seminorm_sobolev_nl}
\mathcal{E}_{\nu,\mu}(f,g) = \int_{\mathbb{R}^d} \int_{\mathbb{R}^d} ( f(x+u) - f(x) )( g(x+u) - g(x) ) \nu(du) \mu(dx). 
\end{align}
Then, 
\begin{align}\label{eq:reduction_cc}
\overline{\mathcal{C}_c^{\infty}(\mathbb{R}^d)}^{\|\cdot\|_{H+E}} = \overline{\mathcal{C}^1_b(\mathbb{R}^d)}^{\|\cdot\|_{H+E}}.
\end{align}
\end{lem}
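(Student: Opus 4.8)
The inclusion $\overline{\mathcal{C}_c^{\infty}(\bbr^d)}^{\|\cdot\|_{H+E}} \subseteq \overline{\mathcal{C}^1_b(\bbr^d)}^{\|\cdot\|_{H+E}}$ is immediate since $\mathcal{C}_c^{\infty}(\bbr^d) \subseteq \mathcal{C}^1_b(\bbr^d)$. So the plan is to prove the reverse inclusion, namely that every $f \in \mathcal{C}^1_b(\bbr^d)$ can be approximated in the $\|\cdot\|_{H+E}$-norm by functions in $\mathcal{C}_c^{\infty}(\bbr^d)$; by density of $\mathcal{C}_c^{\infty}(\bbr^d)$ in $\mathcal{C}^1_b(\bbr^d)$ for this norm, it then suffices to handle a single fixed $f \in \mathcal{C}^1_b(\bbr^d)$. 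The standard device is a smooth cutoff: fix $\chi \in \mathcal{C}_c^{\infty}(\bbr^d)$ with $\chi \equiv 1$ on $\mathcal{B}(0,1)$, $0 \leq \chi \leq 1$, and $\|\nabla \chi\|_\infty < +\infty$, and set $\chi_n(x) = \chi(x/n)$ and $f_n = \chi_n f$. Then $f_n \in \mathcal{C}_c^{\infty}(\bbr^d)$ is false in general (only $\mathcal{C}_c^1$), so one should additionally mollify: replace $f_n$ by $f_n \ast \rho_{\varepsilon_n}$ for a mollifier $\rho_{\varepsilon_n}$ with $\varepsilon_n \to 0$ chosen so fast that the mollification error in $\|\cdot\|_{H+E}$ is $o(1)$ — this is routine since $f_n$ has compact support and is $\mathcal{C}^1$, so $f_n \ast \rho_{\varepsilon_n} \to f_n$ in $\mathcal{C}^1$ uniformly, hence in $L^2(\mu)$ and in $\mathcal{E}_{\nu,\mu}$ (using that $\mathcal{E}_{\nu,\mu}(g,g) \leq 2 M_1(g)^2 \int_{\|u\|\leq 1}\|u\|^2\nu(du) + 2(2\|g\|_\infty)^2 \nu(\mathcal{B}(0,1)^c)$ for $g \in \mathcal{C}^1_b$, a bound controlled by the $\mathcal{C}^1_b$-norm on a set where supports are uniformly bounded). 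Hence it is enough to show $\|f_n - f\|_{H+E} \to 0$.

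For the $L^2(\mu)$-part: $|f_n - f| = |1-\chi_n||f| \leq \|f\|_\infty \bbone_{\mathcal{B}(0,n)^c}$, so $\|f_n - f\|^2_{L^2(\mu)} \leq \|f\|_\infty^2 \mu(\mathcal{B}(0,n)^c) \to 0$ by dominated convergence (this uses nothing about $\mu$ beyond it being a probability measure). For the energy part, write $\Delta_u(f_n - f)(x) = \Delta_u((\chi_n - 1)f)(x)$ and expand using the discrete Leibniz rule $\Delta_u(gh)(x) = g(x+u)\Delta_u(h)(x) + h(x)\Delta_u(g)(x)$ with $g = \chi_n - 1$, $h = f$:
\begin{align*}
\Delta_u(f_n - f)(x) = (\chi_n(x+u) - 1)\Delta_u(f)(x) + f(x)\Delta_u(\chi_n)(x).
\end{align*}
So $\mathcal{E}_{\nu,\mu}(f_n - f, f_n - f) \leq 2 A_n + 2 B_n$ where $A_n = \int\int (\chi_n(x+u)-1)^2 |\Delta_u(f)(x)|^2 \nu(du)\mu(dx)$ and $B_n = \int\int f(x)^2 |\Delta_u(\chi_n)(x)|^2 \nu(du)\mu(dx)$. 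The term $A_n$ tends to $0$ by dominated convergence: the integrand is bounded by $|\Delta_u(f)(x)|^2$, which is $\nu \otimes \mu$-integrable precisely because $f \in \mathcal{C}^1_b(\bbr^d)$ (split at $\|u\| = 1$, using $|\Delta_u(f)(x)| \leq M_1(f)\|u\|$ near $0$ and $\leq 2\|f\|_\infty$ away from $0$, and $\int(1\wedge\|u\|^2)\nu(du) < +\infty$), and $(\chi_n(x+u)-1)^2 \to 0$ pointwise. For $B_n$, use $|\Delta_u(\chi_n)(x)| \leq (1\wedge \tfrac{\|\nabla\chi\|_\infty}{n}\|u\|) \cdot \bbone_{\{x \text{ or } x+u \in \mathcal{B}(0,n)^c \text{ region}\}}$ — more simply $|\Delta_u(\chi_n)(x)| \leq \min(2, \tfrac{\|\nabla\chi\|_\infty}{n}\|u\|)$, giving $B_n \leq \|f\|_\infty^2 \big( \tfrac{\|\nabla\chi\|_\infty^2}{n^2}\int_{\|u\|\leq 1}\|u\|^2\nu(du) + 4\,\nu(\mathcal{B}(0,1)^c)\big)$; the first summand is $O(1/n^2)$, but the second does not vanish, so one must be slightly more careful and also localize in $x$: since $\chi_n$ and $f\chi_n$ agree with constants outside annuli growing with $n$, the contribution of $\{\|u\| > 1\}$ to $B_n$ is supported on $x$ with $\|x\|$ large, hence bounded by $\|f\|_\infty^2 \cdot 4\nu(\mathcal{B}(0,1)^c) \cdot \mu(\{\|x\| \geq n/2\})$ plus a similar small piece, which $\to 0$. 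This bookkeeping — splitting each double integral into $\{\|u\|\leq 1\}$ and $\{\|u\| > 1\}$ and, on the latter, exploiting that $\Delta_u(\chi_n)(x)$ is nonzero only when $x$ is far out — is the one genuinely fiddly step, and I expect it to be the main obstacle to writing cleanly; everything else is dominated convergence against the $\nu\otimes\mu$-integrable majorant $|\Delta_u f(x)|^2 + (1\wedge\|u\|^2)$. Combining, $\|f_n - f\|_{H+E} \to 0$, and after the mollification adjustment described above we obtain $f \in \overline{\mathcal{C}_c^{\infty}(\bbr^d)}^{\|\cdot\|_{H+E}}$, completing the reverse inclusion and hence the equality \eqref{eq:reduction_cc}.
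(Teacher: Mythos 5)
Your argument is correct and rests on the same two ingredients as the paper's proof --- mollification by a standard kernel $\rho_\varepsilon$ and a smooth cutoff $\chi(\cdot/n)$ --- but applies them in the opposite order: the paper first mollifies a general $f\in\mathcal{C}^1_b(\bbr^d)$ to reduce to an infinitely differentiable bounded function with bounded gradient (its Step~1), and then truncates to compact support (Step~2); you truncate first and mollify the resulting $\mathcal{C}^1_c$ function afterwards. Both orders work, and yours has the small advantage that the mollification step is then applied to a compactly supported $\mathcal{C}^1$ function, where uniform $\mathcal{C}^1$-convergence of $f_n\ast\rho_\varepsilon$ to $f_n$ is immediate; the paper instead invokes pointwise convergence of $\nabla f_\varepsilon$ together with the uniform bound $2\|\nabla f\|_\infty$ to dominate the small-$u$ piece.

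One clarification on the step you flag as the main obstacle: the $\{\|u\|>1\}$ contribution to $B_n$ requires no spatial localization at all. For each fixed $(x,u)$ with $\|u\|>1$ one has $\Delta_u(\chi_n)(x)\to 0$ as soon as $n>\max(\|x\|,\|x+u\|)$, and $f(x)^2\,|\Delta_u(\chi_n)(x)|^2\leq 4\|f\|_\infty^2$ is a $\mu\otimes(\bbone_{\mathcal{B}(0,1)^c}\nu)$-integrable majorant; dominated convergence therefore sends this piece to $0$ directly. This is precisely how the paper handles the large-$u$ part of $\mathcal{E}_{\nu,\mu}(f_R-f,f_R-f)$ in its Step~2, without any product-rule decomposition: it simply uses $|\Delta_u(f_R-f)(x)|\leq 4\|f\|_\infty$ together with pointwise convergence to $0$. (Your informal remark that the large-$u$ contribution is ``supported on $x$ with $\|x\|$ large'' is not quite accurate --- $\Delta_u(\chi_n)(x)$ can be nonzero for small $\|x\|$ when $\|x+u\|>n$ --- but the dominated-convergence conclusion is unaffected.) The small-$u$ part of $B_n$ is indeed $O(n^{-2})$ as you computed, and $A_n\to 0$ by the same pointwise dominated-convergence argument. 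In short, your proof is sound and is essentially the paper's with the two reductions interchanged; the bookkeeping you anticipated as fiddly dissolves once dominated convergence is applied pointwise in $(x,u)$.
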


\begin{proof}
\textit{Step 1}. In the first part of the proof, let us reduce the problem to functions which are infinitely differentiable on $\mathbb{R}^d$ such that $\|f\|_{\infty} <+ \infty$ and $\| \nabla(f)\|_{\infty} < +\infty$. Take $f \in \mathcal{C}^1_b(\mathbb{R}^d)$ and let $(\rho_\varepsilon)_{\varepsilon >0}$ be a sequence of standard mollifiers, namely, $\rho$ is an infinitely differentiable function on $\mathbb{R}^d$ with compact support such that $\rho(-x) = \rho(x)$, for all $x \in \mathbb{R}^d$, such that $\operatorname{supp}(\rho) \subset \overline{\mathcal{B}(0,1)}$ and such that 
\begin{align}\label{eq:pm_property}
\int_{\mathbb{R}^d} \rho(x) dx = 1.
\end{align}
Then, for all $\varepsilon>0$, let $\rho_\varepsilon$ be defined, for all $x \in \mathbb{R}^d$, by
\begin{align}\label{def:mollifier_scale_e}
\rho_{\varepsilon}(x) = \frac{1}{\varepsilon^d} \rho\left(\frac{x}{\varepsilon}\right). 
\end{align}
Now, set $f_\varepsilon = f \ast \rho_{\varepsilon}$, where $\ast$ is the standard convolution product of functions. Since $f$ belongs to $\mathcal{C}_b^1(\mathbb{R}^d)$, it is clear that 
\begin{align}\label{eq:uniform_convergence}
\|f_\varepsilon - f \|_{\infty} \longrightarrow 0,
\end{align}
as $\varepsilon$ tends to $0$, which clearly implies that $(f_\varepsilon)_{\varepsilon >0}$ converges to $f$ in $L^2(\mu)$. Moreover, for all $\varepsilon>0$, 
\begin{align*}
\mathcal{E}_{\nu, \mu}(f_\varepsilon - f , f_\varepsilon - f) & = \int_{\mathbb{R}^d} \int_{\mathcal{B}(0,1)} \left| \Delta_u(f_\varepsilon -f)(x) \right|^2 \nu(du) \mu(dx) \\
&\quad\quad\quad +  \int_{\mathbb{R}^d} \int_{\mathcal{B}(0,1)^c} \left| \Delta_u(f_\varepsilon -f)(x) \right|^2 \nu(du) \mu(dx).
\end{align*}
For the second term on the right-hand side of the previous equality, 
\begin{align*}
 \int_{\mathbb{R}^d} \int_{\mathcal{B}(0,1)^c} \left| \Delta_u(f_\varepsilon -f)(x) \right|^2 \nu(du) \mu(dx) \leq 4 \|f_\varepsilon - f\|^2_\infty \nu\left(\mathcal{B}(0,1)^c\right),
\end{align*}
which clearly converges to $0$ as $\varepsilon$ tends to $0$. For the other term, by Taylor's formula, 
\begin{align*}
 \int_{\mathbb{R}^d} \int_{\mathcal{B}(0,1)} \left| \Delta_u(f_\varepsilon -f)(x) \right|^2 \nu(du) \mu(dx) & \leq \int_{\mathbb{R}^d} \int_{\mathcal{B}(0,1)} \|u\|^2 \\
 &\quad \times \left(\int_0^1 \|\nabla(f_\varepsilon -f)(x+tu)\|^2 dt\right) \nu(du) \mu(dx).
\end{align*}
Now, by the Lebesgue dominated convergence theorem, for all $x \in \mathbb{R}^d$, 
\begin{align*}
\underset{\varepsilon \longrightarrow 0}{\lim} \|\nabla(f_\varepsilon)(x) - \nabla(f)(x)\| = 0.
\end{align*}
But, $\|\nabla(f_\varepsilon)(x) - \nabla(f)(x)\| \leq 2 \|\nabla(f)\|_{\infty}$, for all $x \in \mathbb{R}^d$. So, the Lebesgue dominated convergence theorem ensures that
\begin{align*}
 \int_{\mathbb{R}^d} \int_{\mathcal{B}(0,1)} \left| \Delta_u(f_\varepsilon -f)(x) \right|^2 \nu(du) \mu(dx) \longrightarrow 0, 
\end{align*}
as $\varepsilon$ tends to $0$. This concludes the first step of the proof. \\
\textit{Step 2}: Let us assume that $f$ is an infinitely differentiable function on $\mathbb{R}^d$ such that $\|f\|_{\infty}< +\infty$ and such that $\|\nabla(f)\|_{\infty} <+\infty$. Let $\chi$ be an infinitely differentiable even function on $\mathbb{R}^d$ with compact support, such that $\chi(x) = 1$, for all $x \in \overline{\mathcal{B}(0,1)}$, such that $0 \leq \chi \leq 1$ and such that $\chi(x) = 0$, for all $x \in \mathcal{B}(0,2)^c$. Then, for all $R \geq 1$, let $f_R$ be defined, for all $x \in \mathbb{R}^d$, by
\begin{align}\label{eq:smooth_truncation}
f_R(x) = \chi\left(\frac{x}{R}\right) f(x). 
\end{align} 
Clearly, for all $R\geq 1$, $f_R \in \mathcal{C}_c^{\infty}(\mathbb{R}^d)$. Then, 
\begin{align*}
\|f_R - f\|^2_{L^2(\mu)} = \int_{\mathbb{R}^d} \left(1-\chi\left(\frac{x}{R}\right)\right)^2 |f(x)|^2 \mu(dx) \longrightarrow 0, \quad R \longrightarrow +\infty.
\end{align*}
Moreover, for all $R \geq 1$, 
\begin{align*}
\mathcal{E}_{\nu,\mu}(f_R -f, f_R -f) & = \int_{\mathbb{R}^d} \int_{\mathcal{B}(0,1)} |\Delta_u(f_R-f)(x)|^2 \nu(du)\mu(dx) \\
&\quad\quad +\int_{\mathbb{R}^d} \int_{\mathcal{B}(0,1)^c} |\Delta_u(f_R-f)(x)|^2 \nu(du)\mu(dx).
\end{align*}
For the second term on the right-hand side of the previous equality, observe that 
\begin{align*}
\underset{R \longrightarrow +\infty}{\lim} |\Delta_u(f_R-f)(x)|^2 = 0, \quad (x,u) \in \mathbb{R}^{2d}, \quad
|\Delta_u(f_R-f)(x)| \leq 4 \|f\|_{\infty}. 
\end{align*}
Then, the Lebesgue dominated convergence theorem ensures that
\begin{align*}
\underset{R \longrightarrow +\infty}{\lim} \int_{\mathbb{R}^d} \int_{\mathcal{B}(0,1)^c} |\Delta_u(f_R-f)(x)|^2 \nu(du)\mu(dx) = 0.
\end{align*} 
Finally, for all $x \in \mathbb{R}^d$, 
\begin{align*}
\underset{R\rightarrow +\infty}{\lim}\|\nabla(f_R)(x) - \nabla(f)(x)\| = 0, \quad \| \nabla(f_R) - \nabla(f)\|_{\infty} \leq \| \nabla(\chi)\|_{\infty} \|f\|_{\infty} + 2\|\nabla(f)\|_{\infty},
\end{align*}
which ensures that 
\begin{align*}
\underset{R \longrightarrow +\infty}{\lim} \int_{\mathbb{R}^d} \int_{\mathcal{B}(0,1)} |\Delta_u(f_R-f)(x)|^2 \nu(du)\mu(dx) = 0.
\end{align*}
This concludes the proof of the lemma. 
\end{proof}

\begin{lem}\label{lem:reduction_csb}
Let $d \geq 1$ be an integer, let $\mu$ be a non-degenerate probability measure on $\mathbb{R}^d$ and let $\nu$ be a non-degenerate L\'evy measure on $\mathbb{R}^d$ such that $\mu \ast \nu << \mu$. Let $D(\mathcal{E}_{\nu,\mu})$ be defined by 
\begin{align}\label{eq:sobolev_space}
D(\mathcal{E}_{\nu,\mu}) = \{f \in L^2(\mu): \, \mathcal{E}_{\nu,\mu}(f,f) < +\infty\},
\end{align}
where, for all $f,g \in D(\mathcal{E}_{\nu,\mu})$, 
\begin{align*}
\mathcal{E}_{\nu,\mu}(f,g) = \int_{\mathbb{R}^d} \int_{\mathbb{R}^d} (g(x+u)-g(x))(f(x+u)-f(x)) \nu(du) \mu(dx). 
\end{align*}
Then, the set of bounded Borel measurable and compactly supported functions of $D(\mathcal{E}_{\nu,\mu})$ is dense in $D(\mathcal{E}_{\nu,\mu})$ with respect to the norm $\|\cdot\|_{H+E}$ given by \eqref{eq:norm_sobolev_nl}.
\end{lem}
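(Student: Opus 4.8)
\textbf{Proof plan for Lemma \ref{lem:reduction_csb}.}

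The plan is to mimic the truncation argument already carried out in \emph{Step 2} of the proof of Lemma \ref{lem:reduction_cc}, but starting from an arbitrary $f \in D(\mathcal{E}_{\nu,\mu})$ rather than from a smooth function with bounded gradient. First I would fix $f \in D(\mathcal{E}_{\nu,\mu})$ and introduce, as in \eqref{eq:smooth_truncation}, the family $f_R(x) = \chi(x/R) f(x)$, where $\chi$ is the fixed smooth even cutoff with $\chi \equiv 1$ on $\overline{\mathcal{B}(0,1)}$, $0 \le \chi \le 1$, and $\operatorname{supp}(\chi) \subset \overline{\mathcal{B}(0,2)}$. Each $f_R$ is Borel measurable, bounded (because $f$ is in $L^2(\mu)$ and $\chi(\cdot/R)f$ need not be bounded pointwise — here is the first subtlety), and compactly supported. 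To genuinely land inside the set of \emph{bounded} compactly supported functions I would first reduce to bounded $f$ by a preliminary vertical truncation: replace $f$ by $f^{(N)} := (f \wedge N) \vee (-N)$, observe that $f^{(N)} \to f$ in $L^2(\mu)$ by dominated convergence, and that $\mathcal{E}_{\nu,\mu}(f^{(N)}-f, f^{(N)}-f) \to 0$ because the map $t \mapsto (t\wedge N)\vee(-N)$ is $1$-Lipschitz, so $|\Delta_u f^{(N)}(x)| \le |\Delta_u f(x)|$ pointwise and $\Delta_u f^{(N)}(x) \to \Delta_u f(x)$ for every $(x,u)$, whence dominated convergence applies with dominating function $|\Delta_u f(x)|^2 \in L^1(\nu \otimes \mu)$. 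Thus it suffices to treat bounded $f \in D(\mathcal{E}_{\nu,\mu})$.

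For bounded $f$ I would then run the spatial truncation exactly as in Lemma \ref{lem:reduction_cc}, \emph{Step 2}. The $L^2(\mu)$-convergence $\|f_R - f\|_{L^2(\mu)} \to 0$ is immediate from dominated convergence since $|1-\chi(x/R)|^2|f(x)|^2 \le |f(x)|^2 \in L^1(\mu)$. For the energy term I would split
\begin{align*}
\mathcal{E}_{\nu,\mu}(f_R - f, f_R - f) &= \int_{\bbr^d} \int_{\mathcal{B}(0,1)} |\Delta_u(f_R - f)(x)|^2 \nu(du)\mu(dx) \\
&\quad + \int_{\bbr^d} \int_{\mathcal{B}(0,1)^c} |\Delta_u(f_R - f)(x)|^2 \nu(du)\mu(dx),
\end{align*}
and in both pieces use that $\Delta_u(f_R - f)(x) \to 0$ pointwise in $(x,u)$ together with a uniform ($R$-independent, for $R \ge 1$) integrable majorant. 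On $\mathcal{B}(0,1)^c$ the majorant $16\|f\|_\infty^2 \mathbbm{1}_{\mathcal{B}(0,1)^c}(u) \in L^1(\nu\otimes\mu)$ works, as in Lemma \ref{lem:reduction_cc}. The genuinely new point — and the main obstacle — is the near-diagonal piece on $\mathcal{B}(0,1)$, because here $f$ is merely in $L^2(\mu) \cap D(\mathcal{E}_{\nu,\mu})$ and has no pointwise-bounded gradient, so the Taylor-expansion bound used in Lemma \ref{lem:reduction_cc} is unavailable. Instead I would bound $|\Delta_u(f_R - f)(x)|$ by the elementary pointwise estimate
\begin{align*}
|\Delta_u(f_R-f)(x)| &= |(1-\chi(\tfrac{x+u}{R}))f(x+u) - (1-\chi(\tfrac{x}{R}))f(x)| \\
&\le |1-\chi(\tfrac{x+u}{R})|\,|\Delta_u f(x)| + |\chi(\tfrac{x+u}{R}) - \chi(\tfrac{x}{R})|\,|f(x)|,
\end{align*}
so that $|\Delta_u(f_R-f)(x)|^2 \le 2|\Delta_u f(x)|^2 + 2\|\nabla\chi\|_\infty^2 \|u\|^2 \|f\|_\infty^2 / R^2 \cdot \mathbbm{1}_{\mathcal{B}(0,1)}(u)$; the first term is dominated by the fixed $L^1(\nu\otimes\mu)$-function $2|\Delta_u f(x)|^2$ and the second, after integrating $\int_{\mathcal{B}(0,1)}\|u\|^2\nu(du) < \infty$, is $O(R^{-2})$ and vanishes as $R \to \infty$. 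Dominated convergence (applied to the first term, whose pointwise limit is $0$ since $\chi(\tfrac{x}{R}), \chi(\tfrac{x+u}{R}) \to 1$) then gives $\mathcal{E}_{\nu,\mu}(f_R - f, f_R - f) \to 0$. Combining the two truncation steps yields $\|f_R^{(N)} - f\|_{H+E} \to 0$ along a suitable diagonal sequence, proving that bounded compactly supported elements of $D(\mathcal{E}_{\nu,\mu})$ are $\|\cdot\|_{H+E}$-dense in $D(\mathcal{E}_{\nu,\mu})$. I would finally remark that the hypothesis $\mu \ast \nu \ll \mu$ is what guarantees that $\mathcal{E}_{\nu,\mu}$ is well defined and closable on $L^2(\mu)$ so that $\|\cdot\|_{H+E}$ is a genuine norm on $D(\mathcal{E}_{\nu,\mu})$, but it plays no further role in the density argument itself.
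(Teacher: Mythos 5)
Your proof is correct and follows essentially the same two-step route as the paper's: a preliminary clamping $f \mapsto (f\wedge N)\vee(-N)$ to reduce to bounded elements of $D(\mathcal{E}_{\nu,\mu})$ (the paper calls this $G_R(f)$, same $1$-Lipschitz plus dominated convergence argument), followed by the spatial cutoff $f_R=\chi(\cdot/R)f$ with the split of the energy integral over $\mathcal{B}(0,1)$ and $\mathcal{B}(0,1)^c$ and the same dominating functions. The only cosmetic difference is that you keep the factor $|1-\chi(\tfrac{x+u}{R})|^2$ on the near-diagonal piece before dominating it by $2|\Delta_u f|^2$, whereas the paper writes the cruder majorant directly; the mathematics is identical.
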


\begin{proof}
\textit{Step 1}: Let $(G_R)_{R>0}$ be defined, for all $R >0$ and all $x \in \mathbb{R}$, by
\begin{align*}
G_R(x) = (-R) \vee (x \wedge R).
\end{align*}
Then, for all $R>0$, $|G_R(x)| \leq R$, for all $x,y \in \mathbb{R}$ and all $R>0$, 
\begin{align*}
|G_R(x)| \leq |x|, \quad |G_R(x) - G_R(y)| \leq |x-y|,
\end{align*}
and, for all $x \in \mathbb{R}$, $\underset{R \longrightarrow +\infty}{\lim} G_R(x) = x$. Let $f \in D(\mathcal{E}_{\nu,\mu})$ and let us consider $(G_R(f))_{R>0}$. Since the form is Markovian, it is clear that $G_R(f) \in D\left(\mathcal{E}_{\nu,\mu}\right)$, for all $R>0$. Now, for all $R>0$
\begin{align*}
\|G_R(f)-f\|^2_{L^2(\mu)} = \int_{\mathbb{R}^d} |G_R(f(x)) -f(x)|^2 \mu(dx).
\end{align*}
But, for $\mu$-a.e. $x \in \mathbb{R}^d$,
\begin{align}
\underset{R \longrightarrow +\infty}{\lim}  |G_R(f(x)) -f(x)|^2 = 0 , \quad  |G_R(f(x)) -f(x) | \leq 2|f(x)|. 
\end{align}
Then, the Lebesgue dominated convergence theorem ensures that 
\begin{align*}
\underset{R \longrightarrow +\infty}{\lim} \|G_R(f)-f\|_{L^2(\mu)} = 0.
\end{align*}
Moreover, since $\mu \ast \nu <<\mu$, $\mu \otimes \nu$-a.e. $(x,u)\in \mathbb{R}^d \times \mathbb{R}^d$, 
\begin{align*}
\underset{R \longrightarrow +\infty}{\lim} \left| \Delta_u(G_R(f))(x) - \Delta_u(f)(x) \right|^2 = 0.
\end{align*} 
Finally, for all $R>0$ and $\mu \otimes \nu$-a.e. $(x,u) \in \mathbb{R}^d \times \mathbb{R}^d$, 
\begin{align*}
|\Delta_u(G_R(f))(x) - \Delta_u(f)(x)| \leq 2 |f(x+u) -f(x)|.
\end{align*}
Thus, the Lebesgue dominated convergence theorem ensures that, 
\begin{align*}
\underset{R \longrightarrow +\infty}{\lim} \mathcal{E}_{\nu,\mu}(G_R(f)-f , G_R(f)-f) = 0,
\end{align*} 
and so we are done with the first step.\\
\textit{Step 2}: Now, let $f \in D\left(\mathcal{E}_{\nu,\mu}\right)$ be such that $\|f\|_\infty <+\infty$. Let $\chi$ be an infinitely differentiable even function on $\mathbb{R}^d$ with compact support, such that $\chi(x) = 1$, for all $x \in \overline{\mathcal{B}(0,1)}$, such that $0 \leq \chi \leq 1$ and such that $\chi(x) = 0$, for all $x \in \mathcal{B}(0,2)^c$. Let $(f_R)_{R \geq 1}$ be defined, for all $R \geq 1$ and $\mu$-a.e. $x \in \mathbb{R}^d$, by 
\begin{align}\label{eq:smooth_trunc_sobolev_nl}
f_R(x) = \chi\left(\frac{x}{R}\right) f(x).  
\end{align}
Clearly, for all $R \geq 1$, $f_R$ is a bounded Borel measurable and compactly supported function with $f_R \in L^2(\mu)$. Moreover, 
\begin{align*}
\|f_R -f\|_{L^2(\mu)} \longrightarrow 0,
\end{align*}
as $R$ tends to $+\infty$. Now, for all $R \geq 1$ and $\mu \otimes \nu$-a.e. $(x,u) \in \mathbb{R}^d \times \mathbb{R}^d$, 
\begin{align*}
\left| \Delta_u(f_R)(x)\right| \leq \left| f(x+u) - f(x) \right| + |f(x)|\left| \chi\left(\frac{x+u}{R}\right) - \chi\left(\frac{x}{R}\right)\right|.
\end{align*}
Thus, for $u \in \mathcal{B}(0,1)$, the previous bound boils down to
\begin{align}\label{ineq:small_u}
\left| \Delta_u(f_R)(x)\right| \leq \left| f(x+u) - f(x) \right| + \frac{\|u\|}{R} \|\nabla(\chi)\|_{\infty} |f(x)|,
\end{align}
whereas, for $u \in \mathcal{B}(0,1)^c$, 
\begin{align}\label{ineq:big_u}
\left| \Delta_u(f_R)(x)\right| \leq \left| f(x+u) - f(x) \right| + 2 |f(x)|. 
\end{align}
Then, \eqref{ineq:small_u} and \eqref{ineq:big_u} ensure that $\mathcal{E}_{\nu,\mu}(f_R,f_R)<+\infty$, for all $R \geq 1$. Finally, for $\mu\otimes\nu$-a.e. $(x,u) \in \mathbb{R}^d \times \mathbb{R}^d$, 
\begin{align*}
\underset{R \longrightarrow +\infty}{\lim} \left| \Delta_u(f_R)(x) - \Delta_u(f)(x)\right| = 0, 
\end{align*}
and, 
\begin{align*}
\left| \Delta_u(f_R)(x) - \Delta_u(f)(x) \right| \leq 2\left| f(x+u) - f(x) \right| + \|u\|\|\nabla(\chi)\|_{\infty} |f(x)|\bbone_{\|u\|\leq1} + 2 |f(x)|\bbone_{\|u\|>1}. 
\end{align*}
Then, the Lebesgue dominated convergence theorem ensures that 
\begin{align*}
\underset{R\longrightarrow +\infty}\lim \mathcal{E}_{\nu,\mu}(f_R-f,f_R-f) = 0. 
\end{align*} 
This concludes the proof of the lemma.
\end{proof}

\begin{prop}\label{prop:markov_uniqueness_rot_inv}
Let $d \geq 1$ be an integer, let $\alpha \in (0,2)$, let $\mu_\alpha^{\operatorname{rot}}$ be the rotationally invariant $\alpha$-stable probability measure on $\mathbb{R}^d$ which Fourier transform is given by \eqref{eq:characteristic_function_rot_inv} with L\'evy measure denoted by $\nu_\alpha^{\operatorname{rot}}$. Let $D\left(\mathcal{E}_{\nu_\alpha^{\operatorname{rot}}, \mu_\alpha^{\operatorname{rot}}}\right)$ be defined by 
\begin{align}\label{eq:def_sobolev_space_rot_inv}
D\left(\mathcal{E}_{\nu_\alpha^{\operatorname{rot}}, \mu_\alpha^{\operatorname{rot}}}\right) = \{f \in L^2(\mu_\alpha^{\operatorname{rot}}) :\, \mathcal{E}_{\nu_\alpha^{\operatorname{rot}}, \mu_\alpha^{\operatorname{rot}}}(f,f)<+\infty\},
\end{align}
where, for all $f,g \in D\left(\mathcal{E}_{\nu_\alpha^{\operatorname{rot}}, \mu_\alpha^{\operatorname{rot}}}\right)$, 
\begin{align}
\mathcal{E}_{\nu_\alpha^{\operatorname{rot}}, \mu_\alpha^{\operatorname{rot}}}(f,g) = \int_{\mathbb{R}^d} \int_{\mathbb{R}^d} (f(x+u)-f(x))(g(x+u)-g(x))\nu_{\alpha}^{\operatorname{rot}}(du) \mu_\alpha^{\operatorname{rot}}(dx).
\end{align}
Then, 
\begin{align}\label{eq:markov_uniqueness_rot_inv}
D\left(\mathcal{E}_{\nu_\alpha^{\operatorname{rot}}, \mu_\alpha^{\operatorname{rot}}}\right) = \overline{\mathcal{C}_c^{\infty}(\mathbb{R}^d)}^{\| \cdot \|_{H+E}},
\end{align}
where, for all $f \in D\left(\mathcal{E}_{\nu_\alpha^{\operatorname{rot}}, \mu_\alpha^{\operatorname{rot}}}\right)$,
\begin{align}\label{eq:norm_sobolev_rot_invariant}
\| f \|^2_{H+E} = \|f\|^2_{L^2(\mu_\alpha^{\operatorname{rot}})} + \mathcal{E}_{\nu_\alpha^{\operatorname{rot}}, \mu_\alpha^{\operatorname{rot}}}(f,f). 
\end{align}
\end{prop}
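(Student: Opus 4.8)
The plan is to establish the nontrivial inclusion $D\left(\mathcal{E}_{\nu_\alpha^{\operatorname{rot}}, \mu_\alpha^{\operatorname{rot}}}\right) \subset \overline{\mathcal{C}_c^{\infty}(\bbr^d)}^{\| \cdot \|_{H+E}}$, the reverse inclusion being immediate since $\mathcal{C}_c^{\infty}(\bbr^d) \subset D\left(\mathcal{E}_{\nu_\alpha^{\operatorname{rot}}, \mu_\alpha^{\operatorname{rot}}}\right)$ and the latter is closed in $\|\cdot\|_{H+E}$ because $\mu_\alpha^{\operatorname{rot}} \ast \nu_\alpha^{\operatorname{rot}} << \mu_\alpha^{\operatorname{rot}}$ (by \cite[Lemma $4.1.$]{Chen_85}), so that $\left(\mathcal{E}_{\nu_\alpha^{\operatorname{rot}},\mu_\alpha^{\operatorname{rot}}}, D\left(\mathcal{E}_{\nu_\alpha^{\operatorname{rot}},\mu_\alpha^{\operatorname{rot}}}\right)\right)$ is a (closed) Dirichlet form. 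First I would reduce, via Lemma \ref{lem:reduction_csb}, the problem to showing that every bounded, Borel measurable, compactly supported $f \in D\left(\mathcal{E}_{\nu_\alpha^{\operatorname{rot}},\mu_\alpha^{\operatorname{rot}}}\right)$ lies in $\overline{\mathcal{C}_c^{\infty}(\bbr^d)}^{\|\cdot\|_{H+E}}$. For such an $f$, the natural move is mollification: set $f_\varepsilon = f \ast \rho_\varepsilon$ with $(\rho_\varepsilon)_{\varepsilon>0}$ the standard mollifiers from the proof of Lemma \ref{lem:reduction_cc}; each $f_\varepsilon \in \mathcal{C}_c^{\infty}(\bbr^d)$ (support grows only by $\varepsilon$), and one must check $\|f_\varepsilon - f\|_{H+E} \to 0$.

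The $L^2(\mu_\alpha^{\operatorname{rot}})$-convergence $\|f_\varepsilon - f\|_{L^2(\mu_\alpha^{\operatorname{rot}})} \to 0$ is standard once one knows $p_\alpha^{\operatorname{rot}} \in L^\infty(\bbr^d)$, since mollification converges in $L^2(\bbr^d, dx)$ for $f \in L^2(\bbr^d,dx) \cap L^\infty$ compactly supported, and $d\mu_\alpha^{\operatorname{rot}} \leq \|p_\alpha^{\operatorname{rot}}\|_\infty dx$ on a large ball while the tails are controlled by $\|f\|_\infty \to 0$ uniformly. The form part requires more care: writing $\Delta_u(f_\varepsilon - f)(x) = \left((\Delta_u f) \ast \rho_\varepsilon\right)(x) - \Delta_u f(x)$ (the difference operator commuting with translation, hence with convolution), one would like to invoke a dominated convergence argument in the joint variable $(x,u)$ against $\nu_\alpha^{\operatorname{rot}}(du)\mu_\alpha^{\operatorname{rot}}(dx)$. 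The pointwise convergence for a.e. $(x,u)$ follows from $L^1_{\operatorname{loc}}$-type Lebesgue differentiation once $g_u := \Delta_u f \in L^2(\bbr^d, p_\alpha^{\operatorname{rot}} dx)$ for $\mu_\alpha^{\operatorname{rot}}$-a.e.\ $u$, but the delicate point is the integrable domination: one needs a bound of the form $\int_{\bbr^{2d}} |\Delta_u(f_\varepsilon - f)(x)|^2 \nu_\alpha^{\operatorname{rot}}(du)\mu_\alpha^{\operatorname{rot}}(dx) \leq C \mathcal{E}_{\nu_\alpha^{\operatorname{rot}},\mu_\alpha^{\operatorname{rot}}}(f,f) < \infty$ uniformly in $\varepsilon$, exploiting that $\mu_\alpha^{\operatorname{rot}}$-translation absolute continuity $\mu_\alpha^{\operatorname{rot}} \ast \delta_{-z} << \mu_\alpha^{\operatorname{rot}}$ with Radon--Nikodym density $x \mapsto p_\alpha^{\operatorname{rot}}(x+z)/p_\alpha^{\operatorname{rot}}(x)$ bounded uniformly for $\|z\| \leq 1$ (by the two-sided bounds \eqref{eq:two_sided_bounds}, valid for all $\alpha \in (0,2)$ in the rotationally invariant case). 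Concretely, one applies Minkowski's integral inequality to $\Delta_u(f \ast \rho_\varepsilon) = \int \Delta_u f(\cdot - y) \rho_\varepsilon(y) dy$ inside the $L^2(\mu_\alpha^{\operatorname{rot}})$-norm and uses the uniform density bound to absorb the translation; this yields the required uniform control and lets dominated convergence close the argument on $\{\|u\| \leq 1\}$, while $\{\|u\| > 1\}$ is handled by the crude bound $4\|f_\varepsilon - f\|_\infty^2 \nu_\alpha^{\operatorname{rot}}(\mathcal{B}(0,1)^c)$, with $\|f_\varepsilon - f\|_\infty \to 0$ since $f$, being a uniform limit of continuous mollifications, may first be assumed continuous (or one replaces the $L^\infty$-bound by an $L^2$-bound plus tail control as in the proof of Lemma \ref{lem:reduction_csb}).

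The main obstacle I expect is precisely this uniform-in-$\varepsilon$ domination for the near-diagonal part of the energy: unlike the flat case $L^2(\bbr^d, dx)$ where convolution is a contraction, here one convolves inside a weighted $L^2$-norm, so one genuinely needs the quantitative comparison $\sup_{\|z\| \leq 1} \|p_\alpha^{\operatorname{rot}}(\cdot + z)/p_\alpha^{\operatorname{rot}}(\cdot)\|_\infty < \infty$, which is where the explicit two-sided estimates \eqref{eq:two_sided_bounds} and the heavy-tail (polynomial) decay of $p_\alpha^{\operatorname{rot}}$ enter essentially; this is what restricts the clean statement to the \emph{rotationally invariant} case and is exactly the reason the analogous question for general $\mu_X, \nu$ is left open in Remark \ref{rem:applications_forms}(ii). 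A secondary technical nuisance is justifying the a.e.\ pointwise convergence of $\Delta_u(f_\varepsilon)$ to $\Delta_u(f)$ jointly in $(x,u)$: this follows from Fubini together with the Lebesgue differentiation theorem applied to $x \mapsto \Delta_u f(x)$ for fixed $u$, noting that $f$ bounded compactly supported gives $\Delta_u f \in L^1_{\operatorname{loc}}(\bbr^d, dx)$ for every $u$, so the mollified difference converges a.e.\ $dx$ for a.e.\ (indeed every) $u$, hence $\mu_\alpha^{\operatorname{rot}} \otimes \nu_\alpha^{\operatorname{rot}}$-a.e.\ after invoking $\mu_\alpha^{\operatorname{rot}} \ast \nu_\alpha^{\operatorname{rot}} << \mu_\alpha^{\operatorname{rot}}$ to pass from $dx$-null to $\mu_\alpha^{\operatorname{rot}}$-null exceptional sets. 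Once these two points are settled, assembling the $L^2$ and energy estimates gives $\|f_\varepsilon - f\|_{H+E} \to 0$, and combined with Lemmas \ref{lem:reduction_csb} and \ref{lem:reduction_cc} (the latter ensuring $\mathcal{C}_c^\infty$ and $\mathcal{C}_b^1$ have the same closure) this proves \eqref{eq:markov_uniqueness_rot_inv}.
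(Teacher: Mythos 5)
Your strategy (reduce via Lemma \ref{lem:reduction_csb} to bounded compactly supported $f$, mollify, and control the energy using Jensen/Minkowski plus the translation bound $\sup_{\|z\|\leq 1}\|p_\alpha^{\operatorname{rot}}(\cdot+z)/p_\alpha^{\operatorname{rot}}(\cdot)\|_\infty<\infty$) is the same as the paper's, and you correctly identify the density comparison from \eqref{eq:two_sided_bounds} as the key structural input. However, your closing step has a genuine gap. You write that the bound $\int_{\bbr^{2d}}|\Delta_u(f_\varepsilon-f)(x)|^2\,\nu(du)\mu(dx)\leq C\mathcal{E}_{\nu_\alpha^{\operatorname{rot}},\mu_\alpha^{\operatorname{rot}}}(f,f)$ uniformly in $\varepsilon$ ``lets dominated convergence close the argument,'' but this is uniform boundedness of the integrals, not a pointwise $\nu\otimes\mu$-integrable dominant, which is what the dominated convergence theorem actually requires. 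As phrased, the step is logically invalid: a sequence can have uniformly bounded integrals, converge pointwise to zero, and fail to have its integrals tend to zero. Relatedly, your claim that $\|f_\varepsilon-f\|_\infty\to 0$ is false for a bounded Borel $f$ (mollification does not converge uniformly to a discontinuous function), so the $\{\|u\|>1\}$ piece also needs the $L^2$-route you only parenthetically gesture at.

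The gap is fixable, but the fix is not the joint-variable dominated convergence you invoke. The clean way to make your direct approach work is to apply dominated convergence in the $u$-variable alone: for each fixed $u$ one shows $\|\Delta_u f_\varepsilon-\Delta_u f\|_{L^2(\mu_\alpha^{\operatorname{rot}})}\to 0$ (since $\Delta_u f$ is bounded and compactly supported, hence in $L^2(dx)$, translation is continuous on $L^2(dx)$, and $p_\alpha^{\operatorname{rot}}$ is bounded above), and one has the $u$-integrable dominant $\|\Delta_u(f_\varepsilon-f)\|_{L^2(\mu_\alpha^{\operatorname{rot}})}^2\leq 2(C_{\alpha,d}2^{\alpha+d}+1)\,\|\Delta_u f\|^2_{L^2(\mu_\alpha^{\operatorname{rot}})}$ from Minkowski plus the translation estimate; then $\mathcal{E}(f_\varepsilon-f,f_\varepsilon-f)=\int\|\Delta_u(f_\varepsilon-f)\|^2_{L^2(\mu_\alpha^{\operatorname{rot}})}\nu(du)\to 0$. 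The paper takes a different and somewhat more robust route: it establishes only the uniform energy bound $\sup_n\mathcal{E}(f_{\varepsilon_n},f_{\varepsilon_n})<\infty$ together with $\|f_{\varepsilon_n}-f\|_{L^2(\mu_\alpha^{\operatorname{rot}})}\to 0$, and then invokes the Banach--Saks property of the Hilbert space $(D(\mathcal{E}),\|\cdot\|_{H+E})$ to extract a subsequence whose Ces\`aro averages (still in $\mathcal{C}_c^\infty(\bbr^d)$) converge strongly in $\|\cdot\|_{H+E}$; the limit is identified with $f$ by its $L^2(\mu_\alpha^{\operatorname{rot}})$-limit. This sidesteps the dominated convergence delicacy entirely, at the mild cost of concluding only membership in the closure rather than $\|\cdot\|_{H+E}$-convergence of the mollifiers themselves.
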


\begin{proof}
Thanks to Lemma \ref{lem:reduction_csb}, let us consider $f \in D\left(\mathcal{E}_{\nu_\alpha^{\operatorname{rot}}, \mu_\alpha^{\operatorname{rot}}}\right)$  bounded and with compact support which is denoted by $K_f$ in the sequel. Let $(\rho_\varepsilon)_{\varepsilon>0}$ be a sequence of standard mollifiers as in the proof of lemma \ref{lem:reduction_cc} and let $(f_\varepsilon)_{\varepsilon>0}$ be defined, for all $\varepsilon>0$ and all $x \in \mathbb{R}^d$, by
\begin{align*}
f_\varepsilon(x) = \int_{\mathbb{R}^d} \rho_\varepsilon(x-z)f(z)dz. 
\end{align*}
Since $f$ is bounded on $\mathbb{R}^d$ and with compact support, it is clear that $f \in L^2(\mathbb{R}^d,dx)$. Moreover, for all $\varepsilon>0$
\begin{align*}
f_\varepsilon \in \mathcal{C}_c^{\infty}(\mathbb{R}^d), \quad \|f_\varepsilon\|_\infty \leq \|f\|_{\infty}, \quad f_\varepsilon \longrightarrow f, 
\end{align*}
in $L^2(\mathbb{R}^d,dx)$, as $\varepsilon$ tends to $0$. Thus, along some subsequence $(\varepsilon_n)_{n \geq 1}$, $\ell$-a.e. $x \in \mathbb{R}^d$
\begin{align*}
f_{\varepsilon_n}(x) \longrightarrow f(x), \quad n \longrightarrow +\infty,
\end{align*}
where $\ell$ denotes the $d$-dimensional Lebesgue measure on $\mathbb{R}^d$. Since $\mu_\alpha^{\operatorname{rot}} << \ell$, $\mu_\alpha^{\operatorname{rot}}$-a.e. $x \in \mathbb{R}^d$, 
\begin{align}\label{eq:convergence_almost_everywhere}
f_{\varepsilon_n}(x) \longrightarrow f(x), \quad n \longrightarrow +\infty. 
\end{align}
Moreover, for all $n \geq 1$ and $\mu_\alpha^{\operatorname{rot}}$-a.e. $x \in \mathbb{R}^d$, 
\begin{align}\label{eq:domination}
\left| f_{\varepsilon_n}(x) - f(x) \right| \leq 2\|f\|_{\infty}.
\end{align}
Then, thanks to \eqref{eq:convergence_almost_everywhere} and to \eqref{eq:domination}, the Lebesgue dominated convergence theorem ensures that
\begin{align}\label{eq:L2_convergence}
\|f_{\varepsilon_n} - f\|_{L^2(\mu_\alpha^{\operatorname{rot}})} \longrightarrow 0,
\end{align} 
 as $n$ tends to $+\infty$. Now, for all $n \geq 1$, 
 \begin{align*}
 \mathcal{E}_{\nu_\alpha^{\operatorname{rot}}, \mu_\alpha^{\operatorname{rot}}} (f_{\varepsilon_n} , f_{\varepsilon_n}) & = \int_{\mathbb{R}^d} \int_{\mathcal{B}(0,1)^c} \left| f_{\varepsilon_n}(x+u) - f_{\varepsilon_n}(x)\right|^2 \nu_{\alpha}^{\operatorname{rot}}(du) \mu_\alpha^{\operatorname{rot}}(dx) \\
 &\quad\quad + \int_{\mathbb{R}^d} \int_{\mathcal{B}(0,1)} \left| f_{\varepsilon_n}(x+u) - f_{\varepsilon_n}(x)\right|^2 \nu_{\alpha}^{\operatorname{rot}}(du) \mu_\alpha^{\operatorname{rot}}(dx). 
 \end{align*}
 For the first term on the right-hand side of the previous equality, for all $n \geq 1$, 
 \begin{align*}
 \int_{\mathbb{R}^d} \int_{\mathcal{B}(0,1)^c} \left| f_{\varepsilon_n}(x+u) - f_{\varepsilon_n}(x)\right|^2 \nu_{\alpha}^{\operatorname{rot}}(du) \mu_\alpha^{\operatorname{rot}}(dx) \leq 4 \|f\|^2_{\infty} \int_{\mathcal{B}(0,1)^c} \nu_\alpha^{\operatorname{rot}}(du) <+\infty. 
 \end{align*}
 For the second term, using the definition of $f_{\varepsilon_n}$ and Jensen's inequality, for all $n \geq 1$, 
 \begin{align*}
 \int_{\mathbb{R}^d} \int_{\mathcal{B}(0,1)} \left| f_{\varepsilon_n}(x+u) - f_{\varepsilon_n}(x)\right|^2 \nu_{\alpha}^{\operatorname{rot}}(du) \mu_\alpha^{\operatorname{rot}}(dx) & \leq \int_{\mathbb{R}^d} \int_{\mathcal{B}(0,1)} \nu_{\alpha}^{\operatorname{rot}}(du) \mu_\alpha^{\operatorname{rot}}(dx) \\
 &\quad \times \left| \int_{\mathbb{R}^d} (f(x+u-z)-f(x-z)) \rho_{\varepsilon_n}(z)dz\right|^2 ,  \\
 & \leq \int_{\mathbb{R}^d} \int_{\mathcal{B}(0,1)} \nu_{\alpha}^{\operatorname{rot}}(du) p_\alpha^{\operatorname{rot}}(x)dx \\
 &\quad \times \int_{\mathbb{R}^d} (f(x+u-z)-f(x-z))^2 \rho_{\varepsilon_n}(z)dz , \\
 & \leq \int_{\mathbb{R}^d} \int_{\mathcal{B}(0,1)} \nu_{\alpha}^{\operatorname{rot}}(du) \frac{\mu_\alpha^{\operatorname{rot}}(dy)}{p_\alpha^{\operatorname{rot}}(y)}  \\
 &\quad \times \int_{\mathbb{R}^d} (f(y+u)-f(y))^2 p_\alpha^{\operatorname{rot}}(y+z) \rho_{\varepsilon_n}(z)dz.
 \end{align*}
Now, for all $x \in \mathbb{R}^d$ and all $z \in \mathbb{R}^d$, 
\begin{align*}
\frac{p_\alpha^{\operatorname{rot}}(x+z)}{p_\alpha^{\operatorname{rot}}(x)} \leq C_{\alpha,d} \left(1+\|z\|\right)^{\alpha+d},
\end{align*}
for some positive constant $C_{\alpha,d}>0$ depending only on $\alpha$ and on $d$. Thus, for all $n \geq 1$, 
\begin{align*}
 \int_{\mathbb{R}^d} \int_{\mathcal{B}(0,1)} \left| f_{\varepsilon_n}(x+u) - f_{\varepsilon_n}(x)\right|^2 \nu_{\alpha}^{\operatorname{rot}}(du) \mu_\alpha^{\operatorname{rot}}(dx) & \leq \int_{\mathbb{R}^d} \int_{\mathcal{B}(0,1)} (f(y+u)-f(y))^2 \nu_{\alpha}^{\operatorname{rot}}(du)\mu_\alpha^{\operatorname{rot}}(dy) \\
 &\quad \quad \times C_{\alpha,d} \int_{\mathbb{R}^d} \left(1+\|z\|\right)^{\alpha+d} \rho_{\varepsilon_n}(z)dz \\
 & \leq \int_{\mathbb{R}^d} \int_{\mathcal{B}(0,1)} (f(y+u)-f(y))^2 \nu_{\alpha}^{\operatorname{rot}}(du)\mu_\alpha^{\operatorname{rot}}(dy) \\
 &\quad \quad \times C_{\alpha,d} \int_{\mathbb{R}^d} \left(1+ \varepsilon_n \|z\|\right)^{\alpha+d} \rho(z)dz \\
 & \leq C_{\alpha,d,\rho} \mathcal{E}_{\nu_\alpha^{\operatorname{rot}}, \mu_\alpha^{\operatorname{rot}}}(f,f),
\end{align*}
for some $C_{\alpha,d,\rho}>0$ depending only on $\alpha$, on $d$ and on $\rho$. Thus, 
\begin{align}\label{eq:uniform_boundedness_seminorm}
\underset{n \geq 1}{\sup}\, \left(\mathcal{E}_{\nu_\alpha^{\operatorname{rot}}, \mu_\alpha^{\operatorname{rot}}}(f_{\varepsilon_n}, f_{\varepsilon_n}) \right) < + \infty. 
\end{align} 
Then, by the Banach-Saks property, there exists a further subsequence $(\eta_n)_{n\geq 1}$ of $(\varepsilon_n)_{n \geq 1}$ such that, 
\begin{align*}
\frac{1}{N} \sum_{n =1}^N f_{\eta_n} \longrightarrow \tilde{f}, \quad N \longrightarrow +\infty,
\end{align*}
with respect to the norm $\|\cdot\|_{H+E}$, where $\tilde{f} \in D\left(\mathcal{E}_{\nu_\alpha^{\operatorname{rot}} , \mu_\alpha^{\operatorname{rot}}}\right)$. In particular, 
\begin{align*}
\frac{1}{N} \sum_{n =1}^N f_{\eta_n} \longrightarrow \tilde{f}, \quad N \longrightarrow +\infty,
\end{align*}
in $L^2(\mu_\alpha^{\operatorname{rot}})$. Thanks to \eqref{eq:L2_convergence}, $f = \tilde{f}$. This concludes the proof of the proposition. 
\end{proof}
\noindent
The next result proves that the ``carr\'e de Mehler" semigroup defined in \eqref{eq:carre_mehler_sg} is indeed the semigroup generated by the closure of the bilinear form $(\mathcal{E}_{\nu_\alpha, \mu_\alpha}, \mathcal{C}_b^1(\mathbb{R}^d))$ under an uniform boundedness condition on the logarithmic derivative of the positive Lebesgue density $p_\alpha$.

\begin{thm}\label{thm:thesame1}
Let $d \geq 1$ be an integer and let $\alpha \in (1,2)$.~Let $\mu_\alpha$ be a non-degenerate symmetric $\alpha$-stable probability measure on $\mathbb{R}^d$ with L\'evy measure $\nu_\alpha$ and with positive Lebesgue density $p_\alpha$ such that
\begin{align}\label{eq:cond_uniform_boundedness}
\left\|  \frac{\nabla(p_\alpha)}{p_\alpha} \right\|_{\infty} <+\infty.
\end{align}
Let $\left(\mathcal{P}_t\right)_{t\geq 0}$ and $(P_t)_{t\geq 0}$ be as in Remark \ref{rem:discussion_condition_constant_weight_function}. Then, both semigroups coincide. 
\end{thm}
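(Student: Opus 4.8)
The plan is to prove that the two $\mathcal{C}_0$-semigroups of symmetric contractions $(\mathcal{P}_t)_{t\geq 0}$ and $(P_t)_{t\geq 0}$ generate the same closed bilinear form on $L^2(\mu_\alpha)$; by the one-to-one correspondence between such semigroups, their generators and their closed forms, this yields $(\mathcal{P}_t)_{t\geq 0} = (P_t)_{t\geq 0}$. By construction (Remark \ref{rem:markov_uniqueness} together with \eqref{eq:closure_compact_support_rep}), the form of $(P_t)_{t\geq 0}$ is the minimal closed extension $\left(\overline{\mathcal{E}_{\nu_\alpha, \mu_\alpha}}, D\left(\overline{\mathcal{E}_{\nu_\alpha, \mu_\alpha}}\right)\right)$ of $\left(\mathcal{E}_{\nu_\alpha, \mu_\alpha}, \mathcal{C}_c^{\infty}(\bbr^d)\right)$ (equivalently, by Lemma \ref{lem:reduction_cc}, of $\left(\mathcal{E}_{\nu_\alpha, \mu_\alpha}, \mathcal{C}_b^1(\bbr^d)\right)$). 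Writing $\left(\mathcal{E}^{\mathcal{P}}, D\left(\mathcal{E}^{\mathcal{P}}\right)\right)$ for the Dirichlet form of $(\mathcal{P}_t)_{t\geq 0}$, it is then enough to establish the two form-inclusions $\overline{\mathcal{E}_{\nu_\alpha, \mu_\alpha}} \subseteq \mathcal{E}^{\mathcal{P}}$ and $\mathcal{E}^{\mathcal{P}} \subseteq \overline{\mathcal{E}_{\nu_\alpha, \mu_\alpha}}$.

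For the first inclusion I would start by identifying the generator $\mathcal{L}_\alpha$ of $(\mathcal{P}_t)_{t\geq 0}$ on the Schwartz space. As in \cite[Section 2]{AH22_5} and in the proof of Proposition \ref{lem:dual_Cauchy_SG} for the Cauchy case, a duality argument yields an explicit Mehler-type representation of $(P_t^{\nu_\alpha})^*$, namely, for $f \in \mathcal{S}(\bbr^d)$ and $t > 0$,
$$(P_t^{\nu_\alpha})^*(f)(x) = \frac{1}{p_\alpha(x)} \int_{\bbr^d} f(y)\, p_\alpha(y)\, p_\alpha\left(\frac{x - y e^{-t}}{(1 - e^{-\alpha t})^{1/\alpha}}\right) \frac{dy}{(1-e^{-\alpha t})^{d/\alpha}},$$
from which one reads off $(\mathcal{L}^{\nu_\alpha})^*$ and then the generator $\mathcal{L}_\alpha$ of $(\mathcal{P}_t)_{t\geq 0}$ on $\mathcal{S}(\bbr^d)$, built out of $\mathcal{L}^{\nu_\alpha}$ and its $L^2(\mu_\alpha)$-adjoint exactly as $\mathcal{L}_1$ is built in Proposition \ref{lem:dual_Cauchy_SG}. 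The hypothesis $\|\nabla(p_\alpha)/p_\alpha\|_{\infty} < +\infty$ is precisely what makes $\mathcal{L}^{\nu_\alpha}$ and $(\mathcal{L}^{\nu_\alpha})^*$ map $\mathcal{S}(\bbr^d)$ into $L^2(\mu_\alpha)$, so that $\mathcal{S}(\bbr^d) \subseteq D(\mathcal{L}_\alpha) \subseteq D\left(\mathcal{E}^{\mathcal{P}}\right)$. An integration by parts in the radial variable, justified despite the singularity of $\nu_\alpha$ exactly as in Lemma \ref{lem:alternative_representations}, then gives $\mathcal{E}_{\nu_\alpha, \mu_\alpha}(f, g) = \langle (-\mathcal{L}_\alpha)(f)\, ;\, g\rangle_{L^2(\mu_\alpha)}$ for all $f \in \mathcal{S}(\bbr^d)$ and all $g \in \mathcal{C}_b^1(\bbr^d)$, hence $\mathcal{E}^{\mathcal{P}}$ agrees with $\mathcal{E}_{\nu_\alpha, \mu_\alpha}$ on $\mathcal{C}_c^{\infty}(\bbr^d)$; since $\mathcal{E}^{\mathcal{P}}$ is closed and $\overline{\mathcal{E}_{\nu_\alpha, \mu_\alpha}}$ is the smallest closed extension of this data, $\overline{\mathcal{E}_{\nu_\alpha, \mu_\alpha}} \subseteq \mathcal{E}^{\mathcal{P}}$.

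For the reverse inclusion I would combine two facts. First, the identity $\mathcal{E}^{\mathcal{P}}(f, f) = \int_{\bbr^d}\int_{\bbr^d}\|\Delta_u(f)(x)\|^2 \nu_\alpha(du)\mu_\alpha(dx)$ for $f$ in the core $\mathcal{S}(\bbr^d)$ shows that the carré de Mehler form is of pure-jump type with Lévy kernel $\nu_\alpha(du)\mu_\alpha(dx)$ and carries neither a diffusion nor a killing part; this forces $D\left(\mathcal{E}^{\mathcal{P}}\right) \subseteq D\left(\mathcal{E}_{\nu_\alpha, \mu_\alpha}\right)$, the maximal form domain $\{f \in L^2(\mu_\alpha):\, \mathcal{E}_{\nu_\alpha, \mu_\alpha}(f,f) < +\infty\}$, with $\mathcal{E}^{\mathcal{P}}$ and $\mathcal{E}_{\nu_\alpha, \mu_\alpha}$ agreeing there. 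Second, I would invoke the Markov-uniqueness statement $D\left(\mathcal{E}_{\nu_\alpha, \mu_\alpha}\right) = \overline{\mathcal{C}_c^{\infty}(\bbr^d)}^{\|\cdot\|_{H+E}}$, which is the direct analogue of Proposition \ref{prop:markov_uniqueness_rot_inv}: its proof carries over once one has a uniform density-ratio bound, and this is exactly what $\|\nabla(p_\alpha)/p_\alpha\|_{\infty} < +\infty$ provides, since integrating the logarithmic gradient along segments gives $p_\alpha(x+z)/p_\alpha(x) \leq \exp\left(\|\nabla(p_\alpha)/p_\alpha\|_{\infty}\|z\|\right)$ uniformly in $x$, which replaces the polynomial bound \eqref{eq:two_sided_bounds} (the mollifiers in that proof being compactly supported, this locally uniform control suffices), while Lemmas \ref{lem:reduction_cc} and \ref{lem:reduction_csb} take care of the reduction to smooth compactly supported functions. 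Combining the two facts with the first inclusion gives $D\left(\mathcal{E}^{\mathcal{P}}\right) = \overline{\mathcal{C}_c^{\infty}(\bbr^d)}^{\|\cdot\|_{H+E}}$ and $\mathcal{E}^{\mathcal{P}} = \overline{\mathcal{E}_{\nu_\alpha, \mu_\alpha}}$, whence $(\mathcal{P}_t)_{t\geq 0} = (P_t)_{t\geq 0}$.

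The main obstacle is the precise computation of $\mathcal{L}_\alpha$ on $\mathcal{S}(\bbr^d)$ and the control of the attendant singular integrals (those entering $(\mathcal{L}^{\nu_\alpha})^*$ and the integration by parts) in the general, not rotationally invariant, setting, where $p_\alpha$ is available only through classical two-sided $\alpha$-stable density estimates rather than an explicit formula; closely related, and equally delicate, is the rigorous identification of $\mathcal{E}^{\mathcal{P}}$ as a pure-jump Dirichlet form with no killing term, i.e. the inclusion $D\left(\mathcal{E}^{\mathcal{P}}\right) \subseteq D\left(\mathcal{E}_{\nu_\alpha, \mu_\alpha}\right)$. The standing hypothesis $\|\nabla(p_\alpha)/p_\alpha\|_{\infty} < +\infty$ is the single assumption that makes all these integrability and approximation arguments go through.
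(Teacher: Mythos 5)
Your strategy---showing both semigroups generate the same closed form---is a natural one, and it is genuinely different from the proof in the paper. The forward inclusion $\overline{\mathcal{E}_{\nu_\alpha,\mu_\alpha}} \subseteq \mathcal{E}^{\mathcal{P}}$ is fine: you identify a common core $\mathcal{C}_c^\infty(\bbr^d)$, show $\mathcal{S}(\bbr^d)\subseteq D(\mathcal{L}_\alpha)$ and that the two forms agree there, and use closedness of $\mathcal{E}^{\mathcal{P}}$ together with minimality of the closure. The Markov-uniqueness analogue of Proposition \ref{prop:markov_uniqueness_rot_inv} also goes through with the exponential ratio bound $p_\alpha(x+z)/p_\alpha(x)\leq \exp\left(\|\nabla p_\alpha/p_\alpha\|_\infty\|z\|\right)$, since the mollifiers in that proof are compactly supported, as you observe.

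The gap is in the step you yourself flag as delicate: the assertion that ``the carr\'e de Mehler form is of pure-jump type with L\'evy kernel $\nu_\alpha(du)\mu_\alpha(dx)$ and carries neither a diffusion nor a killing part; this forces $D(\mathcal{E}^{\mathcal{P}})\subseteq D(\mathcal{E}_{\nu_\alpha,\mu_\alpha})$.'' Nothing forces this. Knowing that $\mathcal{E}^{\mathcal{P}}$ agrees with $\mathcal{E}_{\nu_\alpha,\mu_\alpha}$ on $\mathcal{C}_c^\infty(\bbr^d)$ determines the Beurling--Deny data only on that core; it does not, by itself, tell you that arbitrary $f\in D(\mathcal{E}^{\mathcal{P}})$ satisfy $\int\int|f(x+u)-f(x)|^2\nu_\alpha(du)\mu_\alpha(dx)<+\infty$. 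A Fatou argument would need a sequence $f_n\in\mathcal{C}_c^\infty(\bbr^d)$ converging to $f$ in the $\|\cdot\|_{H+\mathcal{E}^{\mathcal{P}}}$-norm, i.e.\ it would need $\mathcal{C}_c^\infty(\bbr^d)$ to be a core for $\mathcal{E}^{\mathcal{P}}$---but that is precisely what you are trying to establish, so the argument is circular. In general, a closed Markovian extension of a pre-Dirichlet form need not be sandwiched between the minimal closure and the maximal form domain; this is exactly the kind of question that the theory of Silverstein extensions and reflected Dirichlet spaces addresses, and it is not a formal consequence of the local structure of the form on a dense set.

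The paper avoids this issue entirely by adapting Fuhrman's resolvent argument: instead of comparing whole form domains, it fixes a total family $\{\cos\langle\xi;\cdot\rangle,\sin\langle\xi;\cdot\rangle\}_{\xi\in\bbr^d}$ and shows that the $(\mathcal{P}_t)$-resolvent $v_\alpha=(\beta E-\mathcal{L}_\alpha)^{-1}\varphi_\xi^j$ lands in $\mathcal{C}_b^1(\bbr^d)$. The hypothesis $\|\nabla p_\alpha/p_\alpha\|_\infty<+\infty$ enters precisely here, through the gradient bound
\begin{align*}
\left\|\nabla (P_t^{\nu_\alpha})^*(f)\right\|_\infty\leq\left\|\frac{\nabla p_\alpha}{p_\alpha}\right\|_\infty\|f\|_\infty\left(1+\frac{1}{(1-e^{-\alpha t})^{1/\alpha}}\right),
\end{align*}
which is integrable against $e^{-\beta t}dt$ because $\alpha\in(1,2)$. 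Since $\mathcal{C}_b^1(\bbr^d)\subset D(\overline{\mathcal{E}_{\nu_\alpha,\mu_\alpha}})$ by Lemma \ref{lem:reduction_cc}, one can verify that $v_\alpha$ solves the variational problem \eqref{eq:uniqueness_u_alpha} that uniquely characterises the $(P_t)$-resolvent, and conclude by density. This sidesteps any global control on $D(\mathcal{E}^{\mathcal{P}})$. If you want to rescue your form-theoretic route, you would need either a genuine Silverstein-extension argument for the carr\'e de Mehler form, or some independent regularity theorem showing that the resolvent range of $\mathcal{L}_\alpha$ lies in $D(\mathcal{E}_{\nu_\alpha,\mu_\alpha})$---and the latter is essentially what the paper's proof delivers.
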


\begin{proof}
The proof is an adaptation of the proof of \cite[Theorem $3.6$]{Fuhrman_95} for the non-degenerate symmetric $\alpha$-stable probability measure on $\mathbb{R}^d$ with $\alpha \in (1,2)$.~First, recall that, by \cite[Lemma $2.1$]{AH22_5}, for all $f \in \mathcal{S}(\mathbb{R}^d)$, all $t>0$ and all $x \in \mathbb{R}^d$, 
\begin{align}\label{eq:integral_represen_dual}
(P^{\nu_\alpha}_t)^*(f)(x) = \frac{1}{p_\alpha(x)} \int_{\mathbb{R}^d} f(y)p_\alpha(y) p_\alpha \left(\dfrac{x - y e^{-t}}{\left(1-e^{-\alpha t}\right)^{\frac{1}{\alpha}}}\right) \frac{dy}{\left(1-e^{-\alpha t}\right)^{\frac{d}{\alpha}}}
\end{align}
and, for all $x \in \mathbb{R}^d$ and all $t>0$,  
\begin{align*}
\frac{1}{p_\alpha(x)} \int_{\mathbb{R}^d} p_\alpha(y) p_\alpha \left(\dfrac{x - y e^{-t}}{\left(1-e^{-\alpha t}\right)^{\frac{1}{\alpha}}}\right) \frac{dy}{\left(1-e^{-\alpha t}\right)^{\frac{d}{\alpha}}} = 1.
\end{align*}
Now, for all $\xi \in \mathbb{R}^d$, let $\varphi^1_\xi$ and $\varphi^2_\xi$ be the two functions defined, for all $x \in \mathbb{R}^d$, by
\begin{align*}
\varphi^1_\xi(x) = \cos\left(\langle \xi ; x \rangle\right) , \quad \varphi^2_\xi(x) = \sin \left( \langle \xi ;x \rangle \right). 
\end{align*} 
Next, let $f \in L^2(\mu_\alpha)$ be such that $\langle f ; \varphi^j_\xi \rangle_{L^2(\mu_\alpha)} = 0$, for all $j \in \{1,2\}$ and all $\xi \in \mathbb{R}^d$. Then, 
\begin{align*}
\int_{\mathbb{R}^d} f(x) e^{i \langle x ; \xi \rangle} \mu_\alpha(dx) = 0. 
\end{align*} 
This implies that $f = 0$ $\mu_\alpha$-a.e. Let us denote by $\mathcal{L}_\alpha$ and by $L_\alpha$ the respective $L^2(\mu_\alpha)$-generators of the semigroups $(\mathcal{P}_t)_{t \geq 0}$ and $(P_t)_{t \geq 0}$. Since the family of functions $\{\varphi^1_\xi , \varphi^2_\xi ,\, \xi \in \mathbb{R}^d\}$ is total in $L^2(\mu_\alpha)$, in order to prove the theorem it is sufficient to prove that: for all $\beta >0$, all $\xi \in \mathbb{R}^d$ and all $j \in \{1,2\}$, 
\begin{align}\label{eq:resolvent_equality}
\left(\beta E - \mathcal{L}_\alpha\right)^{-1} \varphi^j_\xi = \left(\beta E - L_\alpha\right)^{-1} \varphi^j_\xi,
\end{align}
where $E$ is the identity operator. In the sequel, let us restrict to the case $j = 1$ since the other case can be treated similarly. Note that $\left| \left(P_t^{\nu_\alpha}\right)^* \circ \left( P_t^{\nu_\alpha} \right) \left(\varphi_\xi^1\right)(x) \right| \leq 1$, for all $x \in \mathbb{R}^d$, all $\xi \in \mathbb{R}^d$ and all $t>0$. At this point, let us consider the function $v_\alpha$ defined, for all $x \in \mathbb{R}^d$, all $\xi \in \mathbb{R}^d$ and all $\beta >0$, by
\begin{align}\label{eq:resolvent_fourier}
v_\alpha(x) = \int_0^{+\infty} e^{- \beta t} \left(P_{\frac{t}{\alpha}}^{\nu_\alpha}\right)^* \circ \left( P_{\frac{t}{\alpha}}^{\nu_\alpha} \right) \left(\varphi_\xi^1\right)(x) dt.
\end{align} 
Then, by definition of the resolvent operator, for all $x \in \mathbb{R}^d$, 
\begin{align*}
v_\alpha(x) = \left(\beta E - \mathcal{L}_\alpha\right)^{-1}( \varphi^1_\xi)(x). 
\end{align*}
Next, let $u_\alpha$ be the function defined by 
\begin{align}
u_\alpha =  \left(\beta E - L_\alpha\right)^{-1}( \varphi^1_\xi).
\end{align}
Again, by definition of the resolvent operator, $u_\alpha$ is the unique element of $D \left(\overline{\mathcal{E}_{\nu_\alpha, \mu_\alpha}} \right)$ such that, for all $\psi \in \mathcal{C}_c^{\infty}\left(\mathbb{R}^d\right)$, 
\begin{align}\label{eq:uniqueness_u_alpha}
\overline{\mathcal{E}_{\nu_\alpha, \mu_\alpha}}\left(u_\alpha ; \psi \right) + \beta \langle u_\alpha; \psi \rangle_{L^2(\mu_\alpha)} = \langle \varphi^1_{\xi} ; \psi \rangle_{L^2(\mu_\alpha)}. 
\end{align}
So, it is sufficient to prove that $v_ \alpha \in D \left(\overline{\mathcal{E}_{\nu_\alpha, \mu_\alpha}} \right)$ and that, for all $\psi \in \mathcal{C}_c^{\infty}\left(\mathbb{R}^d\right)$, 
\begin{align}\label{eq:variationnal_problem}
\overline{\mathcal{E}_{\nu_\alpha, \mu_\alpha}}\left(v_\alpha ; \psi \right) + \beta \langle v_\alpha; \psi \rangle_{L^2(\mu_\alpha)} = \langle \varphi^1_{\xi} ; \psi \rangle_{L^2(\mu_\alpha)}. 
\end{align}
Recall that, by construction, $D \left(\overline{\mathcal{E}_{\nu_\alpha, \mu_\alpha}} \right) = \overline{\mathcal{C}_b^{1}(\mathbb{R}^d)}^{\| \cdot \|_{H+E}}$, where $\mathcal{C}_b^1(\mathbb{R}^d)$ is the set of continuously differentiable functions which are bounded on $\mathbb{R}^d$ together with their first partial derivatives. Now, for all $x \in \mathbb{R}^d$, 
\begin{align*}
\left| v_\alpha(x) \right| \leq \int_0^{+\infty} e^{- \beta t} \left| \left(P_{\frac{t}{\alpha}}^{\nu_\alpha}\right)^* \circ \left( P_{\frac{t}{\alpha}}^{\nu_\alpha} \right) \left(\varphi_\xi^1\right)(x) \right| dt \leq \frac{1}{\beta}. 
\end{align*}
Moreover, for all $f \in \mathcal{C}_b^1(\mathbb{R}^d)$, all $x \in \mathbb{R}^d$ and all $t\geq 0$, 
\begin{align*}
\nabla P_t^{\nu_\alpha}(f)(x) = e^{-t} P_t^{\nu_\alpha} \left(\nabla(f)\right)(x). 
\end{align*}
Finally, for all $t>0$ and all $x \in \mathbb{R}^d$, 
\begin{align*}
\nabla \left((P_t^{\nu_\alpha})^*(f)(x)\right) & = - \frac{\nabla(p_\alpha)(x)}{p_\alpha(x)^2} \int_{\mathbb{R}^d} f(y) p_\alpha(y) p_\alpha\left(\dfrac{x-ye^{-t}}{\left(1-e^{-\alpha t}\right)^{\frac{1}{\alpha}}}\right) \frac{dy}{\left(1-e^{-\alpha t}\right)^{\frac{d}{\alpha}}} \\
&\quad\quad + \frac{1}{(1-e^{-\alpha t})^{\frac{1}{\alpha}}p_\alpha(x)} \int_{\mathbb{R}^d} f(y)p_\alpha(y) \nabla\left(p_\alpha\right)\left(\dfrac{x-ye^{-t}}{\left(1-e^{-\alpha t}\right)^{\frac{1}{\alpha}}}\right) \frac{dy}{\left(1-e^{-\alpha t}\right)^{\frac{d}{\alpha}}}.
\end{align*}
Thus, denoting by $A$ and $B$ the terms on the right-hand side of the previous equality, for all $x \in \mathbb{R}^d$, 
\begin{align*}
\left\| A(x) \right\| & \leq \left\| \frac{\nabla(p_\alpha)(x)}{p_\alpha(x)} \right\| \frac{\|f\|_{\infty}}{p_\alpha(x)} \int_{\mathbb{R}^d} p_\alpha(y) p_\alpha\left(\dfrac{x-ye^{-t}}{\left(1-e^{-\alpha t}\right)^{\frac{1}{\alpha}}}\right) \frac{dy}{\left(1-e^{-\alpha t}\right)^{\frac{d}{\alpha}}} , \\
& \leq \left\| \frac{\nabla(p_\alpha)}{p_\alpha} \right\|_{\infty} \|f\|_{\infty},
\end{align*}
and
\begin{align*}
\| B(x) \| & \leq  \frac{ \|f\|_{\infty}}{\left(1-e^{-\alpha t}\right)^{\frac{1}{\alpha}}}\frac{1}{p_\alpha(x)} \int_{\mathbb{R}^d} p_\alpha(y) \bigg\|\nabla(p_\alpha) \left( \frac{x-ye^{-t}}{\left(1-e^{-\alpha t}\right)^{\frac{1}{\alpha}}}\right) \bigg\| \frac{dy}{\left(1-e^{-\alpha t}\right)^{\frac{d}{\alpha}}} , \\
& \leq \frac{ \|f\|_{\infty}}{\left(1-e^{-\alpha t}\right)^{\frac{1}{\alpha}}} \left\| \frac{\nabla(p_\alpha)}{p_\alpha} \right\|_{\infty}.
\end{align*}
Thus, for all $f \in \mathcal{C}_b^1(\mathbb{R}^d)$ and all $t>0$, 
\begin{align}\label{ineq:supremum_subcommutation}
\left\| \nabla (P_t^{\nu_\alpha})^*(f) \right\|_{\infty} \leq \left\| \frac{\nabla(p_\alpha)}{p_\alpha} \right\|_{\infty} \|f\|_{\infty} \left(1+\frac{1}{\left(1-e^{-\alpha t}\right)^{\frac{1}{\alpha}}} \right). 
\end{align}
From the previous estimate, for all $t >0$, all $\xi \in \mathbb{R}^d$ and all $x \in \mathbb{R}^d$, 
\begin{align*}
\left\| \nabla  \left(\left(P_t^{\nu_\alpha}\right)^* \circ P_t^{\nu_\alpha}(\varphi^1_\xi)\right)(x)\right\| & \leq \left\| \frac{\nabla(p_\alpha)}{p_\alpha} \right\|_{\infty} \| P_t^{\nu_\alpha}(\varphi^1_\xi) \|_{\infty} \left(1+\frac{1}{\left(1-e^{-\alpha t}\right)^{\frac{1}{\alpha}}} \right) , \\
& \leq \left\| \frac{\nabla(p_\alpha)}{p_\alpha} \right\|_{\infty}\left(1+\frac{1}{\left(1-e^{-\alpha t}\right)^{\frac{1}{\alpha}}} \right). 
\end{align*}
The previous estimate implies that $v_\alpha$ is continuously differentiable on $\mathbb{R}^d$ and that, for all $x \in \mathbb{R}^d$, 
\begin{align*}
\left\| \nabla(v_\alpha)(x) \right\| & = \left\| \int_0^{+\infty} e^{- \beta t}\nabla \left(\left(P_{\frac{t}{\alpha}}^{\nu_\alpha}\right)^* \circ P_{\frac{t}{\alpha}}^{\nu_\alpha}(\varphi^1_\xi)\right)(x) dt \right\| , \\
& \leq \left\| \frac{\nabla(p_\alpha)}{p_\alpha} \right\|_{\infty} \int_{0}^{+\infty} e^{- \beta t} \left(1+\frac{1}{\left(1-e^{-\alpha t}\right)^{\frac{1}{\alpha}}} \right) dt < +\infty,
\end{align*}
since $\alpha \in (1,2)$ and $\beta>0$.~Then, $v_\alpha \in \mathcal{C}_b^1(\mathbb{R}^d) \subset D \left(\overline{\mathcal{E}_{\nu_\alpha, \mu_\alpha}} \right)$.~It remains to prove \eqref{eq:variationnal_problem}.~Thanks to \eqref{eq:closure_compact_support_rep}, let $(v_{\alpha,n})_{n \geq 1}$ be a sequence of functions in $\mathcal{C}_c^{\infty}(\mathbb{R}^d)$ such that $v_{\alpha,n} \longrightarrow v_\alpha$ in the norm $\| \cdot \|_{H+E}$ as $n$ tends to $+\infty$. Then, for all $\psi \in \mathcal{C}_c^{\infty}(\mathbb{R}^d)$,
\begin{align*}
\overline{\mathcal{E}_{\nu_\alpha, \mu_\alpha}}\left(v_\alpha, \psi\right) & = \mathcal{E}_{\nu_\alpha,\mu_\alpha}\left(v_\alpha, \psi\right) , \\
& = \underset{n \rightarrow +\infty}{\lim} \mathcal{E}_{\nu_\alpha,\mu_\alpha}\left(v_{\alpha,n}, \psi\right). 
\end{align*}
Now, by polarization and using the results contained in \cite[Appendix]{AH20_4}, for all $n \geq 1$ and all $\psi \in \mathcal{C}_c^{\infty}(\mathbb{R}^d)$, 
\begin{align*}
 \mathcal{E}_{\nu_\alpha, \mu_\alpha}\left(v_{\alpha,n}, \psi\right) = - \frac{1}{\alpha} \langle v_{\alpha,n}  ; \left(\mathcal{L}^{\nu_\alpha} + (\mathcal{L}^{\nu_\alpha})^* \right)(\psi) \rangle_{L^2(\mu_\alpha)}, 
\end{align*}
which gives, at the limit,  
\begin{align}\label{eq:after_ipp_limit}
 \mathcal{E}_{\nu_\alpha, \mu_\alpha}\left(v_{\alpha}, \psi\right) = - \frac{1}{\alpha} \langle v_{\alpha}  ; \left(\mathcal{L}^{\nu_\alpha} + (\mathcal{L}^{\nu_\alpha})^* \right)(\psi) \rangle_{L^2(\mu_\alpha)}, 
\end{align}
where $\mathcal{L}^{\nu_\alpha}$ is the generator of the semigroup $\left(P_t^{\nu_\alpha}\right)_{t \geq 0}$ and where $(\mathcal{L}^{\nu_\alpha})^*$ is the generator of the dual semigroup $\left(\left(P_t^{\nu_\alpha}\right)^*\right)_{t \geq 0}$. Recall that, for all $\psi \in \mathcal{C}_c^{\infty}(\mathbb{R}^d)$ and all $x \in \mathbb{R}^d$, 
\begin{align*}
&\mathcal{L}^{\nu_\alpha}(\psi)(x) = - \langle x ; \nabla(\psi)(x) \rangle + \int_{\mathbb{R}^d} \langle u ; \nabla(\psi)(x+u) - \nabla(\psi)(x) \rangle \nu_\alpha(du), \\
&(\mathcal{L}^{\nu_\alpha})^*(\psi)(x) = \left(M_\alpha^{-1} \circ A_\alpha \circ M_\alpha\right) (\psi)(x),
\end{align*}
where
\begin{align*}
&A_\alpha(\psi)(x) = d\psi(x) + \langle x ; \nabla(\psi)(x) \rangle + \int_{\mathbb{R}^d} \langle \nabla(\psi)(x+u) - \nabla(\psi)(x) ; u \rangle \nu_\alpha(du) , \\
& M_\alpha(\psi)(x) = \psi(x)p_\alpha(x) , \quad M_\alpha^{-1}(\psi)(x) = \frac{\psi(x)}{p_\alpha(x)}. 
\end{align*}
Now, by definition of $v_\alpha$ and since $(\beta E - \mathcal{L}_\alpha)$ is a self-adjoint operator on $L^2(\mu_\alpha)$, for all $\psi \in \mathcal{C}^{\infty}_{c}(\mathbb{R}^d)$, 
\begin{align*}
\left \langle v_\alpha  ; \left(\beta E - \frac{1}{\alpha}\left(\mathcal{L}^{\nu_\alpha} +(\mathcal{L}^{\nu_\alpha})^*\right)\right) \psi \right\rangle_{L^2(\mu_\alpha)} = \langle \varphi_\xi^1 ; \psi \rangle_{L^2(\mu_\alpha)}.
\end{align*}
This concludes the proof of the theorem. 
\end{proof}
\noindent
\textbf{Acknowledgements}: I thank Christian Houdr\'e for discussions on a preliminary version of this manuscript and for pointing out to me the references \cite{HK_07} and \cite{RS_10}.~I also thank the school of mathematics at the Georgia institute of technology for its hospitality where part of this research was carried out and presented. The author's research was supported by a PEPS JCJC.

\end{document}